\documentclass[a4paper, 11pt]{article}

\input xy 
\xyoption{all}

\setcounter{errorcontextlines}{4}

\usepackage{amsmath,amsthm,amssymb}
\usepackage[page,header]{appendix}
\usepackage{morefloats}
\usepackage{color}
\usepackage{makeidx}
\usepackage{hyperref}	 	
\usepackage{fancybox}
\usepackage{mathabx}
\usepackage{tikz}
\usetikzlibrary{matrix}

\makeindex 

\usepackage{latexsym, amsmath, amssymb, amsfonts, amscd}
\usepackage{amsthm}
\usepackage{t1enc}
\usepackage[mathscr]{eucal}
\usepackage{indentfirst}
\usepackage{graphicx, pb-diagram}
\usepackage{enumerate}
\usepackage[all,poly,web,knot]{xy}

\setlength{\oddsidemargin}{-8pt}	
\setlength{\evensidemargin}{-8pt}
\setlength{\topmargin}{5pt}
\setlength{\textheight}{650pt}
\setlength{\textwidth}{474pt} 
\setlength{\headsep}{10pt}
\setlength{\parindent}{0pt}
\setlength{\parskip}{1ex plus 0.5ex minus 0.2ex}
\newcommand{\Title}{Title}

\numberwithin{equation}{section}

\theoremstyle{definition}\newtheorem{definition}{Definition}[section]

\newtheorem{defititle}[definition]{\Title}
\newtheorem{notation}[definition]{Notation}

\newtheorem{remark}[definition]{Remark}

\newtheorem{ex}[definition]{Example}
\newtheorem{exs}[definition]{Examples}
\newtheorem{prop}[definition]{Proposition}
\newtheorem{proposition-definition}[definition]{Proposition-Definition}
\newtheorem{lemma}[definition]{Lemma}
\newtheorem{thm}[definition]{Theorem}
\newtheorem{cor}[definition]{Corollary}

\newtheorem*{prop*}{Proposition}
\newtheorem*{theorem*}{Theorem}
\newtheorem*{ex*} {Example}
\newtheorem*{remark*} {Remark}
\newtheorem{thmx}{Theorem}


\newcommand{\cB}{\mathcal{B}}
\newcommand{\cG}{{G}} 

\newcommand{\cF}{\mathcal{F}}

\newcommand{\cW}{\mathcal{W}}
\newcommand{\cV}{\mathcal{V}}
\newcommand{\cK}{K} 

\newcommand{\cI}{\mathcal{I}}

\newcommand{\cU}{\mathcal{U}}

\newcommand{\id}{{\hbox{id}}}
\newcommand{\ie}{{\it i.e.}\;}

\newcommand{\cf}{{\it cf.}\/ }

\newcommand{\vX}{\mathfrak{X}}

\def\gpd{\,\lower1pt\hbox{$\longrightarrow$}\hskip-.24in\raise2pt
             \hbox{$\longrightarrow$}\,}

\renewcommand{\latticebody}{\drop@{ }}

\newcommand{\R}{\ensuremath{\mathbb R}}

\newcommand{\g}{\ensuremath{\mathfrak{g}}}


\newcommand{\cA}{\mathcal{A}}
\newcommand{\cS}{\mathcal{S}}

\newcommand{\RR}{\ensuremath{\mathbb R}}

 




\newcommand{\bb}{\mathbf{b}}
\newcommand{\bc}{\mathbf{c}}
\newcommand{\bt}{\mathbf{t}}                  
\newcommand{\bs}{\mathbf{s}}                  


\newcommand{\hd}[1]{\Omega^{1/2}({#1})}
\newcommand{\shd}[1]{\Gamma_c  ( \Omega^{1/2} ({#1}) )}

\def\act{\mathbin{\hbox{$<\kern-.4em\mapstochar\kern.4em$}}}
\def\ract{\mathbin{\hbox{$\mapstochar\kern-.3em>$}}}
\def\exp{\mathrm{exp}}
\def\balpha{\boldsymbol{\alpha}}

\def\bgamma{\boldsymbol{\gamma}}
\def\rar{\overrightarrow}
\def\lar{\overleftarrow}

\def\PB(#1,#2,#3,#4){\left\{\begin{matrix}#1&\!\!\!\stackrel{?}{\longrightarrow}&\!\!\!#2\\
\downarrow&&\!\!\!\downarrow\\
#3&\!\!\!\stackrel{?}{\longrightarrow}&\!\!\!#4\end{matrix}\right\}}

\def\pb(#1,#2,#3,#4){ \hom(#1 \to #3, #2 \to #4)}






\begin{document}

\begin{center}
{\Large{\bf Singular subalgebroids}}
\footnote{2010 AMS subject classification: Primary	22A22, ~ Secondary 17B66, 22E60, 53D17. 
\\\indent\indent Keywords: Lie subalgebroid, singular foliation, holonomy groupoid, Lie groupoid.} 


{\large{\sc by  Marco Zambon}}\\
{\sc with an appendix by Iakovos Androulidakis}

\end{center}

{\footnotesize
  
\vskip 2pt KU Leuven
\vskip-4pt  Department of Mathematics
\vskip-4pt Celestijnenlaan 200B box 2400
\vskip-4pt BE-3001 Leuven, Belgium.
\vskip-4pt e-mail: \texttt{marco.zambon@kuleuven.be}

National and Kapodistrian University of Athens
\vskip -4pt Department of Mathematics
\vskip -4pt Panepistimiopolis
\vskip -4pt GR-15784 Athens, Greece
\vskip -4pt e-mail: \texttt{iandroul@math.uoa.gr}
}
\bigskip
\everymath={\displaystyle}

\date{today}

\begin{abstract}\noindent 
We introduce singular subalgebroids of an integrable Lie algebroid,
extending the notion of Lie subalgebroid by dropping the constant rank requirement. 
We lay the bases of a Lie theory for singular subalgebroids: we  construct   the associated holonomy groupoids, adapting the procedure of Androulidakis-Skandalis for singular foliations, in a way that keeps track of the choice of Lie groupoid integrating the ambient Lie algebroid. In the regular case, this recovers the integration of Lie subalgebroids by Moerdijk-Mr{\v{c}}un. The holonomy groupoids are topological groupoids,
and are  
 suitable for noncommutative geometry as they allow  for the construction of the associated convolution algebras.
Further we carry out the construction for morphisms in a functorial way.
\end{abstract}
 
\setcounter{tocdepth}{2} 
\tableofcontents

\section*{Introduction}
\addcontentsline{toc}{section}{Introduction}

 {Lie algebroids arise in differential geometry, mathematical physics and control theory. The standard viewpoint is to declare their sub-objects to be (wide) Lie subalgebroids, i.e. involutive constant-rank subbundles. However  there is a multitude of interesting
``singular'' examples that violate the constant-rank requirement.  
This leads us to introduce here a new class of subalgebroids, quite more singular than 
the usual Lie subalgebroids: we call them \emph{singular subalgebroids}. 
}
 
 {Our aim is to build a Lie theory for singular subalgebroids. In this paper we}
 construct a topological groupoid  canonically associated to them, called \emph{holonomy groupoid}, which depends on a choice of integration $G$ of the ambient Lie algebroid. 
The construction 
parallels the one of \cite{AndrSk}, and here too the holonomy groupoid is a topological groupoid.
This construction  encompasses the integration of wide Lie subalgebroids by Moerdijk-Mr{\v{c}}un \cite{MMRC} and the holonomy groupoids of singular foliations of Androulidakis-Skandalis \cite{AndrSk}. A novel feature is the presence of many interesting morphisms.  We prove a version Lie's second theorem in this context (integration of morphisms),
showing that our holonomy groupoid construction is functorial.

 {Building on the present work,} in a follow-up paper with Androulidakis  \cite{AZ4} we 
 provide a version of Lie's third theorem, making precise how one can view the holonomy groupoid as an ``integration'' of the singular subalgebroid. {This 
 requires us to work in the realm of diffeological groupoids.}
In that paper we 
also show that although the holonomy groupoid is not smooth,  it is still possible to do differential geometry on it.

{Finally here, using the holonomy groupoid we attach a $C^{\ast}$-algebra to a singular subalgebroid, paving the way to the development of pseudodifferential calculus, index theory, and other noncommutative geometry constructions for such structures.}
 
{Recently Laurent Gengoux-Lavau-Strobl  \cite{LavauThesis}\cite{LLG} showed that
singular foliations are tightly connected with higher algebraic structures:
under reasonable assumptions, a singular foliation admits a canonical $L_{\infty}$-algebroid which ``resolves'' it and which provides fine invariants. We expect their construction to extend to singular subalgebroids.} 
  
\subsection*{Singular subalgebroids}

Fix a Lie algebroid $A$ over a manifold $M$.
A {\bf singular subalgebroid}
is a $C^{\infty}(M)$-submodule $\cB$ of   $\Gamma_c (A)$ (the module of compactly supported sections of $A$), which is locally finitely generated and closed w.r.t. the Lie bracket. 

Let us display two obvious classes of singular  subalgebroids, whose intersection consists exactly of the regular foliations.
\begin{ex*}[Wide Lie subalgebroids] Let $B$ be a wide Lie subalgebroid of $A$, {\ie a Lie subalgebroid supported on the whole of $M$.} Then
$\Gamma_c(B)$ is a singular subalgebroid.
\end{ex*}
\begin{ex*}[Singular foliations] 
The singular subalgebroids of $A=TM$ are exactly the singular foliations on $M$.  {Here singular foliation is meant in the sense of \cite{AndrSk}, a notion inspired by the work of 
 Stefan and Sussman in the 1970's.}
\end{ex*}

There is an interesting class that strictly contains  the first one: the singular subalgebroids $\cB$ which are \emph{projective}, i.e., so that there exists a vector bundle over $M$ whose module of  compactly supported  sections is $\cB$. Notice that such a vector bundle is then a Lie algebroid (but not necessarily a Lie subalgebroid of $A$). 

In turn, projective subalgebroids are contained in a larger class, that of singular subalgebroids which are images of Lie groupoid morphisms covering the identity. 
Other examples of singular subalgebroids will be given in \S \ref{subsec:ex}.

{Singular subalgebroids can also be viewed as a nice class of Lie-Rinehart algebras \cite{Rinehart}, more general than Lie algebroids.}

\subsection*{Main results}

 For singular foliations $\cF$ on $M$, which as we saw are exactly the singular subalgebroids of $TM$, the holonomy groupoid was constructed by Androulidakis-Skandalis \cite{AndrSk}.
There the crucial idea was that of a bisubmersion.
Bisubmersions are   manifolds $U$ endowed with two submersive maps to $M$, and are defined locally from the data provided by the singular foliation. Their dimension is variable, and the holonomy groupoid $H(\cF)$ is a quotient of a disjoint union of bisubmersions.

For singular {subalgebroids $\cB$} of an integrable Lie algebroid $A$, after choosing an integrating Lie groupoid $G$, {taking a new point of view} we extend the notion of  \cite{AndrSk} by defining 
bisubmersions to be smooth maps $U\to G$ satisfying certain conditions.
With this notion we can construct the {\bf holonomy groupoid} $H^G(\cB)$ in a way analogous to  \cite{AndrSk}.
 
A feature of the construction we give here is that it keeps track of the choice of Lie groupoid $\cG$ integrating  $A$. More precisely, the  holonomy groupoid $H^G(\cB)$ comes together   with a canonical morphism to $G$. 
For instance the groupoid $H(\cF)$ given in \cite{AndrSk}, in the current context,  is   the holonomy groupoid associated to $\cF$ (viewed as a singular subalgebroid) when we choose $G$ to be the pair groupoid $M \times M$. The canonical morphism to  
  $G$ is just the target-source map.
  
The main result of the paper is Thm. \ref{thm:holgroidconstr}, which can be paraphrased in a simplified way as follows:

\begin{thmx}\label{thmx:a}
{Let $\cB$ be a singular subalgebroid of an integrable Lie algebroid $A$, and $G$ a Lie groupoid  integrating $A$.}
There exists a canonical map $$\Phi\colon H^{\cG}(\cB) \to \cG$$ where
\begin{itemize}
\item[1)] $H^{\cG}(\cB)$ is a  topological groupoid which is ``nice'' 
  and ``integrates $\cB$'',
\item[2)] $\Phi$  is a topological groupoid morphism  ``integrating'' the inclusion $\iota\colon \cB \hookrightarrow \Gamma_c(A)$.
\end{itemize}
\end{thmx}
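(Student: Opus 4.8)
The statement above paraphrases Theorem~\ref{thm:holgroidconstr}, and the plan is to prove it by adapting the bisubmersion machinery of Androulidakis--Skandalis \cite{AndrSk}, the essential change being that a bisubmersion is now a smooth map \emph{to the integrating groupoid $G$} rather than a pair of submersions onto $M$. The first task is to pin down the right notion: a $G$-bisubmersion of $\cB$ should be a manifold $U$ with a smooth map $\varphi\colon U\to G$ such that the compositions $\bs,\bt\colon U\to M$ of $\varphi$ with source and target of $G$ are submersions, and such that $\varphi$ satisfies the natural analogue of the Androulidakis--Skandalis condition --- roughly that $\bs^{-1}(\cB)=\bt^{-1}(\cB)$ and that this module is generated by the $\bs$-vertical together with the $\bt$-vertical directions --- now formulated via $\varphi$ and the identification of $A$ with the $\bar s$-vertical bundle of $G$ along the units. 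One checks this reduces to \cite{AndrSk} when $G=M\times M$, and that $G$ itself, with its multiplication and inversion, already furnishes bisubmersions.

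Next I would establish \emph{local existence}: near $x\in M$ pick generators $X_1,\dots,X_n$ of $\cB$, let $U$ be a neighbourhood of $(0,x)$ in $\R^n\times M$, and set $\varphi(\lambda,y):=\exp\!\big(\sum_i\lambda_iX_i\big)(y)\in G$, using the exponential of $A$-sections into $G$ and evaluating the resulting bisection at $y$. Verifying that this \emph{path-holonomy bisubmersion} is a $G$-bisubmersion, and that it carries a distinguished point over a unit of $G$ at which $\bs=\bt$, is a transcription of \cite{AndrSk}. Then comes the calculus of bisubmersions: define morphisms of $G$-bisubmersions (smooth maps over $G$ intertwining $\bs$ and $\bt$); show that the composition of two bisubmersions (a suitable fibred product over $G$) and the inverse of a bisubmersion (post-composing $\varphi$ with inversion in $G$) are again bisubmersions; and prove the local structure theorem: near a point over a unit with $\bs=\bt$, every $G$-bisubmersion is equivalent, through a morphism of bisubmersions, to a path-holonomy bisubmersion. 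This last point is the technical heart.

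With these in hand, $H^G(\cB)$ is assembled as in \cite{AndrSk}: fix an atlas $\cU$ of $G$-bisubmersions containing all path-holonomy ones and closed under composition and inversion, form $\coprod_{U\in\cU}U$, and quotient by $u\sim v$ iff some local morphism of bisubmersions sends $u$ to $v$, with the quotient topology. Using the calculus above one checks that $\sim$ is an equivalence relation (transitivity via composition, controlled by the structure theorem), that source, target, the partial multiplication, inversion and units descend, and that the result is a topological groupoid with the claimed ``niceness'' (longitudinally smooth $\bs$-fibres, open $\bs$, minimal path-holonomy charts at the units, and recovery of $\cB$ from the bisubmersions near the units --- the precise sense in which $H^G(\cB)$ ``integrates $\cB$''). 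The map $\Phi\colon H^G(\cB)\to G$ is induced by $u\mapsto\varphi(u)$: well defined because morphisms of bisubmersions commute with the maps to $G$, continuous by the quotient topology, and a groupoid morphism because composition and inversion of bisubmersions match multiplication and inversion in $G$. That $\Phi$ ``integrates $\iota\colon\cB\hookrightarrow\Gamma_c(A)$'' is then immediate from how the path-holonomy bisubmersions are built from $A$-sections lying in $\cB$: infinitesimally near the units, $\Phi$ is the inclusion $\cB\hookrightarrow\Gamma_c(A)$ followed by the identification of $A$-sections with right-invariant vector fields on $G$.

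I expect the local structure theorem for $G$-bisubmersions to be the main obstacle: showing that near an ``identity point'' every bisubmersion is equivalent to a path-holonomy one \emph{compatibly with the maps to $G$}. This is what makes $\sim$ an equivalence relation whose quotient carries a groupoid structure independent of the atlas; without it, associativity, existence of inverses, and the identification of the units all fail to descend. The remaining ingredients --- existence of path-holonomy bisubmersions, continuity and functoriality of $\Phi$ --- are either routine or near-verbatim from \cite{AndrSk}; the genuinely new work is to check that the definition of $G$-bisubmersion interacts correctly with the multiplication on $G$, and that the Androulidakis--Skandalis structure theory survives replacing the pair groupoid $M\times M$ by an arbitrary integration $G$ of $A$.
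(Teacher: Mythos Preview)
Your proposal is correct and follows essentially the same route as the paper: define $G$-bisubmersions as maps $\varphi\colon U\to G$ (Def.~\ref{dfn:bisubm2}), construct path-holonomy bisubmersions from local generators (Prop.~\ref{prop:pathhol}), establish the calculus of morphisms, inverses, compositions and bisections (\S\ref{subsec:operations}), prove the comparison result you call the ``local structure theorem'' (Prop.~\ref{prop:crucial} and Cor.~\ref{cor:crucial}), and assemble $H^G(\cB)$ as a quotient of a path-holonomy atlas with $\Phi$ induced by the $\varphi$'s (Thm.~\ref{thm:holgroidconstr}). Two small caveats worth flagging: first, the precise bisubmersion condition is not your ``$\bs^{-1}(\cB)=\bt^{-1}(\cB)$ generated by vertical directions'' but rather $\varphi^{-1}(\rar{\cB})=\Gamma_c(\ker d\bs_U)$ and $\varphi^{-1}(\lar{\cB})=\Gamma_c(\ker d\bt_U)$ together with a liftability condition on right- and left-invariant extensions --- this distinction matters for the proofs of Prop.~\ref{prop:pathhol} and Prop.~\ref{prop:compo}; second, the paper explicitly defers the ``nice'', ``integrates $\cB$'', and ``integrates $\iota$'' claims to the companion paper \cite{AZ4} (see the bullets after Thm.~A and Ex.~\ref{ex:Phi}), so what is actually proved here is only the construction of $H^G(\cB)$ as a topological groupoid together with the continuous groupoid morphism $\Phi$.
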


In  joint work with Androulidakis   \cite{AZ4} 
\begin{itemize}
\item we show that $H^{\cG}(\cB)$ has some smoothness properties: first, {it is leafwise smooth in the sense that} its restriction to the leaves of $\cB$ are Lie groupoids {\cite[\S 2]{AZ4}}, and second, it has a structure of diffeological groupoid that allows to recover $\cB$ {\cite[\S 5.3, \S6.4]{AZ4}}. This is what we mean by 
``nice''  
  and ``integrates $\cB$'' in 1) above.
  \item we consider certain diffeological groupoids endowed with maps to Lie groupoids, and
  show that a morphism of such objects always induces a morphism of singular subalgebroids {\cite[\S 5.4]{AZ4}}. Together with Ex. \ref{ex:Phi}, this explains ``integrating'' in 2) above.
{Further, for every leaf $L$ of $\mathcal{B}$, the  map $H^G(\mathcal{B})|_L\to G$ obtained restricting $\Phi$
is a Lie groupoid morphism integrating the Lie algebroid morphism $\mathcal{B}_L\to A$ induced by the inclusion $\iota$, see \cite[\S 2.5]{AZ4}. Here 
$\mathcal{B}_L$ is a transitive Lie algebroid over $L$; when $L$ is an embedded leaf, its sections are $\mathcal{B}/I_L\mathcal{B}$ for $I_L$ the ideal of functions on $M$ vanishing on $L$.}
   \end{itemize}

Further, for singular subalgebroids which are images of Lie groupoid morphisms (this includes singular foliations), we give a  minimality property for $H^{\cG}(\cB)$ in Prop. \ref{prop:lift}. 
This minimality property was postulated first by Moerdijk-Mr{\v{c}}un  for wide Lie subalgebroids and is satisfied by the holonomy groupoid of a singular foliation.

{
The construction of the holonomy groupoid $H^{\cG}(\cB)$ allows to transfer almost verbatim the   construction of the convolution $*$-algebra of \cite{AndrSk}, see Appendix \ref{section:convsing}  by   Iakovos Androulidakis. Recall \cite{AndrSk,IakAnal,PseudodiffCalcSingFol}  that 
in the case of singular foliations
the $K$-theory of the corresponding $C^{\ast}$-algebra is the recipient of the analytic index for longitudinal elliptic pseudodifferential operators.}

{A feature of singular subalgebroids compared to singular foliations is that morphisms abound. 
In Thm. \ref{thm:morph} we show that the  holonomy groupoid construction extends to morphisms covering the identity. (For more general morphisms we refer to Appendix \ref{sec:appmorsub}.) This provides new statements even for singular foliations.
\begin{thmx}\label{thmx:B}
Let $F\colon \cG_1\to \cG_2$ be a morphism of Lie groupoids covering $Id_M$. Let $\cB_i$ be a singular subalgebroid of $Lie(\cG_i)$ for $i=1,2$,
 such that
$F_*(\cB_1)\subset \cB_2$.
Then there is a canonical morphism of topological groupoids $$\Xi \colon H^{\cG_1}(\cB_1)\to H^{\cG_2}(\cB_2)$$ covering $Id_M$ and 
making the following diagram commute:
\begin{equation*} 
 \xymatrix{
H^{\cG_1}( {\cB}_1)  \ar[d]^{\Phi_1}   \ar@{-->}[r]^{\Xi} &H^{\cG_2}(\cB_2)  \ar[d]^{\Phi_2}     \\
\cG_1 \ar[r]^{F} &   \cG_2 }
\end{equation*}.
\end{thmx}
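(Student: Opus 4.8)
The plan is to build $\Xi$ at the level of bisubmersions and then check it descends to the holonomy quotients. First I would recall the structure of $H^{\cG}(\cB)$ as a quotient $\bigsqcup_\alpha U_\alpha / {\sim}$ of a disjoint union of bisubmersions $U_\alpha \to \cG$, where the equivalence relation is generated by local morphisms of bisubmersions (the adaptation of the Androulidakis--Skandalis construction). The key observation is that the morphism $F\colon \cG_1 \to \cG_2$ lets us push forward bisubmersions: given a bisubmersion $U \to \cG_1$ for $\cB_1$, the composite $U \to \cG_1 \xrightarrow{F} \cG_2$ should be a bisubmersion for $\cB_2$, precisely because $F_*(\cB_1) \subset \cB_2$ guarantees that the generating sections of $\cB_1$ carried by $U$ map into sections of $\cB_2$. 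One has to be a little careful since $F$ need not be a submersion, so $U \to \cG_2$ need not have the submersivity built into the definition of bisubmersion; the fix is to cover $U$ by (shrunk) charts on which one can compose with local bisubmersions of $\cB_2$ adapted to the image, or equivalently to show that near each point $U \to \cG_2$ factors through a standard bisubmersion of $\cB_2$ in the sense of the earlier sections. This is where I expect most of the technical work to sit.

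Second, I would verify functoriality/compatibility: a morphism of bisubmersions $U \to U'$ over $\cG_1$ (the kind generating the equivalence relation defining $H^{\cG_1}(\cB_1)$) is sent, after composing with $F$, to a morphism of the pushed-forward bisubmersions over $\cG_2$, hence to an identification in $H^{\cG_2}(\cB_2)$. This shows the assignment on points descends to a well-defined map $\Xi\colon H^{\cG_1}(\cB_1) \to H^{\cG_2}(\cB_2)$. Continuity is immediate from the quotient topologies since the pushforward on each $U_\alpha$ is smooth, hence continuous, and $\Xi$ is by construction compatible with the quotient maps. That $\Xi$ is a groupoid morphism covering $Id_M$ follows because multiplication and inversion in the holonomy groupoids are induced from composition and inversion of bisubmersions (fibered products over $M$ and the flip of the two submersions), and these operations commute with composing with the fixed groupoid morphism $F$; the source and target maps of $H^{\cG_i}(\cB_i)$ are recovered from $\Phi_i$ followed by $\bs,\bt$ on $\cG_i$, and $F$ covers $Id_M$, so $\Xi$ covers $Id_M$.

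Third, commutativity of the square is essentially built in: $\Phi_i$ is the map induced by sending the class of a point $u \in U_\alpha$ to its image in $\cG_i$, and $\Xi$ sends that class to the class of $u$ viewed in the pushed-forward bisubmersion $U_\alpha \to \cG_2$, whose image in $\cG_2$ is by definition $F$ applied to the image of $u$ in $\cG_1$. So $\Phi_2 \circ \Xi = F \circ \Phi_1$ holds already on representatives, hence on $H^{\cG_1}(\cB_1)$. Uniqueness (``canonical'') is then the statement that any groupoid morphism covering $Id_M$ and compatible with the $\Phi_i$ must agree with $\Xi$ on the image of each bisubmersion, which determines it since the bisubmersions' images cover $H^{\cG_1}(\cB_1)$.

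The main obstacle, as flagged above, is the first step: showing that $U \xrightarrow{F} \cG_2$ genuinely defines (or is locally equivalent to) a bisubmersion of $\cB_2$ when $F$ is not submersive. The inclusion $F_*(\cB_1) \subset \cB_2$ is exactly the hypothesis designed to make the ``carried'' module of vector fields land in $\cB_2$, but matching it to the local normal form of bisubmersions in the sense used to define $H^{\cG_2}(\cB_2)$ — i.e. exhibiting it locally as pulled back from a path-holonomy-type bisubmersion via a smooth map — will require a lemma of the kind ``a smooth map to $\cG_2$ realizing $\cB_2$ factors locally through a standard bisubmersion'', presumably already available among the foundational results of the paper. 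Once that local identification is in hand, everything else is bookkeeping with quotients.
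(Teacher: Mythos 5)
Your overall architecture (push bisubmersions forward along $F$, then check descent to the holonomy quotients) is the paper's, and your second and third steps are essentially correct bookkeeping. The genuine gap is in your first step, and your diagnosis of it is off-target. The issue is not that $F$ fails to be a submersion: in Def. \ref{dfn:bisubm2} the map $\varphi\colon U\to\cG$ is never required to be a submersion, only $\bs_U$ and $\bt_U$ are, and these are unchanged by composing with $F$ since $F$ covers $Id_M$. The real problem is that when $F_*(\cB_1)\subsetneq\cB_2$, the composite $(U, F\circ\varphi,\cG_2)$ is a bisubmersion for $F_*(\cB_1)$ but \emph{not} for $\cB_2$: condition ii) fails, because a section $\balpha\in\cB_2$ not lying in $F_*(\cB_1)$ has no reason to admit a lift to $U$ which is $(F\circ\varphi)$-related to $\rar{\balpha}$ (correspondingly, $\ker d\bs_U$ is too small for condition iii) to hold for $\rar{\cB_2}$). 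So your pushed-forward atlas computes $H^{\cG_2}(F_*(\cB_1))$, not $H^{\cG_2}(\cB_2)$, and the remaining work is exactly the passage from the former to the latter.

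The paper handles this by factoring the theorem into two steps: first the surjective case $\cB_2=F_*(\cB_1)$ (Prop. \ref{prop:hbhfb}), where your argument goes through verbatim because $F\circ\varphi$ is literally the path-holonomy bisubmersion for the generators $F_*(\balpha_i)$ and the $\sim_1$-classes are contained in the $\sim_2$-classes; and second an inclusion $\cB\subset\widetilde{\cB}$ of singular subalgebroids of the \emph{same} Lie algebroid (Lemma \ref{prop:morph0}), where one completes a minimal generating set of $\cB$ at $x$ to a generating set of $\widetilde{\cB}$ and obtains an explicit embedding $(\lambda,y)\mapsto(\lambda,0,y)$ of the path-holonomy bisubmersion for $\cB$ into the one for $\widetilde{\cB}$, commuting with the maps to $\cG_2$; descent then follows from the bisection characterization of the equivalence relation (Rem. \ref{rem:HGBbisec}). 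Your remark that ``$U\to\cG_2$ factors locally through a standard bisubmersion of $\cB_2$'' is the right idea, but you do not supply this factorization, and it is the heart of the proof. Separately, your uniqueness argument is incorrect: since $\Phi_2$ is not injective in general, commutativity of the square does not determine $\Xi$; ``canonical'' here means independence of the chosen atlas, which must be checked directly.
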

}

 \subsection*{The   Moerdijk-Mr{\v{c}}un integration of wide Lie subalgebroids}
Theorem \ref{thmx:a} extends and unifies previous results by Moerdijk-Mr{\v{c}}un  \cite{MMRC} and Androulidakis-Skandalis \cite{AndrSk}
for the two   obvious classes of singular  subalgebroids displayed above -- wide Lie subalgebroids and singular foliations --. We now elaborate on the first class, as
it is instructive to compare Theorem \ref{thmx:a} with the results of    Moerdijk-Mr{\v{c}}un in \cite{MMRC}. 

Let $A\to M$ be a Lie algebroid, and  fix   a Lie groupoid $\cG$ integrating $A$. Let $B\to M$ be a wide Lie subalgebroid of $A$. Moerdijk-Mr{\v{c}}un   show:
\begin{theorem*}{\bf (\cite[Thm. 2.3]{MMRC})}
There exists a unique map $$\Phi\colon H_{min}\to \cG$$ where
\begin{itemize}
\item[1)] $H_{min}$ is a  Lie groupoid  integrating $B$,
\item[2)] $\Phi$  is a Lie groupoid morphism  integrating the inclusion $\iota\colon B\hookrightarrow A$,
\item[3)] minimality property: for any Lie groupoid morphism 
$\tilde{H}\to G$ integrating\footnote{So 
 $\tilde{H}$ is necessarily a Lie groupoid integrating $B$, and 
 the morphism is an immersion.}  $\iota$, there exists a surjective Lie groupoid morphism $\tilde{H}\to H_{\min}$ integrating $Id_B$ and making this diagram commute:
\begin{equation*}
\xymatrix{
\tilde{H}  \ar[rd]_{}    \ar@{-->}[rr] & & H_{min}   \ar[ld]^{\Phi}  \\
&\cG & }
\end{equation*}
\end{itemize}
\end{theorem*}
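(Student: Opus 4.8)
The statement to prove is the Moerdijk--Mr\v{c}un theorem \cite[Thm.~2.3]{MMRC}, so the plan is to reconstruct its proof in a way that will later be recognizable as the ``regular'' shadow of the holonomy groupoid construction of Theorem~\ref{thmx:a}. The first step is to construct a candidate for $H_{min}$ directly. Since $B\to M$ is a wide Lie subalgebroid, it is in particular an integrable Lie algebroid (it is a subalgebroid of the integrable $A$, and one checks that the anchor and bracket restrict); let $\cH$ be its source-simply-connected integration, which exists and is unique. The inclusion $\iota\colon B\hookrightarrow A$ integrates, by Lie's second theorem for Lie algebroids (in the source-simply-connected setting), to a unique Lie groupoid morphism $\phi\colon \cH\to \cG$ covering $Id_M$. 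Now define $H_{min}$ to be the image of $\phi$, more precisely the quotient of $\cH$ by the kernel of $\phi$; equivalently, $H_{min} = \cH/\cN$ where $\cN = \phi^{-1}(M)$ is the normal (bundle of groups) subgroupoid of elements mapping to identities. One must check that $\cN$ is a closed, embedded, \emph{wide} subgroupoid and that the quotient $H_{min}=\cH/\cN$ is a smooth Lie groupoid; here the key point is that $\cN$ meets each source-fibre $\bs^{-1}(x)$ of $\cH$ in a discrete subgroup (because $\phi$ is an immersion on each source-fibre, its Lie algebroid map $\iota$ being injective), so the quotient map $\cH\to H_{min}$ is a local diffeomorphism on source-fibres and $H_{min}$ inherits a manifold structure. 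The induced map $\Phi\colon H_{min}\to\cG$ is then an injective immersion of Lie groupoids, and $\Lie(\Phi)=\iota$, giving parts 1) and 2).

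For the minimality property 3), let $\tilde H\to\cG$ be any Lie groupoid morphism integrating $\iota$; as the footnote observes, $\tilde H$ then integrates $B$ and the morphism is an immersion. Let $\tilde H^0$ denote the source-connected component of the identities in $\tilde H$ (an open subgroupoid integrating $B$). By Lie's second theorem there is a unique Lie groupoid morphism $p\colon \cH\to \tilde H^0$ integrating $Id_B$, and $p$ is surjective (its image is an open, hence closed, source-connected subgroupoid), making the evident triangle with $\cG$ commute: the composite $\cH\to\tilde H^0\hookrightarrow\tilde H\to\cG$ integrates $\iota$, hence equals $\phi$. It remains to descend $p$ to a morphism $\tilde H\to H_{min}$. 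Since $\phi = (\tilde H\to\cG)\circ p$ factors through $H_{min}=\cH/\ker\phi$, we get $\ker\phi\subseteq\ker p$ only if the map $\tilde H\to\cG$ is injective on $p(\ker\phi)$ --- but in fact one argues the reverse: because $\tilde H\to\cG$ is an immersion, $p(\ker\phi)\subseteq\ker(\tilde H\to\cG)$ is discrete in source-fibres, and the map $\tilde H^0\to\cG$ being an injective immersion on a neighborhood of the identities forces $p(\ker\phi)\subseteq M$, i.e. $\ker\phi\subseteq\ker p$. Hence $p$ descends to a surjective Lie groupoid morphism $H_{min}=\cH/\ker\phi\to \tilde H^0/p(\ker\phi)\hookrightarrow\tilde H$; composing appropriately (and using source-connectedness to identify the relevant quotient with a subgroupoid of $\tilde H$) yields the desired surjection $\tilde H\to H_{min}$ integrating $Id_B$ and commuting with the maps to $\cG$. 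Uniqueness of $(H_{min},\Phi)$ in 1)--2) follows from uniqueness in Lie's second theorem together with the minimality: any other candidate receives a surjection from $\cH$ and maps surjectively to $H_{min}$, and an elementary argument with source-connected components shows the two are isomorphic over $\cG$.

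\textbf{Main obstacle.} The delicate point is the smoothness of the quotient $H_{min}=\cH/\ker\phi$ and, correspondingly, the descent of $p$ in part 3). The abstract quotient of a Lie groupoid by a normal subgroupoid need not be smooth in general; what saves us is that $\ker\phi$ is \emph{discrete along source-fibres}, a consequence of $\iota$ being fibrewise injective, so the action of $\ker\phi$ on $\cH$ by translation is free and proper enough in the source-fibre direction to produce a manifold. Making this precise --- constructing slices/charts for $H_{min}$ and verifying the groupoid structure maps are smooth --- is the technical heart, and it is exactly the place where, in the singular setting of Theorem~\ref{thmx:a}, smoothness genuinely fails and one is forced to pass to topological (and later diffeological) groupoids. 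A secondary subtlety is ensuring that ``integrating $\iota$'' is interpreted so that the immersion and source-connectedness hypotheses in the footnote are genuinely available; once they are, all the fibrewise-discreteness arguments go through.
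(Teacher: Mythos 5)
There is a genuine error at the heart of your construction: you define $H_{min}$ as $\cH/\ker\phi$, i.e.\ essentially as the image of the integrated inclusion, and this is the wrong object. The paper's introduction explicitly warns that the canonical morphism $\Phi\colon H_{min}\to\cG$ is \emph{usually not injective} and that its image $\Phi(H_{min})$ usually fails to be smooth; in the model case $A=TM$, $B$ an involutive distribution, $\cG=M\times M$, your $\cH/\ker\phi$ is the graph of the leafwise equivalence relation, whereas $H_{min}$ is the holonomy groupoid of the foliation (as the paper recalls), and the two differ whenever some leaf has nontrivial holonomy. Correspondingly, your smoothness argument has a gap you cannot close: discreteness of $\ker\phi$ in each individual source-fibre (which does follow from $\iota$ being fibrewise injective) does \emph{not} make $\ker\phi$ an embedded Lie subgroupoid, because the fibrewise-discrete subgroups can vary wildly in the transverse direction (e.g.\ the M\"obius-band foliation, where $\pi_1$ of the central leaf acts with holonomy and the graph of the equivalence relation is not a manifold). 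The correct normal subgroupoid to quotient by is the smaller
$$\cI=\{k\in\cH:\ \exists\ \text{a local bisection }\bb\ \text{through }k\ \text{with }\phi(\bb)\subset 1_M\},$$
exactly as in Prop.~\ref{prop:B} and Prop.~\ref{prop:smoothgroid}: the defining bisection $\bb$ provides the local slice (right-translate a neighbourhood of the units by $\bb$) showing that $\cI$ is an embedded, $\bs$-discrete Lie subgroupoid, which is the step that fails for all of $\ker\phi$. Your argument for property 3) inherits the same defect and is moreover internally confused: from $p\colon\cH\to\tilde H$ surjective with $\ker p\subset\ker\phi$ one gets a surjection $\tilde H\to\cH/\ker\phi$ for free, so the detour through ``$p(\ker\phi)\subset M$'' is unnecessary, and that inclusion is in any case false in general (it would force every source-connected integration admitting a morphism to $\cG$ to coincide with $H_{min}$, contradicting Ex.~\ref{ex:notssc}). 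Note finally that the paper does not prove this theorem: it is quoted from Moerdijk--Mr{\v{c}}un, and the in-paper results recovering it (Prop.~\ref{prop:smoothgroid}, Prop.~\ref{prop:lift}, Prop.~\ref{prop:HGBisHmin}) all hinge on replacing $\ker\phi$ by $\cI$.
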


Moerdijk-Mr{\v{c}}un refer to $H_{min}$ as the 
 \emph{minimal integral of $B$ over $\cG$}. By 3) above, $H_{min}$ is unique up to isomorphism. The construction of Thm. \ref{thmx:a}, applied to $\cB:=\Gamma_c(B)$, yields exactly the minimal integral $H_{min}$ together with the above Lie groupoid morphism $\Phi$, see Prop. \ref{prop:HGBisHmin}.
    
 To put this result into context, recall that the wide Lie algebroid $B$ is integrable (because $A$ is),  and that the inclusion $\iota$ integrates to a  morphism $H_{max}\to \cG$, where $H_{max}$ is the source simply connected Lie groupoid   integrating $B$. All other Lie groupoids integrating $B$ are quotients of $H_{max}$. In general they do not admit a morphism to $\cG$ integrating $\iota$, {and $H_{min}$  is the ``smallest'' integration admitting such a morphism.} \emph{We want to stress   that the result of Moerdijk-Mr{\v{c}}un, just as our Theorem \ref{thmx:a}, does  not  contain as a special case the integration of Lie algebroids (indeed, an integration $G$ of the Lie algebroid $A$ is part of the hypotheses).}
 
 As an example of the above theorem, take the case $A=TM$. Then a wide Lie subalgebroid is just an involutive distribution, which by the Frobenius theorem corresponds of a (regular) foliation on $M$. The Lie groupoids $H_{max}$ and $H_{min}$ are nothing else than the monodromy and holonomy groupoids of this foliation.
 
Notice that the above theorem of Moerdijk-Mr{\v{c}}un  starts with a Lie subalgebroid $B$ of $A$ (rather than with an abstract Lie algebroid $B$), and that it produces a Lie groupoid morphism to $\cG$ integrating the inclusion  $\iota\colon B\hookrightarrow A$ (rather than only a Lie groupoid integrating $B$). In other words, in the above theorem     $H_{min}$ is {naturally} endowed with a \emph{morphism} {$\Phi \colon H_{min} \to  \cG$}. {Notice that it would not be wise to disregard this morphism and consider only its image $\Phi(H_{min})$. First, the latter is a set-theoretic subgroupoid of $\cG$, which usually fails to be smooth. (When $B$ is an involutive distribution on $M$, $\Phi(H_{min})$ is the graph of the equivalence relation given by the associated regular foliation, and its failure to be smooth was one of the reasons to introduce the holonomy groupoid in the first place, see the remarks in  \cite{Phillipsholonomicimperative}). Second, the morphism of Lie groupoids $\Phi$
is usually not injective and hence contains more information than its image.}
 
\subsection*{{Androulidakis-Skandalis' holonomy groupoids of singular foliations}}

 We now highlight the aspects of this work that represent the main novelties in  comparison   with the work of  Androulidakis-Skandalis \cite{AndrSk}. (For singular foliations, the construction of Thm. \ref{thmx:a}  yields the holonomy groupoid of Androulidakis-Skandalis.) Let $G$ be a Lie groupoid and  $\cB$ be a singular subalgebroid of $A:=Lie(G)$.
 \begin{itemize}
 \item  Our definition of bisubmersion for $\cB$ (Def. \ref{subsec:defbisub}) 
 is not a straight-forward generalization of the one of \cite{AndrSk}.
Ours is given by a smooth map to $G$, which typically fails to be a  submersion. In the case that $\cB$ is a singular foliation, our definition does not recover on the nose the notion of bisubmersion from \cite{AndrSk}, but it corresponds  bijectively to it if one assembles two submersions into $M$ to one map to the pair groupoid $M\times M$ 
 (Prop. \ref{prop:equivbi}). 
Just as in \cite{AndrSk}, bisubmersions for $\cB$ have the following features:
their bisections induce automorphisms of the  singular subalgebroid $\cB$ (see Remark \ref{rem:rephrasea}), and they
  allow for the construction of the holonomy groupoid by providing its ``building blocks'' (Thm. \ref{thm:holgroidconstr}).
  
\item 
The holonomy groupoid depends of the choice of Lie groupoid $G$ integrating $A$.
 In \S \ref{section:vary} we display how the holonomy groupoid changes if we replace $G$ by another Lie groupoid  of which $G$ is a quotient.
For singular foliations, \ie when $A=TM$, there is a canonical choice for $G$, namely the pair groupoid $M\times M$. With this choice we recover the holonomy groupoid of a singular foliation of \cite{AndrSk}.

\item Morphisms between  Lie algebroids abound, even in the special case of morphisms covering the identity on the base (take for instance the anchor map). We show
that when such a morphism maps a singular subalgebroid into another, there is a canonically induced morphisms between the corresponding holonomy groupoids (Theorem 
\ref{thmx:B}), and that this assignment is functorial. When one restricts to singular foliations, 
there are not as many morphisms, and the natural ones are given by smooth maps between manifolds with singular foliations. Their effect at the level of holonomy groupoids  is 
not considered in \cite{AndrSk} and plays an important role  in \cite{GZQuotiens}.\end{itemize}

\noindent{\bf Conventions:} All Lie groupoids are {assumed to be source connected}, not necessarily Hausdorff, but with Hausdorff source-fibers. Given a Lie groupoid $\cG\rightrightarrows M$, we denote by $\bt$ and $\bs$ its target and source maps, and by $i\colon \cG\to \cG$ the inversion map. We denote by $1_x\in \cG$ the identity
element corresponding to a point $x\in M$, and by $1_M\subset \cG$ the submanifold of identity elements.
Two elements $g,h\in \cG$ are composable if $\bs(g)=\bt(h)$. We identify the Lie algebroid of $\cG$, which we sometimes denote by $Lie(\cG)$, with $\ker (d\bs)|_M$. 

{We use the term ``generators'' only in the context of modules over $C^{\infty}(U)$, for $U$ a manifold, and to mean that they generate as a $C^{\infty}(U)$-module.}

\noindent{\bf Acknowledgements:} 
{M.Z. thanks Iakovos Androulidakis -- who collaborated in this project in its early stages and is the author of Appendix \ref{section:convsing} -- for fruitful discussions and constructive advice, and Ivan Struchiner for inspiring comments on this work. We thank the referee for providing constructive remarks.}
This work was partially supported
by grants  
MTM2011-22612 and ICMAT Severo Ochoa  SEV-2011-0087 (Spain), Pesquisador Visitante Especial grant  88881.030367/2013-01 (CAPES/Brazil),  IAP Dygest,
the long term structural funding -- Methusalem grant of the Flemish Government,
the FNRS-FWO under EOS project G0H4518N, the FWO research project G083118N (Belgium).

\section{Singular subalgebroids}\label{section:SingSubalgd}

In this section we introduce the notion of singular subalgebroid, give several examples,
{and in \S \ref{sec:rivf} make some observations for later use.} Throughout this section we are going to consider a  Lie algebroid $A\to M$ with anchor $\rho\colon A \to TM$ (see for instance \cite{CW}\cite{MK2}\cite{LecturesIntegrabilty}).  
  
\subsection{Definition of singular subalgebroid}
\label{section:singdef}

We define the main object of interest of this paper:

\begin{definition}\label{dfn:singsubalg}
A {\bf singular subalgebroid} of $A$ is an involutive, locally finitely generated $C^{\infty}(M)$-submodule $\cB$ of ${\Gamma_c(A)}$.
\end{definition}

The notion of singular subalgebroid is obtained from the notion of wide Lie subalgebroid, by dropping the requirement of being a (constant rank) subbundle of $A$. This is achieved by focusing on the $C^{\infty}(M)$-module ${\Gamma_c(A)}$ of compactly supported sections of $A$, rather than on the Lie algebroid $A$ itself.

{For any   $C^{\infty}(M)$-submodule $\cB$ of $\Gamma_c(A)$, we define
its {\bf global hull} \cite[\S 1.1]{AndrSk}\cite{AZ4}  to be $$\widehat{\cB}:=\{Z\in \Gamma(A): fZ\in \cB \text{ for all } f\in C_c^{\infty}(M)\}.$$ It is a $C^{\infty}(M)$-submodule of $\Gamma(A)$ containing $\cB$.}

{
A subset $\mathcal{G}$ of $\widehat{\cB}$ is said to be a {\bf generating set for $\cB$} 
if
$\cB=Span_{C^{\infty}_{c}(M)}(\mathcal{G})$, where the latter is the set of finite $C^{\infty}_{c}(M)$-linear combinations of elements of $\mathcal{G}$.}

{Now we can explain the meaning of $\cB$ being ``locally finitely generated'' in Def. \ref{dfn:singsubalg}: it means that for every point of $M$ there is an open neighbourhood $i\colon U \hookrightarrow M$ such that the submodule
$$i^*\cB:=\{Z|_U:Z\in \cB \text{ has support in }U\}$$ 
of $\Gamma_c(A|_U)$ admits a finite generating set. In other words,  there are finitely many
 $Y_1,\dots,Y_n\in\widehat{i^*\cB}$ such that\footnote{Explicitly, $\widehat{i^*\cB}=\{{Y\in \Gamma(A|_U): fY\in i^*\cB \text{ for all } f\in  C^{\infty}_c(U)}\}$.} 
every element of $i^*\cB$ is a $C^{\infty}_c(U)$-linear combination of the $Y_j$'s.}

\subsection{{Motivating examples}}\label{subsec:motivex}

{This notion of singular subalgebroid is motivated by the following two special cases (whose intersection are exactly the regular foliations).

\begin{ex}[{\bf Singular foliations}]\label{ex:singfol} Recall from \cite{AndrSk}  that a {singular foliation} on a manifold $M$ is  an involutive, locally finitely generated submodule of the $C^{\infty}(M)$-module of vector fields {with compact support} $\vX_c(M)$. The singular subalgebroids of $A=TM$ are exactly the singular foliations on $M$.
\end{ex}

\begin{ex}[{\bf Wide Lie subalgebroids}]\label{ex:wideliesub} Recall from \cite[Def. 3.3.21]{MK2} that a wide Lie subalgebroid of $A$ is a vector subbundle $B\to M$, whose sections are closed with respect to the Lie bracket. In this case, $\Gamma_c(B)$ is a singular subalgebroid.
\end{ex}

{{Ex. \ref{ex:wideliesub} belongs} to the larger class of singular subalgebroids  arising from Lie groupoid morphism, which we introduce in \S \ref{subsec:morph}. In this paper we will focus mainly on these 3 classes, which at present are the only ones for which we are able to describe in an explicit way the holonomy groupoid.}
\begin{center}
\hspace{2cm}
\includegraphics[scale=.28]{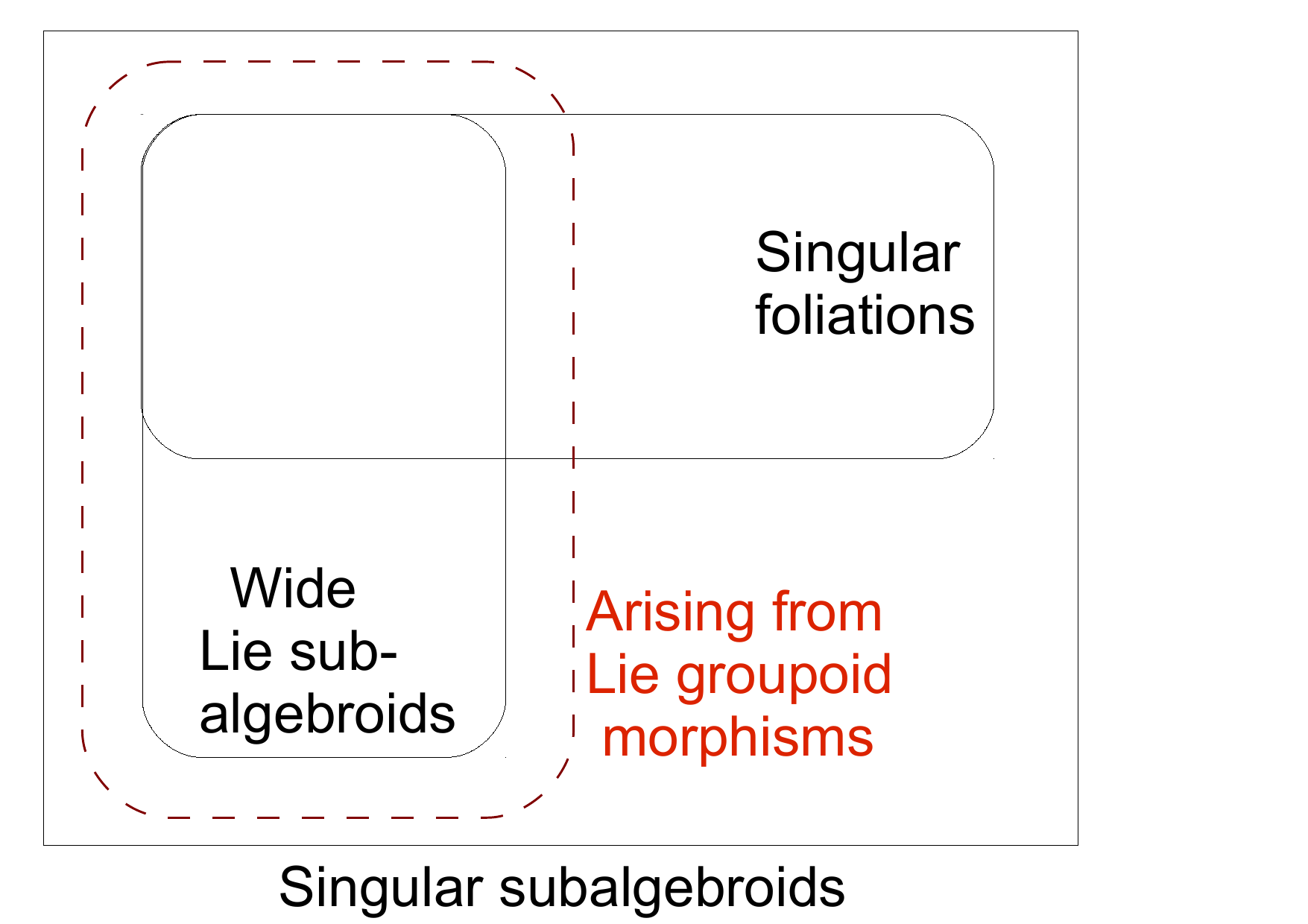}
\end{center} 

\subsection{Further examples}\label{subsec:ex}

Let us now display four geometric contexts in which singular subalgebroids arise.
{Both \S \ref{subsec:morph} and \S \ref{subsec:Liesubalgoids} contain as a special case wide Lie subalgebroids (Example \ref{ex:wideliesub} above).}

\subsubsection{Arising from Lie algebroid morphisms} \label{subsec:morph}

Let {$\psi \colon E\to A$} be a morphism of Lie algebroids covering  the identity on the base manifolds. Then the image of the induced map of compactly supported sections, 
$$\cB:=\psi(\Gamma_c(E)),$$
is a singular subalgebroid  of $A$. We say that $\cB$
{\bf arises from the Lie {algebroid} morphism} $\psi$. }

(The above can be vastly generalized, replacing  $\Gamma_c(E)$ by any singular subalgebroid of $E$, and by allowing $\psi$ to cover a diffeomorphism of the base or even a surjective submersion (see Lemma \ref{lem:FB2}).)
 
 \begin{remark} 
 Given two Lie algebroids $A_1\to M_1$ and $A_2\to M_2$, there is a notion of \emph{comorphism}\footnote{That is,  a pair $(\Phi,
f)$ where $f\colon M_1\to M_2$ is any differentiable map and 
$\Phi\colon f^{!}A_2\to A_1$ is a vector bundle map over $Id_{M_1}$, where $f^{!}A_2$ denotes the pullback of the vector bundle $A_2$ via $f$, such that the induced map of sections $\Gamma(A_2)\to \Gamma(A_1)$ preserves the Lie bracket and the anchor maps satisfy $f_*\circ \rho_{A_1}\circ \Phi=\rho_{A_2}$.}  from $A_1$ to $A_2$
(see \cite[Def. 4.3.16]{MK2}).
{It induces a linear map} $\Gamma(A_2)\to \Gamma(A_1)$. 
{The $C^{\infty}_c(M_1)$-module generated by its image is a  singular subalgebroid of $A_1$}. Example \ref{ex:Poissonmap} below is of this kind, since a Poisson map between Poisson manifolds $M_1\to M_2$ induces a comorphism from $T^*M_1$ to $T^*M_2$. \end{remark}

\begin{exs}\label{ex:proj}
\begin{enumerate}
\item {A singular subalgebroid $\cB$ of $A$   is called {\bf projective} if there exists a vector bundle $B$ over $M$ such that $\Gamma_c(B)\cong \cB$ as $C^{\infty}(M)$-modules. In that case, there is \cite{AZ4} a  Lie algebroid structure on $B$ and almost injective Lie algebroid morphism $\tau \colon B\to A$ inducing the isomorphism $\Gamma_c(B)\cong \cB$, and these data are unique. In particular, $\cB$ arises from the Lie algebroid morphism $\tau$.
}

{A special case occurs when $\cB$ is the space of compactly supported sections of a wide Lie subalgebroid $B$ of $A$.}  
{In that case $\tau \colon B\to A$ is the inclusion.}

\item Given any Lie algebroid $A$, the anchor map $\rho\colon A\to TM$ is a Lie algebroid morphism. In this case $\cB:=\rho(\Gamma_c(A))$ is the singular foliation underlying $A$. {Further, any Lie algebroid morphism (covering the identity) giving rise to $\cB$ must be the anchor map of a Lie algebroid. }
\item  Let $A$ be a Lie algebroid. A  \emph{Nijenhuis operator} {\cite{MagriYvettePN}} is an endomorphism of vector bundles $N \colon A\to A$ over $Id_M$, whose Nijenhuis torsion $T_N(X, Y ) := [NX, NY ] - N[X, Y ]_N$ vanishes. Here  $[X,Y]_N:= [NX, Y ] + [X, NY ] - N[X, Y ]$.
In this case $N$ is a Lie algebroid morphism from $(A,[\cdot,\cdot]_N)$ to $(A,[\cdot,\cdot])$, so $\cB:=N(\Gamma_c(A))$ is a singular subalgebroid  of the original Lie algebroid $(A,[\cdot,\cdot])$.
\end{enumerate}
\end{exs}

{For later use we make the following definition.
\begin{definition}\label{def:arises}
Let $\cB$ be a singular subalgebroid of an integrable Lie algebroid $A$ over $M$.
We say that $\cB$ {\bf arises from a Lie groupoid morphism} (covering the identity)  if there is a Lie groupoid morphism  $\Psi\colon \cK\to\cG$ over $Id_M$,  where $\cG$ is any Lie groupoid integrating $A$, such that   $$\cB=\Psi_*(\Gamma_c(Lie(\cK))).$$
\end{definition}
Examples include compactly supported sections of wide Lie subalgebroids of $A$, for the latter are integrable. 
Clearly {Def. \ref{def:arises}}  implies that $\cB$ arises from a Lie algebroid morphism, namely $\Psi_*\colon Lie(\cK)\to A$. Conversely, if singular subalgebroid arises from a Lie algebroid morphism $\psi \colon E\to A$ with $E$ an integrable Lie algebroid, then this singular subalgebroid arises from a Lie groupoid morphism (namely, 
the Lie groupoid morphism $\Psi\colon \cK\to\cG$  integrating $\psi$, where $K$ is the source simply connected Lie groupoid integrating $E$).
}

\subsubsection{Globally finitely generated singular subalgebroids}\label{subsec:generated1}
Let $\balpha_1,\ldots,\balpha_n \in \Gamma(A)$ satisfying the following involutivity condition: For every $1\leq i,j \leq n$ there exist smooth functions $ f_{ij}^1,\ldots,f_{ij}^n \in C^{\infty}(M)$ such that $[\balpha_i,\balpha_j]=\sum_{k=1}^n f_{ij}^k \balpha_k$. Then the $C^{\infty}(M)$-submodule of $\Gamma(A)$ $$\cB:=C^{\infty}_c(M)\balpha_1 + \ldots + C^{\infty}_c(M)\balpha_n$$ is a singular subalgebroid. 

\begin{exs}\label{ex:momapsingsub}
\begin{enumerate}
\item Given a single section $\balpha \in \Gamma(A)$, the $C^{\infty}(M)$-module $\cB = C^{\infty}_c(M)\balpha$ is a singular subalgebroid.

\item {Let $\g$ be a Lie algebra and $\varphi \colon \g \to \Gamma(A)$ a Lie algebra morphism. Defining
 $\cB$ as the $C^{\infty}_c(M)$-{span} of $\{\varphi(v):v\in \g\}$ we obtain a singular subalgebroid of the kind above.
As $\balpha_1,\ldots,\balpha_n$ we can take  the image of a basis of $\g$; notice that in this case the functions $f_{ij}^k$ are constant. This example also  falls\footnote{Indeed
$\cB$ is the image of the Lie algebroid morphism $\g\times M\to A, (v,x)\mapsto (\varphi(v))|_x$ over $Id_M$, where 
$\g\times M$ is the transformation Lie algebroid of the infinitesimal action $\g\to \vX(M), v\mapsto \rho(\varphi(v))$ induced by $\varphi$ and the anchor of $A$.} into the class  considered in \S\ref{subsec:morph}.}

{A concrete example is the following.}  Let $(M,\omega)$ be a symplectic manifold, $\g$ a Lie algebra, and $J \colon M\to \g^*$ the moment map for some hamiltonian  action on $M$. Then the comoment map (pullback of functions)
$J^*\colon \g \to C^{\infty}(M)$ delivers a Lie algebra morphism into the central extension of $TM$ by the trivial vector bundle $M \times \RR$ twisted by $\omega$\footnote{Recall that $TM\oplus_{\omega} (M \times \RR)$ is a Lie algebroid with the bracket $[X\oplus V, Y\oplus W]=[X,Y]\oplus\{X(W)-Y(V)-\omega(X,Y)\}$.}:
$$\g \to \Gamma(TM\oplus_{\omega} (M \times \RR)), v \mapsto (X_{J^*v}, J^*v)$$
where $X_{J^*v}$ is the Hamiltonian vector field of  $J^{*}v \in C^{\infty}(M)$. When $\omega$ is an integral 2-form, $TM\oplus_{\omega} (M \times \RR)$  is the  Atiyah  algebroid of a circle bundle prequantizing $(M,\omega)$.
\end{enumerate}
\end{exs}

\subsubsection{From Lie subalgebroids supported on submanifolds}\label{subsec:Liesubalgoids}

Recall that a \emph{Lie subalgebroid} of $A$ over a closed embedded submanifold\footnote{
A Lie subalgebroid of $A$ over an immersion $\iota \colon N \to M$ can be defined as well: It is a vector bundle $B \to N$ together with a vector bundle morphism $j \colon B \to A$ over $\iota \colon N \to M$ such that: i) $\rho(j(B)) \subset \iota_*(TN)$, ii) $\tilde{\Gamma}(B) := \{\balpha \in \Gamma(A) \colon \balpha|_{\iota(N)} \subset j(B)\}$ is involutive, iii)  $[\tilde{\Gamma}(B),\tilde{\Gamma}(0_N)]\subset \tilde{\Gamma}(0_N)$.
}
 $N$ of $M$ (see \cite[Def. 4.3.14]{MK2}) is a subbundle $B\to N$, such that: 
 \begin{enumerate}
\item [i)] $\rho(B)\subset TN$,
\item  [ii)] $\tilde{\Gamma}(B):=\{\balpha\in \Gamma(A):\balpha|_N\subset B\}$ is involutive,
\item [iii)]
$[\tilde{\Gamma}(B),\tilde{\Gamma}(0_N)]\subset \tilde{\Gamma}(0_N)$, where $\tilde{\Gamma}(0_N):=\{\balpha\in \Gamma(A):\balpha|_N=0\}$.
\end{enumerate}

If   $B$ is a Lie subalgebroid of $A$  over a closed {embedded} submanifold $N$, 
then $$\cB:=\{\balpha\in \Gamma_c(A):\balpha|_N\subset B\}$$ is a singular subalgebroid  of $A$.

Let us describe $\cB$ near a point $p$ of $N$. Choose coordinates $\{x_i\}$ around $p$ adapted to $N$, \ie $\{x_i\}_{i>n}$ vanish on $N$ and $\{x_i\}_{i\le n}$, once restricted to $N$, provide coordinates there (here $n=dim(N)$). Let $\{\balpha_j\}$ be a frame of  {compactly supported sections} of $A$ adapted to $B$, \ie $\{\balpha_j|_N\}_{j\le b}\subset B$ where $b=rank(B)$.
Then $\cB$, locally near $p$, is generated by $$\{\balpha_j\}_{j\le b}\cup \{x_i\cdot \balpha_j\}_{i>n,j> b},$$
while on open sets disjoint from $N$,  $\cB$ is just given by restrictions of  {compactly supported} sections of $A$.
When $N$  has codimension one in $M$, $\cB$ is   projective {(see Def. \ref{ex:proj}).}
If $codim(N)\ge 2$ and $B\neq A|_N$, then $\cB$ is not   projective, because the number of generators above is strictly larger than   $rank(A)$.

 \begin{exs}\label{ep:GuLi}
\begin{enumerate}
\item Let $A=TM$. Let $B$ be the zero vector subbundle over $N$. Then $\cB$ consists of the vector fields on $M$ which vanish at points of $N$.

\item When $N$  has codimension one in $M$, as mentioned above, $\cB$ {is a projective singular subalgebroid,  i.e.} it  consists of {compactly supported sections} of an honest Lie algebroid over $M$, which Gualtieri and Li  \cite[Def. 2.11]{GuLi}  call \emph{elementary modification of $A$ along $B$} and  denote by $[A:B]$. We remark that they construct an integration of $[A:B]$ applying a blow-up procedure to a Lie groupoid integrating $A$ (assuming that $A$ is integrable) \cite[Thm. 2.9, Cor. 2.10]{GuLi}.

{ In particular, when $A=TM$ and $B=TN$, the Lie algebroid $[TM:TN]$ is called the \emph{log tangent bundle} associated to $N$, 
 and 
 $\cB$ is the projective singular foliation consisting of vector fields on $M$ tangent to $N$.}  \end{enumerate}
\end{exs}

\subsubsection{From Poisson geometry}\label{subsec:Pois}

{For the cotangent Lie algebroid of a Poisson manifold, certain singular subalgebroids can be constructed out of functions.}
Let $(M,\pi)$ be a Poisson manifold, and consider the 
Poisson algebra $(C^{\infty}(M),\cdot,\{\cdot,\cdot\})$.
Let $\cS$ be
a  \emph{Poisson} subalgebra of  $C^{\infty}(M)$, {which is locally finitely generated as a multiplicative algebra}.
  Then 
$${\cB:=Span_{C_c^{\infty}(M)}\{df:f\in \cS\}}$$  
is a singular subalgebroid   of the cotangent Lie algebroid $T^*M$.   {To see that $\cB$ is locally finitely generated as a $C^{\infty}(M)$-module one just uses the product rule, and to see that $\cB$ 
 is involutive, use the Leibniz rule for the Poisson bracket and the fact that $[df,dg]=d\{f,g\}$.} 
 
\begin{exs}\label{ex:Poissonmap}
\begin{enumerate}
\item  {Let $\phi \colon M\to P$ a Poisson map between Poisson manifolds, and $\cS_P$ a  Poisson subalgebra of $C^{\infty}(P)$ which is locally finitely generated as a multiplicative algebra.
Then the same holds for $\cS:=\phi^*(\cS_P)\subset C^{\infty}(M)$}.
 
\item {A special case of the above is given by Poisson maps $M \to \g^*$ to the dual of a Lie algebra, and choosing $\cS_{\g^*}$ to consist of polynomial functions on $\g^*$. Notice that in this case $\cB$ is of the kind\footnote{Indeed, $\g \to \Gamma(T^*M), v\mapsto d(\phi^*(v))$ is a Lie algebra morphism.} described in Ex. \ref{ex:momapsingsub} b).}

\item Let $f\in C^{\infty}(M)$. Then {$Span_{C^{\infty}_c(M)}\{df\}$} is a singular subalgebroid  of $T^*M$.
(This can be seen as a special case of \S \ref{subsec:generated1}, or {as a special case of the above taking  $\cS$ to be the   Poisson subalgebra of  $C^{\infty}(M)$ generated by $f$}.)
 For instance, take  $M=\RR^2$ with the standard ``symplectic'' structure $\pi=\partial_x\wedge \partial_y$, and let $f(x,y)=xy$. Then ${\cB}$ is given by all $C^{\infty}_c(\RR^2)$-multiples of $d(xy)$. 
{The anchor map     $\Pi\colon T^{\ast}\RR^2 \to T\RR^2$ of the cotangent Lie algebroid is just contraction with $\pi$. The singular foliation $\Pi(\cB)$ of $\RR^2$ {induced by $\cB$} is interesting: its leaves  agree with the connected components of the $f$-fibers, except on the preimage of $0$: $f^{-1}(0)$ is the union of the axes, and 
 it consists of 5 leaves, namely the 4 open half-axes and the origin.}
\end{enumerate}
\end{exs}

\subsection{Singular subalgebroids and right-invariant vector fields}\label{sec:rivf}
\label{sec:twofols}

Let $\cB$ be a singular subalgebroid of a Lie algebroid $A$ over $M$ with anchor $\rho$. $\cB$ induces a singular foliation on $M$, whose leaves are contained in the orbits of the Lie algebroid $A$, namely
\begin{equation}\label{eq:fb}
\cF_{\cB}:=\{\rho(\alpha):\alpha\in \cB\}.
\end{equation}
 
In this subsection we are concerned with another  singular foliation  associated to $\cB$, which will be very important to carry out our constructions. Assume $A$ is integrable and fix a Lie groupoid $\cG \gpd M$ integrating $A$. We denote the source and target maps of $\cG$ by $\bs, \bt \colon \cG \to M$ and identify the Lie algebroids $\ker(d\bs)\mid_M$ and $A$. 

Given a section $\balpha \in \cB \subset \Gamma_c(M,\ker(d\bs)|_M)$, put $\overset{\rightarrow}{\balpha}$ the right-invariant vector field on $\cG$ which extends $\balpha$.  Recall that
$\overset{\rightarrow}{\balpha}$ is an element of
$\Gamma(\cG,\ker(d\bs))\subset \vX(\cG)$, and
it is given by the formula $\overset{\rightarrow}{\balpha}_g = (R_g)_*\balpha_{\bt(g)}$, for all $g \in \cG$. We will consider  {the singular foliation}
\begin{equation}
\rar{\cB}:={Span_{C^{\infty}_{c}(\cG)} \{ \overset{\rightarrow}{\balpha} \mid \balpha \in \cB\}}.
\end{equation}
 Likewise, we denote by  $\overset{\leftarrow}{\cB}$   the ${C^{\infty}_c(\cG)}$-module 
 generated by the left-invariant vector fields $\overset{\leftarrow}{\balpha}$ for all $\balpha \in \cB$. {All the statements made  in this subsection for $\rar{\cB}$ hold in a similar way for  $\lar{\cB}$ too.}
  
 Notice that  $\text{support}(\rar{\balpha})=\bt^{-1}(\text{support}(\balpha))$, hence $\rar{\balpha}$ is not necessarily compactly supported.
 However, we have:
\begin{lemma}\label{lem:complete}
For every  section $\balpha \in \cB$ the vector field $\rar{\balpha} \in \rar{\cB}$ is complete.
\end{lemma}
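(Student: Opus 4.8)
The plan is to reduce completeness of $\rar{\balpha}$ on $\cG$ to the compactness of the support of $\balpha$ on $M$, using the fact that $\rar{\balpha}$ is tangent to the $\bs$-fibers and that its flow is compatible with right translation. Concretely, I would first observe that since $\rar{\balpha}\in \Gamma(\cG,\ker(d\bs))$, its flow (wherever defined) preserves each source fiber $\bs^{-1}(x)$; moreover, because $\rar{\balpha}$ is right-invariant, the flow commutes with right translations $R_h$, i.e. $\varphi^{\rar{\balpha}}_\tau(gh)=\varphi^{\rar{\balpha}}_\tau(g)\cdot h$ whenever $g,gh$ and the relevant flow are defined. Hence it suffices to show that the flow is defined for all time on the single source fiber $\bs^{-1}(x)$ for each $x$, or even just starting from the identity section $1_M$: indeed, any $g\in\cG$ with $\bt(g)=y$ can be written as $g=1_y\cdot g$, so $\varphi^{\rar{\balpha}}_\tau(g)=\varphi^{\rar{\balpha}}_\tau(1_y)\cdot g$, and thus the flow through an arbitrary point exists for as long as the flow through the corresponding identity element does.

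Next I would pin down why the flow through $1_M$ is complete. On $1_M$, the flow of $\rar{\balpha}$ projects (via $\bt$) to the flow of the vector field $\rho(\balpha)=\alpha$-image on $M$, which is a compactly supported vector field on $M$ and hence complete; call its flow $\psi_\tau\colon M\to M$. So $\bt(\varphi^{\rar{\balpha}}_\tau(1_y))=\psi_\tau(y)$ is defined for all $\tau$. Now the key point: outside the compact set $\bt^{-1}(\mathrm{supp}(\balpha))$ the vector field $\rar{\balpha}$ vanishes, so the integral curve through $1_y$ is either constant (if $y\notin \mathrm{supp}(\rho(\balpha))$, equivalently $1_y\notin \bt^{-1}(\mathrm{supp}\,\balpha)$) or stays inside the set $\bs^{-1}(\{y\})\cap \bt^{-1}(\mathrm{supp}\,\balpha)$... but this set need not be compact since source fibers need not be compact. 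So instead I would run the standard escape-lemma argument: if the maximal interval of existence of the integral curve $\gamma$ through $1_y$ were $(a,b)$ with $b<\infty$, then $\gamma$ would eventually leave every compact set; but $\bt\circ\gamma=\psi_\tau(y)$ converges to $\psi_b(y)$ as $\tau\to b$, and once $\psi_\tau(y)$ leaves the compact $\mathrm{supp}(\rho\balpha)$ the curve $\gamma$ becomes constant, so $\gamma$ does \emph{not} escape — contradiction. Therefore $b=\infty$, and symmetrically $a=-\infty$.

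I would organize the write-up as: (1) reduce to completeness along $1_M$ using right-invariance; (2) note $\bt$ intertwines $\rar{\balpha}$ with the complete compactly supported field $\rho(\balpha)$ on $M$, so the $\bt$-projection of any integral curve through $1_y$ exists for all time; (3) apply the escape lemma on the (Hausdorff) source fiber $\bs^{-1}(y)$, using that $\rar{\balpha}$ vanishes off $\bt^{-1}(\mathrm{supp}\,\balpha)$ to prevent escape in finite time; (4) transport back to arbitrary $g\in\cG$ via $g=1_{\bt(g)}\cdot g$ and right-invariance of the flow. The conclusion is that $\rar{\balpha}$ is complete.

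\textbf{Main obstacle.} The delicate point is that $\cG$ is in general non-Hausdorff and its source fibers, while Hausdorff, are not compact, so one cannot simply invoke "complete vector field on a compact manifold" or "compactly supported vector field on a manifold." The argument has to genuinely use (a) right-invariance to localize the problem to the identity section and to the $\bt$-behavior, and (b) the Hausdorffness of the source fibers $\bs^{-1}(y)$ (guaranteed by the standing conventions) so that the escape lemma applies on each fiber. Getting the escape-lemma step clean — showing that an integral curve on a finite time interval cannot run off to infinity because it is eventually forced to be constant by the vanishing of $\rar{\balpha}$ outside a compact downstairs set — is where the care is needed; everything else is formal manipulation with invariant vector fields and their flows.
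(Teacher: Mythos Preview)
Your reduction (1), (2), (4) is correct and is essentially the content of the result the paper invokes. The gap is in step (3). Your escape-lemma argument reads: ``once $\psi_\tau(y)$ leaves the compact $\mathrm{supp}(\rho\balpha)$ the curve $\gamma$ becomes constant, so $\gamma$ does not escape.'' But $\psi_\tau(y)$ need not leave $\mathrm{supp}(\balpha)$ at all on $[0,b)$; the orbit of a compactly supported vector field can stay inside its support forever. In that case your argument gives only $\gamma([0,b))\subset \bs^{-1}(y)\cap\bt^{-1}(\mathrm{supp}\,\balpha)$, and as you yourself observe this set is not compact, so the escape lemma yields nothing. The convergence of the $\bt$-projection does not by itself prevent $\gamma$ from running off to infinity within a $\bt$-fiber.

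The fix is to exploit your reduction (1) more directly and abandon the escape lemma. By local existence of flows and compactness of $\mathrm{supp}(\balpha)\subset M$, there is a uniform $\epsilon>0$ such that $\varphi^{\rar{\balpha}}_\tau(1_z)$ is defined for all $|\tau|\le\epsilon$ and all $z\in M$ (for $z$ outside a neighborhood of $\mathrm{supp}(\balpha)$ the flow is constant; on the compact set use a finite cover). Right-invariance then gives $\varphi^{\rar{\balpha}}_\tau(g)=\varphi^{\rar{\balpha}}_\tau(1_{\bt(g)})\cdot g$ for all $g\in\cG$ and $|\tau|\le\epsilon$, a uniform lower bound on existence time, hence completeness. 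This is exactly the standard proof of \cite[Thm.~3.6.4]{MK2}, which is what the paper cites: it simply observes that $\rar{\balpha}$ is $\bt$-related to the compactly supported (hence complete) vector field $\rho(\balpha)$ and invokes that theorem.
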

\begin{proof}
Identifying the anchor map $\rho\colon A \to TM$ with $d\bt|_M\colon\ker(d\bs)\mid_M \to TM$, we get that $\rar{\balpha}$ is $\bt$-related with the vector field $\rho(\balpha)$, which has the same support as $\balpha$, whence it is complete. It follows from \cite[Thm. 3.6.4]{MK2} that $\rar{\balpha}$ is complete as well.
\end{proof}

{We now relate local generators of $\cB$ with local generators of $\rar{\cB}$.}
  Given $x\in M$,
let $I_x^M$ denote the ideal of functions on $M$ vanishing at $x$, and
$I_x^{\cG}$ the ideal of functions on $\cG$ vanishing at $x$.

{
\begin{remark}\label{rem:basis}
Let $\balpha_1,\cdots,\balpha_n\in \cB$. These elements are 
generators of $\cB$ in a neighborhood of $x$ if{f}  their images $[\balpha_1],\cdots,[\balpha_n]$
in $\cB/I_x^M\cB$ are a spanning set of this vector space. This is proved exactly as in the case of singular foliations \cite[Prop. 1.5 a)]{AndrSk}. If the latter form a basis of $\cB/I_x^M\cB$,  we say that $\balpha_1,\cdots,\balpha_n$ is a {\bf minimal} set of local generators.  
\end{remark}
}

\begin{lemma}\label{lem:basis}
Let $\balpha_1,\dots,\balpha_n$ be a finite subset of $\cB$.
Then $[\balpha_1],\dots,[\balpha_n]$ is a basis of $\cB/I_x^M\cB$ if{f}
 $[\overset{\rightarrow}{\balpha_1}],\dots,[\overset{\rightarrow}{\balpha_n}]$ is a basis\footnote{Actually the $\overset{\rightarrow}{\balpha_i}$ do not lie in $\rar{\cB}$ but rather in the 
global hull $\widehat{\rar{\cB}}$, see \S \ref{section:singdef}. This does not pose any problems since  the inclusion of  $\rar{\cB}$ in $\widehat{\rar{\cB}}$ induces an isomorphism 
 $\rar{\cB}/I_x^{\cG}\rar{\cB}\cong \widehat{\rar{\cB}}/I_x^{\cG}\widehat{\rar{\cB}}$.}
  of $\rar{\cB}/I_x^{\cG}\rar{\cB}$. 
\end{lemma}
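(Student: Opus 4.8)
\textbf{Proof strategy for Lemma \ref{lem:basis}.}

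The plan is to reduce the statement to a linear-algebra fact about the fibers of two vector bundles, by comparing the quotients $\cB/I_x^M\cB$ and $\rar{\cB}/I_x^{\cG}\rar{\cB}$ via the right-invariant extension map $\balpha \mapsto \rar{\balpha}$. First I would fix $x \in M$, viewed as a point of $\cG$ through $1_x \in 1_M \subset \cG$, and observe that since right-invariant extension is $C^\infty(M)$-linear in the obvious sense (pulling back functions along $\bt$), it sends $I_x^M\cB$ into $I_x^{\cG}\rar{\cB}$; hence it descends to a well-defined linear map $\Theta\colon \cB/I_x^M\cB \to \widehat{\rar{\cB}}/I_x^{\cG}\widehat{\rar{\cB}}$, where I use the footnote's remark that the inclusion $\rar{\cB}\hookrightarrow\widehat{\rar{\cB}}$ induces an isomorphism on the quotients by $I_x^{\cG}$, so it is harmless to work with either. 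The content of the lemma is then precisely that $\Theta$ is an isomorphism that carries spanning sets to spanning sets and bases to bases — and for that it suffices to show $\Theta$ is a linear isomorphism of finite-dimensional vector spaces.

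The key step is to exhibit the inverse. I would use that $\cG$ is locally, near $1_x$, a product of the source fiber $\bs^{-1}(x)$ (or rather a bisection-type slice) with $M$; more concretely, restriction of vector fields along $\bt$ to $1_M$ realizes $\ker(d\bs)|_M = A$, and a right-invariant vector field is determined by its restriction to $1_M$. So the ``evaluate at the identity section and restrict to $\ker(d\bs)$'' operation is a left inverse at the level of modules. The point to nail down is compatibility with the two ideals: I must check that this evaluation map sends $I_x^{\cG}\rar{\cB}$ into $I_x^M \cB$. This is where one uses that $\bt^*(I_x^M) \subset I_x^{\cG}$ and, conversely, that a function in $I_x^{\cG}$ restricts along $1_M$ to a function in $I_x^M$; combined with the fact that $\rar{\cB}$ is generated over $C^\infty_c(\cG)$ by the $\rar{\balpha}$ with $\balpha\in\cB$, a short computation with the Leibniz rule shows the evaluation map descends and is a two-sided inverse of $\Theta$. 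An alternative, perhaps cleaner, route: choose (by Remark \ref{rem:basis} and local finite generation) a minimal set of local generators $\balpha_1,\dots,\balpha_n$ of $\cB$ near $x$, so that $\cB/I_x^M\cB$ has dimension exactly $n$; then show directly that $\rar{\balpha_1},\dots,\rar{\balpha_n}$ generate $\rar{\cB}$ near $1_x$ (again using $\bt$ being a submersion, so that $\bt^*$ of a local generating set of functions together with these vector fields generates) and that their classes in $\rar{\cB}/I_x^{\cG}\rar{\cB}$ are linearly independent. Linear independence is the crucial point and follows because a relation $\sum c_i \rar{\balpha_i} \in I_x^{\cG}\rar{\cB}$, restricted to $1_M$ and then to $\ker(d\bs)$, yields $\sum c_i \balpha_i \in I_x^M\cB$, forcing all $c_i = 0$ by minimality.

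I expect the main obstacle to be the bookkeeping around the two ideals $I_x^M$ and $I_x^{\cG}$ and the passage between $\rar{\cB}$ and its global hull $\widehat{\rar{\cB}}$: one has to be careful that ``$\rar{\balpha}$ is not compactly supported'' (noted before Lemma \ref{lem:complete}) does not cause the quotient $\rar{\cB}/I_x^{\cG}\rar{\cB}$ to differ from the naive expectation, which is exactly what the footnote's isomorphism $\rar{\cB}/I_x^{\cG}\rar{\cB}\cong\widehat{\rar{\cB}}/I_x^{\cG}\widehat{\rar{\cB}}$ is there to handle — I would invoke it rather than reprove it. The differential-geometric input (right-invariant vector fields are determined by their restriction to the identity section, and $\bt$ is a submersion so $\bt^*$ is injective on functions and local generators pull back to local generators) is standard and I would cite \cite{MK2} as in the proof of Lemma \ref{lem:complete}. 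Everything else is the linear algebra observation that a surjective linear map between vector spaces of the same finite dimension is an isomorphism, and that isomorphisms preserve bases.
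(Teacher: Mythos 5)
Your proposal is correct and rests on exactly the same computations as the paper's proof: restriction of vector fields to the identity section $1_M$ (which sends $I_x^{\cG}\rar{\cB}$ into $I_x^M\cB$) gives linear independence/injectivity, the replacement $f_i \equiv f_i(1_x) \bmod I_x^{\cG}$ together with $\bt^*(I_x^M)\subset I_x^{\cG}$ gives spanning/surjectivity, and your "alternative route" at the end is literally the paper's "$\Rightarrow$" direction. The only difference is cosmetic — you package these steps as a mutually inverse pair of maps between the two quotients rather than as a two-directional basis check — so there is nothing to add.
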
 
\begin{proof} 
``$\Rightarrow$''  
We first show that the $[\overset{\rightarrow}{\balpha_i}]$ are linearly independent. Let $c_1,\dots,c_n\in \RR$ with $\sum c_i \overset{\rightarrow}{\balpha_i} \in I_x^{\cG}\rar{\cB}$. Restricting from $\cG$ to $M$ we obtain $\sum c_i \balpha_i\in I_x^M\cB$, therefore all coefficients $c_i$ are zero. We now show that the $[\overset{\rightarrow}{\balpha_i}]$ are a spanning set of $\rar{\cB}/I_x^{\cG}\rar{\cB}$. The $\balpha_i$ generate the $C^{\infty}(M)$-module $\cB$ in a neighborhood of $x$ (see Remark \ref{rem:basis}), and hence
 the $\overset{\rightarrow}{\balpha_i}$ generate the $C^{\infty}(\cG)$-module  $\rar{\cB}$ near  $x$. Given any $X\in \rar{\cB}$, there are $f_i\in {C_c^{\infty}(\cG)}$ such that $X=\sum f_i \rar{\balpha_i}=\sum f_i(x) \rar{\balpha_i}+\sum (f_i-f_i(x))\rar{\balpha_i}$, and since $f_i-f_i(x)\in I_x^{\cG}$ we obtain $[X]=\sum f_i(x) [\rar{\balpha_i}]$.

``$\Leftarrow$'' The $[\balpha_i]$ are linearly independent:
 if $\sum c_i \balpha_i \in  I_x^M\cB$ then $\sum c_i \overset{\rightarrow}{\balpha_i} \in \bt^*(I_x^{M}) \rar{\cB}  
\subset  I_x^{\cG}\rar{\cB}$, showing that the $c_i$ all vanish. To show that the $[\balpha_i]$ are a spanning set of $\cB/I_x^M\cB$, notice that by assumption  any element of $\rar{\cB}$ can be written as $\sum c_i \rar{\balpha_i}$ (for suitable $c_i\in \RR)$ plus an element of $ I_x^{\cG}\rar{\cB}$. We pick $\balpha\in \cB$ and write $\rar{\balpha}$ in the above form.
 Restricting to $M$ we see that $\balpha$ equals $\sum c_i \balpha_i$ plus an element of $ I_x^{M}{\cB}$, \ie $[\balpha]=\sum c_i [\balpha_i]$.
\end{proof}

\section{Bisubmersions for singular subalgebroids}\label{section:relbisub}

In this whole section we
fix an integrable Lie algebroid $A\to M$ and a singular subalgebroid $\cB$. Further, we fix a Lie groupoid $\cG$ integrating $A$.

Recall from \cite{AndrSk} that the key ingredient for the construction of the holonomy groupoid of a singular foliation is the notion of bisubmersion. Here, in order to carry out the construction in the case of singular subalgebroids, we reformulate the notion of bisubmersion {in \S \ref{subsec:defbisub}. We then present examples, including
  path holonomy bisubmersions. The latter, upon applying the operations we outline in \S \ref{subsec:operations}, will be used to construct the   holonomy groupoid in the next Section. {Our \S \ref{subsubsec:phrel} and \S \ref{subsec:operations} follow closely \cite{AndrSk}.}
}

\subsection{Pullbacks of singular foliations}

{We collect   background material on pullbacks and generating sets for
 singular foliations (see \S \ref{subsec:motivex}).} 
\begin{definition}\label{def:pullback}
Let $\varphi \colon U\to V$ a smooth map between smooth manifolds.
\begin{enumerate}
\item Let $X\in \vX(U)$ and $Y\in \vX(V)$. We say that $X$ is {\bf $\varphi$-related} to $Y$ if{f}  $\varphi_*(X(p))=Y(\varphi(p))$ for all $p\in U$.
\item
Let $\cF$ be a $C^{\infty}(V)$-submodule of ${\vX_c(V)}$.
Define, as in \cite{AndrSk} (see also \cite[\S 1.1]{AZ2})
$$\varphi^{-1}(\cF):=\{X\in \vX_c(U):d\varphi(X)=\sum f_i(Y_i\circ \varphi) \text{ for finitely many $f_i\in C^{\infty}_{c}(U)$ and $Y_i\in \cF$}\}.$$ 
Here $d\varphi\colon TU\to \varphi^*(TV)$ is a vector bundle map covering $Id_U$, where $\varphi^*(TV)$ denotes the pullback vector bundle. Notice that 
$\varphi^{-1}(\cF)$ is a $C^{\infty}(U)$-submodule of   $\vX_c(U)$.   It is a foliation, {called \textbf{pullback foliation}}, whenever $\cF$ is a foliation and  $\varphi$ is transverse to $\cF$ \cite{AndrSk}.  
\end{enumerate}
\end{definition}
 
 Now let $\varphi \colon U\to V$ a smooth map and  $\cF$ be a $C^{\infty}(V)$-submodule of ${\vX_c(V)}$.
Fix a  generating set $\mathcal{G}$ of $\cF$, {as defined in \S \ref{section:singdef}}. We display two technical lemmas, which are not completely obvious due to the fact that {neither  $\mathcal{G}\subset \cF$ nor  $\mathcal{G}\supset \cF$ in general}.

\begin{lemma}\label{lem:liftequiv}
{The following conditions are equivalent:
\begin{itemize}
\item for every $Y\in \cF$ there is
a $Z\in \vX(U)$ which is $\varphi$-related to $Y$.
\item for every $Y\in \mathcal{G}$ there is a $Z\in \vX(U)$ which is $\varphi$-related to $Y$.
\end{itemize}
}
\end{lemma}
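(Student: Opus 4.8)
The plan is to prove both implications by reducing everything to a local statement, since being $\varphi$-related is a pointwise condition and lifts can be patched together using a partition of unity. The direction ``first $\Rightarrow$ second'' is immediate: every element of $\mathcal{G}$ lies in $\widehat{\cF}$, but more to the point, in the local situation it suffices to observe that a generating set element, while not necessarily in $\cF$, becomes an element of $\cF$ after multiplication by a compactly supported bump function equal to $1$ on a prescribed compact set; so one first checks the easy reduction that the existence of $\varphi$-related lifts is a local property of the target $V$ (cover $V$ by opens, lift locally, glue with a partition of unity subordinate to the cover on $V$ pulled back to $U$), and then the statement for $Y\in\mathcal{G}$ follows from the statement for $\cF$ by localizing near each point and using that $\chi Y\in\cF$ for a suitable $\chi\in C_c^\infty(V)$.

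For the harder direction ``second $\Rightarrow$ first'', I would argue as follows. Fix $Y\in\cF$. By definition of generating set, $Y=\sum_{i} g_i Y_i$ for finitely many $g_i\in C_c^\infty(V)$ and $Y_i\in\mathcal{G}$. By hypothesis, for each $i$ there is $Z_i\in\vX(U)$ with $\varphi_*(Z_i(p))=Y_i(\varphi(p))$ for all $p$. Then I claim $Z:=\sum_i (g_i\circ\varphi)\, Z_i\in\vX(U)$ is $\varphi$-related to $Y$: indeed at any $p\in U$,
\[
\varphi_*\bigl(Z(p)\bigr)=\sum_i (g_i\circ\varphi)(p)\,\varphi_*\bigl(Z_i(p)\bigr)=\sum_i g_i(\varphi(p))\,Y_i(\varphi(p))=Y(\varphi(p)).
\]
This is essentially a one-line computation once the finiteness and the linearity of $\varphi_*$ are in hand, so in fact this direction is not hard either.

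The main point to be careful about — and what I expect to be the only genuine obstacle — is the interplay between ``compactly supported'' and ``locally finitely generated'': a priori $\cF$ is only \emph{locally} finitely generated, so an arbitrary $Y\in\cF$ need not be a \emph{finite} $C_c^\infty(V)$-combination of elements of the \emph{global} generating set $\mathcal{G}$ unless $\mathcal{G}$ is genuinely a generating set in the sense of \S\ref{section:singdef} (i.e. $\cF=\mathrm{Span}_{C^\infty_c(V)}(\mathcal{G})$, finite linear combinations). Reading the hypothesis of the lemma, $\mathcal{G}$ \emph{is} fixed as such a generating set, so every $Y\in\cF$ is by definition a finite combination and the argument above goes through verbatim. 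If instead one only had local generators, one would first use a locally finite partition of unity on $V$ to write $Y=\sum_\lambda \rho_\lambda Y$ with each $\rho_\lambda Y$ supported in a chart where finitely many generators suffice, lift each piece, and sum; the sum is locally finite hence a well-defined vector field on $U$, and it is $\varphi$-related to $Y$ by the same pointwise computation. I would include this partition-of-unity remark for robustness but note that with $\mathcal{G}$ a global generating set it is not strictly needed.

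\textbf{Summary of steps.} (1) Observe $\varphi$-relatedness is pointwise and lifts patch via partitions of unity; deduce the problem localizes on $V$. (2) ``$\Rightarrow$'': for $Y\in\mathcal{G}$, multiply by a bump function to land in $\cF$, apply the hypothesis, restrict; or just localize. (3) ``$\Leftarrow$'': write $Y\in\cF$ as a finite $C_c^\infty(V)$-combination $\sum g_iY_i$ of generators, lift each $Y_i$ to $Z_i$, and set $Z=\sum(g_i\circ\varphi)Z_i$; verify $\varphi_*(Z(p))=Y(\varphi(p))$ by the displayed computation. (4) Remark that the finiteness in step (3) is exactly what ``$\mathcal{G}$ is a generating set'' provides, and indicate the partition-of-unity variant if only local generators are available.
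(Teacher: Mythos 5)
Your proof is correct and follows essentially the same route as the paper: a partition-of-unity/localization argument on $V$ to pass between lifts of elements of $\cF$ and lifts of elements of $\mathcal{G}$ (using that $\chi Y\in\cF$ for $Y$ in the global hull and $\chi\in C_c^\infty(V)$), and the finite combination $Z=\sum_i(g_i\circ\varphi)Z_i$ for the other direction. The paper only writes out one implication and declares the converse ``similar''; your explicit pointwise computation and your remark on why the sum is finite (because a generating set spans $\cF$ over $C_c^\infty(V)$ by \emph{finite} combinations) supply exactly the intended argument.
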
 
\begin{proof}
We only show that the first condition implies the second (the converse is similar). Take a partition of unity $\{\psi_a\}$ on $V$ by functions with compact support. Given $Y\in \mathcal{G}$, by assumption there is $Z_a\in \vX(U)$ that is $\varphi$-related to $\psi_a Y\in \cF$. Since the partition of unity is locally finite, {one can arrange that} the sum $\sum_a Z_a$ is locally finite, and the resulting vector field is $\varphi$-related to $\sum_a\psi_a Y=Y$.
\end{proof}

Under certain conditions, $\varphi^{-1}(\cF)$ has a distinguished generating set.
\begin{lemma}\label{lem:fix}
 Assume that any of the equivalent  conditions in Lemma \ref{lem:liftequiv} is satisfied. {(This happens for instance when $\varphi$ is a submersion).} Then
 \begin{align*}
 \varphi^{-1}(\cF)=&Span_{C^{\infty}_{c}(U)}\{\text{$Z\in \vX(U)$: $Z$ is
 $\varphi$-related to an element of $\cF$}\}\\
 =&Span_{C^{\infty}_{c}(U)}\{\text{$Z\in \vX(U)$: $Z$ is
 $\varphi$-related to an element of $\mathcal{G}$ {or to $0$}}\}.
 \end{align*}
\end{lemma}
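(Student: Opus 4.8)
The plan is to establish the second equality first, since it is essentially automatic once Lemma~\ref{lem:liftequiv} is in force: every $Y \in \mathcal{G}$ lies in $\widehat{\cF}$, so $\psi_a Y \in \cF$ for any compactly supported function $\psi_a$; hence a vector field $\varphi$-related to an element of $\mathcal{G}$, after multiplying by a bump function, is $\varphi$-related to an element of $\cF$, and conversely the zero vector field is $\varphi$-related to $0 \in \mathcal{G}$. Using a partition of unity on $V$ (exactly as in the proof of Lemma~\ref{lem:liftequiv}), a vector field $\varphi$-related to $Y \in \cF$ can be written as a locally finite sum of vector fields each $\varphi$-related to some $\psi_a Y$, which is $C^\infty_c(V)$-proportional to an element of $\mathcal{G}$; pulling the coefficient back along $\varphi$ shows this sum lies in the $C^\infty_c(U)$-span of vector fields $\varphi$-related to elements of $\mathcal{G}$ or to $0$. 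This gives the inclusion of the first span into the second, and the reverse inclusion is trivial.

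For the main equality $\varphi^{-1}(\cF) = \mathrm{Span}_{C^\infty_c(U)}\{Z : Z \text{ is }\varphi\text{-related to an element of }\cF\}$, I would argue both inclusions. The inclusion ``$\supseteq$'' is direct: if $Z$ is $\varphi$-related to $Y \in \cF$, then $d\varphi(Z) = Y \circ \varphi$ is of the required form with a single summand ($f_1 = 1$, though to stay inside $C^\infty_c(U)$ one multiplies by a bump function — but $Z$ itself need not be compactly supported, so care is needed here: one should note that elements of the span are finite $C^\infty_c(U)$-combinations, and $fZ$ for $f \in C^\infty_c(U)$ is compactly supported and $\varphi$-related to $f'Y$ appropriately, or more cleanly observe that $d\varphi(fZ) = f\cdot(Y\circ\varphi)$ shows $fZ \in \varphi^{-1}(\cF)$). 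For ``$\subseteq$'', take $X \in \varphi^{-1}(\cF)$, so $d\varphi(X) = \sum_i f_i (Y_i \circ \varphi)$ with $f_i \in C^\infty_c(U)$, $Y_i \in \cF$. By the hypothesis (the equivalent conditions of Lemma~\ref{lem:liftequiv}), each $Y_i$ admits a $\varphi$-related vector field $Z_i \in \vX(U)$. Then $X - \sum_i f_i Z_i$ maps to $0$ under $d\varphi$, i.e.\ it is $\varphi$-related to the zero vector field on $V$; after multiplying by a partition of unity on $U$ subordinate to a cover trivializing things, or simply noting $0 \in \cF$, this difference is itself $\varphi$-related to an element of $\cF$ (namely $0$), hence lies in the span. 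Therefore $X = \sum_i f_i Z_i + (X - \sum_i f_i Z_i)$ exhibits $X$ as a finite $C^\infty_c(U)$-combination of vector fields $\varphi$-related to elements of $\cF$.

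The step I expect to require the most care is the bookkeeping around compact supports: the vector fields $Z_i$ lifting the $Y_i$ need not be compactly supported, and neither need the ``remainder'' $X - \sum f_i Z_i$ be, a priori. The clean fix is to absorb everything into the $C^\infty_c(U)$-coefficients — writing $f_i Z_i$, which \emph{is} compactly supported since $f_i$ is — and to handle the remainder term by observing it is compactly supported (being $X$ minus compactly supported fields, with $X \in \vX_c(U)$) and $\varphi$-related to $0 \in \cF$. This is the only genuinely non-formal point; the rest is the same partition-of-unity manipulation already used in Lemma~\ref{lem:liftequiv}, transported from $V$ to $U$ where needed. One should also double-check that $0 \in \cF$ (true, $\cF$ is a submodule) and $0 \in \mathcal{G} \cup \{0\}$ (true by construction of the second set) so that the ``or to $0$'' clause is legitimately available.
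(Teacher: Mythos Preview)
Your argument for the first equality is exactly the paper's: lift each $Y_i$ to $Z_i$, and observe that the remainder $X-\sum_i f_i Z_i$ is compactly supported and $\varphi$-related to $0\in\cF$. Your handling of the compact-support bookkeeping is correct and matches the paper's treatment.

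For the second equality your plan goes slightly astray. The partition of unity on $V$ is unnecessary: since $\mathcal{G}$ generates $\cF$, any $Y\in\cF$ is \emph{already} a finite sum $\sum_i g_i Y_i$ with $g_i\in C^\infty_c(V)$ and $Y_i\in\mathcal{G}$ --- this is the whole point of having a generating set. (Your phrase ``$C^\infty_c(V)$-proportional to an element of $\mathcal{G}$'' is also not quite right; it should be ``a finite $C^\infty_c(V)$-combination''.) The paper proceeds directly: lift each $Y_i$ to $X_i\in\vX(U)$, note that $\sum_i\varphi^*(g_i)X_i$ is $\varphi$-related to $Y$, so for any $f\in C^\infty_c(U)$ one has $fZ=\sum_i f\varphi^*(g_i)X_i+Z'$ with $Z'\in\vX_c(U)$ $\varphi$-related to $0$. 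For the reverse inclusion the paper uses your bump-function idea precisely: choose $\psi\in C^\infty_c(V)$ equal to $1$ on $\varphi(\mathrm{Supp}(f))$, so $fZ=f\varphi^*(\psi)Z$ and $\varphi^*(\psi)Z$ is $\varphi$-related to $\psi Y\in\cF$. With these small corrections your plan becomes the paper's proof verbatim.
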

\begin{proof} In the first equality, the inclusion ``$\supset$'' is easily checked to  hold even when the assumption is not satisfied. For ``$\subset$'', take $X\in \vX_c(U)$ such that $d\varphi(X)=\sum f_i(Y_i\circ \varphi)$ where the sum is finite,  $f_i\in C^{\infty}_{c}(U)$  and $Y_i\in \cF$. By the assumption, there exist $Z_i\in 
\vX(U)$ that is $\varphi$-related  to $Y_i$, \ie $d\varphi(Z_i)=Y_i\circ \varphi$. Hence we can write $$
d\varphi(X)=\sum f_i d\varphi(Z_i)=d\varphi(\sum f_i Z_i).$$ This means that
$X=\sum f_i Z_i+Z$ where $Z\in \vX_c(U)$ is $\varphi$-related to the zero vector field on $V$, which is an element of $\cF$.

For  the second equality, to prove ``$\supset$'', take $Z\in \vX(U)$ which is $\varphi$-related to $Y\in \mathcal{G}$ and  $f\in C^{\infty}_{c}(U)$.  We can choose a function  $\psi\in C^{\infty}_{c}(V)$ with is one on $\varphi(Supp(f))$. We then have $fZ=f\varphi^{*}(\psi)Z$, and clearly $\varphi^{*}(\psi)Z$ is $\varphi$-related to $\psi Y\in \cF$. To show ``$\subset$'', take $X\in {\vX(U)}$ which is $\varphi$-related to an element of $\cF$, \ie to some $\sum g_i Y_i$ where $g_i\in C^{\infty}_{c}(U)$ and $Y_i\in \mathcal{G}$. By assumption there is $X_i\in \vX(U)$ that is $\varphi$-related  to $Y_i$, hence  $\sum \varphi^{*}(g_i) X_i$ is also related to $\sum g_i Y_i$. {Fix $f\in C^{\infty}_{c}(U)$. Then $fX=f\sum \varphi^{*}(g_i) X_i+Z$  where $Z\in \vX_c(U)$ is $\varphi$-related to the zero vector field on $V$.}
\end{proof}

\subsection{Definition of bisubmersion}\label{subsec:defbisub}

Let $A$ be an integrable Lie algebroid and $\cG$ a Lie groupoid integrating $A$. Let $\cB$ be a singular subalgebroid of $A$. 
\begin{definition}\label{dfn:bisubm2} A {\bf bisubmersion} for $\cB$ is a smooth map $\varphi\colon U \to \cG$, where $U$ is a manifold, such that 
\begin{enumerate}[i)]
\item $\bs_U:=\bs\circ\varphi$ and $\bt_U:=\bt\circ\varphi \colon U \to M$ are submersions,
\item for every $\balpha\in \cB$, there is $Z\in\vX(U)$
 which is $\varphi$-related to $\rar{\balpha}$ and $W\in\vX(U)$ which is $\varphi$-related to $\lar{\balpha}$,
\item 
$\varphi^{-1}(\rar{\cB})=  \Gamma_c(U,\ker d\bs_U)$ and $\varphi^{-1}(\lar{\cB})=  \Gamma_c(U,\ker d\bt_U)$.
\end{enumerate}
\end{definition}

\begin{notation}\label{not:bisubm2}
We denote a bisubmersion of $\cB$ by $(U,\varphi,\cG)$. One can bear in mind the following diagram:
\begin{equation*}
\xymatrix{
&U\ar[d]^{\varphi} & \\
&\ar[dr]^{\bs}\cG\ar[dl]_{\bt}&\\
M&&M 
}
\end{equation*}
\end{notation}
 
 \begin{remark}
In Def. \ref{dfn:bisubm2}  the map $\varphi$ is not required to be transverse to $\rar{\cB}$, and 
the conditions in Def. \ref{dfn:bisubm2} do not imply transversality in general (see the examples in \S \ref{subsub:LieGrbisub} with $\cK$ there being the trivial groupoid).
\end{remark}

{The rest of this subsection is devoted to explanations about conditions ii) and iii).}

\begin{remark}\label{rem:condii}
The first part condition ii) in Def. \ref{dfn:bisubm2} is expressed in terms of the generating set $\{ \overset{\rightarrow}{\balpha} \mid \balpha \in \cB\}$ of the singular foliation $\rar{\cB}$. Using Lemma \ref{lem:liftequiv} it can be  rephrased saying that any element of $\rar{\cB}$ can be lifted to $U$. Notice that any lift will lie in $\ker d\bs_U$, since right-invariant vector fields on $G$ lie in $\ker d\bs$.
 An analogue statement holds for the second part of condition ii).
\end{remark}

We now phrase the first part of condition iii) in Definition \ref{dfn:bisubm2} more explicitly.
{
 We have
\begin{align}\label{eq:varrar}
\varphi^{-1}(\rar{\cB})&=Span_{C^{\infty}_{c}(U)}\{\text{$Z\in\vX(U): Z$ is $\varphi$-related to $\overset{\rightarrow}{\balpha}$ for some $\balpha \in \cB$}\}\\
&=Span_{C^{\infty}_{c}(U)}\Gamma(U,\ker d\bs_U)^{proj,{\cB}},\nonumber
\end{align}
where the first equation holds by  Lemma \ref{lem:fix} (which can be applied due to condition ii)), {and in the second equation we used that   $\rar{\cB}$ lies in the kernel of $d\bs$}.
Here, $$\Gamma(U,\ker d\bs_U)^{proj,{\cB}}:=\{Z\in \Gamma(U,\ker d\bs_U): \text{$Z$ is  $\varphi$-related to $\overset{\rightarrow}{\balpha}$ for some $\balpha \in \cB$}\}.$$
}

\begin{lemma}\label{lem:bisubm2}
Given any map $\varphi\colon U \to \cG$, the following statements are equivalent:
\begin{itemize}
\item[a)] $\varphi^{-1}(\rar{\cB})=  \Gamma_c(U,\ker d\bs_U)$
\item[b)] $\Gamma(U,\ker d\bs_U)^{proj,{\cB}}$
generates ${\Gamma_c(U,\ker d\bs_U)}$ as a ${C^{\infty}_c(U)}$-module
\item[c)] $\Gamma(U,\ker d\bs_U)^{proj,{\cB}}$ spans $\ker (d\bs_U)_u$ at every $u\in U$.
\end{itemize}
\end{lemma}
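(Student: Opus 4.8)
\textbf{Proof plan for Lemma \ref{lem:bisubm2}.} The plan is to prove the cycle of implications $a)\Rightarrow b)\Rightarrow c)\Rightarrow a)$, using the explicit description of $\varphi^{-1}(\rar{\cB})$ recorded in \eqref{eq:varrar} as the starting point. Note that one subtlety is that the displayed formula \eqref{eq:varrar} was derived under condition ii) of Definition \ref{dfn:bisubm2}, whereas the present lemma has no standing hypotheses beyond ``$\varphi$ is a map $U\to\cG$''; so the first thing I would do is observe that the inclusion $\varphi^{-1}(\rar{\cB})\supset Span_{C^\infty_c(U)}\Gamma(U,\ker d\bs_U)^{proj,\cB}$ always holds (this is the ``$\supset$'' direction of Lemma \ref{lem:fix}, which holds unconditionally), and that for the converse inclusion one should instead argue directly: if $X\in\varphi^{-1}(\rar{\cB})$ then $d\varphi(X)=\sum_i f_i(\rar{\balpha_i}\circ\varphi)$ for finitely many $f_i\in C^\infty_c(U)$ and $\balpha_i\in\cB$, and since each $\rar{\balpha_i}$ lies in $\ker d\bs$, the vector field $X$ automatically lies in $\ker d\bs_U$ — so in fact $\varphi^{-1}(\rar{\cB})\subset\Gamma_c(U,\ker d\bs_U)$ \emph{always}, and the content of a) is really just the reverse inclusion $\Gamma_c(U,\ker d\bs_U)\subset\varphi^{-1}(\rar{\cB})$. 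This reframing makes the equivalences essentially formal.

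For $a)\Rightarrow b)$: assuming $\varphi^{-1}(\rar{\cB})=\Gamma_c(U,\ker d\bs_U)$, I want to show $\Gamma(U,\ker d\bs_U)^{proj,\cB}$ generates $\Gamma_c(U,\ker d\bs_U)$ over $C^\infty_c(U)$. Here I would invoke Lemma \ref{lem:fix} in the form of the second equality to rewrite $\varphi^{-1}(\rar{\cB})$, but since I cannot assume condition ii) holds, I would instead argue that $a)$ forces condition ii): indeed given $\balpha\in\cB$, the pullback $\varphi^{-1}(\rar{\cB})=\Gamma_c(U,\ker d\bs_U)$ contains, for any bump function $f$, the vector field $f\cdot Z$ for $Z$ a local $\varphi$-lift... actually the cleaner route is: $a)$ says every compactly supported section of $\ker d\bs_U$ lies in the span $Span_{C^\infty_c(U)}\{Z: Z \text{ is }\varphi\text{-related to some }\rar{\balpha},\ \balpha\in\cB\}$ together with $\varphi$-relations to $0$ — but a vector field $\varphi$-related to $0$ lies in $\Gamma(U,\ker d\bs_U)^{proj,\cB}$ trivially (take $\balpha=0$). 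Combining with the unconditional $\supset$ of Lemma \ref{lem:fix}, $b)$ follows. For $b)\Rightarrow c)$: this is the easy direction — if a set of sections of a vector bundle generates the module of compactly supported sections over $C^\infty_c$, then evaluating at each point $u$ and using a bump function supported near $u$ equal to $1$ at $u$, the values span the fiber $\ker(d\bs_U)_u$. For $c)\Rightarrow a)$: this is the ``partition of unity glues pointwise spanning to module generation'' argument. Given $c)$, pick $u\in U$; choose finitely many $Z_1,\dots,Z_k\in\Gamma(U,\ker d\bs_U)^{proj,\cB}$ whose values span $\ker(d\bs_U)_u$; since $\ker d\bs_U$ is a subbundle and spanning is an open condition, they span on a neighbourhood $V_u$ of $u$; then any $X\in\Gamma_c(U,\ker d\bs_U)$ can be written locally as $\sum_j g_j^{(u)} Z_j$ with $g_j^{(u)}\in C^\infty(V_u)$, and a compactly-supported partition of unity subordinate to the cover $\{V_u\}$ patches these into a global expression $X=\sum_{\text{finite}} h_j \tilde Z_j$ with $h_j\in C^\infty_c(U)$ and $\tilde Z_j\in\Gamma(U,\ker d\bs_U)^{proj,\cB}$ (using that the latter set is closed under multiplication by $\varphi^*$ of compactly supported functions on $\cG$, as in the ``$\supset$'' argument of Lemma \ref{lem:fix}). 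Hence $X\in Span_{C^\infty_c(U)}\Gamma(U,\ker d\bs_U)^{proj,\cB}\subset\varphi^{-1}(\rar{\cB})$, which gives $a)$.

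The main obstacle I anticipate is purely bookkeeping rather than conceptual: the set $\Gamma(U,\ker d\bs_U)^{proj,\cB}$ is \emph{not} a $C^\infty_c(U)$-module (only its span is), and worse, a vector field $\varphi$-related to $\rar{\balpha}$ need not be compactly supported, while its values still span fibers — so I must be careful that in the $c)\Rightarrow a)$ patching step the partition-of-unity multipliers are compactly supported and that the product of a (possibly non-compactly-supported) $Z_j\in\Gamma(U,\ker d\bs_U)^{proj,\cB}$ with a compactly supported function again lies in that set, which requires reproducing the argument from the ``$\supset$'' part of Lemma \ref{lem:fix} (multiply by $\varphi^*\psi$ for a suitable $\psi\in C^\infty_c(\cG)$ equal to $1$ on $\varphi(\mathrm{Supp})$). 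Once these support subtleties are handled, every implication is a short partition-of-unity or bump-function argument, and no transversality or groupoid-specific input beyond $\rar{\cB}\subset\ker d\bs$ is needed.
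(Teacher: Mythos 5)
Your handling of b) $\Rightarrow$ c) and c) $\Rightarrow$ a) is correct and is essentially the paper's own argument: the paper proves c) $\Rightarrow$ b) by exactly the patching you describe (cover the compact support of $X$ by finitely many neighbourhoods on which a pointwise-spanning finite subset of $\Gamma(U,\ker d\bs_U)^{proj,\cB}$ is a local frame, then use a compactly supported partition of unity), and the passage from b) to a) via the unconditional inclusion ``$\supset$'' of Lemma \ref{lem:fix}, together with the observation that $\varphi^{-1}(\rar{\cB})\subset\Gamma_c(U,\ker d\bs_U)$ holds automatically because $\rar{\cB}$ lies in $\ker d\bs$, is exactly how the paper moves between a) and b) via eq. \eqref{eq:varrar}. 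Your support bookkeeping (replacing $fZ$ by $f\,\varphi^{*}(\psi)Z$ with $\psi$ equal to $1$ on $\varphi(\mathrm{Supp}(f))$) is the right fix and reproduces the ``$\supset$'' step of Lemma \ref{lem:fix}.

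The gap is in a) $\Rightarrow$ b). You assert that a) ``says'' every compactly supported section of $\ker d\bs_U$ lies in the $C^{\infty}_c(U)$-span of vector fields $\varphi$-related to some $\rar{\balpha}$ or to $0$. It does not: by Definition \ref{def:pullback}, membership in $\varphi^{-1}(\rar{\cB})$ is the condition $d\varphi(X)=\sum_i f_i(Y_i\circ\varphi)$, and converting this into a decomposition $X=\sum_i f_i Z_i+(\text{field $\varphi$-related to }0)$ requires lifts $Z_i$ of the $Y_i$ --- that is precisely the nontrivial inclusion ``$\subset$'' of Lemma \ref{lem:fix}, which needs condition ii) of Definition \ref{dfn:bisubm2}. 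Nor does a) force condition ii). In fact a) $\Rightarrow$ c) fails for a general map: take $M=\RR$, $\cG=M\times M$, $\cB=\vX_c(\RR)$, $U=\RR^2\ni(a,b)$ and $\varphi(a,b)=(ab,b)$. Then $\bs_U(a,b)=b$ and $\ker d\bs_U=\langle\partial_a\rangle$; every $g\,\partial_a\in\Gamma_c(U,\ker d\bs_U)$ satisfies $d\varphi(g\,\partial_a)=(gb)\,(\partial_x\circ\varphi)$, so a) holds, yet any $k(a,b)\partial_a$ that is $\varphi$-related to $\rar{h\partial_x}$ must satisfy $k(a,b)\,b=h(ab)$, forcing $h(0)=0$ and $k(0,0)=0$; hence $\Gamma(U,\ker d\bs_U)^{proj,\cB}$ does not span $\ker(d\bs_U)_{(0,0)}$ and c) fails. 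The paper sidesteps this by proving a) $\Leftrightarrow$ b) through eq. \eqref{eq:varrar}, whose first equality is established ``due to condition ii)'', and every application of the direction a) $\Rightarrow$ b) or a) $\Rightarrow$ c) in the paper is to a map already known to be a bisubmersion. So for that implication you must assume condition ii) (as the paper implicitly does); your b) $\Leftrightarrow$ c) $\Rightarrow$ a) arguments are the ones that genuinely hold for an arbitrary map.
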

\begin{proof}
Using eq. \eqref{eq:varrar} it is clear that a) and b) are equivalent.

Clearly b) implies c). For the converse, {we first make an observation}. 
By c), for any $u\in U$, we can take a basis of  $\ker (d\bs_U)_u$  and extend it to  $\balpha^1,\dots,\balpha^n\in \Gamma(U,\ker d\bs_U)^{proj,{\cB}}$. {These vector fields are  linearly independent  on a small open 
 neighborhood $U'$ of $u$.   
Therefore can write any section of $\ker d\bs_U$ with support in $U'$  as a 
$C_c^{\infty}(U')$-linear combination of the $\balpha^i$. 
 }
   
 Now fix $X\in {\Gamma_c(U,\ker d\bs_U)}$. The support of $X$, being compact, can be covered by finitely many open neighborhoods as above. Extend this {cover} to a open cover $\{U'_i\}$ of $U$, and choose a partition of unity $\{g_i\}\subset 
C_c^{\infty}(U)$ subordinate to it. Then $X=\sum g_i(X|_{U'_i})$ is a \emph{finite} sum. {The summands are
 sections of $\ker d\bs_U$ with support in $U'_i$. 
Applying the above observation to each summand, we see that $X$ is written as a finite $C_c^{\infty}(U)$-linear combination of elements of $\Gamma(U,\ker d\bs_U)^{proj,{\cB}}$.} \end{proof}

{We mention another characterization of Def. \ref{dfn:bisubm2}:}
\begin{remark}\label{donto}
{The first parts of conditions ii) and iii) in Definition \ref{dfn:bisubm2} are equivalent to the following: the map of $C^{\infty}(U)$-modules
 $$d\varphi \colon \Gamma_c(U;\ker d\bs_U) \to \varphi^*(\rar{\cB})$$
is well-defined and surjective. Here  $\varphi^*(\rar{\cB})$ is the $C^{\infty}(U)$-submodule of $\vX_c(U)$ generated by $f(\xi\circ\varphi)$ with $f \in C^{\infty}_c(U)$ and $\xi \in \rar{\cB}$.  }

Indeed, the first part of condition iii)   is equivalent  to $\varphi^{-1}(\rar{\cB}) \supset   {\Gamma_c(U,\ker d\bs_U)}$ (the other inclusion is obvious since $\rar{\cB}$ lies in the kernel of $d\bs$), and therefore is equivalent to the fact that  the 
above map is well-defined. The first part of condition ii), by Remark \ref{rem:condii},  says that this map is onto.
\end{remark}

  \subsection{Examples}\label{subsec:exbisub}
  
{We exhibit examples of bisubmersions for singular foliations and wide Lie subalgebroids (the two motivating examples displayed in \S \ref{subsec:motivex}), and generalizing the latter, for the examples treated in   \S \ref{subsec:morph}.}

\subsubsection{Bisubmersions of singular foliations}\label{section:usualbisub}

In \cite[definition 2.1]{AndrSk} a {\bf bisubmersion for a singular foliation} $(M,\cF)$ is defined as a triple $(U, \bt_U, \bs_U)$  consisting of a manifold $U$ with two submersions $\bt_U$ and $\bs_U$ to $M$, such that 
\begin{equation}\label{eq:bisubfol}
\bt_U^{-1}(\cF)=\Gamma_c(U,\ker d\bt_U) + \Gamma_c(U,\ker d\bs_U)=\bs_U^{-1}(\cF).
\end{equation}

On the other hand, singular foliations are special cases of  singular subalgebroids; namely, they are the singular subalgebroids of $TM$. We show that the two notions of bisubmersion for singular foliations {essentially agree, since there is a canonical bijective correspondence between them}. 

\begin{prop}\label{prop:equivbi}
Let $(M,\cF)$ be a  singular foliation, $U$ a manifold, and $\bt_U \colon U\to M$ and  $\bs_U \colon U\to M$ submersions. The following are equivalent:
\begin{itemize}
\item[1)] $(U, \bt_U, \bs_U)$ is a bisubmersion for the singular foliation $\cF$ (in the sense of \cite[definition 2.1]{AndrSk});
\item[2)] the map $(\bt_U, \bs_U)\colon U\to M\times M$ is a bisubmersion (in the sense of definition \ref{dfn:bisubm2}), {where $M\times M$ is endowed with the pair groupoid structure}.
\end{itemize}
\end{prop}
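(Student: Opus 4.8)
The plan is to unwind both definitions explicitly and show they make the same demands. Recall that for the pair groupoid $G = M\times M$ we have $\bt(x,y)=x$, $\bs(x,y)=y$, and a right-invariant vector field $\rar{\balpha}$ on $M\times M$ associated to a vector field $\balpha\in\vX(M)=\Gamma_c(TM)$ is the vector field $(\balpha,0)$ (tangent to the first factor), while $\lar{\balpha}$ is $(0,\balpha)$. Consequently $\ker d\bs = TM\times 0$ is spanned by the $\rar{\balpha}$'s, and $\rar{\cF}=Span_{C^\infty_c(M\times M)}\{(\balpha,0):\balpha\in\cF\}$; symmetrically for $\lar{\cF}$. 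Given submersions $\bt_U,\bs_U\colon U\to M$, set $\varphi:=(\bt_U,\bs_U)\colon U\to M\times M$; then $\bs\circ\varphi=\bs_U$ and $\bt\circ\varphi=\bt_U$, so condition i) of Definition \ref{dfn:bisubm2} is automatic and the diagram in Notation \ref{not:bisubm2} is exactly the span $M\xleftarrow{\bt_U}U\xrightarrow{\bs_U}M$.

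Next I would check condition ii) of Definition \ref{dfn:bisubm2} is automatic in this setting: since $\bt_U$ is a submersion, for every $\balpha\in\cF$ we can locally (hence, via a partition of unity as in Lemma \ref{lem:liftequiv}, globally) lift $\balpha$ through $\bt_U$ to a vector field on $U$, which is then $\varphi$-related to $(\balpha,0)=\rar{\balpha}$; similarly using $\bs_U$ for $\lar{\balpha}$. So the content of Definition \ref{dfn:bisubm2} for $\varphi$ reduces to condition iii), namely $\varphi^{-1}(\rar{\cF})=\Gamma_c(U,\ker d\bs_U)$ and $\varphi^{-1}(\lar{\cF})=\Gamma_c(U,\ker d\bt_U)$.

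The heart of the argument is then to identify $\varphi^{-1}(\rar{\cF})$ with $\bt_U^{-1}(\cF)$ as submodules of $\vX_c(U)$ (and symmetrically $\varphi^{-1}(\lar{\cF})$ with $\bs_U^{-1}(\cF)$). For this, I would unpack Definition \ref{def:pullback}(2): $X\in\varphi^{-1}(\rar{\cF})$ iff $d\varphi(X)=\sum_i f_i(Y_i\circ\varphi)$ with $Y_i\in\rar{\cF}$. Writing $d\varphi=(d\bt_U, d\bs_U)$ and decomposing $Y_i$ in the generating set of $\rar{\cF}$, one sees the $\ker d\bs$-component records exactly the condition $d\bt_U(X)=\sum_j g_j(\balpha_j\circ\bt_U)$ with $\balpha_j\in\cF$, i.e. $X\in\bt_U^{-1}(\cF)$, while the $\ker d\bt$-component of $d\varphi(X)$ is forced to be expressible through $\bs_U$-related-to-$\cF$ vector fields — but $\cF$ being involutive and locally finitely generated, combined with $\bt_U^{-1}(\cF)$ already being known (by \cite[Prop. 2.10]{AndrSk}, once $\bt_U$ is transverse to $\cF$, which it is since it is a submersion) to equal $\Gamma_c(U,\ker d\bt_U)+\Gamma_c(U,\ker d\bs_U)$, this is no extra constraint. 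I expect this bookkeeping — carefully matching the two components of $d\varphi$ and invoking Lemma \ref{lem:fix} to replace $\rar{\cF}$ by its generating set — to be the main technical obstacle; everything else is formal. Once $\varphi^{-1}(\rar{\cF})=\bt_U^{-1}(\cF)$ and $\varphi^{-1}(\lar{\cF})=\bs_U^{-1}(\cF)$ are established, the equivalence reads: condition \eqref{eq:bisubfol} says $\bt_U^{-1}(\cF)=\Gamma_c(U,\ker d\bt_U)+\Gamma_c(U,\ker d\bs_U)=\bs_U^{-1}(\cF)$, and I must see this is equivalent to the pair of equalities $\varphi^{-1}(\rar{\cF})=\Gamma_c(U,\ker d\bs_U)$, $\varphi^{-1}(\lar{\cF})=\Gamma_c(U,\ker d\bt_U)$. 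The inclusion $\Gamma_c(U,\ker d\bs_U)\subset\varphi^{-1}(\rar{\cF})$ is always true (it is the obvious inclusion noted after Definition \ref{dfn:bisubm2}); the reverse inclusion, given $\varphi^{-1}(\rar{\cF})=\bt_U^{-1}(\cF)$, says precisely $\bt_U^{-1}(\cF)\subset\Gamma_c(U,\ker d\bs_U)$, which combined with its symmetric counterpart and the always-true reverse inclusions is a clean reformulation of \eqref{eq:bisubfol}. I would close by remarking that the correspondence $(U,\bt_U,\bs_U)\leftrightarrow (U,(\bt_U,\bs_U),M\times M)$ is visibly bijective on the nose (a bisubmersion to $M\times M$ has its two components recording $\bt_U$ and $\bs_U$), which gives the claimed canonical bijection.
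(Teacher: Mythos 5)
There is a genuine gap, and it sits exactly where you predicted the ``main technical obstacle'' would be --- but the two simplifications you make before reaching it are both false, and they are what makes your reduction collapse. First, condition ii) of Definition \ref{dfn:bisubm2} is \emph{not} automatic for an arbitrary pair of submersions. By Lemma \ref{lem:easyinclusion}, a vector field $Z\in\vX(U)$ is $\varphi$-related to $\rar{\balpha}=(\balpha,0)$ if{f} it is $\bt_U$-related to $\balpha$ \emph{and} $\bs_U$-related to $0$, i.e.\ if{f} it is a $\bt_U$-lift of $\balpha$ lying in $\Gamma(U,\ker d\bs_U)$. A generic $\bt_U$-lift has no reason to kill $d\bs_U$; producing one that does is precisely where the hypothesis \eqref{eq:bisubfol} enters (via \cite[Prop.~2.10~b)]{AndrSk}, which is what the paper invokes at this point). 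Concretely, take $U=M$ and $\bt_U=\bs_U=\id_M$: both maps are submersions, but $\ker d\bs_U=0$, so no nonzero $\balpha$ admits a lift $\varphi$-related to $\rar{\balpha}$, and $(M,\id_M,\id_M)$ is not a bisubmersion for any nonzero $\cF$.

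Second, the identification $\varphi^{-1}(\rar{\cF})=\bt_U^{-1}(\cF)$ is wrong. Every element of $\rar{\cF}$ has vanishing second component, so the defining equation $d\varphi(X)=\sum_i f_i(Y_i\circ\varphi)$ forces $d\bs_U(X)=0$; in fact $\varphi^{-1}(\rar{\cF})=\bt_U^{-1}(\cF)\cap\Gamma_c(U,\ker d\bs_U)$. Consequently condition iii) only encodes the inclusions $\Gamma_c(U,\ker d\bs_U)\subset\bt_U^{-1}(\cF)$ and $\Gamma_c(U,\ker d\bt_U)\subset\bs_U^{-1}(\cF)$, i.e.\ only the inclusion ``$\supset$'' of \eqref{eq:bisubfol}; the inclusion ``$\subset$'' (decomposing an arbitrary $Z\in\bt_U^{-1}(\cF)$ as $(Z-W)+W$ with $W\in\Gamma(\ker d\bs_U)$ a lift of the kind above) genuinely requires condition ii). The example $U=M$, $\bt_U=\bs_U=\id_M$ with $\cF\neq 0$ satisfies conditions i) and iii) but violates \eqref{eq:bisubfol}, so your claimed equivalence between condition iii) alone and \eqref{eq:bisubfol} cannot hold. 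The repair is essentially the paper's proof: treat ii) and iii) together, using \cite[Prop.~2.10~b)]{AndrSk} to produce the lifts in $\Gamma(\ker d\bs_U)$ for $1)\Rightarrow 2)$, and using those same lifts to perform the splitting $Z=(Z-W)+W$ in $2)\Rightarrow 1)$. Your setup of the pair groupoid, the verification of condition i), and the final remark on bijectivity of the correspondence are all fine.
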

We give the following lemma without proof.
 \begin{lemma}\label{lem:easyinclusion}
Let $\bt_U \colon U\to M$ and  $\bs_U \colon U\to M$ {be smooth maps}.
If $Z\in \vX(U)$ and $X,Y\in \vX(M)$, then $Z$ is $(\bt_U, \bs_U)$-related to  $\rar{X}+\lar{Y}$ if{f} it is $\bt_U$-related to  $X$ and $\bs_U$-related to  $Y$.
{Here $\rar{X}+\lar{Y}$ denotes the vector field $(X,Y)$ on $M\times M$.}
\end{lemma}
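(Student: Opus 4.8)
The plan is to reduce the statement to an explicit description of the right- and left-invariant vector fields on the pair groupoid $M\times M$, after which the equivalence falls out from the product structure of its tangent spaces. Throughout I write $\varphi:=(\bt_U,\bs_U)\colon U\to M\times M$ and recall the conventions for the pair groupoid: $\bt(a,b)=a$, $\bs(a,b)=b$, and the Lie algebroid $TM$ is identified with $\ker(d\bs)|_M$ via the first factor. Since $\bt\circ\varphi=\bt_U$ and $\bs\circ\varphi=\bs_U$, the differential splits as $d\varphi_u=\big((d\bt_U)_u,(d\bs_U)_u\big)$ under $T_{\varphi(u)}(M\times M)=T_{\bt_U(u)}M\oplus T_{\bs_U(u)}M$.

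First I would establish the two pointwise formulas
\[
\rar{X}_{(a,b)}=(X_a,0),\qquad \lar{Y}_{(a,b)}=(0,Y_b)\ \in\ T_aM\oplus T_bM.
\]
For $\rar{X}$ this follows from $\rar{X}_g=(R_g)_*X_{\bt(g)}$ together with the fact that right translation $R_{(a,b)}\colon(c,a)\mapsto(c,b)$ is the identity on the first factor; the analogous description of left translation $L_{(a,b)}\colon(b,c)\mapsto(a,c)$, which is the identity on the second factor, gives the formula for $\lar{Y}$. In particular $\rar{X}$ is $\bt$-related to $X$ and $\bs$-related to $0$, while $\lar{Y}$ is $\bs$-related to $Y$ and $\bt$-related to $0$. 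This is the only place where the groupoid structure enters, and it is the main (if routine) computational content of the lemma.

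It then remains to compare, for $Z\in\vX(U)$ and $u\in U$, the two elements of $T_{\bt_U(u)}M\oplus T_{\bs_U(u)}M$
\[
d\varphi(Z_u)=\big((d\bt_U)(Z_u),\,(d\bs_U)(Z_u)\big),\qquad
(\rar{X}+\lar{Y})_{\varphi(u)}=\big(X_{\bt_U(u)},\,Y_{\bs_U(u)}\big).
\]
Reading off the two factors, $Z$ is $\varphi$-related to $\rar{X}+\lar{Y}$, i.e. these agree for every $u$, exactly when $(d\bt_U)(Z_u)=X_{\bt_U(u)}$ and $(d\bs_U)(Z_u)=Y_{\bs_U(u)}$ hold for all $u$, that is, exactly when $Z$ is $\bt_U$-related to $X$ and $\bs_U$-related to $Y$.

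The one point requiring care is the sign convention in the definition of $\lar{Y}$: identifying $\ker(d\bs)|_M$ with $\ker(d\bt)|_M$ via $\pm$ the differential of the inversion $i(a,b)=(b,a)$ produces $\lar{Y}_{(a,b)}=(0,\pm Y_b)$, and only the choice giving $+Y_b$ makes $\lar{Y}$ genuinely $\bs$-related to $Y$ (rather than to $-Y$), as the statement requires. I would therefore make explicit at the outset that $\lar{Y}$ is normalized so as to be $\bs$-related to $Y$, which is the convention consistent with the rest of the paper; with this fixed, the proof above goes through verbatim.
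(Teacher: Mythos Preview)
The paper states this lemma without proof, so there is no argument to compare against. Your proof is correct and is precisely the routine componentwise verification the authors are tacitly leaving to the reader: once $\rar{X}_{(a,b)}=(X_a,0)$ and $\lar{Y}_{(a,b)}=(0,Y_b)$ are in hand, the equivalence is immediate from $d\varphi=(d\bt_U,d\bs_U)$, and your closing remark about the sign normalization of $\lar{Y}$ is an appropriate caveat.
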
 

\begin{proof}[Proof of proposition \ref{prop:equivbi}]
$1)\Rightarrow 2)$: Property i) in definition \ref{dfn:bisubm2} is obviously satisfied. For property ii)  we argue as follows. Every $X\in \cF$ can be $\bt_U$-lifted to $Z\in \Gamma(\ker d\bs_U)$, by the proof of \cite[Prop. 2.10 b)]{AndrSk}. Such a $Z$ is $(\bt_U, \bs_U)$-related to  $\rar{X}$ by Lemma \ref{lem:easyinclusion}.
Further, $X$ can be $\bs_U$-lifted to $W\in \Gamma(\ker d\bt_U)$, by the same argument in the proof of \cite[Prop. 2.10 b)]{AndrSk}(interchanging the roles of source and target), and such a $W$  is $(\bt_U, \bs_U)$-related to  $\lar{X}$.

We show {property iii)}, that is, $(\bt_U, \bs_U)^{-1}(\rar{\cF})=  \Gamma_c(U,\ker d\bs_U)$. 
We do so using  Lemma \ref{lem:bisubm2} b).
Let $Z\in \Gamma_c(U,\ker d\bs_U)$. By the {first equality in eq.} \eqref{eq:bisubfol},  we have $Z=\sum g_iZ_i $ where $g_i\in C_c^{\infty}(U)$ and $Z_i \in \vX(U)$ is $\bt_U$-projectable to an element of $\cF$.  
We can write each $Z_i$ as the sum of a vector field in $\Gamma(\ker d\bt_U)$ and one in $\Gamma(\ker d\bs_U)$ -- which we denote by the respective subscrips --.
{To see this, we have to apply with some care the first equality in eq. \eqref{eq:bisubfol}: choose a partition of unity $\{\psi_a\}$  with compact support on $U$. Then $\psi_aZ_i$ equals an element of $\Gamma_c(\ker d\bt_U)$ plus an element of $\Gamma_c(\ker d\bs_U)$, and we may assume\footnote{By multiplying them with a suitable function with value $1$ on $Supp(\psi_a)$.}
 that their support is contained in a small enough neighborhood of $Supp(\psi_a)$.
Hence $Z_i=\sum_a \psi_aZ_i$ equals a locally finite  sum of elements of $\Gamma_c(\ker d\bt_U)$ plus a locally finite  sum of elements of $\Gamma_c(\ker d\bt_U)$.}
Altogether we obtain
$$Z=  Z'+\sum g_i(Z_i)_{\ker d\bs_U},\;\; \text{  where }Z':=\sum g_i(Z_i)_{\ker d\bt_U}.$$
Notice that $Z'$ lies in $\Gamma_c(U,\ker d\bs_U)$ (being the difference of two vector fields with this property), 
hence it is $(\bt_U, \bs_U)$-related to the zero vector field.
Similarly, each $(Z_i)_{ker d\bs_U}$ is $\bt_U$-projectable to an element of $\cF$ (being the difference $Z_i-(Z_i)_{ker d\bt_U}$ of two vector fields with this property), hence {by Lemma \ref{lem:easyinclusion}} it is $(\bt_U, \bs_U)$-related to an element of {$\{\overset{\rightarrow}{X} : X \in \cF\}$}.
 In conclusion,
$Z$ is a $C_c^{\infty}(U)$-linear combination of vector fields which are $(\bt_U, \bs_U)$-related to  elements of {$\{\overset{\rightarrow}{X} : X \in \cF\}$},
proving that the condition in Lemma \ref{lem:bisubm2} b) holds.  {Lemma \ref{lem:bisubm2} b)}. {The second equality in item iii) of  definition \ref{dfn:bisubm2} follows in an analogous way.}

$2)\Rightarrow 1)$:  $\bt_U$ and $\bs_U$ are submersions by item i) of  definition \ref{dfn:bisubm2}. Hence we just need to show eq. \eqref{eq:bisubfol}.
By Lemma \ref{lem:easyinclusion} and item iii) of  definition \ref{dfn:bisubm2} we have
\begin{align*}
\bt_U^{-1}(\cF)\cap \bs_U^{-1}(\cF)\supset (\bt_U, \bs_U)^{-1}(\rar{\cF})+(\bt_U, \bs_U)^{-1}(\lar{\cF})=\Gamma_c(U,\ker d\bs_U) + \Gamma_c(U,\ker d\bt_U).
\end{align*}
We show $\bt_U^{-1}(\cF)\ \subset \Gamma_c(U,\ker d\bs_U) + \Gamma_c(U,\ker d\bt_U)$ (the argument for $\bs_U^{-1}(\cF)$ is analogous). Let $Z\in \vX(U)$ be $\bt_U$-related  to some $X\in \cF$. By condition ii), there is a vector field $W$ on $U$ which is $(\bt_U,\bs_U)$-related to $\rar{X}$, \ie $W$ lies in $\Gamma(\ker d\bs_U)$ and is 
$\bt_U$-related to ${X}$.  We conclude by writing
$Z=(Z-W)+W$, with $Z-W\in \Gamma(\ker d\bt_U)$.
\end{proof}
 
Further, a bisubmersion for any  singular subalgebroid $\cB$ gives rise to a bisubmersion for $\cF_{\cB}$, the induced singular foliation defined in eq. \eqref{eq:fb}. We have:

\begin{lemma}\label{lem:usualbi}
Let $(U,\varphi,\cG)$ a bisubmersion of $\cB$. Then $(U,\bt_U,\bs_U)$ is a bisubmersion of the singular foliation $\cF_{\cB}$ (in the sense of \cite{AndrSk}).
\end{lemma}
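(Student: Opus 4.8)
The plan is to unwind the definitions and reduce the claim for $\cF_{\cB}$ to the already-established facts about the bisubmersion $(U,\varphi,\cG)$ for $\cB$. Recall that $\cF_{\cB}=\{\rho(\balpha):\balpha\in\cB\}=d\bt|_M(\cB)$ under the identification of $A$ with $\ker(d\bs)|_M$, and that a bisubmersion for $\cF_{\cB}$ in the sense of \cite{AndrSk} means a pair of submersions $(\bt_U,\bs_U)$ satisfying $\bt_U^{-1}(\cF_{\cB})=\Gamma_c(U,\ker d\bt_U)+\Gamma_c(U,\ker d\bs_U)=\bs_U^{-1}(\cF_{\cB})$. So the work is to verify this double equality, knowing that $\bt_U=\bt\circ\varphi$ and $\bs_U=\bs\circ\varphi$ are submersions by item i) of Def.~\ref{dfn:bisubm2}, and that items ii) and iii) hold for $\varphi$.

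First I would establish the key relatedness observation: if $Z\in\vX(U)$ is $\varphi$-related to $\rar{\balpha}$ for some $\balpha\in\cB$, then since $\rar{\balpha}$ is $\bt$-related to $\rho(\balpha)$ (as in the proof of Lemma~\ref{lem:complete}) and lies in $\ker d\bs$, it follows that $Z$ is $\bt_U$-related to $\rho(\balpha)\in\cF_{\cB}$ and $Z\in\Gamma(U,\ker d\bs_U)$. Symmetrically, a $\varphi$-lift $W$ of $\lar{\balpha}$ is $\bs_U$-related to $\rho(\balpha)$ and lies in $\ker d\bt_U$. Combining this with the characterization in Remark~\ref{donto} (or directly with Lemma~\ref{lem:bisubm2}): condition iii) for $\varphi$ gives $\varphi^{-1}(\rar{\cB})=\Gamma_c(U,\ker d\bs_U)$, and via eq.~\eqref{eq:varrar} every element of $\Gamma_c(U,\ker d\bs_U)$ is a $C^\infty_c(U)$-linear combination of vector fields $\varphi$-related to some $\rar{\balpha}$, hence $\bt_U$-related to an element of $\cF_{\cB}$; therefore $\Gamma_c(U,\ker d\bs_U)\subset\bt_U^{-1}(\cF_{\cB})$, and likewise $\Gamma_c(U,\ker d\bt_U)\subset\bs_U^{-1}(\cF_{\cB})$. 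Since $\bt_U^{-1}(\cF_{\cB})$ is a module stable under the relevant operations, one gets the inclusion $\Gamma_c(U,\ker d\bt_U)+\Gamma_c(U,\ker d\bs_U)\subset\bt_U^{-1}(\cF_{\cB})\cap\bs_U^{-1}(\cF_{\cB})$, using also that $\ker d\bt_U\subset\bt_U^{-1}(\cF_{\cB})$ trivially (it projects to $0$).

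For the reverse inclusion $\bt_U^{-1}(\cF_{\cB})\subset\Gamma_c(U,\ker d\bt_U)+\Gamma_c(U,\ker d\bs_U)$, I would mimic the $2)\Rightarrow 1)$ argument in the proof of Prop.~\ref{prop:equivbi}: take $Z\in\vX_c(U)$ with $d\bt_U(Z)=\sum_i f_i(X_i\circ\bt_U)$, $f_i\in C^\infty_c(U)$, $X_i\in\cF_{\cB}$; write each $X_i=\rho(\balpha_i)$ with $\balpha_i\in\cB$ (after a partition-of-unity argument on $M$ to globalize, as in Lemma~\ref{lem:liftequiv}), use condition ii) to get $W_i\in\vX(U)$ which is $\varphi$-related to $\rar{\balpha_i}$, hence $\bt_U$-related to $X_i$ and lying in $\ker d\bs_U$; then $Z-\sum_i f_i' W_i'$ (with suitable cutoffs to handle compact supports, exactly as in the cited proof) lies in $\ker d\bt_U$, giving the decomposition. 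The same argument with left-invariant fields handles $\bs_U^{-1}(\cF_{\cB})$. Assembling both directions yields eq.~\eqref{eq:bisubfol} for $\cF_{\cB}$, which is exactly the statement.

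I expect the main obstacle to be bookkeeping with compact supports and partitions of unity: $\rar{\balpha}$ and the lifts $W_i$ need not be compactly supported, so one must insert cutoff functions carefully (pulled back from $M$ or chosen on $U$) to stay within $\Gamma_c$, precisely the kind of delicate argument carried out in the proof of Prop.~\ref{prop:equivbi} and in Lemma~\ref{lem:fix}. No genuinely new idea is needed beyond what is already in those proofs; the content is that conditions ii) and iii) for a bisubmersion of $\cB$, once pushed forward along $\bt$ and $\bs$, degenerate into precisely the Androulidakis--Skandalis bisubmersion condition for the induced singular foliation $\cF_{\cB}$.
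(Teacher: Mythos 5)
Your proposal is correct and follows essentially the same route as the paper's proof: both directions reduce eq.~\eqref{eq:bisubfol} for $\cF_{\cB}$ to conditions ii) and iii) of Def.~\ref{dfn:bisubm2} via the observation that a $\varphi$-lift of $\rar{\balpha}$ (resp. $\lar{\balpha}$) is $\bt_U$-related (resp. $\bs_U$-related) to $\rho(\balpha)$ and lies in $\ker d\bs_U$ (resp. $\ker d\bt_U$), with the reverse inclusion obtained by the same $Z=W+(Z-W)$ decomposition. The only cosmetic difference is that the paper handles $\bs_U^{-1}(\cF_{\cB})$ by passing to the inverse bisubmersion rather than rerunning the argument with left-invariant fields.
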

 
 \begin{proof} Since $\bt_U$ and $\bs_U$ are submersions,  we just have to show that 
$$\bt_U^{-1}(\cF_{\cB})=\Gamma_c(U,\ker d\bt_U) + \Gamma_c(U,\ker d\bs_U)$$
and similarly for $\bs_U^{-1}(\cF_{\cB})$. We start proving the above equality.

``$\supset$'' By item iii) of  definition \ref{dfn:bisubm2} and {by Lemma \ref{lem:bisubm2}
 b)}, $\Gamma_c(U,\ker d\bt_U) + \Gamma_c(U,\ker d\bs_U)$ is generated by elements   which are $\varphi$-related to  elements of {$\{ \overset{\rightarrow}{\balpha} : \balpha \in \cB\}+
\{ \overset{\leftarrow}{\balpha} : \balpha \in \cB\}$}.
The latter are $\bt$-related to elements of $\cF_{\cB}$. As $\bt\circ \varphi=\bt_U$,  the above generators are $\bt_U$-related to  elements of $\cF_{\cB}$.

``$\subset$''  Let $Z\in \vX(U)$ be $\bt_U$-related to some $X\in \cF_{\cB}$. There exists $\balpha\in \cB$ with $\rho(\balpha)=X$. Since under the identification $A\cong \ker(d\bs)|_M$ the anchor $\rho$ is identified with $d\bt|_M$, we see that $\rar{\balpha}$ is $\bt$-related to $X$. By item ii) of  definition \ref{dfn:bisubm2}, there is $W\in \vX(U)$ which is  $\varphi$-related to  $\rar{\balpha}$. Hence $W$ is also related to $X$ under the map  $\bt_{U}=\bt\circ \varphi$.  Notice that  
$W\in 
\Gamma(U,\ker d\bs_U)$, and further $Z-W\in \Gamma(U,\ker d\bt_U)$. Writing $Z=W+(Z-W)$ we conclude the proof of the inclusion.

To show the above equality for $\bs_U^{-1}$ in place of 
$\bt_U^{-1}$
we proceed as follows: consider the above equality for the inverse bisubmersion $({U},\bar{\varphi}, \cG)$ (see definition \ref{def:inv}), and use $\bar{\bt}_{U}=\bs_U$, $\bs_{{U}}=\bar{\bt}_U$.
\end{proof}

 \subsubsection{Lie groupoid morphisms as bisubmersions}\label{subsub:LieGrbisub}

{If a singular subalgebroid arises from a Lie groupoid morphism (see Def. \ref{def:arises}, this includes wide Lie subalgebroids), then that morphism is automatically a bisubmersion:}

\begin{prop}\label{prop:imagerelbi}  
Let $\varphi\colon \cK \to \cG$ be
a morphism of   Lie groupoids covering the identity on $M$. Denote by $\cB:=\varphi_*(\Gamma_c(Lie(\cK)))$ {the  singular subalgebroid  of $Lie(\cG)$ it gives rise to}. Then $\varphi\colon \cK \to \cG$  is a bisubmersion for   $\cB$.
\end{prop}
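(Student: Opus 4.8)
The plan is to verify the three conditions of Definition \ref{dfn:bisubm2} for the map $\varphi\colon\cK\to\cG$. First I would dispose of condition i): a Lie groupoid morphism over $Id_M$ intertwines the source and target maps, so $\bs_{\cK}=\bs_\cG\circ\varphi$ and $\bt_{\cK}=\bt_\cG\circ\varphi$, and since $\cK$ is a Lie groupoid its source and target are submersions; hence $\bs_U:=\bs\circ\varphi$ and $\bt_U:=\bt\circ\varphi$ are submersions. (Here $U=\cK$.)

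For condition ii), fix $\balpha\in\cB=\varphi_*(\Gamma_c(Lie(\cK)))$. The key point is that $\varphi$, being a groupoid morphism, intertwines right translations: $\varphi\circ R_k=R_{\varphi(k)}\circ\varphi$ for composable arguments. Consequently, for any $\beta\in\Gamma_c(Lie(\cK))$ the right-invariant vector field $\rar{\beta}$ on $\cK$ is $\varphi$-related to $\rar{\varphi_*(\beta)}$ on $\cG$: differentiating $\varphi(k\cdot h)$ in $h$ at the identity gives $d\varphi(\rar{\beta}_k)=(R_{\varphi(k)})_*\, d\varphi(\beta_{\bt(k)}) = \rar{(\varphi_*\beta)}_{\varphi(k)}$. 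Now any $\balpha\in\cB$ can be written (locally, or after a partition-of-unity argument as in Lemma \ref{lem:liftequiv}) as a combination $\sum_j f_j\,\varphi_*(\beta_j)$, but more directly: by definition $\balpha = \varphi_*(\beta)$ for some $\beta\in\Gamma_c(Lie(\cK))$ up to $C_c^\infty(M)$-combinations, so $Z:=\rar{\beta}$ (or the corresponding combination $\sum_j (\bt_\cK^*f_j)\rar{\beta_j}$) is $\varphi$-related to $\rar{\balpha}$. The left-invariant case is symmetric, using that $\varphi$ intertwines left translations.

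For condition iii) I would prove $\varphi^{-1}(\rar{\cB})=\Gamma_c(\cK,\ker d\bs_\cK)$ using Lemma \ref{lem:bisubm2}, for which it suffices to check condition c): that $\Gamma(\cK,\ker d\bs_\cK)^{proj,\cB}$ spans $\ker(d\bs_\cK)_k$ at every $k\in\cK$. Given $k$, the fiber $\ker(d\bs_\cK)_k=(R_k)_*\,\ker(d\bs_\cK)|_{\bt(k)}=(R_k)_*\,Lie(\cK)_{\bt(k)}$, which is spanned by values $\rar{\beta}_k$ with $\beta\in\Gamma_c(Lie(\cK))$. Each such $\rar{\beta}$ lies in $\Gamma(\cK,\ker d\bs_\cK)$ and, by the relatedness established in step ii), is $\varphi$-related to $\rar{\varphi_*(\beta)}=\rar{\balpha}$ with $\balpha:=\varphi_*(\beta)\in\cB$; hence $\rar{\beta}\in\Gamma(\cK,\ker d\bs_\cK)^{proj,\cB}$. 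Thus these vector fields span each $\ker(d\bs_\cK)_k$, giving c), and Lemma \ref{lem:bisubm2} yields the first equality in iii). The second equality, $\varphi^{-1}(\lar{\cB})=\Gamma_c(\cK,\ker d\bt_\cK)$, follows by the same argument with left-invariant vector fields, or by passing to the inverse groupoid.

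The main obstacle I anticipate is bookkeeping around the distinction between $\balpha=\varphi_*(\beta)$ for a single $\beta$ versus $\balpha$ being a $C_c^\infty(M)$-linear combination of such pushforwards, and ensuring the lifts $Z,W$ are genuinely smooth vector fields on all of $\cK$ (not merely compactly supported ones) — here the completeness/relatedness machinery of \S\ref{sec:rivf} and Lemma \ref{lem:liftequiv} handles the partition-of-unity patching. Everything else is a direct unwinding of the definition of a groupoid morphism covering the identity.
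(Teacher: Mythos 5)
Your proposal is correct and follows essentially the same route as the paper: condition i) from $\varphi$ intertwining sources and targets, condition ii) from the computation that $\varphi$ being a groupoid morphism makes $\rar{\beta}$ $\varphi$-related to $\rar{\varphi_*\beta}$, and condition iii) via Lemma \ref{lem:bisubm2} c) by observing that these right-invariant lifts span $\ker d\bs_{\cK}$ pointwise. The only cosmetic difference is that the paper deduces the left-invariant case from the right-invariant one via the inversion map rather than by a symmetric argument, and your worry about $C_c^\infty(M)$-combinations is moot since $\varphi_*$ is $C^\infty(M)$-linear over $Id_M$, so every element of $\cB$ is a single pushforward.
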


\begin{proof}
We check the properties of definition \ref{dfn:bisubm2}:
\begin{enumerate}[i)]
\item is satisfied since $\cK$ is a Lie groupoid over $M$.
\item is an immediate consequence of the Claim below, {since any element of $\cB$ is of the form $\varphi_*X$ for some $X\in \cB$.}

\item   {for all $X \in \Gamma_c(Lie(\cK))$, the claim below implies that $\rar{X}$  
lies in $\Gamma(K,\ker d\bs_K)^{proj,{\cB}}$. Such $\rar{X}$ span $\ker d\bs_K$ at every point of $K$, so we can conclude using Lemma \ref{lem:bisubm2} c).
The same argument applies to  $\lar{X}$.}
\end{enumerate}

\underline{Claim:} \emph{Denote $E:=Lie(\cK)$ and $A:=Lie(\cG)$.
 Let $X \in \Gamma_c(E)$, and denote $\balpha_X := \varphi_*X\in \Gamma(A)$. Then $\rar{X}$ is 
$\varphi$-related  to   $\rar{\balpha_X}$. Similarly,  $\lar{X}$ is 
$\varphi$-related  to   $\lar{\balpha_X}$. }

Consider $\rar{X}$, the right-invariant vector field on $\cK$ which restricts to $X$ along $M$. Let $k\in \cK$. We have
$$\varphi_*(\rar{X}_k )= 
\varphi_*
 (R^{\cK}_{k})_*\left( X_{\bt(k)}\right)
  = (R^{\cG}_{\varphi(k)})_*\varphi_*\left( X_{\bt(k)}\right)
  =  (R^{\cG}_{\varphi(k)})_* \left((\balpha_X)_{\bt(k)}\right)
  = \rar{(\balpha_X)}_{\varphi(k)},$$
showing that  $\rar{X}$ is 
$\varphi$-related  to   $\rar{\balpha_X}$.
  Here we denote $R^{\cG}$ the right-translation by in $\cG$, and likewise $R^{\cK}$ the right-translation in $\cK$. In the second equality we used that $\varphi$ is a groupoid morphism.
  
Now consider $\lar{X}$. We have $\lar{X}=-i^{\cK}_*\rar{X}$, where $i^{\cK}$ is the inversion map of the Lie groupoid $\cK$. Hence
$$\varphi_*(\rar{X})=-\varphi_*i^{\cK}_*(\rar{X})=
-i^{\cG}_*\varphi_*(\rar{X})=-i^{\cG}_* (\rar{\alpha_X})=
\lar{\alpha_X},$$
where in the second equality we used that $\varphi$ is a groupoid morphism, and in the third that  $\rar{X}$ is 
$\varphi$-related  to   $\rar{\balpha_X}$.
\end{proof}

We spell out Prop. \ref{prop:imagerelbi} in the case of a wide Lie subalgebroid:  
\begin{cor}\label{prop:Lierelbi} Let $A$ be an integrable Lie algebroid and $B$ a wide Lie subalgebroid of $A$. Let $\varphi\colon \cK \to \cG$ be
a morphism\footnote{The map $\varphi$ is always a (not necessarily injective) immersion, and covers the identity on $M$.} of   Lie groupoids which integrates the inclusion $\iota \colon B\hookrightarrow A$. 

Then $\varphi\colon \cK \to \cG$  is a bisubmersion for the 
 singular subalgebroid $\cB = \Gamma_c(B)$.
\end{cor}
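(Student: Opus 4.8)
The plan is to simply read off \textbf{Corollary \ref{prop:Lierelbi}} as a special case of \textbf{Proposition \ref{prop:imagerelbi}}, once we observe that the hypotheses match. First I would recall that, since $A$ is integrable and $B\hookrightarrow A$ is a wide Lie subalgebroid, $B$ is itself an integrable Lie algebroid (this is standard: a wide Lie subalgebroid of an integrable Lie algebroid is integrable, as also used in the Introduction when discussing Moerdijk–Mr\v{c}un). Hence there exists a Lie groupoid $\cK$ over $M$ integrating $B$, and the inclusion $\iota\colon B\hookrightarrow A$ integrates to a Lie groupoid morphism $\varphi\colon \cK\to\cG$ covering $\id_M$ (here $\cG$ is our fixed integration of $A$). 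Any such integrating morphism is, as noted in the footnote, a not-necessarily-injective immersion covering the identity.

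Next I would identify the singular subalgebroid attached to $\varphi$ by the recipe of \S\ref{subsec:morph}. By construction $\varphi_*\colon Lie(\cK)\to Lie(\cG)$ is exactly $\iota\colon B\hookrightarrow A$, so
\[
\varphi_*(\Gamma_c(Lie(\cK)))=\iota(\Gamma_c(B))=\Gamma_c(B)=\cB.
\]
Thus the singular subalgebroid $\cB=\Gamma_c(B)$ in the statement coincides with the one $\varphi_*(\Gamma_c(Lie(\cK)))$ produced by $\varphi$ in the sense of \textbf{Proposition \ref{prop:imagerelbi}}.

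Finally I would invoke \textbf{Proposition \ref{prop:imagerelbi}} verbatim: since $\varphi\colon\cK\to\cG$ is a morphism of Lie groupoids covering $\id_M$ and $\cB=\varphi_*(\Gamma_c(Lie(\cK)))$, that proposition gives precisely that $\varphi\colon\cK\to\cG$ is a bisubmersion for $\cB=\Gamma_c(B)$, which is the assertion of the corollary. There is essentially no obstacle here; the only point requiring a word is the integrability of the wide Lie subalgebroid $B$ (so that a $\cK$ and an integrating morphism $\varphi$ exist at all) and the trivial check that $\varphi_*$ on sections is the inclusion $\Gamma_c(B)\hookrightarrow\Gamma_c(A)$. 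Everything substantive has already been done in the proof of \textbf{Proposition \ref{prop:imagerelbi}}, in particular the Claim about right- and left-invariant vector fields being $\varphi$-related, together with \textbf{Lemma \ref{lem:bisubm2}} c).
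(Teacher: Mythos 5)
Your proposal is correct and matches the paper's intent exactly: the corollary is stated as a direct specialization of Proposition \ref{prop:imagerelbi}, and the only content is the observation that $\varphi_*=\iota$ at the Lie algebroid level, so $\varphi_*(\Gamma_c(Lie(\cK)))=\Gamma_c(B)=\cB$. The remark on integrability of $B$ is harmless but not needed, since the corollary already assumes the integrating morphism $\varphi$ is given.
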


\begin{ex}\label{ep:groid}
For $\cB = \Gamma_c(A)$ we obtain that $\id\colon \cG \to \cG$ is a bisubmersion.
\end{ex}

\subsection{Path-holonomy bisubmersions}\label{subsubsec:phrel}

We give an explicit construction of bisubmersions starting from local generators of $\cB$. Recall from Lemma \ref{lem:complete} that if $\balpha \in \cB$, the vector field $\rar{\balpha}$ is complete. We denote by $\exp_y \rar{\balpha}$ its time-1 flow applied to the point $y\in M$.

\begin{definition}
\label{dfn:pathhol}  
\begin{enumerate}
\item Let $x \in M$ and $\balpha_1,\dots,\balpha_n \in \cB$ such that $[\balpha_1],\dots,[\balpha_n]$ span   $\cB/I_x\cB$. The associated {\bf {path holonomy} bisubmersion} is the map  $$\varphi\colon U  \to \cG, \;\;(\lambda,y)\mapsto \exp_y \sum \lambda_i \overset{\rightarrow}{\balpha_i},$$ {where $U$ is a neighborhood of $(0,x)$ in $\RR^n \times M$  such that $\bt\circ \varphi$ is\footnote{Such a neighborhood exists since
$\varphi$ is a submersion at $(0,x)$.}
 a submersion}, and we write $\lambda=(\lambda_1,\dots,\lambda_n)\in \RR^n$.

\item We say that $(U,\varphi, \cG)$ is {\bf minimal} (at $x$) if $[\balpha_1],\dots,[\balpha_n]$ is a basis of {the vector space}  $\cB/I_x\cB$.
\end{enumerate}
\end{definition}

\begin{ex}\label{ex:sfolpair}
Consider the case when $A=TM$, so that $\cF:=\cB$ is a singular foliation on $M$. 
Fix $x\in M$ and let $X_1,\dots,X_n\in \cF$ so that they induce a basis of $\cF/I_x\cF$.
We make two choices of Lie groupoid $\cG$ integrating the Lie algebroid $TM$.
\begin{itemize}
\item[a)]  As $\cG$ let us choose the pair groupoid $M\times M$. Then  the {path holonomy}  bisubmersion we obtain is
$$\varphi\colon U \to M\times M,\;\; (y,\lambda)\mapsto (\exp_y (\sum \lambda_i  {X_i}), y).$$ 
{Notice that under the bijection given by Prop. \ref{prop:equivbi}, it corresponds to the path holonomy bisubmersion $(U, \bt, \bs)$ for the singular foliation $\cF$ (induced by   $X_1,\dots,X_n$) as in \cite{AndrSk}.}

\item[b)] Now as $\cG$ we take   the fundamental groupoid $\Pi(M)$. {Then  the {path holonomy}  bisubmersion we obtain is
}
$$\widetilde{\varphi}\colon U \to \Pi(M),\;\; (y,\lambda)\mapsto 
\text{homotopy class of $\gamma_{(y,\lambda)}$},$$ where 
$\gamma_{(y,\lambda)}$ is the path $[0,1] \ni t\mapsto \exp_y (t\sum \lambda_i  {X_i})$, {\ie the integral curve of $\sum \lambda_i  {X_i}$ that starts at $y$}. 

{To see this, use the canonical Lie groupoid isomorphism $\tilde{M}\times_{\pi_1(M)}\tilde{M}\cong \Pi(M)$, where $\tilde{M}$ is the universal covering space of $M$, and take the unique lift of $\sum \lambda_i  {X_i}$ to a vector field on $\tilde{M}$.}
\end{itemize}

{Notice that the above two path holonomy bisubmersions are related} by $\varphi=\pi\circ\widetilde{\varphi}$, where $\pi\colon \Pi(M) \to M \times M$ is the morphism of Lie groupoids sending the homotopy class of a path $\gamma$ in $M$ to $(\gamma(0),\gamma(1))$. We will elaborate on this example in \S\ref{section:vary}.  
\end{ex}

Let us show now that the object defined in definition \ref{dfn:pathhol} is really a bisubmersion as in definition \ref{dfn:bisubm2}. To this aim recall that $\bs_U:=\bs\circ\varphi$ and $\bt_U:=\bt\circ\varphi \colon U \to M$.
 
\begin{prop}\label{prop:pathhol}
Let $x \in M$ , let $\balpha_1,\dots,\balpha_n \in \cB$ such that $[\balpha_1],\dots,[\balpha_n]$ span   $\cB/I_x\cB$, and let 
$(U,\varphi, \cG)$
be as in definition \ref{dfn:pathhol}. Then $(U,\varphi,\cG)$ is a bisubmersion.
\end{prop}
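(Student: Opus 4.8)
The strategy is to verify conditions i), ii), iii) of Definition \ref{dfn:bisubm2} for the map $\varphi\colon U\to\cG$, $(\lambda,y)\mapsto \exp_y\sum\lambda_i\rar{\balpha_i}$, using the structure of $\varphi$ as a time-$1$ flow. Condition i) is partly built into the definition of path holonomy bisubmersion: $\bt_U=\bt\circ\varphi$ is a submersion on $U$ by construction (we shrank $U$ to ensure this), so the only real content is that $\bs_U=\bs\circ\varphi$ is a submersion near $(0,x)$. For this, note that at points $(0,y)$ the map $\varphi$ sends $(0,y)\mapsto 1_y$, so $\bs_U(0,y)=y$; hence $d\bs_U$ is already surjective at $(0,x)$, and by openness it remains a submersion on a (possibly smaller) neighborhood of $(0,x)$, which we absorb into $U$.

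\textbf{Conditions ii) and iii).} The key computational input is an explicit description of $\varphi$-related lifts. For each $i$, the coordinate vector field $\partial_{\lambda_i}$ on $U=\RR^n\times M$ is $\varphi$-related to $\rar{\balpha_i}$: this follows because $\varphi$ is, by construction, the time-$1$ flow of the (pullback to $U$ of the) right-invariant vector field $\sum\lambda_i\rar{\balpha_i}$ applied to $1_y$, so differentiating in $\lambda_i$ pushes forward to $\rar{\balpha_i}$ evaluated at $\varphi(\lambda,y)$ — here I would invoke the completeness of $\rar{\balpha_i}$ from Lemma \ref{lem:complete} and the standard flow-differentiation identity, being slightly careful that the $\rar{\balpha_i}$ need not commute, so one uses the one-parameter family $t\mapsto\exp_y(t\sum\lambda_j\rar{\balpha_j}+\cdots)$ argument rather than naive coordinatewise flows. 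Since the $[\balpha_1],\dots,[\balpha_n]$ span $\cB/I_x\cB$, Remark \ref{rem:basis} shows the $\balpha_i$ generate $\cB$ near $x$, hence (shrinking $U$) the $\rar{\balpha_i}$ generate $\rar{\cB}$ over $\bt^{-1}$ of that neighborhood; combined with Lemma \ref{lem:liftequiv} this gives the first half of ii). The left-invariant half of ii) is obtained by an analogous argument, or by passing to the inverse bisubmersion. For iii), by Lemma \ref{lem:bisubm2} c) it suffices to check that the vector fields in $\Gamma(U,\ker d\bs_U)^{proj,\cB}$ span $\ker(d\bs_U)_u$ at every $u\in U$. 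The $\partial_{\lambda_i}$ are $\varphi$-related to $\rar{\balpha_i}\in\ker d\bs$, hence lie in $\Gamma(U,\ker d\bs_U)$, and they are pointwise linearly independent as coordinate fields; since $\dim\ker d\bs_U = \dim U - \dim M = n$ (using that $\bs_U$ is a submersion and $\dim U = n + \dim M$), the $n$ fields $\partial_{\lambda_1},\dots,\partial_{\lambda_n}$ already span $\ker(d\bs_U)_u$ at every $u$. This gives $\varphi^{-1}(\rar{\cB})=\Gamma_c(U,\ker d\bs_U)$. The second equality in iii), involving $\lar{\cB}$ and $\ker d\bt_U$, follows by the symmetric argument (or via the inverse bisubmersion, using that $\bar\bt_U=\bs_U$, $\bar\bs_U=\bt_U$).

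\textbf{Main obstacle.} I expect the one genuinely delicate point to be establishing that $\partial_{\lambda_i}$ is $\varphi$-related to $\rar{\balpha_i}$ cleanly, because $\varphi(\lambda,y)=\exp_y\sum\lambda_j\rar{\balpha_j}$ is the time-$1$ flow of a \emph{sum} of non-commuting vector fields, so $\partial_{\lambda_i}\varphi$ is not literally the value of $\rar{\balpha_i}$ at $\varphi(\lambda,y)$ but rather involves the derivative of the flow in a parameter. The correct formulation is: $\varphi(\lambda,y)$ is the endpoint of the integral curve of the (time-independent, $\lambda$-dependent) right-invariant field $X_\lambda:=\sum_j\lambda_j\rar{\balpha_j}$ starting at $1_y$, and one must show the variation field along this curve, pushed to time $1$, combines so that $d\varphi(\partial_{\lambda_i})$ is $\varphi$-related-compatible with $\rar{\balpha_i}$. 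A clean way is to observe that $\varphi$ restricted to the ray $\{(t\lambda,y):t\in[0,1]\}$ is exactly the flow line of $X_\lambda$, so $d\varphi$ applied to the radial field $\sum\lambda_i\partial_{\lambda_i}$ equals $X_\lambda\circ\varphi=\sum\lambda_i\rar{\balpha_i}\circ\varphi$; then, since this holds for all $\lambda$ in an open set, one differentiates in $\lambda$ to peel off each $\partial_{\lambda_i}$ individually. Once this lift statement is in hand, everything else is a routine assembly of the lemmas already proved in Sections \ref{section:SingSubalgd} and \ref{section:relbisub}.
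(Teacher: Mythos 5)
The core of your argument --- that the coordinate field $\partial_{\lambda_i}$ on $U\subset\RR^n\times M$ is $\varphi$-related to $\rar{\balpha_i}$ --- is false, and since both conditions ii) and iii) rest on it, this is a genuine gap. The simplest counterexample is already the Lie algebra case (Ex. \ref{ex:Liegr}): take $M$ a point, $\cB=A=\g$ nonabelian with basis $\balpha_i=e_i$, so that $\varphi=\exp\colon\g\supset U\to G$. Your claim would say $d\exp_v(e_i)=(R_{\exp v})_*e_i$ for all $v$, whereas $d\exp_v=(R_{\exp v})_*\circ\frac{1-e^{-\mathrm{ad}_v}}{\mathrm{ad}_v}$, and the correction factor is the identity only at $v=0$ or when $\mathrm{ad}_v=0$. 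Your proposed repair does not close the gap: the radial identity $d\varphi_{(\lambda,y)}\bigl(\sum_i\lambda_i\partial_{\lambda_i}\bigr)=\sum_i\lambda_i\rar{\balpha_i}|_{\varphi(\lambda,y)}$ is correct (it just says $t\mapsto\varphi(t\lambda,y)$ is an integral curve of $\sum_i\lambda_i\rar{\balpha_i}$), but writing $A_i:=d\varphi(\partial_{\lambda_i})-\rar{\balpha_i}\circ\varphi$ it only gives $\sum_i\lambda_iA_i=0$, which does not force each $A_i$ to vanish; differentiating in $\lambda_j$ produces derivatives of the $A_i$ that you cannot discard, and indeed in the Lie group example $\sum_i\lambda_iA_i=0$ holds while the $A_i$ are nonzero. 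Without these lifts, condition ii) is unproved, and you have exhibited no elements of $\Gamma(U,\ker d\bs_U)^{proj,\cB}$ with which to run Lemma \ref{lem:bisubm2} c) for condition iii).

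The paper circumvents this entirely: it regards $U\subset\RR^n\times M$ as an open subset of the Androulidakis--Skandalis path-holonomy bisubmersion $V\subset\RR^n\times\cG$ for the singular foliation $\rar{\cB}$ on $\cG$ generated by $\rar{\balpha_1},\dots,\rar{\balpha_n}$ (Remark \ref{rem:FB} b)), with $\varphi=\bt_V|_U$ and $\bs_U=\bs_V|_U$, and imports \cite[Prop.~2.10]{AndrSk}: for each $\balpha\in\cB$ that result produces a lift $\tilde Z\in\Gamma(V,\ker d\bs_V)$ which is $\bt_V$-related to $\rar{\balpha}$ --- by a genuinely different mechanism, exploiting the involutivity and finite generation of the module rather than the coordinate fields --- and whose restriction to $U$ is tangent to $U$ and $\varphi$-related to $\rar{\balpha}$. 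Condition iii) then follows by extending a section of $\ker d\bs_U$ to $V$ and applying Lemma \ref{lem:bisubm2} b) there. A minor further point: your openness argument for condition i) only yields that $\bs_U$ is a submersion on a possibly smaller neighbourhood of $(0,x)$, whereas in fact $\bs_U$ is globally the projection $(\lambda,y)\mapsto y$ (right-invariant vector fields are tangent to the source fibres, so their flows preserve them); that part is easily repaired.
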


\begin{remark}\label{rem:FB}
Recall from \S \ref{sec:twofols} that there are two foliations associated to $\cB$.  
In the proof of proposition \ref{prop:pathhol} we will use the following {relations} between  $(U,\varphi, \cG)$ 
(as in definition \ref{dfn:pathhol})
and bisubmersions for these two foliations: 
\begin{itemize}
\item[a)]  For the singular foliation 
$\cF_{\cB}$ on $M$: $$(U,\bt_U,\bs_U)$$ is a bisubmersion for   $\cF_{\cB}$ (in the sense of \cite[definition 2.1]{AndrSk}). Indeed, it is the ``path-holonomy bisubmersion'' of $\cF_{\cB}$
constructed using the generators $X_i=\rho(\balpha_i)$ of the foliation $\cF_{\cB}$ on $M$. 
This is proven\footnote{Once we establish proposition \ref{prop:pathhol}, an alternative proof is obtained applying lemma \ref{lem:usualbi}.} exactly as in  \cite[Prop. 2.10 a)]{AndrSk}, which holds since
the $X_i$ define a spanning set of $\cF_x:=\cF/I_x\cF$ (it does not matter that they do not define a basis in general).   

\item[b)]
For the singular foliation {$\rar{\cB}$} on $\cG$: a set of local generators of  {$\rar{\cB}$}  is   $\rar{\balpha_1},\dots,\rar{\balpha_n}$, by Lemma \ref{lem:basis}. Let $$(V, \bt_V,\bs_V)$$ be the  path-holonomy bisubmersion {(in the sense of \cite{AndrSk})} associated to these generators near $x$. Notice that, as $M$ embeds in $\cG$ as the identity section, we can view $U\subset \RR^n\times M$ as a subset of $V\subset \RR^n\times \cG$. Restricting the source and target map of $V$ we see that  $\bs_U=\bs_V|_U\colon U\to M$  (the vector bundle projection) and $\varphi=\bt_V|_U\colon U\to \cG$.
\end{itemize}
\end{remark}

\begin{proof}[Proof of proposition \ref{prop:pathhol}]

We show that the three conditions listed in Definition \ref{dfn:bisubm2} hold.

Condition i) holds since $(U,\bt_U,\bs_U)$ is a bisubmersion for the foliation $\cF_{\cB}$, {by Remark \ref{rem:FB} a)}.

In the rest of the proof we use the bisubmersion  $(V,\bt_V,\bs_V)$   for the foliation $\rar{\cB}$ described in Remark \ref{rem:FB} b).
We show the first part of condition ii), \ie, that for any  $\balpha\in \cB$, there is a vector field on $U$
which is $\varphi$-related to  $\rar{\balpha}$. This holds because there exists $\tilde{Z}\in \Gamma(V, \ker d\bs_V)$ which $\bt_V$-projects to $\rar{\balpha}$ (for example by \cite[Prop. 2.10(b)]{AndrSk}, and therefore $\tilde{Z}|_U$ is tangent to $U$ and  $\varphi$-related to $\rar{\balpha}$.

We show the first equation in condition iii). 
We do so using  Lemma \ref{lem:bisubm2} b). 
Let $Z\in \Gamma_c(U, \ker d\bs_U)$, and take an extension $\tilde{Z}\in \Gamma_c(V, \ker d\bs_V)$ to $V$.  
By   proposition \ref{prop:equivbi}
{$(\bt_V,\bs_V)\colon V\to G\times G$ is a bisubmersion for the singular subalgebroid $\rar{\cB}$. Applying  Lemma \ref{lem:bisubm2} b) to it} 
 we see that 
$\tilde{Z}$ is a $C_c^{\infty}(V)$-linear combination of vector fields which are $(\bt_V, \bs_V)$-related to  elements of $\rar{\cB}$. In other words, $\tilde{Z}=\sum \tilde{g_i}\tilde{Y_i}$ where $\tilde{g_i}\in C_c^{\infty}(V)$ and the vector fields $Y_i $ lie in $\Gamma(V,\ker d\bs_V)$ and are $\bt_V$-related to some element of $\rar{\cB}$. In particular,
$(\tilde{Y_i})|_{U}$ is tangent to $U$ and, since $\varphi=\bt_V|_U$, it is  $\varphi$-related to some element of $\rar{\cB}$. Hence $Z=\sum (\tilde{g_i})|_U(\tilde{Y_i})|_{U}$ lies in $\varphi^{-1}(\rar{\cB})$.

{The second parts of conditions ii) and iii) in Definition \ref{dfn:bisubm2} are proven analogously to the above.}
\end{proof}

  {The following Lemma allows to simplify some of the later constructions and proofs, see Rem. \ref{rem:invphbisub}.}
 
\begin{lemma}\label{lem:kappa} Let  $\balpha_1,\dots,\balpha_n \in \cB$ and let $(U,\varphi, \cG)$ be {a path holonomy bisubmersion} 
as in Definition \ref{dfn:pathhol}.
There exists a diffeomorphism $\kappa$ making the following diagram commute:
\begin{equation*}
\xymatrix{
U\ar[rd]^{\varphi}  \ar[rr]^{\kappa} & & U\ar[ld]^{i\circ \varphi} \\
&\cG& }
\end{equation*}
In particular,  $\bs_U\circ \kappa=\bt_U$ and $\bt_U\circ \kappa=\bs_U$.
\end{lemma}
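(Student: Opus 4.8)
The plan is to produce $\kappa$ explicitly as the map that reverses the flow time. Recall that the path holonomy bisubmersion is $\varphi\colon U\to \cG$, $(\lambda,y)\mapsto \exp_y\sum\lambda_i\rar{\balpha_i}$, where $U$ is an open neighborhood of $(0,x)$ in $\RR^n\times M$. The natural candidate is
\[
\kappa\colon (\lambda,y)\longmapsto \bigl(-\lambda,\; \bt(\varphi(\lambda,y))\bigr)=\bigl(-\lambda,\;\exp_y\textstyle\sum\lambda_i\rar{\balpha_i}\cdot(\text{projected to }M)\bigr),
\]
i.e. $\kappa(\lambda,y)=(-\lambda,\bt_U(\lambda,y))$. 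The idea is that applying $\sum\lambda_i\rar{\balpha_i}$ for time $1$ and then $\sum(-\lambda_i)\rar{\balpha_i}$ for time $1$ gets one back to the start, so $\varphi(-\lambda,\bt_U(\lambda,y))=i(\varphi(\lambda,y))$; one must only be careful about the subtlety that the flow on $\cG$ of a right-invariant vector field need not fix source-fibers, so ``$\exp$ applied to a point of $M$'' versus ``composing group elements'' needs bookkeeping.

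First I would record the precise relation between $\varphi$ and the flow $\Phi^t$ of $\rar{\balpha}:=\sum\lambda_i\rar{\balpha_i}$ on $\cG$. Since $\rar{\balpha}$ is right-invariant, $\Phi^t(g)=\Phi^t(1_{\bt(g)})\cdot g$, and in particular $\varphi(\lambda,y)=\Phi^1_\lambda(1_y)$. Also $\bt(\varphi(\lambda,y))=\bt(\Phi^1_\lambda(1_y))$ is exactly $\exp_y\rho(\sum\lambda_i\balpha_i)$, the time-$1$ flow on $M$; call this point $z$. Then $1_z=\Phi^1_\lambda(1_y)\cdot \varphi(\lambda,y)^{-1}$ on the nose. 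Now $\varphi(-\lambda,z)=\Phi^1_{-\lambda}(1_z)=\Phi^{-1}_{\lambda}(1_z)$ (because scaling $\lambda$ by $-1$ scales the vector field, hence reverses the flow time), and using right-invariance $\Phi^{-1}_\lambda(1_z)\cdot 1_z=\Phi^{-1}_\lambda(\varphi(\lambda,y)\cdot\varphi(\lambda,y)^{-1}\cdot 1_z)$; chasing this through $\Phi^{-1}_\lambda\circ\Phi^1_\lambda=\id$ yields $\varphi(-\lambda,z)=\varphi(\lambda,y)^{-1}=i(\varphi(\lambda,y))$, which is the desired commutativity $i\circ\varphi\circ\kappa=\varphi$, equivalently $\varphi=i\circ\varphi\circ\kappa$ — after noting $i\circ i=\id$. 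I would also check that $\kappa$ maps $U$ into $U$ (possibly after shrinking $U$ to a symmetric-enough neighborhood of $(0,x)$, using that $(0,x)\mapsto(0,x)$ and continuity), and that $\kappa$ is smooth with smooth inverse: indeed $\kappa$ is an involution up to the obvious sign, more precisely $\kappa^2(\lambda,y)=(\lambda,\bt_U(-\lambda,\bt_U(\lambda,y)))=(\lambda,y)$ by the same flow computation, so $\kappa$ is its own inverse and hence a diffeomorphism onto its image.

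The last sentence of the statement, $\bs_U\circ\kappa=\bt_U$ and $\bt_U\circ\kappa=\bs_U$, then falls out: from $i\circ\varphi\circ\kappa=\varphi$ we get $\bs\circ\varphi\circ\kappa=\bs\circ i\circ\varphi=\bt\circ\varphi$, i.e. $\bs_U\circ\kappa=\bt_U$, and symmetrically $\bt_U\circ\kappa=\bs\circ i\circ\varphi\circ\kappa^{-1}$... more directly, apply $\bt\circ i=\bs$ to $\varphi=i\circ\varphi\circ\kappa$ to get $\bt_U=\bt\circ\varphi=\bt\circ i\circ\varphi\circ\kappa=\bs\circ\varphi\circ\kappa=\bs_U\circ\kappa$, and the other identity follows by replacing $\kappa$ with $\kappa^{-1}=\kappa$ (after the sign normalization) or by the symmetric computation with $\bs\leftrightarrow\bt$.

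I expect the main obstacle to be the careful handling of the interplay between flows of right-invariant vector fields on $\cG$ and the groupoid multiplication — specifically, making rigorous that $\Phi^{-1}_\lambda(1_z)=\varphi(\lambda,y)^{-1}$ where $z=\bt_U(\lambda,y)$, since the flow $\Phi^t_\lambda$ does not preserve $\bs$-fibers and one is composing elements living in different fibers. The clean way to bypass this is to pull everything back: use that $(U,\bt_U,\bs_U)$ is the path-holonomy bisubmersion of the foliation $\cF_{\cB}$ on $M$ (Remark \ref{rem:FB} a)), for which the analogous $\kappa$ is classical (it is the standard ``inverse'' of a path-holonomy bisubmersion in \cite{AndrSk}), and then lift that $\kappa$ to the $\cG$-level using the identity $\varphi(\lambda,y)=\Phi^1_\lambda(1_y)$ together with right-invariance of $\rar{\balpha_i}$; the rest is the bookkeeping above.
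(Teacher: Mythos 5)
Your proposal is correct and matches the paper's proof: you use the same explicit $\kappa(\lambda,y)=(-\lambda,\bt_U(\lambda,y))$, the same involution check $\kappa^2=\id_U$, and the same key identity $\exp_{\bt_U(\lambda,y)}(-\sum\lambda_i\rar{\balpha_i})\cdot\exp_y(\sum\lambda_i\rar{\balpha_i})=1_y$, which the paper derives by packaging the flows into a one-parameter family of bisections $\psi_{\epsilon}$ satisfying $\psi_{\epsilon}\ast\psi_{\sigma}=\psi_{\epsilon+\sigma}$, while you derive it pointwise from the right-invariance identity $\Phi^t(g\cdot h)=\Phi^t(g)\cdot h$ --- the same fact in different clothing. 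One small correction to your parenthetical worry: under the paper's convention $A=\ker(d\bs)|_M$, right-invariant vector fields lie in $\ker d\bs$, so their flows \emph{do} preserve $\bs$-fibers (it is the $\bt$-fibers they move), which only simplifies the bookkeeping you were concerned about.
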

 
\begin{proof} Consider the map
$$\kappa \colon U\to U, \kappa(\lambda,x)=(-\lambda, \bt_U(\lambda,x)).$$ 
One computes easily that $\kappa^2=Id_{U}$, hence $\kappa$ is a diffeomorphism\footnote{The idea of considering $\kappa$ comes from  \cite[Prop. 2.10 a)]{AndrSk}, where $\kappa$ is shown to satisfy
$\bs_U\circ \kappa=\bt_U$ and $\bt_U\circ \kappa=\bs_U$.}.
 
To check that the diagram commutes, we fix $(\lambda,x)\in U$ and compute
$$(i\circ \varphi\circ \kappa)(\lambda,x)=i(\exp_{\bt_U(\lambda,x)}(-\sum \lambda_i\rar{\balpha_i})).$$ By definition we have $\varphi(\lambda,x)=\exp_x(\sum \lambda_i\rar{\balpha_i})$. We need to show that these two points of $\cG$ agree, or equivalently that  
\begin{equation}\label{eq:oneminusone}
\exp_{\bt_U(\lambda,x)}(-\sum \lambda_i\rar{\balpha_i})\cdot \exp_x(\sum \lambda_i\rar{\balpha_i})=1_x.
\end{equation}
Use the short form notation $\balpha:=\sum \lambda_i{\balpha_i}$.
For all $\epsilon\in \RR$ define   section of $\bs$ by $\psi_{\epsilon}\colon M\to \cG, \psi_{\epsilon}(x)=\exp_x(\epsilon  \rar{\balpha})$. Its image defines a bisection, at least for $\epsilon$ is small enough. The right invariance of $\rar{\balpha}$ implies that
the above family of bisections satisfies\footnote{Recall from\cite[Prop. 1.4.2, p.22]{MK2} that the product of bisections is defined as
$(\psi_{\epsilon}\ast \psi_{\sigma})(y)=\psi_{\epsilon}(\bt(\psi_{\sigma}(y)))\cdot \psi_{\sigma}(y)$ for all $y\in M$.} $\psi_{\epsilon}\ast \psi_{\sigma}= \psi_{\epsilon+\sigma}$.
 In particular we obtain
$$(\psi_{-1}\ast\psi_1)(x)=\psi_0(x)=x,$$ which is exactly eq. \eqref{eq:oneminusone}.
\end{proof}

\subsection{Operations involving bisubmersions}\label{subsec:operations}

We explain how to handle bisubmersions algebraically. This will be used the construction of the holonomy groupoid in \S \ref{section:HB}. To this end,  we fix a Lie groupoid $\cG$ and a singular subalgebroid $\cB$ of its Lie algebroid.

{Recall that, given a singular foliation $\cF$, in Prop. \ref{prop:pathhol} we established a bijection between bisubmersions for $\cF$ in the sense of \cite{AndrSk} and 
bisubmersions for $\cF$ regarded as a singular subalgebroid (Def. \ref{dfn:bisubm2}). All the operations we define in this subsection, in the special case of singular foliations,  correspond under the above bijection with the operations introduced in \cite{AndrSk}.}

\subsubsection{Morphisms}

\begin{definition}\label{def:morph}
{Let $(U_i,\varphi_i,\cG)$, $i=1,2$ be bisubmersions for $\cB$.} A {\bf morphism} of bisubmersions is a map $f \colon U_1\to U_2$ such that
$\varphi_1 =\varphi_2\circ f$.
\begin{equation*}
\xymatrix{
U_1  \ar[rd]_{\varphi_1  }  \ar[rr]^{f} & & U_2 \ar[ld]^{\varphi_2} \\
&\cG&\\
  }
\end{equation*}

\end{definition}

There is a simple way to construct new bisubmersions out of old ones, which we will use in the sequel.
\begin{lemma}\label{lem:UV}
Let $(V,\varphi,\cG)$ be a bisubmersion for $\cB$. Let $U$ be a manifold and $p\colon U\to V$ be a  submersion. Then
$(U,\varphi\circ p,\cG)$ is a bisubmersion for $\cB$.
Further,  $p$ is a morphism of bisubmersions.
 \begin{equation*}
\xymatrix{
U  \ar[r]^{p} \ar@{-->}[dr]      &   V \ar[d]^{\varphi}  \\
 & \cG }
\end{equation*}
\end{lemma}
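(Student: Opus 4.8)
The plan is to verify directly the three conditions in Definition \ref{dfn:bisubm2} for the map $\varphi\circ p\colon U\to \cG$, transferring each from the corresponding property of $(V,\varphi,\cG)$ along the submersion $p$. For condition i), note that $\bt_U:=\bt\circ\varphi\circ p=\bt_V\circ p$ and $\bs_U:=\bs\circ\varphi\circ p=\bs_V\circ p$ are compositions of submersions (using that $\bt_V,\bs_V$ are submersions by hypothesis on $V$, and that $p$ is a submersion), hence submersions. This also shows $\ker d\bs_U=(dp)^{-1}(\ker d\bs_V)$ fibrewise, a fact I will use repeatedly.

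For condition ii), fix $\balpha\in\cB$. Since $(V,\varphi,\cG)$ is a bisubmersion there is $Z\in\vX(V)$ which is $\varphi$-related to $\rar{\balpha}$. Because $p$ is a surjective submersion, $Z$ can be lifted to a vector field $\tilde Z\in\vX(U)$ that is $p$-related to $Z$ (choose a splitting of $dp$, or work locally and patch with a partition of unity as in Lemma \ref{lem:liftequiv}); then $\tilde Z$ is $(\varphi\circ p)$-related to $\rar{\balpha}$. The argument for $\lar{\balpha}$ and $W$ is identical.

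For condition iii), I will show $(\varphi\circ p)^{-1}(\rar{\cB})=\Gamma_c(U,\ker d\bs_U)$ using the characterization in Lemma \ref{lem:bisubm2} c): it suffices to check that $\Gamma(U,\ker d\bs_U)^{proj,\cB}$ spans $\ker(d\bs_U)_u$ at every $u\in U$. Let $v:=p(u)$. By Lemma \ref{lem:bisubm2} c) applied to the bisubmersion $(V,\varphi,\cG)$, the vector fields in $\Gamma(V,\ker d\bs_V)^{proj,\cB}$ span $\ker(d\bs_V)_v$. Lift a spanning family to $p$-related vector fields on $U$ as in the previous paragraph; each lift lies in $\ker d\bs_U$ (since $p$-related to something in $\ker d\bs_V$ and $\bs_U=\bs_V\circ p$) and is $(\varphi\circ p)$-related to some $\rar{\balpha}$, hence lies in $\Gamma(U,\ker d\bs_U)^{proj,\cB}$. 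Since $dp_u\colon\ker(d\bs_U)_u\to\ker(d\bs_V)_v$ is surjective with kernel $\ker(dp_u)$, the lifted vectors together with $\ker(dp_u)\subset\ker(d\bs_U)_u$ span $\ker(d\bs_U)_u$; and $\ker(dp_u)$ is spanned by (germs of) vector fields tangent to the $p$-fibres, which are $p$-related to $0$, hence $(\varphi\circ p)$-related to the zero vector field and so also lie in $\Gamma(U,\ker d\bs_U)^{proj,\cB}$ (taking $\balpha=0$). This gives c), hence iii); the equation $(\varphi\circ p)^{-1}(\lar{\cB})=\Gamma_c(U,\ker d\bt_U)$ follows by the symmetric argument. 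Finally, $p$ is a morphism of bisubmersions because $(\varphi\circ p)=\varphi\circ p$ trivially satisfies $\varphi_{U}=\varphi_{V}\circ p$ in the sense of Definition \ref{def:morph}.

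The only mildly delicate point, and the one I would write out with some care, is the lifting of vector fields along the submersion $p$: a single global lift exists because surjective submersions admit smooth splittings of $dp$ locally, and one patches with a partition of unity, exactly the mechanism already used in Lemma \ref{lem:liftequiv} and Lemma \ref{lem:fix}; everything else is a routine transfer of the defining conditions. No genuine obstacle is expected.
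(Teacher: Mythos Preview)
Your proof is correct and follows essentially the same approach as the paper: both verify conditions i)--iii) of Definition~\ref{dfn:bisubm2} directly, lifting vector fields along the submersion $p$ for ii), and invoking Lemma~\ref{lem:bisubm2}~c) for iii) by observing that lifts of $\cS_V$ together with $\Gamma(\ker dp)$ span $\ker d\bs_U$ pointwise. One small slip: you refer to $p$ as a ``surjective submersion'', but the lemma does not assume surjectivity, nor is it needed---the lifting of vector fields works via local splittings of $dp$ patched by a partition of unity on $U$, regardless of the image of $p$.
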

\begin{remark}\label{rem:pq}
 If $q\colon V\to U$ is a section of $p$ (\ie, $p\circ q=Id_V$), then $q$ is also a morphism of bisubmersions. Notice that such a global section might not exist, but local sections of $p$ exist around every point of the open subset $p(U)$ of $V$.
\end{remark}
\begin{proof}
We check that $(U,\varphi\circ p,\cG)$  satisfies the conditions of  definition \ref{dfn:bisubm2}. Since $p$ is a submersion, i) clearly holds, and ii) holds too as $p$  allows to lift vector fields on $V$ to vector fields on $U$. For iii),
 by Lemma \ref{lem:bisubm2} {c)} that it suffices to show that
\begin{equation*}
\cS_U:=\{W\in \Gamma(U,\ker d\bs_U): \text{$W$ is  $(\varphi\circ p)$-related to an element 
{$\rar{\alpha}$}
}\}
\end{equation*} 
{spans $d_u\bs_U$ at every $u\in U$.} 
By assumption,  
\begin{equation*}
\cS_V:=\{Z\in \Gamma(V,\ker d\bs_V): \text{$Z$ is  $\varphi$-related to an element 
{$\rar{\alpha}$}}\}
\end{equation*} 
{spans $d_v\bs_V$ at every $v\in V$.}
 Since $\ker d\bs_U=p_*^{-1}(\ker d\bs_V)$,
taking all lifts (via $p$) of all elements of  $\cS_V$ we obtain a subset of $\cS_U$ which contains $\Gamma_c(\ker(p_*))$ and {spans $d_u\bs_U$ at every $u\in U$}.
 Finally, $p$ is a morphism of bisubmersions by construction.
 \end{proof}

\subsubsection{Inverses}

\begin{definition}\label{def:inv}
Let $(U,\varphi,\cG)$ be a bisubmersion for $\cB$. Its {\bf inverse} is ${\bar{\varphi}:=}i\circ \varphi \colon U \to \cG$, where $i\colon \cG\to \cG, i(g)=g^{-1}$. We denote it $({U},\bar{\varphi}, \cG)$.
\end{definition}

\begin{prop}\label{prop:inv}
The inverse of a bisubmersion is a bisubmersion.
\end{prop}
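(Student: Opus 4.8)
The plan is to verify the three conditions of Definition \ref{dfn:bisubm2} for $(U,\bar\varphi,\cG)$ with $\bar\varphi = i\circ\varphi$, by transferring them from the corresponding conditions for $(U,\varphi,\cG)$ via the inversion map $i\colon\cG\to\cG$. The crucial observation is that $i$ interchanges source and target ($\bs\circ i = \bt$, $\bt\circ i = \bs$) and interchanges right- and left-invariant vector fields: for $\balpha\in\cB$ one has $i_*\rar{\balpha} = -\lar{\balpha}$ and $i_*\lar{\balpha} = -\rar{\balpha}$. Hence the whole statement is really a symmetry: inverting a bisubmersion swaps the roles of the two pieces of each condition.

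First I would record that $\bar\bs_U := \bs\circ\bar\varphi = \bs\circ i\circ\varphi = \bt\circ\varphi = \bt_U$ and similarly $\bar\bt_U = \bs_U$; since $\bt_U,\bs_U$ are submersions by condition i) for $(U,\varphi,\cG)$, condition i) holds for $(U,\bar\varphi,\cG)$. Next, for condition ii): given $\balpha\in\cB$, by condition ii) for $\varphi$ there is $W\in\vX(U)$ that is $\varphi$-related to $\lar{\balpha}$; then $-W$ is $\bar\varphi$-related to $i_*\lar{\balpha}=-\rar{\balpha}$, i.e.\ $W$ is $\bar\varphi$-related to $\rar{\balpha}$, up to sign $-W$ is $\bar\varphi$-related to $\rar\balpha$ — more cleanly: since $\bar\varphi = i\circ\varphi$ and $i_*\lar\balpha = -\rar\balpha$, the vector field $-W$ is $\bar\varphi$-related to $\rar\balpha$. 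Symmetrically, a $\varphi$-lift $Z$ of $\rar\balpha$ gives $-Z$ as a $\bar\varphi$-lift of $\lar\balpha$. So condition ii) holds for the inverse.

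For condition iii) I would argue that $\bar\varphi^{-1}(\rar{\cB}) = \Gamma_c(U,\ker d\bar\bs_U)$, and the other equality, follow from the $\varphi$-versions by the same swap. Concretely: $\ker d\bar\bs_U = \ker d\bt_U$, and by Definition \ref{def:pullback}(2), $X\in\bar\varphi^{-1}(\rar\cB)$ iff $d\bar\varphi(X) = \sum_i f_i(\rar{\balpha_i}\circ\bar\varphi)$; pushing through $i$ (which is a diffeomorphism, so $di$ is invertible) and using $i_*\rar{\balpha_i} = -\lar{\balpha_i}$ identifies this with the condition $X\in\varphi^{-1}(\lar\cB)$, which by condition iii) for $\varphi$ equals $\Gamma_c(U,\ker d\bt_U) = \Gamma_c(U,\ker d\bar\bs_U)$. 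The same manipulation with the roles of $\rar{}$ and $\lar{}$ (equivalently $\bs$ and $\bt$) exchanged gives the second equality $\bar\varphi^{-1}(\lar\cB) = \Gamma_c(U,\ker d\bar\bt_U)$.

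I do not expect a serious obstacle here; this proposition is essentially bookkeeping built on the identity $i_*\rar\balpha = -\lar\balpha$ and $\bs\circ i = \bt$. The only point requiring a little care is condition iii): one should phrase the pullback-module equality so that precomposition with the diffeomorphism $i$ genuinely carries $\varphi^{-1}(\lar\cB)$ onto $\bar\varphi^{-1}(\rar\cB)$ — it is cleanest to invoke Remark \ref{donto}, noting that $d\bar\varphi\colon\Gamma_c(U;\ker d\bar\bs_U)\to\bar\varphi^*(\rar\cB)$ is, after composing with the isomorphism induced by $di$ on the target, literally the map $d\varphi\colon\Gamma_c(U;\ker d\bt_U)\to\varphi^*(\lar\cB)$, whose well-definedness and surjectivity are exactly the second parts of conditions ii) and iii) for $(U,\varphi,\cG)$. (Alternatively one can cite Lemma \ref{lem:bisubm2} c) and check the spanning condition pointwise, which is equally routine.)
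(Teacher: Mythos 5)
Your proposal is correct and takes essentially the same approach as the paper's proof: record that $\bar{\bs}_U=\bt_U$ and $\bar{\bt}_U=\bs_U$, transfer condition ii) via $i_*\rar{\balpha}=-\lar{\balpha}$, and obtain condition iii) from the identities $\bar{\varphi}^{-1}(\rar{\cB})=\varphi^{-1}(\lar{\cB})$ and $\bar{\varphi}^{-1}(\lar{\cB})=\varphi^{-1}(\rar{\cB})$ coming from the isomorphism $i_*\colon\rar{\cB}\to\lar{\cB}$. The only blemish is a momentary sign slip in your condition-ii) paragraph (you attach the minus sign to $W$ one step too early before restating it correctly), which does not affect the argument.
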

\begin{proof}
Given a bisubmersion  $(U,\varphi,\cG)$, first notice that $\bar{\bs}_U := \bs\circ \bar{\varphi}  = \bt_U$ and likewise $\bar{\bt}_U := \bt\circ \bar{\varphi}=s_U$ are submersions. For  condition ii) of definition \ref{dfn:bisubm2}, let $\balpha \in \cB$. Since this condition holds for $(U,\varphi,\cG)$, there is $Z\in \vX(U)$ which is $\varphi$-related to $\rar{\balpha}$.
Hence $Z$ is $i\circ \varphi$-related to $i_*\rar{\balpha}=-\lar{\balpha}$, and therefore $-Z$ is $\bar{\varphi}$-related to $\lar{\balpha}$. Similarly, there is $W\in \vX(U)$ which is $\varphi$-related to $\lar{\balpha}$, and $-W$ is $\bar{\varphi}$-related to $\rar{\balpha}$.

For  condition iii) of definition \ref{dfn:bisubm2} notice that the inversion map $i \colon \cG \to \cG$ gives rise to an isomorphism $i_* \colon \rar{\cB} \to \lar{\cB}$, whence $\bar{\varphi}^{-1}(\rar{\cB})=\varphi^{-1}(\lar{\cB})$ and $\bar{\varphi}^{-1}(\lar{\cB})=\varphi^{-1}(\rar{\cB})$. We conclude because $(U,\varphi,\cG)$ satisfies condition iii) of definition \ref{dfn:bisubm2} and  $\bar{\bs}_U = \bt_U, \bar{\bt}_U = \bs_U$.
\end{proof}

\begin{remark}\label{rem:invphbisub}
{If $U$ is a path holonomy bisubmersion, then its inverse bisubmersion is isomorphic to $U$ itself. This follows from Lemma \ref{lem:kappa}.} 
\end{remark}

\subsubsection{Compositions}

\begin{definition}\label{def:comp}
Let $(U_j,\varphi_j,\cG)$ be bisubmersions for $\cB$, $j=1,2$. Their {\bf composition} is
$$m \circ (\varphi_1,\varphi_2) \colon U_1\times_{{\bs_{U_1}},\bt_{U_2}}U_2\to \cG\times_{\bs,\bt}\cG\to \cG$$
where $m$ is the groupoid multiplication on $\cG$ and  $\bs_{U_1} = \bs\circ\varphi_1$ and $\bt_{U_2} = \bt\circ\varphi_2$. We denote it $(U_1 \circ U_2,\varphi_1\cdot\varphi_2,\cG)$.
\end{definition}

\begin{equation*}
\xymatrix{
&U_1\times_{{\bs_{U_1}},\bt_{U_2}}U_2\ar[d]^{(\varphi_1,\varphi_2)} & \\
&\cG\times_{\bs,\bt}\cG \ar[d]^{m}&\\
&\ar[dr]^{\bs}\cG\ar[dl]_{\bt}&\\
M&&M 
}
\end{equation*}

Notice that if the open subsets $\bs_1(U_1)$ and $\bt_2(U_2)$ of $M$ are disjoint, then the composition $U_1 \circ U_2$ is the empty set.
 
\begin{prop}\label{prop:compo}
The  composition of two bisubmersions is a bisubmersion.
\end{prop}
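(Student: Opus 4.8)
The plan is to verify the three conditions of Definition \ref{dfn:bisubm2} for the composed map $\varphi_1\cdot\varphi_2 = m\circ(\varphi_1,\varphi_2)$ on $W := U_1\times_{\bs_{U_1},\bt_{U_2}} U_2$. First I would establish condition i). The source map of the composition is $\bs\circ m\circ(\varphi_1,\varphi_2) = \bs_{U_2}\circ\pr_2$ and the target is $\bt_{U_1}\circ\pr_1$, where $\pr_i$ are the two projections from the fibered product. Since $\bs_{U_2}$ and $\bt_{U_1}$ are submersions by hypothesis, it suffices to note that $\pr_2\colon W\to U_2$ is a submersion (being the pullback of the submersion $\bs_{U_1}$ along $\bt_{U_2}$) and similarly $\pr_1$; hence the composites are submersions. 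Here one should check the fibered product is a manifold, which holds because $\bs_{U_1}$ (or $\bt_{U_2}$) is a submersion.

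Next, condition ii). Given $\balpha\in\cB$, by condition ii) for $(U_1,\varphi_1,\cG)$ there is $Z_1\in\vX(U_1)$ which is $\varphi_1$-related to $\rar{\balpha}$, and by Remark \ref{rem:condii} any such lift lies in $\ker d\bs_{U_1}$. Similarly — and this is the point where we use \emph{both} the right- and left-invariant parts of condition ii) — by condition ii) for $(U_2,\varphi_2,\cG)$ there is $W_2\in\vX(U_2)\cap\Gamma(\ker d\bt_{U_2})$ which is $\varphi_2$-related to $\lar{\balpha}$. Since $Z_1$ is $\bs_{U_1}$-vertical and $W_2$ is $\bt_{U_2}$-vertical, the pair $(Z_1,W_2)$ is tangent to the fibered product $W$; call the resulting vector field $Z$. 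The claim is that $Z$ is $(\varphi_1\cdot\varphi_2)$-related to $\rar{\balpha}$. This follows from the elementary fact about the multiplication map of a Lie groupoid: if $\xi\in\vX(\cG)\cap\Gamma(\ker d\bs)$ and $\eta\in\vX(\cG)\cap\Gamma(\ker d\bt)$, then $(\xi,\eta)$ (tangent to $\cG\times_{\bs,\bt}\cG$) is $m$-related to... one has to be careful here: actually the clean statement is that the right-invariant field $\rar{\balpha}$ on $\cG$ pulls back under $m$ as $(\rar{\balpha},0)$ on the fibered product, because $m$ intertwines right-translation on the first factor with right-translation on $\cG$. So the correct choice is to take $Z_1$ $\varphi_1$-related to $\rar{\balpha}$ and $Z_2 = 0$ on $U_2$; then $(Z_1,0)$ is tangent to $W$ (as $Z_1$ is $\bs_{U_1}$-vertical and $0$ is trivially $\bt_{U_2}$-vertical) and is $(\varphi_1\cdot\varphi_2)$-related to $\rar{\balpha}$. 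Dually, taking $Z_1=0$ and $Z_2\in\vX(U_2)$ $\varphi_2$-related to $\lar{\balpha}$ gives a lift of $\lar{\balpha}$. I expect the author may instead phrase condition ii) via Remark \ref{rem:condii} / Lemma \ref{lem:liftequiv}, lifting arbitrary elements of $\rar{\cB}$ rather than just $\rar{\balpha}$, which is cleaner.

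For condition iii), I would use the characterization in Lemma \ref{lem:bisubm2} c): it suffices to show that $\Gamma(W,\ker d\bs_W)^{proj,\cB}$ spans $\ker(d\bs_W)_w$ at every $w=(u_1,u_2)\in W$, where $\bs_W = \bs_{U_2}\circ\pr_2$. Decompose $\ker(d\bs_W)_w$: it is the span of the vertical tangent space of $\pr_2$ at $w$ together with any lift of $\ker(d\bs_{U_2})_{u_2}$. The $\pr_2$-vertical part is identified (via $\pr_1$) with $\ker(d\bs_{U_1})_{u_1}$, which by condition iii) for $U_1$ (through Lemma \ref{lem:bisubm2} c)) is spanned by vector fields $\varphi_1$-related to right-invariant fields $\rar{\balpha}$; paired with $0$ on $U_2$, these give elements of $\Gamma(W,\ker d\bs_W)^{proj,\cB}$ by the argument above. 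For the remaining directions, condition iii) for $U_2$ gives that $\ker(d\bs_{U_2})_{u_2}$ is spanned by vector fields $Z_2$ on $U_2$ that are $\varphi_2$-related to some $\rar{\balpha}$; I lift each such $Z_2$ to $W$ — using condition ii) for $U_1$ to produce a compatible $\bt_{U_2}$–$\bs_{U_1}$ matched partner $Z_1$ on $U_1$ (here $Z_1$ should be $\varphi_1$-related to $\rar{\balpha}$, hence $\bs_{U_1}$-vertical, and the matching is over $M$ via $\bt_{U_2}=\bs_{U_1}$ on the base)— and check via the groupoid-multiplication relation that $(Z_1,Z_2)$ is $(\varphi_1\cdot\varphi_2)$-related to $\rar{\balpha}$. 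The second equation $\varphi^{-1}(\lar{\cB})=\Gamma_c(W,\ker d\bt_W)$ follows by the symmetric argument, or more economically by passing to the inverse bisubmersion (Prop. \ref{prop:inv}) and noting that the inverse of $U_1\circ U_2$ is $\bar U_2\circ\bar U_1$.

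The main obstacle I anticipate is condition iii): specifically, the bookkeeping needed to lift a $\varphi_2$-related vector field on $U_2$ to a $(\varphi_1\cdot\varphi_2)$-related vector field on the fibered product $W$, which requires simultaneously invoking condition ii) on the \emph{other} factor $U_1$ to produce a base-matched partner, and then verifying the relatedness against the right-invariant field using how $m$ interacts with right-invariant vector fields on $\cG$. The two-sided nature of the definition (both $\rar{\cB}$ and $\lar{\cB}$ conditions) is essential here and is the structural reason Def. \ref{dfn:bisubm2} is stated with both halves; getting the sidedness right in each step is where care is needed.
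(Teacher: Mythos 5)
Your conditions i) and ii) are handled correctly and coincide with the paper's argument (including your self-correction: the lift of $\rar{\balpha}$ is $(Z_1,0)$ and the lift of $\lar{\balpha}$ is $(0,W_2)$, using that $(\rar{\balpha},0)$ is $m$-related to $\rar{\balpha}$). The overall architecture of your proof of iii) -- reduce to spanning $\ker d\bs_{12}$ pointwise via Lemma \ref{lem:bisubm2} c), split into the $\pr_2$-vertical directions and lifts of $\ker d\bs_{U_2}$ -- is also the paper's. But the lift of the $\ker d\bs_{U_2}$ directions, which you yourself flag as the delicate step, is carried out with the wrong sidedness, and as written it fails. You pair $Z_2$ ($\varphi_2$-related to $\rar{\balpha}$) with a partner $Z_1$ that is ``$\varphi_1$-related to $\rar{\balpha}$, hence $\bs_{U_1}$-vertical.'' Then $(Z_1,Z_2)$ is not tangent to the fibered product: tangency requires $d\bs_{U_1}(Z_1)=d\bt_{U_2}(Z_2)$, and here the left side is $0$ while the right side is $\rho(\balpha)\circ\bt_{U_2}$, which is nonzero unless $\balpha$ is anchored trivially. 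Equivalently, the groupoid fact you would need -- that $(\rar{\balpha},\rar{\balpha})$ is $m$-related to $\rar{\balpha}$ -- is false; the pair is not even tangent to $\cG\times_{\bs,\bt}\cG$.

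The correct partner for $Z_2$ is a vector field $W_1\in\vX(U_1)$ that is $\varphi_1$-related to $i_*\rar{\balpha}=-\lar{\balpha}$, supplied by the \emph{left}-invariant half of condition ii) for $U_1$; then $d\bs_{U_1}(W_1)=\rho(\balpha)\circ\bs_{U_1}$ matches $d\bt_{U_2}(Z_2)$, so $(W_1,Z_2)$ is tangent, and since $(i_*\rar{\balpha},\rar{\balpha})$ is $m$-related to the \emph{zero} vector field, $(W_1,Z_2)$ is $(\varphi_1\cdot\varphi_2)$-related to $0=\rar{0}\in\rar{\cB}$ -- which still places it in $\Gamma(U_1\circ U_2,\ker d\bs_{12})^{proj,\cB}$. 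With this fix your decomposition closes up exactly as in the paper: given $(X_1,X_2)\in\ker d\bs_{12}$, extend $X_2$ to such a $Z_2$, take the matched $(W_1,Z_2)$, and absorb the $\bs_{U_1}$-vertical discrepancy $X_1-(W_1)|_{u_1}$ into a pair $(Z_1,0)$ of the type you already produced for the vertical directions. Your closing remark, that the second equality in iii) follows by identifying the inverse of $U_1\circ U_2$ with $\bar U_2\circ\bar U_1$ and invoking Prop. \ref{prop:inv}, is a legitimate (and slightly slicker) alternative to the paper's ``proven similarly.''
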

\begin{proof}
Consider two bisubmersions $(U_i,\varphi_i,\cG)$, $i=1,2$. 
 For condition i) of definition \ref{dfn:bisubm2} consider the two natural maps
 $$\bt_{12}, \bs_{12} \colon U_1 \circ U_2 \to M$$ given by 
 $\bt_{12} :=\bt \circ (\varphi_1\cdot\varphi_2)= \bt_{U_1} \circ p_1$ and 
 $\bs_{12} := \bs \circ (\varphi_1\cdot\varphi_2) =
 \bs_{U_2} \circ p_2$, where $p_i\colon U_1 \circ U_2 \to U_i$ are the projections.  They are  both submersions because  the $p_i$ are submersions and because $\bt_{U_1}$ and $\bs_{U_2}$ are submersions. 

For condition ii), let $\balpha \in \cB$ and take a vector field $Z_1\in \vX(U_1)$ which is $\varphi_1$-related to  $\rar{\balpha}$ (it exists since condition
 ii) holds for the bisubmersions $(U_1,\varphi_1,\cG)$). Notice that  $(\rar{\balpha},0)$ restricts to a vector field on $\cG\times_{\bs,\bt}\cG$ which   is $m$-related to $\rar{\balpha}$.
Hence the vector field $(Z_1,0)$ {on $U_1 \circ U_2$} is related by $\varphi_1\cdot \varphi_2=m\circ (\varphi_1, \varphi_2)$ to $\rar{\balpha}$. Similarly, taking $W_2\in \vX(U_2)$ which is $\varphi_2$-related to  $\lar{\balpha}$, we see that $(0,W_2)$ is $\varphi_1\cdot \varphi_2$-related to $\lar{\balpha}$.

Now we prove that $(U_1 \circ U_2,\varphi_1\cdot\varphi_2,\cG)$ satisfies condition iii) of definition \ref{dfn:bisubm2}. For simplicity we'll show the first of the equalities appearing there, namely $$(\varphi_1\cdot\varphi_2)^{-1}(\rar{\cB})=\Gamma_c(U_1\circ U_2;\ker d\bs_{12})$$ (the second is proven similarly). 
First notice that there are distinguished elements of $\Gamma_c(U_1\circ U_2;\ker d\bs_{12})$:
\begin{enumerate}
\item[(*)] $(W_1,Z_2)\in \vX(U_1\circ U_2)$ such that $W_1$ is $\varphi_1$-related to $i_*\rar{\balpha}$, and  $Z_2$ is $\varphi_2$-related to $\rar{\balpha}$, for some $\balpha\in \cB$,
\item[(**)] $(Z_1,0)$ such that $Z_1$ is  $\varphi_1$-related to $\rar{\balpha}$ for some $\balpha\in \cB$.
\end{enumerate}
{Notice that 
both families  $(*)$ and $(**)$ consist of vector fields which  are $\varphi_1\cdot\varphi_2$-related to $\rar{\alpha}$ for some $\balpha\in \cB$.
Indeed, for any $\balpha\in \cB$, $(i_*\rar{\balpha},\rar{\balpha})$ is $m$-related to the zero vector field on $\cG$, and 
$(\rar{\balpha},0)$ is $m$-related to $\rar{\balpha}$, where $m$ is the multiplication.
Hence Lemma \ref{lem:bisubm2} c), together with the following claim, finishes the proof.}

\underline{Claim:}
\emph{The union of the   families  of vector fields $(*)$ and $(**)$, evaluated at any point  $(u_1,u_2)\in U_1\circ U_2$, span the kernel of $d\bs_{12}$ at $(u_1,u_2)$.}\\
Fix a vector in the  kernel of $d\bs_{12}$, that is, $(X_1,X_2)\in T_{u_1}U_1\times  T_{u_2}U_2$ such that
$$d\bs_{U_1}(X_1)=d\bt_{U_2}(X_2) \text{ and }d\bs_{U_2}(X_2)=0.$$
Since $(\varphi_2)^{-1}(\rar{\cB})=\Gamma_c(\ker d\bs_{U_2})$, {by  Lemma \ref{lem:bisubm2} c)} we can extend $X_2$ to a vector field $Z_2\in \vX(U_2)$ which is $\varphi_2$-related to
$\rar{\balpha}$ for some $\balpha\in \cB$. Since $U_1$ satisfies condition ii) in definition \ref{dfn:bisubm2}, there is $W_1\in \vX(U_1)$ which is $\varphi_1$-related to $i_*\rar{\balpha}=-\lar{\balpha}$. Notice that $(W_1,Z_2)$ belongs to the  family of vector fields $(*)$.

Further, the map $d_{u_1}\bs_{U_1}$ sends both $X_1$ and $(W_1)|_{u_1}$  to the same vector $d_{u_2}\bt_{U_2}(X_2)$. This means that  $X_1-(W_1)|_{u_1}$ lies in $\ker d_{u_1}\bs_{U_1}$, and therefore 
can be extended to a vector field $Z_1$ lying in $ \Gamma(U_1,\ker d\bs_{U_1})$. Since $(\varphi_1)^{-1}(\rar{\cB})=\Gamma_c(U_1,\ker d\bs_{U_1})$, {by  Lemma \ref{lem:bisubm2} c)} we can choose $Z_1$ so that it is $\varphi_1$-related to {some $\rar{\alpha'}$}, \ie so that $(Z_1,0)$ belongs to the  family of vector fields $(**)$. Now  $$(X_1,X_2)=((W_1)|_{u_1},X_2)+(X_1-(W_1)|_{u_1},0)=
\left((W_1,Z_2)+(Z_1,0)\right)|_{(u_1,u_2)},$$
proving the claim.
\end{proof}

\begin{ex}\label{G2bisub} Let $\cB=\Gamma_c(A)$, where $A=Lie(\cG)$. We saw in example \ref{ep:groid} that    $(\cG,Id_\cG,\cG)$ is a  bisubmersion for $\cB$. From proposition  \ref{prop:compo} we deduce that
$(\cG\times_{\bs,\bt}\cG, m, \cG)$ is also a  bisubmersion for $\cB$. The multiplication $m\colon \cG\times_{\bs,\bt}\cG \to \cG$ is a morphism of bisubmersions.
\end{ex}

\subsubsection{Bisections}

\begin{definition}\label{def:bisec}
Let $(U,\varphi,\cG)$ be a bisubmersion for $\cB$.
\begin{enumerate}
\item A \textbf{bisection} of $(U,\varphi,\cG)$ is a locally closed submanifold $\bb$ of $U$  such that the restrictions of both $\bs_U$ and $\bt_U$ to $\bb$ are diffeomorphisms from $\bb$ onto open subsets of $M$.

\item Let $u \in U$ and $\bc$ a bisection of the Lie groupoid $\cG$ with $\varphi(u)\in \bc$.
 We say that $\bc$ is carried by $(U,\varphi,\cG)$ at $u$ if there exists a bisection $\bb$ of $U$ such that $u \in \bb$ and $\varphi(\bb)$ is an open subset of $\bc$.
\end{enumerate}
\end{definition}

{Notice that if $\bb$ is a bisection of $(U,\varphi,\cG)$, then 
$\varphi(\bb)$ is a bisection of the Lie groupoid $\cG$, and further $\varphi|_{\bb}\colon \bb\to \varphi(\bb)$ is a diffeomorphism.
This is best seen viewing bisections of $(U,\varphi,\cG)$ as images of   maps $b$ which are sections of $\bs_U\colon U\to M$ so that $\bt_U \circ b$ is local diffeomorphism of $M$.
}

The existence of bisections at every $u \in U$ of a bisubmersion $(U,\varphi,\cG)$ is proven exactly like in \cite[Prop. 2.7]{AndrSk}. Notice that the proof uses only the fact that $\bt_U, \bs_U\colon U \to M$ are submersions. {The following proposition is given without proof.}
 
\begin{prop}
Let $(U,\varphi,\cG)$ and $(U_i,\varphi_i,\cG)$, $i=1,2$ be bisubmersions.
\begin{enumerate}
\item Let $u \in U$ and $\bc$  a local  bisection of $\cG$ carried by $(U,\varphi,\cG)$ at $u$. Then $\bc^{-1}$ is carried by the inverse bisubmersion $({U},\bar{\varphi},\cG)$ at $u$.
\item Let $u_i \in U_i$, $i=1,2$ be such that $\bs_{U_1}(u_1)=\bt_{U_2}(u_2)$ and let $\bc_i$ be local  bisections of $\cG$ carried by $(U_i,\varphi_i,\cG)$ at $u_i$ respectively, $i=1,2$. Then $\bc_1\cdot\bc_2$ is carried by the composition $(U_1\circ U_2,\varphi_1 \cdot \varphi_2,\cG)$ at $(u_1,u_2)$.
\end{enumerate}
\end{prop}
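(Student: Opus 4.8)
The plan is to unwind both statements directly from the definitions of carried bisection (Def. \ref{def:bisec}(2)), inverse bisubmersion (Def. \ref{def:inv}) and composition (Def. \ref{def:comp}), together with the observation recorded after Def. \ref{def:bisec} that for a bisection $\bb$ of a bisubmersion $(U,\varphi,\cG)$ the map $\varphi|_{\bb}\colon \bb\to \varphi(\bb)$ is a diffeomorphism onto a bisection of $\cG$. Throughout I will think of bisections of bisubmersions as images of sections $b$ of $\bs_U$ with $\bt_U\circ b$ a local diffeomorphism, and similarly of bisections of $\cG$ as images of sections of $\bs$ with $\bt$ a local diffeomorphism on them; this makes the bookkeeping of source/target maps transparent.

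\emph{Proof of (1).} By hypothesis there is a bisection $\bb$ of $(U,\varphi,\cG)$ with $u\in\bb$ and $\varphi(\bb)$ an open subset of $\bc$. I claim the \emph{same} submanifold $\bb$ is a bisection of the inverse bisubmersion $({U},\bar{\varphi},\cG)$ witnessing that $\bc^{-1}$ is carried at $u$. First, being a bisection of a bisubmersion only involves $\bs_U$ and $\bt_U$; since $\bar{\bs}_U=\bt_U$ and $\bar{\bt}_U=\bs_U$ (see the proof of Prop. \ref{prop:inv}), $\bb$ is again a bisection of $({U},\bar{\varphi},\cG)$, and of course $u\in\bb$. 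Second, $\bar{\varphi}(\bb)=(i\circ\varphi)(\bb)=i(\varphi(\bb))$; since $\varphi(\bb)$ is an open subset of $\bc$ and $i\colon\cG\to\cG$ is a diffeomorphism that maps $\bc$ to $\bc^{-1}$, we conclude that $\bar{\varphi}(\bb)$ is an open subset of $\bc^{-1}$. Finally $\varphi(u)\in\bc$ gives $\bar{\varphi}(u)=\varphi(u)^{-1}\in\bc^{-1}$, so all conditions of Def. \ref{def:bisec}(2) hold.

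\emph{Proof of (2).} Choose bisections $\bb_i$ of $(U_i,\varphi_i,\cG)$ with $u_i\in\bb_i$ and $\varphi_i(\bb_i)$ open in $\bc_i$. I propose as witnessing bisection of the composition the fibre product
\begin{equation*}
\bb_1\times_{\bs_{U_1},\bt_{U_2}}\bb_2\ \subset\ U_1\times_{\bs_{U_1},\bt_{U_2}}U_2=U_1\circ U_2,
\end{equation*}
which contains $(u_1,u_2)$ because $\bs_{U_1}(u_1)=\bt_{U_2}(u_2)$. To see this is a bisection of $U_1\circ U_2$, use the section description: if $\bb_i=\mathrm{image}(b_i)$ with $b_i$ a section of $\bs_{U_i}$ and $\bt_{U_i}\circ b_i$ a local diffeomorphism, then $b_2$ followed by $b_1\circ(\bt_{U_2}\circ b_2)$ defines a section of $\bs_{12}=\bs_{U_2}\circ p_2$ whose composition with $\bt_{12}=\bt_{U_1}\circ p_1$ is $(\bt_{U_1}\circ b_1)\circ(\bt_{U_2}\circ b_2)$, a local diffeomorphism; its image is exactly $\bb_1\times_{\bs_{U_1},\bt_{U_2}}\bb_2$. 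Next, $(\varphi_1\cdot\varphi_2)(\bb_1\times\bb_2)=m(\varphi_1(\bb_1)\times_{\bs,\bt}\varphi_2(\bb_2))$; since $\varphi_i(\bb_i)$ is an open subset of the bisection $\bc_i$ of $\cG$, and the product of bisections $\bc_1\cdot\bc_2$ is by definition $m$ applied to the corresponding fibre product (see the footnote after Lemma \ref{lem:kappa}), this image is an open subset of $\bc_1\cdot\bc_2$. Finally $(\varphi_1\cdot\varphi_2)(u_1,u_2)=\varphi_1(u_1)\varphi_2(u_2)\in\bc_1\cdot\bc_2$, completing Def. \ref{def:bisec}(2).

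The only mildly delicate point — and hence the step I would be most careful about — is the verification in (2) that the fibre product $\bb_1\times_{\bs_{U_1},\bt_{U_2}}\bb_2$ is genuinely a (locally closed sub)manifold on which $\bs_{12}$ and $\bt_{12}$ restrict to local diffeomorphisms onto open subsets of $M$; this is where one really uses that $\bs_{U_1}|_{\bb_1}$ and $\bt_{U_2}|_{\bb_2}$ are diffeomorphisms onto open subsets of $M$, so that the fibre product is cut out transversally and is diffeomorphic (via $p_2$) to an open subset of $\bb_2$. Everything else is a formal chase through the definitions, using that $i$ and $m$ are smooth and that taking images commutes with the relevant set-theoretic operations. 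I would also remark that, just as in \cite[Prop. 2.7 ff.]{AndrSk}, shrinking the $\bb_i$ if necessary we may arrange all the images above to be open in the respective bisections without affecting the point $u$ (resp. $(u_1,u_2)$).
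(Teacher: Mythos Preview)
The paper explicitly states that this proposition ``is given without proof,'' so there is no argument to compare against. Your proof is correct and is exactly the direct unwinding of Definitions \ref{def:inv}, \ref{def:comp} and \ref{def:bisec} that the omission invites; in particular, your choices of witnessing bisections (the same $\bb$ for part (1), and the fibre product $\bb_1\times_{\bs_{U_1},\bt_{U_2}}\bb_2$ for part (2)) are the natural ones, and your care about the transversality of the fibre product in (2) is well placed.
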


\subsubsection{{Modifying a bisubmersion by a bisection}}\label{modify}
 
Let $(U, \bt_U, \bs_U)$  be a bisubmersion for a  {singular foliation} $(M,\cF)$ (in the sense of \cite{AndrSk}, see \S \ref{section:usualbisub}), and $\bb$ a bisection  of $U$.
An easy but important fact is that the diffeomorphism carried by the bisection, namely $\phi\colon M\to M, \bs_U(u)\mapsto \bt_U(u)$  where $u\in \bb$,  is an \emph{automorphism}\footnote{To see this, 
notice that  $(\bt_U)|_{\bb}$ is an isomorphism of foliated manifolds between
 the submanifold $\bb$ -- endowed  with the restriction of the pullback foliation 
 $\bt_U^{-1}(\cF)$ -- and $(M,\cF)$. The same holds for $\bs_U$ in place of $\bt_U$, and the results follows since 
$\bt_U^{-1}(\cF)=\bs_U^{-1}(\cF)$.}
 of the singular foliation $\cF$, \ie $\phi_* \cF=\cF$. This fact was used in \cite[\S 2.3]{AndrSk} to obtain a new bisubmersion for $\cF$ out of $(U, \bt_U, \bs_U)$ and $\bb$.
 
We now derive an analog statement for bisubmersions of singular subalgebroids. This will be used in the proof of Corollary \ref{cor:crucial} and in Lemma \ref{lem:sm1}.

\begin{prop}\label{prop:preserveb}
Let   $(U,\varphi,\cG)$ be a bisubmersion for the singular subalgebroid $\cB$, and let $\bb$ be a bisection of $U$. Denote by $\bc:=\varphi(\bb)$ the induced bisection of the Lie groupoid $\cG$, and by $L_{\bc}\colon G\to G$ the
left multiplication by $\bc$. Denote by $\bc^{-1}$ the image of $\bc$ under the inversion map. Then

\begin{itemize}
\item [a)] $(L_{\bc})_*\rar{\cB}=\rar{\cB}$. 
\item [b)] $(U,L_{\bc^{-1}}\circ \varphi,\cG)$ is also a  bisubmersion for $\cB$. 
\end{itemize}

\end{prop}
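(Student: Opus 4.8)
The plan is to prove both statements by reducing them to structural facts about right-invariant vector fields and to the definition of bisubmersion (Def. \ref{dfn:bisubm2}).

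\textbf{Part a).} First I would establish the key pointwise fact: for any bisection $\bc$ of $\cG$, left multiplication $L_{\bc}\colon \cG \to \cG$ (defined on the open set $\bt^{-1}(\bt(\bc)) = \bs^{-1}(\text{dom})$ suitably) preserves each right-invariant vector field. Indeed, if $\balpha \in \Gamma_c(A)$ and $\rar{\balpha}$ is the associated right-invariant vector field, then since $L_{\bc}$ commutes with all right-translations $R_g$, we get $(L_{\bc})_* \rar{\balpha} = \rar{\balpha}$ wherever defined. Concretely, $(L_{\bc})_*(\rar{\balpha}_g) = (L_{\bc})_*(R_g)_* \balpha_{\bt(g)} = (R_{\bc \cdot g})_*(L_{\bc})_* \balpha_{\bt(g)}$; but one must be a little careful since $L_{\bc}$ acts on a point $g$ by sending it to $\bc(\bt(g)) \cdot g$, and $\bt(\bc(\bt(g)) \cdot g)$ changes. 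The clean way is: $L_{\bc}$ maps the $\bs$-fiber over $x$ to itself, hence preserves $\ker d\bs$; and because $L_{\bc} \circ R_g = R_g \circ L_{\bc}$ on the relevant domains, the right-invariant extension of $\balpha$ is carried to the right-invariant extension of $\balpha$. Thus $(L_{\bc})_*$ maps the generating set $\{\rar{\balpha} : \balpha \in \cB\}$ of $\rar{\cB}$ to itself, and since $L_{\bc}$ is a diffeomorphism onto an open set, it also respects $C^{\infty}_c$-linear combinations and supports, so $(L_{\bc})_* \rar{\cB} = \rar{\cB}$ as claimed. (Strictly, since $\bc$ may only be a local bisection, this equality should be read as an equality of the restrictions of $\rar{\cB}$ to the relevant open subsets of $\cG$, exchanged by $L_{\bc}$.)

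\textbf{Part b).} I would verify the three conditions of Def. \ref{dfn:bisubm2} for $\varphi' := L_{\bc^{-1}} \circ \varphi$. For condition i): note $\bs \circ \varphi' = \bs \circ L_{\bc^{-1}} \circ \varphi = \bs \circ \varphi = \bs_U$, since left multiplication preserves $\bs$-fibers; hence $\bs_{U}' = \bs_U$ is a submersion. For the target, $\bt_U' := \bt \circ L_{\bc^{-1}} \circ \varphi$; on $\bb$, $\varphi(\bb) = \bc$, so $L_{\bc^{-1}}(\bc) = 1_M$ and $\bt_U'|_{\bb} = \bs_U|_{\bb}$; in general $\bt_U' = \phi^{-1} \circ \bt_U$ where $\phi$ is the diffeomorphism of (an open subset of) $M$ carried by $\bc$, so $\bt_U'$ is a submersion because $\bt_U$ is and $\phi$ is a diffeomorphism. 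For condition ii): given $\balpha \in \cB$, by part a) (applied with $\bc^{-1}$ in place of $\bc$) $\rar{\balpha}$ is $L_{\bc^{-1}}$-related to $\rar{\balpha}$ itself; composing with the $\varphi$-related lift $Z \in \vX(U)$ of $\rar{\balpha}$ given by condition ii) for $(U,\varphi,\cG)$, we get that $Z$ is $\varphi'$-related to $\rar{\balpha}$. For the left-invariant half, I need that $\lar{\balpha}$ is $L_{\bc^{-1}}$-related to some element of $\lar{\cB}$; this is where one uses that $\bc$ is a \emph{bisection} of $\cG$: conjugation-type considerations show $(L_{\bc^{-1}})_* \lar{\balpha} = \lar{(\text{Ad}_{\bc}\text{-type transform of }\balpha)}$, and more simply $(L_{\bc^{-1}})_*\lar{\balpha}$ is tangent to $\bt$-fibers of $\cG$ and lies in $\lar{\cB}$ (again because $\lar{\cB}$ has generating set $\{\lar{\balpha}\}$ and $L_{\bc^{-1}}$ preserves $\bt$-fibers and maps left-invariant vector fields to left-invariant vector fields up to the relevant open domains). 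So the $\varphi$-related lift $W$ of $\lar{\balpha}$ is $\varphi'$-related to an element of $\lar{\cB}$, which suffices by Lemma \ref{lem:liftequiv} / Remark \ref{rem:condii}. For condition iii): since $\bs_U' = \bs_U$ and $L_{\bc^{-1}}$ induces (via part a)) an isomorphism of the sheaf $\rar{\cB}$ intertwining pullbacks, we get $(\varphi')^{-1}(\rar{\cB}) = \varphi^{-1}((L_{\bc^{-1}})^{-1} \rar{\cB}) = \varphi^{-1}(\rar{\cB}) = \Gamma_c(U, \ker d\bs_U) = \Gamma_c(U, \ker d\bs_U')$. For the $\lar{\cB}$ equation, $(\varphi')^{-1}(\lar{\cB}) = \varphi^{-1}((L_{\bc^{-1}})^{-1}\lar{\cB})$, and $(L_{\bc^{-1}})^{-1}$ maps $\lar{\cB}$ to $\lar{\cB}$ by the bisection argument above; one then checks $\ker d\bt_U' = \ker d(\phi^{-1} \circ \bt_U) = \ker d\bt_U$, so this equals $\varphi^{-1}(\lar{\cB}) = \Gamma_c(U,\ker d\bt_U) = \Gamma_c(U, \ker d\bt_U')$.

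\textbf{Main obstacle.} The delicate point is the left-invariant half of conditions ii) and iii): a local bisection $\bc$ of $\cG$ does \emph{not} preserve left-invariant vector fields on the nose — left translation is compatible with left-invariance only for \emph{global} elements, whereas $L_{\bc}$ moves $\bt$-fibers — so one must argue that $(L_{\bc})_*$ nonetheless maps the \emph{module} $\lar{\cB}$ to itself. The cleanest route is to pass through the inversion map $i\colon \cG \to \cG$, which intertwines $\rar{\cB}$ with $\lar{\cB}$ and conjugates $L_{\bc}$ into a right-multiplication-type map, reducing the left-invariant statement to part a); alternatively, one observes that $\bc$ being a bisection means the diffeomorphism $L_{\bc}$ restricted to a $\bt$-fiber lands in another $\bt$-fiber isomorphically, and that the induced action on sections of $\ker d\bt$ respects the submodule $\lar{\cB}$ because the bisection carries an automorphism of $\cB$ (cf. Remark \ref{rem:rephrasea}). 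I would spell this out via the inversion map, since that keeps the bookkeeping minimal and lets part b) be a formal consequence of part a) together with Prop. \ref{prop:inv}.
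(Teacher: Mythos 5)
There is a genuine gap, and it sits at the heart of part a). Your key claim --- that $L_{\bc}$ commutes with right translations and therefore satisfies $(L_{\bc})_*\rar{\balpha}=\rar{\balpha}$ for every $\balpha$ --- is false. The commutation $L_{\bc}\circ R_g=R_g\circ L_{\bc}$ only shows that $(L_{\bc})_*$ maps right-invariant vector fields to right-invariant vector fields; writing it out, $(L_{\bc})_*\rar{\balpha}_g=(R_g)_*(L_{\bc})_*\balpha_{\bt(g)}$, and the vector $(L_{\bc})_*\balpha_{\bt(g)}$ sits at $\bc(\bt(g))$, not at an identity element, so one must right-translate it back: the result is $\rar{\balpha'}$ with $\balpha'$ the image of $\balpha$ under the adjoint action of the bisection $\bc$ (cf.\ Remark \ref{rem:rephrasea}), not $\balpha$ itself. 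Already for a Lie group $G$ and $\cB=\RR v$ with $v\in\g$, one has $(L_c)_*\rar{v}=\rar{\mathrm{Ad}_c v}$, which lies in $\rar{\cB}$ for a generic $c\in G$ only if $\mathrm{Ad}_c v\in\RR v$ --- so the statement you are actually proving, namely that \emph{every} bisection of $\cG$ preserves $\rar{\cB}$, is false. The entire content of part a) is that the adjoint action of $\bc$ does preserve the module $\cB$, and this uses the hypothesis that $\bc=\varphi(\bb)$ is carried by the bisubmersion $U$ --- a hypothesis your argument never invokes, which is a reliable sign that something has gone wrong. The paper's proof makes essential use of it: it promotes $U$ to a bisubmersion $\widehat{U}=U\times_{\bs_U,\bt}\cG$ for the singular foliation $\rar{\cB}$ on $\cG$ (Prop.\ \ref{lem:bisubrarb}), lifts $\bb$ to a bisection $\widehat{\bb}$ of $\widehat{U}$ whose carried diffeomorphism is exactly $L_{\bc}$, and then quotes the standard fact that bisections of bisubmersions of singular foliations carry automorphisms of the foliation.

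Compounding this, your ``main obstacle'' paragraph has the difficulty located in exactly the wrong half. Left multiplication by a bisection \emph{does} preserve left-invariant vector fields on the nose: on the $\bt$-fiber through $g$ the map $L_{\bc}$ is left multiplication by the fixed element $\bc(\bt(g))$, so $L_{\bc}\circ L_g=L_{\bc(\bt(g))g}$ and hence $(L_{\bc})_*\lar{\balpha}=\lar{\balpha}$ with no Ad-twist --- this is the trivial half of condition ii), as the paper notes. It is the \emph{right}-invariant half that requires the automorphism statement of part a). Your proposed repair via the inversion map would therefore not help: it would reduce the (genuinely easy) left-invariant statement to your part a), which is the part that is unproven. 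Once part a) is correctly established, your treatment of conditions i) and iii) in part b) is essentially the paper's argument ($\bs\circ L_{\bc^{-1}}=\bs$, $\bt\circ L_{\bc^{-1}}=\phi\circ\bt$, and $(L_{\bc^{-1}}\circ\varphi)^{-1}(\rar{\cB})=\varphi^{-1}((L_{\bc})_*\rar{\cB})=\varphi^{-1}(\rar{\cB})$), so the missing ingredient is precisely a correct proof of a).
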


\begin{remark}\label{rem:rephrasea} Item a) above can be rephrased saying that the singular subalgebroid $\cB$ is preserved by the Lie algebroid automorphism of 
$A=\ker(\bs_*|_M)$ induced by $\bc$.
Recall that the conjugation by the bisection $\bc$ is a Lie groupoid automorphism of $G$, which differentiates to the Lie algebroid automorphism $A\to A, a_x\mapsto (R_{{\bc(x)}^{-1}})_*(L_{\bc})_*(a_x)$, where  $\bc(x)$ is the unique point of $\bc$ with source $x$. The fact that this Lie algebroid automorphism preserves  $\cB$ is an immediate consequence of Prop. \ref{prop:preserveb} a), upon using the facts that 
vector fields of the form $(L_{\bc})_*(\rar{\balpha})$ are right-invariant and that   the right invariant vector fields in the global hull of $\rar{\cB}$  are exactly the right-translates of elements of $\cB$. 
\end{remark}
 
 \begin{proof}
a) 
We view the bisection $\bb$ as a map $\bb\colon M\to U$ which is a right-inverse to $\bs_U$. 
In Prop. \ref{lem:bisubrarb} we established that  $\widehat{U}:=U \times_{\bs_U, \bt}G$, with the target and source maps indicated there, is a bisubmersion (in the sense of \cite{AndrSk}) for the singular foliation $\rar{\cB}$ on $G$.
From $\bb$ we obtain a bisection of  $\widehat{U}$, namely $$\widehat{\bb}\colon G\to U \times_{\bs_U, \bt}G,\;\; g \mapsto (\bb(\bt(g)),g).$$
By the text just before Prop. \ref{prop:preserveb}, the diffeomorphism $\bt_{\widehat{U}}\circ \widehat{\bb}\colon G\to G$   carried by  this bisection 
is an automorphism of the singular foliation $\rar{\cB}$. This diffeomorphism
reads $g\mapsto \bc(\bt(g))\cdot g$, \ie it is exactly $L_{\bc}$.

b) We   check that the fact that $(U,\varphi,\cG)$ is a bisubmersion for $\cB$ implies that $(U,L_{\bc^{-1}}\circ \varphi,\cG)$ satisfies 
 the three conditions in Def. \ref{dfn:bisubm2}. 
Condition i) holds because 
$\bs\circ L_{\bc^{-1}} =\bs$, and $\bt\circ L_{\bc^{-1}}=\phi \circ\bt$ for  
some diffeomorphism $\phi$ of $M$.

For ii), let $\balpha \in \cB$. By item a) (or more precisely Remark \ref{rem:rephrasea}) we have $(L_{\bc})_*\rar{\balpha}=\rar{\balpha'}$ for some $\balpha'\in \cB$.
Hence any  $Z\in\vX(U)$
which is $\varphi$-related to $\rar{\balpha'}$ is also $(L_{\bc^{-1}}\circ \varphi)$-related to $\rar{\balpha}$. The second part of condition ii) holds trivially since $(L_{\bc})_*\lar{\balpha}=\lar{\balpha}$. 

Finally, the first part of condition iii) holds because 
$(L_{\bc^{-1}}\circ \varphi)^{-1}(\rar{\cB})=\varphi^{-1} (L_{\bc})_*(\rar{\cB})=\varphi^{-1} (\rar{\cB})$, where the last equation holds by item a). The second part of condition iii) holds similarly.
\end{proof}

{\begin{remark}\label{rem:ccinv} A variation of Prop. \ref{prop:preserveb} b) is the following: under
the same hypotheses of the proposition,  $(U,L_{\bc}\circ \varphi,\cG)$ is also a  bisubmersion for $\cB$. Indeed, since  $(L_{\bc})^{-1}=L_{\bc^{-1}}$,  Prop. \ref{prop:preserveb} a) implies that $(L_{\bc^{-1}})_*\rar{\cB}=\rar{\cB}$, and the proof of Prop. \ref{prop:preserveb} b) gives the claimed result.\end{remark}
}

\section{The Holonomy Groupoid}\label{section:HB}

In this whole section we
fix an integrable Lie algebroid $A\to M$ and a singular subalgebroid $\cB$. Further, we fix a Lie groupoid $\cG$ integrating $A$.

We give the construction of the holonomy groupoid associated with a singular subalgebroid $\cB$,  relying on the methods developed in \cite{AndrSk}.
{In particular, our \S \ref{subsec:compare} and \S \ref{sub:con} follow closely \cite{AndrSk}. A new feature is that the holonomy groupoid depends on the choice of $G$; in \S \ref{section:vary} we describe this dependence.}

\subsection{Comparison of bisubmersions}\label{subsec:compare}
 
We start with a technical result,  needed in the proof of Proposition \ref{prop:crucial}. 

\begin{lemma}\label{lem:linindip}
Let $(U,\varphi,\cG)$ be a bisubmersion for the singular subalgebroid $\cB$, $u\in U$, and  ${\balpha}_1,\dots,{\balpha}_n\in \cB$ which 
{induce a linearly independent set of vectors in $\cB/I_{\bt_U(u)}\cB$.}
Let $Y_1,\dots,Y_n \in \Gamma(U,\ker d\bs_U)$ such that 
$Y_i$ is $\varphi$-related to $\rar{\balpha}_i$ for every $i=1,\dots,n$.    
Then $Y_1(u),\dots,Y_n(u)$ are linearly independent.
\end{lemma}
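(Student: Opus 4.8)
The plan is to reduce linear independence of the $Y_i(u)$ in $\ker d\bs_U$ at the single point $u$ to the linear independence of the $[\balpha_i]$ in $\cB/I_{\bt_U(u)}\cB$, using the right-invariant vector fields on $\cG$ as a bridge. Write $x:=\bt_U(u)$ and $g:=\varphi(u)\in\cG$, so $\bt(g)=x$. First I would record the obvious observation that, because each $Y_i$ lies in $\Gamma(U,\ker d\bs_U)$ and is $\varphi$-related to $\rar{\balpha_i}$, we have $d\varphi_u(Y_i(u))=(\rar{\balpha_i})_g=(R_g)_*(\balpha_i)_x$. Thus a relation $\sum c_i Y_i(u)=0$ with $c_i\in\RR$ would give $(R_g)_*\big(\sum c_i(\balpha_i)_x\big)=0$, and since $R_g$ is a diffeomorphism onto the $\bt$-fiber through $g$, this forces $\sum c_i(\balpha_i)_x=0$ in $A_x=\ker(d\bs)_x$. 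So the images $d\varphi_u(Y_i(u))$ are linearly independent as soon as the vectors $(\balpha_i)_x\in A_x$ are; but that is \emph{not} what the hypothesis gives us — the hypothesis is about independence in $\cB/I_x\cB$, which is generally a larger space than the span of the $(\balpha_i)_x$ in $A_x$. So a more careful argument is needed: $d\varphi_u$ need not be injective on $\ker d\bs_U$, and the $(\balpha_i)_x$ genuinely can be dependent in $A_x$ while $[\balpha_i]$ stay independent in $\cB/I_x\cB$.

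The key idea I would use is to transfer everything to $\cG$ via the right-invariant foliation $\rar{\cB}$ and invoke the characterization of minimal generators in Lemma~\ref{lem:basis} and Remark~\ref{rem:basis}. By Lemma~\ref{lem:basis}, the hypothesis that $[\balpha_1],\dots,[\balpha_n]$ are linearly independent in $\cB/I_x^M\cB$ is equivalent to $[\rar{\balpha_1}],\dots,[\rar{\balpha_n}]$ being linearly independent in $\rar{\cB}/I_x^{\cG}\rar{\cB}$, where $x\in\cG$ is regarded as the identity element $1_x$. However I actually need independence of the classes $[\rar{\balpha_i}]$ at the point $g=\varphi(u)$, not at $1_x$. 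Here I would use Prop.~\ref{prop:preserveb}: choose a bisection $\bb$ of $U$ through $u$, let $\bc=\varphi(\bb)$ be the carried bisection of $\cG$, so $\bc(x)=g$. Prop.~\ref{prop:preserveb}(a) gives $(L_{\bc})_*\rar{\cB}=\rar{\cB}$, hence left translation by $\bc$ is an automorphism of the foliated manifold $(\cG,\rar{\cB})$ carrying $1_x$ to $g$; thus the classes $[\rar{\balpha_i}]$ are independent in $\rar{\cB}/I_g^{\cG}\rar{\cB}$ iff $(L_{\bc^{-1}})_*\rar{\balpha_i}$ have independent classes at $1_x$, which (composing with the isomorphism induced by $(L_{\bc})_*$) is what we know. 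So after this reduction the $[\rar{\balpha_i}]$ form a linearly independent family in $\rar{\cB}/I_g^{\cG}\rar{\cB}$.

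Now the proof is finished by a standard pullback-of-foliations argument applied to $\varphi\colon U\to\cG$ regarded as providing, via condition iii) of Def.~\ref{dfn:bisubm2}, the identification $\varphi^{-1}(\rar{\cB})=\Gamma_c(U,\ker d\bs_U)$. Suppose $\sum c_i Y_i(u)=0$. Extend to a local argument: the $Y_i$ are $\varphi$-related to $\rar{\balpha_i}$, so $Z:=\sum c_i Y_i\in\Gamma(U,\ker d\bs_U)$ is $\varphi$-related to $\rar{\balpha}$ where $\balpha:=\sum c_i\balpha_i$, and $Z(u)=0$. I would then run the local linear-algebra argument of \cite[Prop.~2.10(c) / Cor.~2.11]{AndrSk} for the bisubmersion $\varphi$ of $\rar{\cB}$: near $u$ complete $Y_1,\dots,Y_n$ to a frame of $\ker d\bs_U$ by vector fields $Y_{n+1},\dots,Y_k$ that are $\varphi$-related to $0$ (possible since $\ker(d\varphi_u)\cap\ker d\bs_U$ is spanned by such, as $d\varphi$ maps $\ker d\bs_U$ onto $\varphi^*\rar{\cB}$ by Remark~\ref{donto}); then the $d\varphi(Y_j)=\rar{\balpha_j}\circ\varphi$ together with the relations express that $Z=\sum_{j\le n}c_jY_j$ being $\varphi$-related to $\rar{\balpha}$ and vanishing at $u$ forces, by evaluating in $\rar{\cB}/I_g^{\cG}\rar{\cB}$ after dividing by a function vanishing at $u$, that $[\rar{\balpha}]=\sum c_i[\rar{\balpha_i}]=0$, hence $c_i=0$ by the independence established in the previous paragraph. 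The main obstacle I anticipate is precisely this last step: because $\varphi$ is typically not a submersion and not transverse to $\rar{\cB}$, the usual ``pullback foliation'' toolkit of \cite{AndrSk} does not apply verbatim, and I will have to argue by hand that a section of $\ker d\bs_U$ that is $\varphi$-related to $\rar{\balpha}$ and vanishes at $u$ forces $\rar{\balpha}\in I_g^{\cG}\rar{\cB}$, using only condition ii)/iii) of Def.~\ref{dfn:bisubm2} (equivalently Lemma~\ref{lem:bisubm2}(c) and Remark~\ref{donto}) rather than transversality.
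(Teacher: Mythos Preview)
Your overall instinct---reduce a dependence relation $\sum c_iY_i(u)=0$ to $\sum c_i[\balpha_i]=0$ in the fibre quotient---is exactly right, and it is what the paper does. But the route you sketch through $\rar{\cB}/I_g^{\cG}\rar{\cB}$ (via Lemma~\ref{lem:basis} and Prop.~\ref{prop:preserveb}) is an unnecessary detour, and your final paragraph contains a genuine gap. Your proposed frame-completion ``complete $Y_1,\dots,Y_n$ to a frame of $\ker d\bs_U$ by $Y_{n+1},\dots,Y_k$ that are $\varphi$-related to $0$'' assumes that $Y_1(u),\dots,Y_n(u)$ are already linearly independent, which is the conclusion. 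Worse, even setting that aside, the step you flag as the ``main obstacle'' really is one: from $Z=\sum c_iY_i$ vanishing at $u$ you get $\rar{\balpha}\circ\varphi=\sum_j f_j(\rar{\boldsymbol{\beta}_j}\circ\varphi)$ with $f_j\in I_u$, but since $\varphi$ is not a submersion there is no way to push the $f_j$ down to functions in $I_g^{\cG}$ on $\cG$, so you cannot conclude $\rar{\balpha}\in I_g^{\cG}\rar{\cB}$.

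The paper avoids this by never working at $g\in\cG$ at all. The key observation (implicit in Remark~\ref{donto}) is that right translation identifies $\varphi^*(\rar{\cB})$ with $\bt_U^*\cB$, so $d\varphi$ may be regarded as a map $\Gamma_c(U,\ker d\bs_U)\to\bt_U^*\cB$. One then chooses, via Lemma~\ref{lem:bisubm2}(c), a local frame $W_1,\dots,W_k$ of $\ker d\bs_U$ near $u$ with each $W_j$ $\varphi$-related to some $\rar{\boldsymbol{\beta}_j}$, writes $\sum_i\lambda_iY_i=\sum_jf_jW_j$ with $f_j\in I_u$, and applies $d\varphi$ to obtain $\sum_i\lambda_i\,\bt_U^*\balpha_i=\sum_jf_j\,\bt_U^*\boldsymbol{\beta}_j$. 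Now the left-hand side is pulled back from $M$ via the \emph{submersion} $\bt_U$; choosing a local section $\tau$ of $\bt_U$ with $\tau(\bt_U(u))=u$ lets one replace each $f_j$ by $F_j:=\tau^*f_j\in I_{\bt_U(u)}^M$ without changing the equation, yielding $\sum_i\lambda_i\balpha_i=\sum_jF_j\boldsymbol{\beta}_j$ in $\cB$ and hence $\sum_i\lambda_i[\balpha_i]=0$ in $\cB/I_{\bt_U(u)}\cB$. The use of $\bt_U$ rather than $\varphi$ is precisely what circumvents the non-transversality issue you anticipated.
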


 \begin{proof}
The existence of the $Y_i$ follows from Definition \ref{dfn:bisubm2} ii). We show that they are linearly independent at $u$.

First, recall from Rem. \ref{donto} that there is a well-defined map of $C^{\infty}(U)$-modules $d\varphi \colon \Gamma_c(U;\ker d\bs_U) \to \varphi^*(\rar{\cB})$. Denote by $\bt\colon G\to M$ the target map of the Lie groupoid. Upon the identification between the pullback vector bundle $\bt^*(\ker d\bs|_M)$ and $\ker d\bs$ given by right-translation, we have $\bt^* \cB=\rar{\cB}$. Hence, since $\bt_U=\bt\circ \varphi$, the above map can be written as $d\varphi \colon \Gamma_c(U;\ker d\bs_U) \to \bt_U^*{\cB}$.

Take constants $\lambda_i$ such that $\sum  \lambda_iY_i$ vanishes at $u$. We need to show that   $\lambda_i=0$ for all $i$.  By definition \ref{dfn:bisubm2} iii) and Lemma \ref{lem:bisubm2} c)
there are elements  $W_1,\dots,W_k\in \Gamma(U,\ker d\bs_U)$ which form a frame for $\ker d\bs_U$ over a neighborhood $U_0$ of $u$, and  which are $\varphi$-related to elements {$\rar{\boldsymbol{\beta}_j}$}, for some $\boldsymbol{\beta}_j\in \cB$.

On $U_0$ we have 
$\sum \lambda_iY_i=\sum f_jW_j$
for some  $f_j\in I_u\subset C^{\infty}(U_0)$.
Applying $d\varphi$ to this equation we obtain 
\begin{equation}\label{eqn:F2}
\sum \lambda_i \;\bt_U^*({\balpha_i})=\sum f_j\;\bt_U^*({\boldsymbol{\beta}_j})\end{equation}

 Choose a (locally defined) section $\tau$ of the submersion $\bt_U\colon U\to  M$. 
Notice that the l.h.s. of eq. \eqref{eqn:F2} is the pullback by $\bt_U$ of an element of $\cB$, namely $\sum \lambda_i  \balpha_i$, hence the above expression is determined by its value on the image $Im(\tau)$. Therefore the value of  the  r.h.s. of
eq. \eqref{eqn:F2} is unchanged 
if we replace the coefficients $f_j$ with $\bt_U^*F_j$, where $F_j=\tau^*f_j \in C^{\infty}(M)$.
Hence we have the following equality of elements of $\cB$:
\begin{equation*} 
\sum \lambda_i  {\balpha_i}=\sum F_j {\boldsymbol{\beta}_j}. 
\end{equation*}
Since $F_j\in I_{\bt_U(u)}$, the image of this element in $\cB/I_{\bt_U(u)}\cB$ vanishes. But the image is $\sum \lambda_i  [{\balpha_i}]$, and from the linear independence of the 
$[{\balpha_i}]$, we conclude that $\lambda_i=0$ for all $i$.
\end{proof}
 
{For the following fundamental result, recall that the minimality of a set of generators is defined in Rem. \ref{rem:basis}.}

\begin{prop}\label{prop:crucial}
Let $x \in M$ and $\balpha_1,\dots,\balpha_n
\in \cB$ which form
a minimal set of generators of $\cB$ around $x$.
Let  $(U_0,\varphi_0,\cG)$ be the {path holonomy} bisubmersion they define {(see Def. \ref{dfn:pathhol})}. Let $(U,\varphi,\cG)$ be a bisubmersion of $\cB$ and suppose that $u \in U$ with $\varphi(u)=1_x$ carries the identity bisection $1_M$ of $\cG$. 
 
Then there exists an open neighborhood $U'$ of $u$ in $U$ and a submersion $g\colon U' \to U_0$ which is a morphism of bisubmersions and $g(u)=(0,x)$.
\begin{equation*}
\xymatrix{
U'\ar[rd]_{ \varphi} \ar@{-->}[rr]^{g} & & U_0\ar[ld]^{ \varphi_0} \\
&\cG& }
\end{equation*}
\end{prop}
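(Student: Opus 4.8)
The plan is to mimic the proof of the corresponding statement for singular foliations in \cite[Prop. 2.10 b) and Prop. 2.12]{AndrSk}, transporting everything to the groupoid $\cG$ via right-invariant vector fields. First I would choose, using Definition \ref{dfn:bisubm2} ii), vector fields $Y_1,\dots,Y_n\in\Gamma(U,\ker d\bs_U)$ on (a neighbourhood of $u$ in) $U$ with $Y_i$ being $\varphi$-related to $\rar{\balpha_i}$. Since the $\balpha_i$ form a minimal set of generators of $\cB$ around $x$, in particular the $[\balpha_i]$ are linearly independent in $\cB/I_x\cB$, and $\bt_U(u)=\bt(\varphi(u))=\bt(1_x)=x$; hence Lemma \ref{lem:linindip} applies and gives that $Y_1(u),\dots,Y_n(u)$ are linearly independent in $\ker (d\bs_U)_u$. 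Complete them to a frame $Y_1,\dots,Y_n,Y_{n+1},\dots,Y_k$ of $\ker d\bs_U$ on a neighbourhood of $u$.

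Next I would define $g$ by flowing: choose a local submanifold transverse to $\ker d\bs_U$ through $u$ on which $\bs_U$ restricts to a diffeomorphism onto a neighbourhood of $x$ (e.g. a bisection $\bb$ through $u$; such bisections exist as noted after Definition \ref{def:bisec}), and send a point obtained from $\bb$ by flowing along $\sum_{i\le n}\lambda_i Y_i$ (and the remaining $Y_j$'s with parameters $\mu_j$) to the point $(\lambda, \bt_U(\cdot))\in U_0\subset\RR^n\times M$, after composing with the identification $\bs_U|_\bb$ and forgetting the $\mu_j$-directions. More invariantly: on a small $U'\ni u$ the map $(\lambda,\mu)\mapsto \exp(\sum\lambda_iY_i+\sum\mu_jY_j)$ applied to points of $\bb$ is a diffeomorphism onto $U'$ by the inverse function theorem (the $Y$'s are a frame of $\ker d\bs_U$ and $\bb$ is a complement), and $g$ is the composite "read off $(\lambda,\bt_U)$''; it is a submersion because it forgets the $\mu$-coordinates and $\bt_U$ is a submersion, and $g(u)=(0,x)$ because at $u$ all parameters vanish and $\bt_U(u)=x$. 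The key point that $g$ is a \emph{morphism} of bisubmersions, i.e. $\varphi=\varphi_0\circ g$, follows from the fact that $Y_i$ is $\varphi$-related to $\rar{\balpha_i}$: flowing a point of $\bb$ along $\sum\lambda_iY_i$ and applying $\varphi$ equals flowing $\varphi(\bb)$ along $\sum\lambda_i\rar{\balpha_i}$; since $u$ carries the identity bisection $1_M$, we may shrink $\bb$ so that $\varphi(\bb)$ is an open piece of $1_M$, and then $\varphi(\exp_{\,\cdot}(\sum\lambda_iY_i))=\exp_{\bt_U(\cdot)}(\sum\lambda_i\rar{\balpha_i})=\varphi_0(g(\cdot))$, while the $Y_j$ with $j>n$ lie in $\ker d\bs_U$ and their flow is $\varphi$-related to the flow of vector fields lying over $\rar{\cB}$, so after adjusting $\bb$ one arranges $\varphi$ to be constant in the $\mu$-directions along $1_M$ --- this is exactly the argument of \cite[Prop. 2.10 b)]{AndrSk}, now carried out on $\cG$. (Alternatively one invokes Remark \ref{rem:FB} b) and transfers the statement from the path holonomy bisubmersion $(V,\bt_V,\bs_V)$ of $\rar{\cB}$, for which the assertion is \cite[Prop. 2.12]{AndrSk}, along the inclusion $U\hookrightarrow V$.)

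I expect the main obstacle to be the verification that $\varphi=\varphi_0\circ g$ \emph{with $g$ a submersion}: one must control the directions $Y_{n+1},\dots,Y_k$ complementary to the $Y_i$'s inside $\ker d\bs_U$, showing that $\varphi$ can be made independent of those directions near $u$ after shrinking. Concretely, the hypothesis that $u$ carries the identity bisection $1_M$ of $\cG$ is what makes this possible: it lets us pick the bisection $\bb$ of $U$ through $u$ with $\varphi(\bb)\subset 1_M$, and then along $1_M$ the composition of the source-kernel flows that $\varphi$ induces is trivial (the right-invariant vector fields $\rar{\balpha_j}$ pointing along $\ker d\bs$ generate $\rar{\cB}$, whose flows fix $1_M$ pointwise only to first order --- so one genuinely uses that the extra $Y_j$ are $\varphi$-related to $0$ after modifying the frame, \`a la $\kappa$ in Lemma \ref{lem:kappa}). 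Once this normalization is in place the rest is the inverse function theorem and bookkeeping. I would present the flow construction cleanly and cite \cite[Prop. 2.10, 2.12]{AndrSk} for the parts that are verbatim, emphasizing only where the passage from $M\times M$ to a general $\cG$ requires the bisubmersion axioms ii) and iii) in the form of Lemma \ref{lem:bisubm2} c) and Lemma \ref{lem:linindip}.
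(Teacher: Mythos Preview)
Your overall strategy matches the paper's: lift the $\rar{\balpha_i}$ to $Y_i\in\Gamma(\ker d\bs_U)$, use Lemma~\ref{lem:linindip} for linear independence at $u$, complete to a frame of $\ker d\bs_U$, pick a bisection $\bb$ through $u$ with $\varphi(\bb)\subset 1_M$, and define $g$ via the flow chart and projection onto the first $n$ flow-parameters. The paper does exactly this.

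There is, however, a genuine gap at the point you yourself flag as the obstacle. You correctly recognise that the extra vector fields $Y_{n+1},\dots,Y_k$ must be made $\varphi$-related to the \emph{zero} vector field on $\cG$ (not merely to something in $\rar{\cB}$), since otherwise $\varphi$ will not be constant along the $\mu$-directions and $\varphi_0\circ g=\varphi$ fails. But you do not say \emph{how} this modification is achieved, and the hint ``\`a la $\kappa$ in Lemma~\ref{lem:kappa}'' is a false lead: that lemma concerns the inversion symmetry of path holonomy bisubmersions and has nothing to do with normalising a frame. Likewise, ``adjusting $\bb$'' cannot fix this --- it is the frame that must change, not the bisection. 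The mechanism is bisubmersion axiom iii) in the form of Remark~\ref{donto}: the map $d\varphi\colon\Gamma_c(U;\ker d\bs_U)\to\varphi^*(\rar{\cB})$ is well defined, and since $\rar{\balpha}_1,\dots,\rar{\balpha}_n$ generate $\rar{\cB}$ near $1_x$ (Lemma~\ref{lem:basis}), one can write $d\varphi(Y_j)=\sum_{i\le n} f^i_j\,d\varphi(Y_i)$ for $j>n$ and replace $Y_j$ by $Z_j:=Y_j-\sum_{i\le n} f^i_j Y_i$, which is then $\varphi$-related to $0$. After this correction the flow computation gives $\varphi(\psi_{(\lambda,\mu)}(y))=\exp_{\varphi(y)}\sum_{i\le n}\lambda_i\rar{\balpha_i}$ on the nose, and the rest of your argument goes through.

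Two minor points. First, your alternative route via Remark~\ref{rem:FB}~b) does not apply here: that remark embeds a \emph{path holonomy} bisubmersion of $\cB$ into one for $\rar{\cB}$, whereas in this proposition $(U,\varphi,\cG)$ is an arbitrary bisubmersion. Second, the second coordinate of $g$ should be $\bs_U$ (equivalently $\varphi|_{\bb}$, identifying $\bb$ with an open set in $M$), not $\bt_U$ of the flowed point; these agree on $\bb$ since $\varphi(\bb)\subset 1_M$, but differ off $\bb$.
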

\begin{proof}
Replacing $U$ by an open subset containing $u$, we may assume $\bs_U(U) \subset \bs_{U_0}(U_0)$. By Lemma \ref{lem:linindip} there are $Y_1,\dots,Y_n \in \Gamma(U,\ker d\bs_U)$ such that 
$Y_i$ is $\varphi$-related to $\rar{\balpha}_i$ for every $i=1,\dots,n$, and
 the $Y_1(u),\dots,Y_n(u)$ are linearly independent. Let $Z'_{n+1},\dots,Z'_{k} \in \Gamma(U,\ker d\bs_U)$ such that $(Y_1,\dots,Y_n,Z'_{n+1},\dots,Z'_k)$ is a {frame of $\ker d\bs_U$} nearby $u$. Consider as in    Remark \ref{donto} the map $d\varphi\colon \Gamma_c(U;\ker d\bs_U) \to \varphi^*(\rar{\cB}).$  
For all
 $i=n+1,\ldots,k$ consider also $d\varphi(Z'_i)\in \varphi^*(\rar{\cB})$.
Since $\varphi^*(\rar{\cB})$ is generated by  $\{\varphi^*\rar{\balpha}_j\}_{j=1,\dots,n}$ nearby $u$,
 there exist functions $f_i^j$ nearby $u$ such that
$d\varphi(Z'_i)=d\varphi(\xi_i)$, where
$\xi_i := \sum_{j=1}^n f_i^j Y_j$. Put $Z_i = Z'_i - \xi_i$.
 Then $(Y_1,\dots,Y_n,Z_{n+1},\dots,Z_k)$ is also a {frame}  nearby $u$, since the $\xi$'s are linear combinations of the $Y_i$'s, and further
\begin{equation}\label{eq:nk}
\text{$\varphi_*(Y_i)=\rar{\balpha}_i$ for $i\leq n\;\;\;\;\;$ $\varphi_*(Z_i)=0$ for $i>n$}.
\end{equation}
To unify the notation, denote $Y_i:=Z_i$ for $i>n$. 

   For $\lambda = (\lambda_1,\dots,\lambda_k) \in \RR^k$ small enough, we denote by $\psi_{\lambda}$ the partially defined diffeomorphism $\exp(\sum_{i=1}^k\lambda_i Y_i)$ of $U$. {Denote by $\bb$ a bisection of $U$ through $u$ carrying the identity bisection of $G$}.
There is an open neighborhood $\bb' \subset \bb$ of $u$ and an open ball $B^k$ in $\RR^k$ such that $$h \colon \bb' \times B^k \to U',\quad (y,\lambda) \mapsto \psi_{\lambda}(y)$$ is a diffeomorphism of $\bb' \times B^k$ into an open neighborhood $U'$ of $u$ in $U$.  Notice that for all $y\in \bb'$ we have 
\begin{equation}\label{eq:varphipsi}
\varphi(\psi_{\lambda}(y))=\exp_{\varphi(y)}(\sum_{i=1}^{{n}}\lambda_i \rar{\balpha}_i),
\end{equation}
{where the sum runs only until $n$} as a consequence of eq. \eqref{eq:nk}.
Let $p \colon \RR^k \to \RR^n$ be the projection to the first $n$ coordinates. 
{Use the diffeomorphism $\varphi|_{\bb'}$ to identify $\bb'$ with an open subset of $M$ (thereby changing the domain of $h$).}
We define $$g := p \circ h^{-1} \colon U' \to   U_0.$$ 
The map $g$ is a morphism of bisubmersions by eq. \eqref{eq:varphipsi}, and it is a submersion.
\end{proof}

Corollary \ref{cor:crucial} below 
{allows to define the equivalence relation giving rise to the holonomy groupoid.}

\begin{cor}\label{cor:crucial}
Let $(U_i,\varphi_i,\cG)$, $i=1,2$ be bisubmersions of $\cB$ and $u_i \in U_i$ such that $\varphi_1(u_1) = \varphi_2(u_2)$. 
\begin{enumerate}
\item If the identity bisection $1_M$ of $G$ is carried by $U_i$ at $u_i$, for $i=1,2$, there exists an open neighborhood $U'_1$ of $U_1$ and a morphism of bisubmersions $f\colon U'_1 \to U_2$ such that $f(u_1)=u_2$.
\item If there is a bisection of $\cG$ carried by both $U_1$ at $u_1$ and by $U_2$ at $u_2$, there exists an open neighborhood $U'_1$ of $u_1$ in $U_1$ and a morphism of   bisubmersions $f \colon U'_1 \to U_2$ such that $f(u_1)=u_2$.
\item If there is a morphism of   bisubmersions $g \colon U_1 \to U_2$ such that $g(u_1)=u_2$, then there exists an open neighborhood  $U'_2$ of $u_2$ in $U_2$  and a morphism of   bisubmersions $f \colon U'_2 \to U_1$ such that $f(u_2)=u_1$.
\end{enumerate}
\end{cor}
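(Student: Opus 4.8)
The strategy is to deduce all three items from Proposition \ref{prop:crucial}, plus the operations on bisubmersions (inverse, composition) from \S\ref{subsec:operations} and the bisection-modification result Proposition \ref{prop:preserveb}(b). The key reduction in each case is to arrange a situation where the bisubmersions at the relevant points carry the \emph{identity} bisection of $\cG$, so that one can compare both of them to a common path-holonomy bisubmersion via Proposition \ref{prop:crucial}.

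For item (1): let $x := \bt_{U_1}(u_1) = \bt_{U_2}(u_2)$ (the two agree since $\varphi_1(u_1) = \varphi_2(u_2)$ and both carry $1_M$, so $\varphi_i(u_i) = 1_x$). Pick $\balpha_1,\dots,\balpha_n \in \cB$ a minimal set of generators of $\cB$ around $x$, and let $(U_0,\varphi_0,\cG)$ be the associated path-holonomy bisubmersion, which by Proposition \ref{prop:pathhol} is indeed a bisubmersion, with $(0,x)$ carrying $1_M$. Applying Proposition \ref{prop:crucial} to $(U_1,\varphi_1,\cG)$ and separately to $(U_2,\varphi_2,\cG)$ yields open neighborhoods $U_i' \ni u_i$ and submersive morphisms of bisubmersions $g_i \colon U_i' \to U_0$ with $g_i(u_i) = (0,x)$. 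Now $g_2$ is a submersion, so by Remark \ref{rem:pq} there is a local section $q$ of $g_2$ defined near $(0,x)$ with $q((0,x)) = u_2$, and $q$ is a morphism of bisubmersions. After shrinking $U_1'$ so that $g_1(U_1')$ lies in the domain of $q$, the composite $f := q \circ g_1 \colon U_1' \to U_2$ is a morphism of bisubmersions with $f(u_1) = u_2$. This proves (1).

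For item (2): let $\bc$ be a local bisection of $\cG$ carried by $U_1$ at $u_1$ and by $U_2$ at $u_2$; say $\varphi_i(u_i) = g \in \bc$. Choose a local bisection $\bc'$ of $\cG$ with $1_x \in \bc'$ and $\bc' \cdot \bc$ defined near the relevant points, e.g.\ $\bc' := \bc^{-1}$ (possibly after shrinking, so that $\bc^{-1}\cdot\bc$ contains $1_x$); then $L_{\bc^{-1}}$ maps $g$ to $1_x$. By Proposition \ref{prop:preserveb}(b) (and Remark \ref{rem:ccinv}), $(U_i, L_{\bc^{-1}}\circ\varphi_i, \cG)$ is again a bisubmersion for $\cB$. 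Moreover $L_{\bc^{-1}}\circ\varphi_i$ carries the identity bisection $1_M$ of $\cG$ at $u_i$: indeed a bisection $\bb_i \subset U_i$ through $u_i$ with $\varphi_i(\bb_i)$ open in $\bc$ has $(L_{\bc^{-1}}\circ\varphi_i)(\bb_i)$ open in $L_{\bc^{-1}}(\bc) = \bc^{-1}\cdot\bc$, which locally around $1_x$ is $1_M$. Applying item (1) to the two modified bisubmersions at $u_1,u_2$ gives an open $U_1' \ni u_1$ and a morphism of bisubmersions $f \colon U_1' \to U_2$ (for the modified structures) with $f(u_1) = u_2$; but $f$ being a morphism for the modified structures, $L_{\bc^{-1}}\circ\varphi_2 \circ f = L_{\bc^{-1}}\circ\varphi_1$, hence $\varphi_2\circ f = \varphi_1$, so $f$ is a morphism of the original bisubmersions. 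This proves (2).

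For item (3): given a morphism of bisubmersions $g \colon U_1 \to U_2$ with $g(u_1) = u_2$, first note $\varphi_1(u_1) = \varphi_2(g(u_1)) = \varphi_2(u_2)$, and a local bisection $\bc$ of $\cG$ carried by $U_1$ at $u_1$ (such a $\bc$ exists by the existence of bisections through any point, as recalled before Definition \ref{def:bisec}) is then also carried by $U_2$ at $u_2$: if $\bb \subset U_1$ is a bisection through $u_1$ with $\varphi_1(\bb)$ open in $\bc$, then $g(\bb)$ need not be a submanifold, so instead argue via submersions --- but more directly, one can apply item (2) with the roles of $U_1$ and $U_2$ interchanged. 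Concretely: by the reasoning in (2) we may, after modifying both bisubmersions by $L_{\bc^{-1}}$, assume that $1_M$ is carried by $U_1$ at $u_1$ and by $U_2$ at $u_2$ (with $\varphi_i(u_i) = 1_x$, $x = \bt_{U_1}(u_1)$). Then apply item (1) with the roles of the two bisubmersions swapped: it produces an open $U_2' \ni u_2$ and a morphism of bisubmersions $f \colon U_2' \to U_1$ with $f(u_2) = u_1$. Undoing the $L_{\bc^{-1}}$-modification (which, as in (2), does not affect the property of being a morphism of bisubmersions) gives the claim.

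\emph{Main obstacle.} The only genuinely delicate point is keeping careful track, in items (2) and (3), of which bisubmersion structure a given map is a morphism \emph{for}, and checking that left-translation by a bisection $\bc$ turns ``carries $\bc$'' into ``carries $1_M$'' after appropriate shrinking of neighborhoods; once Proposition \ref{prop:crucial} and Proposition \ref{prop:preserveb}(b) are in hand, everything else is bookkeeping with the operations of \S\ref{subsec:operations} and the existence of local sections of submersions (Remark \ref{rem:pq}).
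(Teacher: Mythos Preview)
Your arguments for items (1) and (2) are correct and essentially identical to the paper's. The paper cites \cite{AndrSk} for (a) and for (b) uses exactly your $L_{\bc^{-1}}$-modification via Proposition~\ref{prop:preserveb}(b), followed by (a), followed by cancelling $L_{\bc^{-1}}$.

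There is a genuine gap in your item (3). You want to reduce to (1) by modifying both bisubmersions with $L_{\bc^{-1}}$, where $\bc$ is a bisection of $\cG$ carried by $U_1$ at $u_1$. For this to work you need the modified $(U_2, L_{\bc^{-1}}\circ\varphi_2,\cG)$ to carry $1_M$ at $u_2$, which is equivalent to the \emph{original} $U_2$ carrying $\bc$ at $u_2$. You never establish this: you correctly raise the concern that $g(\bb)$ might not be a submanifold, and then abandon that line, but your replacement (``apply item (2) with the roles interchanged'') already presupposes that $\bc$ is carried by $U_2$ at $u_2$, so the argument is circular.

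The fix is exactly what the paper does, and your worry about $g(\bb)$ is unfounded. If $\bb\subset U_1$ is a bisection through $u_1$, then $\bs_{U_2}\circ (g|_\bb)=\bs_{U_1}|_\bb$ is a diffeomorphism onto an open subset $V\subset M$; hence $g|_\bb=\big(g|_\bb\circ(\bs_{U_1}|_\bb)^{-1}\big)\circ(\bs_{U_1}|_\bb)$ where the first factor is a section of the submersion $\bs_{U_2}$ over $V$, so $g|_\bb$ is an embedding and $g(\bb)$ is a locally closed submanifold of $U_2$. Since $\bs_{U_2}|_{g(\bb)}$ and $\bt_{U_2}|_{g(\bb)}$ coincide (via $g|_\bb$) with $\bs_{U_1}|_\bb$ and $\bt_{U_1}|_\bb$, the image $g(\bb)$ is a bisection of $U_2$ through $u_2$, and $\varphi_2(g(\bb))=\varphi_1(\bb)\subset\bc$. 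Thus $\bc$ is carried by $U_2$ at $u_2$, and now item (2) applies (with the roles of $U_1$ and $U_2$ swapped) to produce $f\colon U_2'\to U_1$ with $f(u_2)=u_1$.
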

\begin{proof}
Given proposition \ref{prop:crucial}, a) is proven exactly as in \cite[Cor. 2.11]{AndrSk} a).

b) By assumption there are bisections  $\bb_i$ of $U_i$ through $u_i$ ($i=1,2$) and a bisection $\bc$ of $\cG$ through $\varphi_1(u_1) = \varphi_2(u_2)$ 
such that  $\varphi_1(\bb_1)$ and 
$\varphi_2(\bb_2)$ are open subsets of $\bc$.
{By Prop. \ref{prop:preserveb},  $(U_i,L_{\bc^{-1}}\circ \varphi_i,\cG)$ are
  bisubmersions for $\cB$}, where
$L_{\bc^{-1}}$ denotes the automorphism of $\cG$ given by left multiplication with the bisection $\bc^{-1}:=\{g^{-1}:g\in \bc\}$.
The points $u_i$ of these bisubmersions carry $1_M$, for
the $\bb_i$ are bisections of these bisubmersions whose images in $\cG$ are contained in $1_M$. By  a) we therefore obtain a map $f$ making the following diagram commute:
\begin{equation*}
\xymatrix{
U_1\ar[rd]_{L_{\bc^{-1}}\circ \varphi_1}  \ar@{-->}[rr]^{f} & & U_2\ar[ld]^{L_{\bc^{-1}}\circ \varphi_2} \\
&\cG\ar[d]^{L_{\bc}}&\\
&\cG& }
\end{equation*}
Therefore $f$ is the desired morphism.

c) Let $\bb$ be a bisection of $U_1$ through $u_1$. Then $g(\bb_1)$ is a bisection of $U_2$ through $u_2$. Both $\bb$ and $g(\bb_1)$ carry the same bisection of $\cG$, that is, $\varphi_2(g(\bb))=\varphi_1(\bb)$. Hence we can apply item b) to obtain the existence of $f$.
\end{proof}

\subsection{Construction of the holonomy groupoid}\label{sub:con}

Recall that we fixed an integrable Lie algebroid $A\to M$, a singular subalgebroid $\cB$, and a Lie groupoid $\cG$ integrating $A$.
\begin{definition}\label{def:pathholatlas}
 Consider a family $(U_i,\varphi_i,\cG)_{i \in I}$ of  {source-connected} {minimal} path-holonomy bisubmersions defined as in definition \ref{dfn:pathhol} such that $M = \cup_{i \in I}\bs_{U_i}(U_i)$. Let $\mathcal{U}$ be the collection of all such bisubmersions, together with their inverses
  and finite compositions. {(We can omit the inverses, by Remark. \ref{rem:invphbisub}).}
     We call $\mathcal{U}$ a {\bf path holonomy atlas} of $\cB$.
\end{definition}

Corollary \ref{cor:crucial} c) shows that for $u_1 \in (U_1,\varphi_1,\cG), u_2 \in (U_2,\varphi_2,\cG)$ the relation 
\begin{align*}
u_1 \sim u_2 \Leftrightarrow &\;\;\text{there is an open neighborhood  $U'_1$ of $u_1$,}\\
&\;\;\text{there is a morphism of bisubmersions } f\colon U_1' \to U_2 \text{  such that } f(u_1)=u_2   
\end{align*}
is an equivalence relation. This allows us to give the following definition:

\begin{definition}\label{def:holgroupoid} Let $\cG$ be a  Lie groupoid and $\cB$   a singular subalgebroid of $Lie(\cG)$. The  {\bf holonomy groupoid of $\cB$ over $\cG$} is 
\begin{center}
\fbox{\begin{Beqnarray*}
\;\;\;\; H^{\cG}(\cB):= \coprod_{U \in \mathcal{U}} U/\sim
 \end{Beqnarray*}}
\end{center} 
We write $H(\cB)$ instead of $H^{\cG}(\cB)$ when the choice of $\cG$ is understood.
\end{definition} 

\begin{remark}\label{rem:HGBbisec} The equivalence relation $\sim$ can be made more explicit as follows, as a consequence of Cor. \ref{cor:crucial} b):
\begin{align*}
u_1\sim u_2 \Leftrightarrow &\;\text{
$\varphi_1(u_1)=\varphi_2(u_2)$}, \\
&\;\text{$\exists$   bisections $\bb_1$ through $u_1$,   $\bb_2$ through $u_2$, s.t. $\varphi_1(\bb_1)=\varphi_2(\bb_2)$. 
}
\end{align*}
\end{remark}

{Denote by $\natural \colon \coprod_{U \in \mathcal{U}} U \to H^{\cG}(\cB)$ the quotient map.}

\begin{remark}\label{rem:open}
{In the following, we endow $H^{\cG}(\cB)$ with the quotient topology induced by $\natural$. For any bisubmersion $U\in \cU$, the image $\natural U$ is open in $H^G(\cB)$, by the very same argument used at the beginning of \cite[\S 3.4]{AZ1}.}
\end{remark}
The next proposition justifies the use of the term ``groupoid'' for $H^G(\cB)$.

 \begin{thm}\label{thm:holgroidconstr}
Denote $q_U:=\natural|_{U}$ for all $U\in \cU$.
\begin{enumerate}
\item There are maps $\bs_H, \bt_H\colon H^{\cG}(\cB) \to M$ such that $\bs_H\circ q_U=\bs_U$ and $\bt_H\circ q_U=\bt_U$ for all $U\in \cU$.
\item There is a topological groupoid structure on $H^{\cG}(\cB)$, with objects $M$, source and target maps $\bs_H, \bt_H$ defined above, and with multiplication  $q_U(u)q_V(v):=q_{U\circ V}(u,v)$.
\item  The canonical map  
\begin{center}
\fbox{\begin{Beqnarray*}
\;\;\;\; \Phi \colon H^{\cG}(\cB)\to \cG,
 \end{Beqnarray*}}
\end{center}
determined by $\Phi\circ q_U=\varphi_U$ for all $U\in \cU$,
is a morphism of topological groupoids covering $Id_M$.
\end{enumerate}
\end{thm}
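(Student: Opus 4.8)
The plan is to build the structure maps piece by piece on the "charts" $U\in\cU$, check they are compatible with the equivalence relation $\sim$, and then verify the groupoid axioms and continuity. For part (1), I would define $\bs_H$ on the image $\natural U$ by $\bs_H(q_U(u)):=\bs_U(u)$, and similarly for $\bt_H$; well-definedness is immediate because a morphism of bisubmersions $f\colon U_1'\to U_2$ satisfies $\varphi_1=\varphi_2\circ f$, hence $\bs_{U_1}=\bs\circ\varphi_1=\bs\circ\varphi_2\circ f=\bs_{U_2}\circ f$, so $u_1\sim u_2$ forces $\bs_{U_1}(u_1)=\bs_{U_2}(u_2)$ (and likewise for $\bt$). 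Continuity of $\bs_H,\bt_H$ follows from the universal property of the quotient topology, since $\bs_U,\bt_U$ are smooth. For part (3) I would argue in exactly the same way: set $\Phi(q_U(u)):=\varphi_U(u)$; the relation $u_1\sim u_2$ directly gives $\varphi_1(u_1)=\varphi_2(u_2)$ (this is literally the first line in the description of $\sim$ from Remark \ref{rem:HGBbisec}, and also follows from $\varphi_1=\varphi_2\circ f$), so $\Phi$ is well-defined, and it is continuous by the quotient property. It covers $\Id_M$ because $\bs\circ\Phi=\bs_H$ and $\bt\circ\Phi=\bt_H$ by construction.

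The substantive content is in part (2): defining the groupoid operations and checking the axioms. The unit at $x\in M$ should be $\natural(0,x)$ where $(0,x)$ lies in any minimal path-holonomy bisubmersion through $x$ (using $\varphi(0,x)=1_x$); one checks this is independent of the chart using Corollary \ref{cor:crucial} a), since any two such points carry the identity bisection $1_M$. The inverse of $q_U(u)$ should be $q_{\bar U}(u)$, using that $\bar U\in\cU$ is again a bisubmersion (Proposition \ref{prop:inv}) with $\bar\varphi=i\circ\varphi$; well-definedness uses that a morphism $U_1'\to U_2$ induces one $\bar U_1'\to\bar U_2$. The multiplication $q_U(u)q_V(v):=q_{U\circ V}(u,v)$, defined when $\bs_U(u)=\bt_V(v)$, uses that $U\circ V\in\cU$ (Proposition \ref{prop:compo}); one must check it respects $\sim$ in both arguments, i.e. if $u\sim u'$ and $v\sim v'$ then $(u,v)\sim(u',v')$ in $U\circ V$ — this follows by pairing the two morphisms of bisubmersions. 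Associativity, the unit laws, and the inverse laws then reduce to the corresponding identities for bisubmersions together with the explicit description of $\sim$ in Remark \ref{rem:HGBbisec}; for instance associativity uses that $(U_1\circ U_2)\circ U_3$ and $U_1\circ(U_2\circ U_3)$ are canonically isomorphic as bisubmersions via the associativity of the fiber product and of the multiplication $m$ on $\cG$. Continuity of multiplication and inversion follows because on the level of the disjoint union they are induced by smooth maps between the charts and their composites, and the quotient maps are open (Remark \ref{rem:open}), so products of quotient topologies behave well.

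Finally, with part (2) in hand, part (3) upgrades from "well-defined continuous map" to "morphism of topological groupoids": $\Phi$ sends units to units because $\varphi(0,x)=1_x$; it is multiplicative because $\Phi(q_U(u)q_V(v))=\Phi(q_{U\circ V}(u,v))=(\varphi_U\cdot\varphi_V)(u,v)=m(\varphi_U(u),\varphi_V(v))=\Phi(q_U(u))\Phi(q_V(v))$ by the very definition of the composition bisubmersion; and it intertwines inversion because $\bar\varphi=i\circ\varphi$. I expect the main obstacle to be the bookkeeping in the multiplication: verifying that $\sim$ is a congruence (compatible with composition on both sides) and that associativity holds, since these require chasing morphisms of bisubmersions through fiber products and invoking Corollary \ref{cor:crucial} at the right points. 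None of this is deep — it is a faithful adaptation of \cite[\S 3]{AndrSk} — but it is where care is needed; everything concerning $\bs_H$, $\bt_H$, $\Phi$, and continuity is formal once the quotient topology and the openness of $\natural|_U$ are used.
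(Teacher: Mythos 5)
Your proposal is correct and follows essentially the same route as the paper: $\Phi$ and the structure maps are defined chartwise and shown to descend because morphisms of bisubmersions commute with the maps to $\cG$, the multiplication is shown to respect $\sim$ by pairing the two morphisms of bisubmersions into a morphism $U\circ V\to U'\circ V'$, units are represented by points carrying the identity bisection, and inverses by the inverse bisubmersion. If anything you supply slightly more detail than the paper (which dispatches associativity and continuity with ``it is clear that''), so there is nothing to correct.
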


\begin{proof} 
a)  First notice that the map $\Phi$ introduced in c) is well-defined, by   the definition of morphism of bisubmersions (definition \ref{def:morph}). Hence $\bs_H:=\bs \circ \Phi$ and $\bt_H:=\bt \circ \Phi$ are well-defined maps $H^{\cG}(\cB) \to M$. They clearly  satify  $\bs_H\circ q_U=\bs_U$ and $\bt_H\circ q_U=\bt_U$ for all $U\in \cU$.

b) We prove that the multiplication is well-defined. Let $U,V,U',V'\in \cU$ and consider elements satisfying $q_U(u)=
q_{U'}(u')$ and $q_V(v)=
q_{V'}(v')$. Then there is exist local morphisms of bisubmersions $f\colon U\to U'$ with $u\mapsto u'$, and  $h\colon V\to V'$ with $v\mapsto v'$. Assume that $\bs_U(u)=\bt_V(v)$, which implies   $\bs_{U'}(u')=\bt_{V'}(v')$.
Since morphisms of bisubmersions preserve the source and target maps,
the product map restricts to a well-defined map $(f,h)\colon U\circ V\to  U'\circ V'$ with $(u,v)\mapsto(u',v')$ and which is a morphism of bisubmersions, showing that 
$(u,v)\sim(u',v')$ and therefore $q_{U\circ V}(u,v)=q_{U'\circ V'}(u',v')$.

{For any $x\in M$, the identity element $1_x\in H^{\cG}(\cB)$ is represented by any point $u$ in a bisubmersion $U\in \cU$ with $\bs_U(u)=\bt_U(u)=x$ and so that $u$ carries (locally) the identity bisection of $\cG$. (For instance, one can take $U\subset \RR^n\times M$ to be a minimal path holonomy bisubmersion and $u=(0,x)$.). The inverse of $q_V(v)\in H^{\cG}(\cB)$ is $q_{\bar{V}}(v)$, where $\bar{V}$ denotes the inverse bisubmersion to $V$ as in Def. \ref{def:inv}.} It is clear that, with these operations, $H^{\cG}(\cB)$ forms a topological groupoid.

c) The map $\Phi$ is a morphism of topological groupoids, by the definition of inverse and composition of bisubmersions (definitions \ref{def:inv} and \ref{def:comp}).
\end{proof}

\begin{remark}
In Appendix \ref{sec:atlases} we define the notion of atlas for singular subalgebroids, of which   $\cU$ appearing in definition \ref{def:pathholatlas} is an example, and from each atlas we construct a topological groupoid. \end{remark}

{
{Let us work out by hand an elementary example.}
More classes of examples will be given in \S \ref{subsec:exholgr}.}

\begin{ex}[Lie algebras]\label{ex:Liegr}
Let $\cB=A=\g$ a Lie algebra and $G$ any connected Lie group integrating $\g$. There is a neighborhood $U$ of the origin in $\g$ 
such that the exponential map $U \to G$ is a bisubmersion. Indeed, for any
 basis of $\g$, consider the induced path-holonomy bisubmersion $\RR^n \to G$; upon the identification $\g \cong \RR^n$ given by the basis, it is the exponential map, as the latter is obtained taking integral curves starting at the identity of right-invariant vector fields. The $n$-fold composition of this bisubmersion is $$U^{\times n}:=U\times\dots\times U \to G, \;\;\;(v_1,\dots,v_n)\mapsto \exp(v_1)\dots \exp(v_n).$$ 
The map $\cup_n U^{\times n} \to G$ is surjective and, by the definition of holonomy groupoid, it descends to an injective map $H^G(\g)\to G$ . We conclude that $H^G(\g)$ is isomorphic (as a topological groupoid)  to $G$.
\end{ex}

\subsection{Examples}\label{subsec:exholgr}

We give some examples of the holonomy groupoid defined in \S \ref{sub:con}.  We do so for the two basic examples of singular subalgebroid displayed in \S  \ref{subsec:motivex}:
singular foliations 
and wide Lie subalgebroids. 
{For wide Lie subalgebroids we show that  $H^{\cG}(\cB)$ agrees with $H_{min}$, the  minimal integral of $B$ over $\cG$ of Moerdijk and Mr{\v{c}}un recalled in the Introduction.} 
Wide Lie subalgebroids are treated as a special case of 
  images of Lie algebroid morphisms covering diffeomorphisms of the base (\S\ref{subsec:morph}).
 {Unfortunately at the moment we have no way to describe explicitly the holonomy groupoids for the other classes of singular subalgebroids displayed in \S \ref{subsec:ex}.}

  \subsubsection{For singular foliations}
\begin{ex}[Singular foliations]\label{ex:singfo}
When $A=TM$ (so $\cB$ is a singular foliation on $M$), and $\cG=M\times M$, then $H^{M\times M}(\cB)$ is the holonomy groupoid of the singular foliation as defined in \cite{AndrSk}. {This follows from Ex. \ref{ex:sfolpair}  and comparing the construction in \S \ref{sub:con} to the one of \cite{AndrSk}.}

In example \ref{ex:regfol} we will take $\cG=\Pi(M)$ (the fundamental groupoid of $M$, {which is   source simply connected}) and construct the topological groupoid $H^{\Pi(M)}(\cB)$. We will see that $H^{\Pi(M)}(\cB)$ is not   source simply connected in general, and has $H^{M\times M}(\cB)$ as a quotient.
\end{ex}

 \subsubsection{For singular subalgebroids arising from Lie groupoid morphisms}

The next proposition will allow us to construct holonomy groupoids for several classes of singular subalgebroids.  
It is based on the ideas explained in \cite[Ex. 3.4(4)]{AndrSk}.

\begin{prop}\label{prop:B} Let $\cG$ be a Lie groupoid over $M$ and $\cB$ a singular subalgebroid of $Lie(\cG)$. Let $\cK$ be a 
Lie groupoid  over $M$. Let
$\varphi\colon \cK \to \cG$ be a morphism of Lie groupoids  covering $Id_M$ which is also a bisubmersion for  $\cB$. Then:
\begin{itemize}
\item [i)]  $H^{\cG}(\cB)=\cK/\cI$ as topological groupoids, where  $$\cI:=\{k\in \cK: \text{$\exists$ a local bisection $\bb$ through $k$ such that $\varphi(\bb)\subset 1_M$}\}$$
\item [ii)] {the canonical map $\Phi\colon H^{\cG}(\cB)\to \cG$ coincides with the map $\cK/\cI\to \cG$ induced by $\varphi$.}
\end{itemize}

\end{prop}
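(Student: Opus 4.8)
The proposition says that when a singular subalgebroid $\cB$ of $Lie(\cG)$ arises as $\varphi_*(\Gamma_c(Lie(\cK)))$ for a Lie groupoid morphism $\varphi\colon\cK\to\cG$ over $Id_M$, then the holonomy groupoid $H^\cG(\cB)$ is just $\cK/\cI$, with $\cI$ the set of elements of $\cK$ lying on a local bisection mapped into $1_M$. Let me think about how to prove this.

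First, I'd note that by Prop.~\ref{prop:imagerelbi}, $\varphi\colon\cK\to\cG$ is indeed a bisubmersion for $\cB$, so it makes sense as a "building block". The key structural input is that $\cK$, being a Lie groupoid, is itself closed under the composition and inverse operations on bisubmersions in a compatible way: composing $\cK$ with itself via the groupoid multiplication $m_\cK$ gives a map $\cK\times_M\cK\to\cG$ that factors through $\cK$ (via $m_\cK$) and $\varphi$, and $m_\cK$ is a submersion, hence a morphism of bisubmersions by Lemma~\ref{lem:UV}; similarly the inverse bisubmersion of $\cK$ is $i_\cG\circ\varphi = \varphi\circ i_\cK$, again related to $\cK$ by the morphism $i_\cK$. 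So $\cK$ "absorbs" compositions and inverses of itself.

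The crux is Cor.~\ref{cor:crucial}: I claim that for $u\in U$ in a path-holonomy bisubmersion of $\cB$ and $k\in\cK$ with $\varphi(k)=\varphi_U(u)$, there is locally a morphism of bisubmersions between a neighborhood of $u$ in $U$ and a neighborhood of $k$ in $\cK$, in one direction or the other. The reason: both bisubmersions carry a common local bisection of $\cG$ through the point $\varphi(k)$ — indeed $\cK$ being a Lie groupoid carries, through any $k$, the bisection $L_{\bc}(1_M)$ for a suitable bisection $\bc$ of $\cK$ through $k$, which maps to a bisection of $\cG$; and by definition of path-holonomy bisubmersion together with Prop.~\ref{prop:preserveb}/Cor.~\ref{cor:crucial}~b) any bisection of $\cG$ carried by $U$ at $u$ can be matched. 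Concretely: given a minimal path-holonomy bisubmersion $(U_0,\varphi_0,\cG)$ built from generators $\balpha_i=\varphi_*X_i$ with $X_i\in\Gamma_c(Lie(\cK))$, the map $(\lambda,y)\mapsto \exp_y(\sum\lambda_i\rar{X_i})\in\cK$ is a path-holonomy-type bisubmersion of $\cK$ for $\Gamma_c(Lie(\cK))$ which is sent by $\varphi$ to $(U_0,\varphi_0,\cG)$ (by the Claim in the proof of Prop.~\ref{prop:imagerelbi}, $\varphi_*\rar{X_i}=\rar{\balpha_i}$), hence there is a canonical morphism of bisubmersions $U_0\to\cK$ landing near $1_x$. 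This exhibits every path-holonomy bisubmersion of $\cB$ as locally receiving a morphism from (an open subset of) $\cK$, and conversely $\cK$ itself is one of the admissible bisubmersions (up to taking it as an "atlas element" — I'd invoke the atlas formalism of Appendix~\ref{sec:atlases}, or argue directly that $\cK$ is covered by the images of such morphisms). Therefore $\cK$, with all its compositions and inverses, generates the same quotient: the natural map $\cK\to H^\cG(\cB)$, $k\mapsto \natural(k)$, is well-defined, surjective (since $\cK$ absorbs compositions of path-holonomy bisubmersions up to morphism — this is where I need that every element of $H^\cG(\cB)$ comes from a finite composition of path-holonomy bisubmersions, each of which maps to $\cK$, and compositions in $\cK$ stay in $\cK$), and its fibers are exactly the classes of $\sim$ restricted to $\cK$.

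Finally, I'd identify the equivalence relation on $\cK$. By Remark~\ref{rem:HGBbisec}, $k_1\sim k_2$ iff $\varphi(k_1)=\varphi(k_2)$ and there are local bisections $\bb_1\ni k_1$, $\bb_2\ni k_2$ of $\cK$ with $\varphi(\bb_1)=\varphi(\bb_2)$. Using left-translation in $\cK$ (which is carried to left-translation in $\cG$ by $\varphi$), this is equivalent to: $k_1 k_2^{-1}\in\cI$, i.e. $k_2^{-1}k_1$... let me be careful: translating $\bb_1$ by $\bb_2^{-1}$ on one side, $k_1\sim k_2$ iff $k_1\cdot k_2^{-1}$ lies on a local bisection of $\cK$ mapped by $\varphi$ into $1_M$, which is exactly $k_1 k_2^{-1}\in\cI$. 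So $\sim$ is the orbit equivalence of the normal-ish subgroupoid $\cI$ (one checks $\cI$ is a wide subgroupoid, being $\varphi$-"preimage of identities along bisections"), and $\cK/\sim\,=\cK/\cI$ as sets; the topological groupoid structures agree because all structure maps are induced from $\cK$ through the quotient on both sides, and the quotient topology on $H^\cG(\cB)$ restricted to $\natural(\cK)=H^\cG(\cB)$ coincides with the quotient topology on $\cK/\cI$ (since $\natural U$ is open for all $U$ by Remark~\ref{rem:open}, and $\cK$ surjects). Part ii) is then immediate: $\Phi$ is determined by $\Phi\circ\natural = \varphi$ on $\cK$, which is precisely the statement that $\Phi$ corresponds to the map $\cK/\cI\to\cG$ induced by $\varphi$.

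\textbf{Main obstacle.} The delicate point is surjectivity of $\cK\to H^\cG(\cB)$ and, dually, that $\cK$ "sees" all identifications — i.e. that restricting the path-holonomy atlas $\cU$ to the single bisubmersion $\cK$ (and its self-compositions) loses no information. This requires showing every path-holonomy bisubmersion $U_0$ of $\cB$ admits, near each point carrying a given bisection of $\cG$, a morphism to $\cK$; the clean way is the explicit lift $\balpha_i=\varphi_*X_i$ above together with Cor.~\ref{cor:crucial}~b) to handle points not over $1_M$ by conjugating with a bisection. Getting the bookkeeping of compositions right (a finite composition $U_1\circ\cdots\circ U_r$ of path-holonomy bisubmersions maps, morphism-compatibly, to the $r$-fold product in $\cK$, which lies in $\cK$) is the routine-but-careful part. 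Everything else — well-definedness of the relation, that $\cI$ is a subgroupoid, matching of topologies — follows the template of \cite[Ex.~3.4(4)]{AndrSk}.
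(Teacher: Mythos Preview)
Your proposal is correct and follows essentially the same route as the paper. The paper packages your argument through the atlas formalism of Appendix~\ref{sec:atlases}: it observes that $\cU=\{(\cK,\varphi,\cG)\}$ is already an atlas (inverse and composition are adapted to $\cU$ via $i_\cK$ and $m_\cK$, exactly as you say), then shows $\cU$ is equivalent to a path-holonomy atlas by invoking Prop.~\ref{prop:crucial} in the direction $\cK\to U_0$ (near identities) rather than your explicit lift $U_0\to\cK$, and finally appeals to Prop.~\ref{prop:rem33} to conclude $H(\cB)^\cU=H^\cG(\cB)$; the identification of $\sim_\cK$ with the $\cI$-coset relation is Remark~\ref{rem:bisec}, matching your translation argument. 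One small imprecision: your opening line asserts $\cB=\varphi_*(\Gamma_c(Lie(\cK)))$, but the hypothesis only says $\varphi$ is a bisubmersion for $\cB$ --- your lift $\balpha_i=\varphi_*X_i$ does follow from condition~ii) of Def.~\ref{dfn:bisubm2} (restrict the $\varphi$-lift $Z$ of $\rar{\balpha_i}$ to $M$), but this step deserves a sentence; the paper sidesteps it by using Prop.~\ref{prop:crucial} instead.
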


\begin{remark}\label{rem:cI} We explain the notation and terminology in the definition of $\cI$ in the above proposition.
The symbol $1_M$ denotes the set of identity elements of the Lie groupoid $\cG$. The term ``bisection'' refers to bisection for the Lie groupoid $\cK$.
This coincides with the notion of 
bisection for the bisubmersion
$(\cK,\varphi,\cG)$ (in the sense of definition \ref{def:bisec}) 
since the morphism of Lie groupoids $\varphi$ covers $Id_M$. 

The set $\cI$ is a   normal  subgroupoid of $\cK$. This means that $\cI$ is a wide subalgebroid and is contained in the union of the isotropy subgroups of $\cK$ ({a consequence of  $\cI\subset \ker(\varphi)$}). Further, it means that $\cI$ is invariant under conjugation: for every $k\in \cK$  we have $k\cI_{y}k^{-1}\subset \cI_{x}$ where $y=\bs_{\cK}(k)$ and $x=\bt_{\cK}(k)$. The quotient $\cK/\cI$ has a unique topological groupoid structure such that the quotient map $\cK \to \cK/\cI$ is a groupoid morphism (see \cite[Prop. 2.2.3]{MK2}).
\end{remark}

\begin{remark}\label{rem:bisec}
Every normal subgroupoid $\cI$ of $\cK$ corresponds to the equivalence relation $$k_1\sim_{\cK}k_2 \Leftrightarrow k_1 (k_2)^{-1}\in \cI$$ For $\cI$ as in proposition \ref{prop:B} this relation can be written down\footnote{To see that $k_1 (k_2)^{-1}\in \cI$ implies that $k_1$ and $k_2$ satisfy the conditions on the r.h.s. of \eqref{eq:simKbis}, notice the following: if $\bb$ is a bisection through $k_1 (k_2)^{-1}$ such that $\varphi(\bb)\subset 1_M$ and $\bb_2$ is any bisection through $k_2$, then $\bb\cdot \bb_2$ is a bisection through $k_1$ such that $\varphi(\bb\cdot \bb_2)=\varphi(\bb_2)$.
} 
explicitly:
\begin{align}\label{eq:simKbis}
k_1\sim_{\cK}k_2 \Leftrightarrow &\;\text{
$\varphi(k_1)=\varphi(k_2)$} \text{ and} \\
\nonumber &\;\text{$\exists$ local bisections $\bb_1$ through $k_1$,   $\bb_2$ through $k_2$, s.t. $\varphi(\bb_1)=\varphi(\bb_2)$.
}
\end{align}
Analogously to Rem. \ref{rem:HGBbisec}, the equivalence \eqref{eq:simKbis} can be rephrased as follows:
 $$k_1\sim_{\cK}k_2 
\Leftrightarrow \text{$\exists$ neighborhood $\cK'$ of $k_1$  and  $f \colon \cK'\to \cK$ satisfying $\varphi\circ f=\varphi$ and $f(k_1)=k_2$}.
$$
\end{remark}
 
\begin{proof}[Proof of proposition \ref{prop:B}] {The proof relies in Appendix \ref{sec:atlases}.} i) We first show the following claim:

\underline{Claim:} \emph{$H^{\cG}(\cB)=\cK/\sim_{\cK}$ as topological spaces.}

The set $\cU:=\{(\cK,\varphi,\cG)\}$ is an atlas for $\cB$ (see definition \ref{def:atlas}) consisting of just one bisubmersion. This holds by the following two arguments which are consequences of the fact that $\varphi$ is a groupoid morphism: 
\begin{itemize}
\item The inverse bisubmersion $(\cK,i_{\cG}\circ \varphi,\cG)$ is adapted to $\cU$ because the inversion map $i_{\cK}$ is a morphism of bisubmersions from $(\cK,i_{\cG}\circ \varphi,\cG)$ to $(\cK,\varphi,\cG)$. 
\item The composition of bisubmersions $\cK\circ \cK$ is also adapted to $\cU$. Indeed it agrees with 
the space of composable arrows of $K$ and the groupoid multiplication of $\cK$ is a morphism of bisubmersions, \ie this diagram commutes:
\begin{equation*}
\xymatrix{
\cK\circ \cK \ar[rd]_{\varphi\cdot\varphi  }  \ar[rr]^{\text{multipl. of $\cK$}} & & \cK \ar[ld]^{\varphi} \\
&\cG &
 }
\end{equation*} 
\end{itemize}
The atlas $\cU$ is equivalent to a path holonomy atlas (definition \ref{def:pathholatlas}). To prove this, by proposition \ref{prop:pathholadapted}, we just need to show that the bisubmersion 
$(\cK,\varphi,\cG)$ is adapted to a path holonomy atlas. Proposition \ref{prop:crucial} implies that for any $x\in M$, there exists a morphism of bisubmersions from an open neighborhood of $1_x$ in $\cK$ to some {path holonomy} bisubmersion. Then simply use that the Lie groupoid $\cK$ is generated by such neighborhoods.

As $\cU=\{(\cK,\varphi,\cG)\}$ is equivalent to a path holonomy atlas, by proposition \ref{prop:rem33}  we have $H(\cB)^{\cU}=H^{\cG}(\cB)$.  The former, as a topological space, is defined as the quotient of $\cK$ by the equivalence relation 
$\sim_{\cK}$ (see eq. \eqref{eq:HUcb}). This proves the claim. \hfill$\bigtriangleup$

Now we can show that $H^{\cG}(\cB)=\cK/\sim_{\cK}$ as  groupoids. Let $k,k'\in \cK$ such that $\bs_{\cK}(k)=\bt_{\cK}(k')$. First, the product of $[k]$ and $[k']$ in $H(\cB)^{\cU}=H^{\cG}(\cB)$ is the class of  $k\circ k'$ in the composition of bisubmersions $\cK\circ \cK$. Second, as seen above, the groupoid multiplication is  a morphism of bisubmersions from $\cK\circ \cK$ to $\cK$, hence $k\circ k'\sim_{\cK}kk'$. Combining the last two statements we obtain $[k]\cdot[k']=[kk']$.

ii) {The canonical map $H(\cB)^{\cU}\to \cG$ is the map $\cK/\cI\to \cG$ induced by $\varphi$. Now apply Prop. \ref{prop:rem33} ii).}
\end{proof}

We immediately obtain a construction for the holonomy groupoid of singular subalgebroids arising from Lie groupoid morphisms (covering the identity), see Def. \ref{def:arises}: 
\begin{prop}\label{cor:HBimage}   
{Assume that the singular subalgebroid $\cB$  arises from a Lie groupoid morphism $\Psi\colon \cK\to\cG$ covering the identity.}
Then $H^{\cG}(\cB)=\cK/\cI$, where $\cI$ is an in proposition \ref{prop:B}, {and 
the canonical map $\Phi\colon H^{\cG}(\cB)\to \cG$ is the map $\cK/\cI\to \cG$ induced by $\Psi$.}
\end{prop}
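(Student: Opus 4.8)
The plan is to reduce Proposition \ref{cor:HBimage} directly to Proposition \ref{prop:B}, since the hypothesis that $\cB$ \emph{arises from a Lie groupoid morphism} $\Psi\colon \cK\to \cG$ covering $\id_M$ supplies exactly the data needed. By Definition \ref{def:arises} we have $\cB=\Psi_*(\Gamma_c(Lie(\cK)))$, and by Proposition \ref{prop:imagerelbi} the morphism $\Psi\colon \cK\to \cG$ is automatically a bisubmersion for $\cB$. Thus the pair $(\cK,\Psi)$ satisfies all the hypotheses of Proposition \ref{prop:B}: $\cK$ is a Lie groupoid over $M$, $\Psi$ is a Lie groupoid morphism covering $\id_M$, and $\Psi$ is a bisubmersion for $\cB$.

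First I would state this observation explicitly, invoking Proposition \ref{prop:imagerelbi} to see that $\Psi$ is a bisubmersion. Then I would apply Proposition \ref{prop:B} i) to conclude that $H^{\cG}(\cB)=\cK/\cI$ as topological groupoids, with
$$\cI=\{k\in \cK: \exists \text{ a local bisection } \bb \text{ through } k \text{ such that } \Psi(\bb)\subset 1_M\},$$
which is exactly the description of $\cI$ in the statement of Proposition \ref{cor:HBimage} (with $\varphi$ there replaced by $\Psi$). Finally I would apply Proposition \ref{prop:B} ii) to conclude that the canonical map $\Phi\colon H^{\cG}(\cB)\to \cG$ coincides with the map $\cK/\cI\to \cG$ induced by $\Psi$. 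This completes the proof.

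There is essentially no obstacle here: the result is an immediate corollary, and the only thing to check is that the hypotheses of Proposition \ref{prop:B} are met, which is precisely the content of Proposition \ref{prop:imagerelbi} combined with Definition \ref{def:arises}. The one mild point worth a sentence is that $\cG$ in Definition \ref{def:arises} is required only to be \emph{some} Lie groupoid integrating $A$; since the holonomy groupoid $H^{\cG}(\cB)$ is defined relative to a fixed choice of $\cG$, one simply takes that same $\cG$ throughout, and the morphism $\Psi$ provided by the hypothesis lands in it. No new estimates or constructions are needed.

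\begin{proof}[Proof of Proposition \ref{cor:HBimage}]
By hypothesis there is a Lie groupoid morphism $\Psi\colon \cK\to \cG$ covering $\id_M$ with $\cB=\Psi_*(\Gamma_c(Lie(\cK)))$. By Proposition \ref{prop:imagerelbi}, $\Psi\colon \cK\to \cG$ is a bisubmersion for $\cB$. Hence $\cK$, together with the morphism $\Psi$, satisfies the hypotheses of Proposition \ref{prop:B}. Applying part i) of that proposition gives $H^{\cG}(\cB)=\cK/\cI$ as topological groupoids, with $\cI$ as described there (which is the $\cI$ in the present statement, upon writing $\Psi$ for $\varphi$). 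Applying part ii) shows that the canonical map $\Phi\colon H^{\cG}(\cB)\to \cG$ is the map $\cK/\cI\to \cG$ induced by $\Psi$.
\end{proof}
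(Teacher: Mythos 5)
Your proposal is correct and follows exactly the paper's own argument: invoke Proposition \ref{prop:imagerelbi} to see that $\Psi$ is a bisubmersion for $\cB$, then apply Proposition \ref{prop:B}. Nothing further is needed.
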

\begin{proof}
By proposition \ref{prop:imagerelbi}, $\Psi\colon \cK \to \cG$  is a bisubmersion for $\cB$. Hence we can apply proposition \ref{prop:B}.
\end{proof}

 \begin{ex}[Singular foliations arising from Lie algebroids]\label{ex:explicitgr}
\begin{itemize}
\item [a)] {The following example is a rephrasing of \cite[Ex. 3.4(4)]{AndrSk}.}
 Let $A$ be a Lie algebroid over $M$, and let $\cF:=\rho(\Gamma_c(A))$ be the singular foliation on $M$ 
associated to $A$ (here $\rho$ is  the anchor map). Let $\cK\rightrightarrows M$ be a Lie groupoid integrating $A$. The Lie groupoid morphism given by the target-source map $$\Psi:= (\bt,\bs)\colon \cK \to M\times M$$  
integrates the anchor map, so it gives rise to $\cF$, i.e.
$\cF=\{\Psi_*(\Gamma_c(A))\}$. Therefore Prop.  \ref{cor:HBimage}  implies that the holonomy groupoid $H(\cF)$ of the foliation is $$H(\cF)=\cK/\cI,$$ where $\cI$ consists of the elements $k\in \cK$ through which passes a local bisection inducing the identity (local) diffeomorphism on $M$.

\item [b)] {We now spell out a special case of the above (for linear actions compare  with \cite[Ex. 3.7]{AndrSk})}. Consider an action of a Lie group $G$ on $M$. It gives rise to a singular foliation $\cF$ on $M$, which is generated by the image of the associated infinitesimal action $\psi\colon \g\to \vX(M), v\to v_M$. Let $A:=\g\times M$ be   the transformation algebroid of the infinitesimal action. Its anchor map $\rho\colon \g\times M\to TM, (v,p)\mapsto v_M(p)$ satisfies $\cF=\rho(\Gamma_c(A))$. By a), the holonomy groupoid of $\cF$ is obtained from the transformation groupoid $G\times M\rightrightarrows M$ as $$H(\cF)=(G\times M)/\cI,$$
where $\cI$ is very explicit: it consists of $(g,x)\in G\times M$ (necessarily with $g\cdot x=x$) for which there is a neighborhood $U$ of $x$ in $M$ and a smooth map $\tilde{g}\colon U\to G$ such that  $\tilde{g}(x)=g$ and $\tilde{g}(y)\cdot y=y$ for all $y\in U$. In other words, it consists of elements of $G\times M$ through which there is a local section of the second projection $G\times M\to M$ which lies in the isotropy groups of the action of $G$ on $M$.
\end{itemize}
\end{ex}

{For singular subalgebroids arising from Lie groupoid morphisms, the holonomy groupoid satisfies a minimality property. In particular, it is a quotient of any Lie groupoid giving rise to the given singular subalgebroid.}
\begin{prop}\label{prop:lift}
Any  Lie groupoid morphism {covering the identity} $\Psi\colon \cK\to\cG$ giving rise to a singular subalgebroid $\cB$
factors as
  \begin{equation*} 
 \xymatrix{
 &H^{\cG}(\cB)\ar_{\Phi}[d]\\
 {\cK}\ar@{-->}[ru]^\tau \ar[r]^{\Psi}&\cG   }
\end{equation*}
where $\tau\colon \cG\to H^{\cG}(\cB)$ is  a surjective morphism of topological groupoids.
\end{prop}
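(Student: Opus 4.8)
The plan is to reduce everything to Proposition~\ref{prop:B} and Proposition~\ref{cor:HBimage}. The key observation is that since $\cB$ arises from the Lie groupoid morphism $\Psi\colon \cK\to\cG$ (covering $Id_M$), Proposition~\ref{prop:imagerelbi} tells us that $\Psi$ is itself a bisubmersion for $\cB$. Thus we are exactly in the situation of Proposition~\ref{prop:B} (with $\cK$ playing the role of the $\cK$ there), and we may identify $H^{\cG}(\cB)=\cK/\cI$ as topological groupoids, with $\Phi\colon H^{\cG}(\cB)\to\cG$ being the map induced by $\Psi$, where $\cI=\{k\in\cK:\exists \text{ a local bisection }\bb\text{ through }k\text{ with }\Psi(\bb)\subset 1_M\}$ is a normal subgroupoid of $\cK$.

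First I would invoke Proposition~\ref{cor:HBimage} to obtain the identification $H^{\cG}(\cB)=\cK/\cI$ as topological groupoids, together with the fact that the canonical morphism $\Phi\colon H^{\cG}(\cB)\to\cG$ is precisely the morphism $\cK/\cI\to\cG$ induced by $\Psi$ (this uses that $\Psi$ factors through the quotient because $\cI\subset\ker\Psi$, as noted in Remark~\ref{rem:cI}). Then I would define $\tau\colon\cK\to H^{\cG}(\cB)$ to be the quotient map $\cK\to\cK/\cI$. By Remark~\ref{rem:cI} the quotient $\cK/\cI$ carries a unique topological groupoid structure making $\tau$ a groupoid morphism, and $\tau$ is surjective by construction. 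Finally, the factorization $\Psi=\Phi\circ\tau$ is immediate: for $k\in\cK$ we have $\Phi(\tau(k))=\Phi([k])$, and $\Phi$ being the map induced by $\Psi$ means exactly $\Phi([k])=\Psi(k)$, so the triangle commutes.

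Concretely I would write: \emph{Since $\cB$ arises from $\Psi\colon\cK\to\cG$, by Prop.~\ref{prop:imagerelbi} the morphism $\Psi$ is a bisubmersion for $\cB$, so Prop.~\ref{prop:B} (equivalently Prop.~\ref{cor:HBimage}) applies and gives $H^{\cG}(\cB)=\cK/\cI$ as topological groupoids, with $\Phi$ the induced map $\cK/\cI\to\cG$. Set $\tau\colon\cK\to H^{\cG}(\cB)=\cK/\cI$ to be the quotient map. By Rem.~\ref{rem:cI}, $\tau$ is a morphism of topological groupoids, and it is clearly surjective. Moreover $\Phi\circ\tau=\Psi$ by the description of $\Phi$ in Prop.~\ref{cor:HBimage} ii). This is the desired factorization.}

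I do not expect any serious obstacle here: the statement is essentially a corollary of Proposition~\ref{cor:HBimage}, repackaging the identification $H^{\cG}(\cB)=\cK/\cI$ as a surjective factorization. The only point requiring a word of care is the verification that $\tau$ is a morphism of \emph{topological} groupoids --- i.e.\ that $\cK/\cI$ with its quotient topology as a groupoid coincides with $H^{\cG}(\cB)$ with the quotient topology coming from the holonomy-groupoid construction --- but this is precisely what Prop.~\ref{prop:B} i) (via Prop.~\ref{cor:HBimage}) asserts, and its proof in turn rests on Prop.~\ref{prop:rem33} in the appendix on atlases. So the ``hard part'' has already been done upstream; the present proposition is a short deduction.
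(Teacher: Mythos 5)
Your proof is correct and is essentially identical to the paper's own argument: the paper likewise takes $\tau$ to be the quotient map $\cK\to\cK/\cI=H^{\cG}(\cB)$ supplied by Prop.~\ref{cor:HBimage}, with the commutativity of the triangle following from the description of $\Phi$ there. Your additional remarks on the topological-groupoid structure are accurate but, as you note, already handled upstream.
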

\begin{proof}
{Thanks to Prop. \ref{cor:HBimage}, we can take $\tau$ to be the quotient map $K\to \cK/\cI=H^{\cG}(\cB)$.}
\end{proof}

\subsubsection{For wide Lie subalgebroids}\label{subsubsec:wide}
 
{The proof of the following statement is similar to the one of \cite[Prop. 1.9]{AZ5} but  
has the advantage of relying only on elementary facts.}
{
\begin{prop}\label{prop:smoothgroid}
Let $B$ be a wide Lie subalgebroid of $A$, let $G$ be a Lie groupoid integrating $A$, and denote $\cB:=\Gamma_c(B)$. Then
\begin{itemize}
\item [i)]  $H^{\cG}(\cB)$ is a Lie groupoid integrating $B$
\item [ii)]  the canonical Lie groupoid morphism $\Phi\colon H^{\cG}(\cB)\to \cG$ integrates the inclusion $\iota\colon B\hookrightarrow A$.
\end{itemize}
\end{prop}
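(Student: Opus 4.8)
Since $A$ is integrable, so is the wide Lie subalgebroid $B$; I would let $\cK$ be its source simply connected integration and let $\varphi\colon\cK\to\cG$ be the Lie groupoid morphism over $Id_M$ integrating the inclusion $\iota\colon B\hookrightarrow A$ (it exists by Lie's second theorem for Lie algebroids, and it is an immersion because $\iota$ is fibrewise injective, as noted in the footnote to Corollary \ref{prop:Lierelbi}). By Corollary \ref{prop:Lierelbi}, $\varphi$ is a bisubmersion for $\cB=\Gamma_c(B)$, so Proposition \ref{prop:B} (equivalently Proposition \ref{cor:HBimage}) applies: as topological groupoids $H^{\cG}(\cB)=\cK/\cI$, and $\Phi$ is the morphism $\cK/\cI\to\cG$ induced by $\varphi$, where $\cI=\{k\in\cK:\text{some local bisection of }\cK\text{ through }k\text{ has }\varphi\text{-image inside }1_M\}$. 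Thus everything reduces to upgrading this topological quotient to a Lie groupoid.

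The heart of the argument is to show that $\cI=\ker\varphi=:N$ and that $N$ is a closed embedded normal Lie subgroupoid of $\cK$ on which $\bs$ and $\bt$ restrict to étale maps. Normality is immediate, and the inclusion $\cI\subseteq N$ is clear since a local bisection through $k$ landing in $1_M$ forces $\varphi(k)=1_{\bs(k)}$. For the converse and for smoothness I would proceed as follows: since $\bs\circ\varphi=\bs$, the immersion $\varphi$ restricts to immersions on source fibres, and because $\varphi(1_y)=1_y$ and immersions are locally injective, $N$ coincides with $1_M$ on a neighbourhood of each identity element $1_x$. Every $k_0\in N$ is an isotropy element (as $\bt(k_0)=\bt(\varphi(k_0))=\bs(k_0)$), and right translation $R_{k_0}$ is a diffeomorphism of the source fibre through $k_0$ with $\varphi\circ R_{k_0}=\varphi$ there; transporting the local model of $N$ near $1_{\bs(k_0)}$ by $R_{k_0}$ shows that near $k_0$ the set $N$ is the image of a local bisection of $\cK$. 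Patching these local models exhibits $N$ as a closed embedded submanifold of dimension $\dim M$ with $\bs|_N,\bt|_N$ étale; this in particular gives $N\subseteq\cI$, so $\cI=N$. (Alternatively, that $\ker\varphi$ is such a subgroupoid may be quoted from the Moerdijk--Mr\v{c}un theory \cite{MMRC}.)

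Granting this, the quotient $\cK/N$ of the Lie groupoid $\cK$ by the closed, étale, normal subgroupoid $N$ is a Lie groupoid: near the identity section $N=1_M$, so $\cK/N$ is locally $\cK$, and one spreads this smooth structure over $\cK/N$ by translations; it is source connected because $\cK$ is, and the projection $q\colon\cK\to\cK/N$ is a local diffeomorphism. Hence $dq$ identifies $Lie(\cK/N)$ with $Lie(\cK)=B$, which is part i). For ii), the relation $\Phi\circ q=\varphi$ together with $q$ being a surjective submersion shows $\Phi$ is smooth, and differentiating gives $d\Phi\circ dq=d\varphi=\iota$, so under the identification $Lie(\cK/N)\cong B$ the canonical morphism $\Phi$ integrates $\iota$.

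The main obstacle is the middle step: checking that $\ker\varphi$ is genuinely an embedded (not merely immersed) normal subgroupoid with étale source and target, equivalently that the local models near the identity section patch together so that $\cK/\ker\varphi$ carries a manifold structure making the quotient map a Lie groupoid morphism. Once this is in place the rest is formal, and it is precisely the point where one either argues directly from the immersion property of $\varphi$ (as sketched) or invokes the corresponding statement in \cite{MMRC}.
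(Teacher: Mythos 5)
Your reduction to Proposition \ref{cor:HBimage} --- writing $H^{\cG}(\cB)=\cK/\cI$ for $\cK$ the source simply connected integration of $B$ and $\varphi\colon\cK\to\cG$ the immersion integrating $\iota$ --- is exactly the paper's starting point. The problem is your central claim that $\cI=\ker\varphi$. Only the inclusion $\cI\subseteq\ker\varphi$ holds in general; the reverse inclusion fails, and with it the rest of your argument. Take $A=TM$, $\cG=M\times M$, and $B$ an involutive distribution whose foliation $\cF$ has a leafwise loop $\gamma$ with nontrivial holonomy (e.g.\ the Reeb foliation). Then $\cK=Mon(\cF)$, $\varphi=(\bt,\bs)$, and $\ker\varphi$ consists of \emph{all} homotopy classes of leafwise loops, whereas $\cI$ consists only of those carrying a bisection landing in the diagonal, i.e.\ those with trivial holonomy. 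Here $\cK/\ker\varphi\cong\varphi(\cK)$ is the graph of the leaf equivalence relation, which is not smooth (this failure is the very reason holonomy groupoids were introduced), while $\cK/\cI=Hol(\cF)=H_{min}$ is. In this example $\ker\varphi$ is not an embedded subgroupoid with \'etale source: near $\langle\gamma\rangle$ its dimension equals that of the leaf, not $\dim M$. Quoting Moerdijk--Mr{\v{c}}un does not rescue this, since their $H_{min}$ is $\cK/\cI$, not $\cK/\ker\varphi$.

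The precise gap in your argument for $\ker\varphi\subseteq\cI$ is the ``transport by $R_{k_0}$'' step: right translation by the single element $k_0$ is only a diffeomorphism between source \emph{fibres}, so it can at most show that $\ker\varphi$ meets the source fibre through $k_0$ discretely; it cannot carry the open neighbourhood $V$ of $1_M$ on which $\ker\varphi$ coincides with $1_M$ onto an open neighbourhood of $k_0$ in $\cK$. To move open sets you need right translation by a \emph{bisection} $\bb$ through $k_0$, and $r_{\bb}$ preserves $\ker\varphi$ only when $\varphi(\bb)\subset 1_M$ --- which is precisely the assertion $k_0\in\cI$ you are trying to prove. The paper's proof sidesteps this by working with $\cI$ throughout: for $k\in\cI$ the defining bisection $\bb$ is itself contained in $\cI$, so $r_{\bb}$ preserves $\cI$ and carries $\cI\cap V\cap\bs^{-1}(U)=1_U$ onto $\cI\cap\bigl(r_{\bb}(V)\cap\bs^{-1}(U)\bigr)=\bb$; this exhibits $\cI$ as an embedded, $\bs$-discrete normal Lie subgroupoid, after which \cite[Thm. 1.20]{GuLi} gives that $\cK/\cI$ is a Lie groupoid integrating $B$. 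Your concluding arguments (smoothness of $\Phi$ via $\Phi\circ q=\varphi$ and the identification of Lie algebroids) are fine once the kernel is replaced by $\cI$.
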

}
 
\begin{proof}
  The Lie algebroid $B$ is integrable since $A$ is \cite{MMRC}.
Let $K$ be the source simply connected Lie groupoid integrating $B$. Let
 $\Psi \colon \cK \to \cG$  the morphism of Lie groupoids which integrates the inclusion $\iota \colon B\hookrightarrow A$.
 
On one hand, clearly the Lie groupoid morphism $\Psi$ gives rise to $\cB$. Therefore we can apply Prop. \ref{cor:HBimage}, which states that  $H^{\cG}(\cB)=\cK/\cI$ and the canonical map $\Phi\colon H^{\cG}(\cB)\to \cG$ is the map $\cK/\cI\to \cG$ induced by $\Psi$.  
 
 On the other hand we have the following
 
\underline{Claim:} \emph{The subgroupoid $\cI$ of $\cK$, defined as in Prop. \ref{prop:B}, satisfies:
\begin{enumerate}
\item Set-theoretically, $\cI$ is a normal subgroupoid of $\cK$ lying in the union of the isotropy groups of $\cK$.
\item Topologically, $\cI$ is an embedded Lie subgroupoid of $\cK$
and it is $\bs$-discrete (\ie the intersection of $\cI$ with any $\bs$-fiber is discrete).
\end{enumerate}
}

The claim implies (see for example  \cite[Thm. 1.20]{GuLi}) that $\cK/\cI$ is also a Lie groupoid integrating $B$. Clearly the Lie groupoid morphism $\cK/\cI\to \cG$ induced by $\Psi$ integrates the inclusion $\iota \colon B\hookrightarrow A$. This concludes the proof {of the proposition, modulo the   claim which we prove right now}.
 
That a) in the claim holds was explained in Rem. \ref{rem:cI}. We argue that b) holds.
There is a neighborhood $V\subset K$ of the set of identities $1_M$ on which the morphism $\Psi$ is injective (this follows from $\Psi$ being a Lie groupoid morphism covering the identity and whose Lie algebroid map is injective). Since $\cI\subset \ker(\Psi)$, we have
  $\cI\cap V=1_M.$
Let $k\in \cI$, and take a bisection $\bb$ of $K$ through $k$ as in the definition of $\cI$. Denote $U:=\bs(\bb)$, an open subset of $M$.
Denote $r_{\bb}\colon \bs^{-1}(U) \to \bs^{-1}(U)$ the diffeomorphism given by right-multiplication by the bisection $\bb$. It maps $1_{\bs(g)}$ to $k$ and it preserves $\cI$, since
$\bb$ lies in the subgroupoid $\cI$. Applying $r_{\bb}$ to
$$\cI\cap (V\cap \bs^{-1}(U))=1_U$$
we obtain that the intersection of $\cI$ with $r_{\bb}(V)\cap \bs^{-1}(U)$ (an    open neighbourhood of $k$) is exactly $\bb$. This shows both that $\cI$ is an embedded submanifold (hence, a \emph{Lie subgroupoid}) of $K$ and that $\cI$ is \emph{$\bs$-discrete}. 
\end{proof}

\begin{remark}
 {Recall from \S \ref{subsec:morph} that $\cB$ is a projective singular subalgebroid  if $\cB\cong \Gamma_c(E)$ for some vector bundle $E\to M$, which automatically comes with a Lie algebroid structure and a morphism $\tau \colon E\to A$.  In \cite{AZ4}, {generalizing Prop. \ref{prop:smoothgroid} and its proof}, we show that $H^{\cG}(\cB)$ is a Lie groupoid if{f} $\cB$ is projective, that
the Lie groupoid $H^{\cG}(\cB)$ integrates $E$, and that the canonical morphism $\Phi \colon H^{\cG}(\cB)\to G$ integrates $\tau$.} 
\end{remark}
 
 {We now refine Prop. \ref{prop:smoothgroid}, showing that in the case of wide Lie subalgebroids $H^{\cG}(\cB)$ is exactly the the  minimal integral of $B$ over $\cG$   
defined in the work of Moerdjik and Mr{\v{c}}un {\cite[Thm. 2.3]{MMRC} recalled in the Introduction.}}

\begin{prop}\label{prop:HGBisHmin}
{Let $B$ be a wide Lie subalgebroid of an integrable Lie algebroid $A$, and fix a Lie groupoid $\cG$ integrating $A$. Let  $\cB=\Gamma_c(B)$. 
 Then 
 \begin{itemize}
\item [i)] $H^{\cG}(\cB)$ agrees with $H_{min}$, the  minimal integral of $B$ over $\cG$ {recalled in the introduction,}  
\item [ii)] the canonical map $\Phi\colon H^{\cG}(\cB)\to G$ agrees with the map 
 {recalled in the introduction}. 
\end{itemize}
}
\end{prop}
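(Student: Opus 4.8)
\textbf{Proof plan for Proposition \ref{prop:HGBisHmin}.}

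The plan is to exploit the uniqueness built into the Moerdijk--Mr\v{c}un theorem. Recall that in their theorem, $H_{min}$ together with its morphism $\Phi_{MM}\colon H_{min}\to \cG$ is characterized (up to isomorphism) by three properties: it is a Lie groupoid integrating $B$, $\Phi_{MM}$ is a Lie groupoid morphism integrating $\iota\colon B\hookrightarrow A$, and the minimality property 3) holds. By Proposition \ref{prop:smoothgroid}, we already know that $H^{\cG}(\cB)$ is a Lie groupoid integrating $B$ and that the canonical map $\Phi\colon H^{\cG}(\cB)\to \cG$ is a Lie groupoid morphism integrating $\iota$. So the only thing left to verify is that $H^{\cG}(\cB)$, equipped with $\Phi$, satisfies the minimality property. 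Once this is done, the uniqueness clause of the Moerdijk--Mr\v{c}un theorem (``$H_{min}$ is unique up to isomorphism'') gives a canonical isomorphism $H^{\cG}(\cB)\cong H_{min}$ intertwining $\Phi$ with $\Phi_{MM}$, which is exactly i) and ii).

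So the core of the argument is: given any Lie groupoid morphism $\tilde H\to \cG$ integrating $\iota$, produce a surjective Lie groupoid morphism $\tilde H\to H^{\cG}(\cB)$ integrating $Id_B$ and commuting with the maps to $\cG$. But this is precisely the content of Proposition \ref{prop:lift} applied to the present situation: since $\tilde H\to \cG$ integrates the inclusion $\iota$, it in particular gives rise to the singular subalgebroid $\cB=\Gamma_c(B)$ (its induced map on compactly supported sections is $\iota$, whose image $C^\infty_c(M)$-spans $\Gamma_c(B)=\cB$ because $\iota$ is already $C^\infty(M)$-linear). Hence Proposition \ref{prop:lift} yields a surjective morphism of topological groupoids $\tau\colon \tilde H\to H^{\cG}(\cB)$ with $\Phi\circ\tau$ equal to the given map $\tilde H\to \cG$. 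Since both $\tilde H$ and $H^{\cG}(\cB)$ are Lie groupoids integrating $B$ (the latter by Proposition \ref{prop:smoothgroid}) and $\tau$ covers $Id_M$ and is a groupoid morphism, $\tau$ is automatically smooth and integrates $Id_B$: indeed $\tau$ is the quotient map $\tilde H\to \tilde H/\cI'$ for the corresponding $\bs$-discrete normal subgroupoid $\cI'$, as in the proof of Proposition \ref{prop:smoothgroid}, so $d\tau$ on Lie algebroids is the identity of $B$. This establishes property 3).

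I then conclude: $H^{\cG}(\cB)$ with $\Phi$ satisfies 1), 2), 3) of the Moerdijk--Mr\v{c}un theorem, so by the uniqueness asserted there it is isomorphic, as a Lie groupoid over $\cG$, to $(H_{min},\Phi_{MM})$. The isomorphism respects source, target and the maps to $\cG$, which is the statement of i) and ii). The main obstacle — or rather the one point requiring care — is making sure that the surjective morphism $\tau$ produced by Proposition \ref{prop:lift} is genuinely smooth and has differential $Id_B$, rather than merely a continuous groupoid morphism; this is handled by re-using the structural description of $\cI$ (embedded, $\bs$-discrete, contained in the isotropy) from the proof of Proposition \ref{prop:smoothgroid}, applied now to an arbitrary integration $\tilde H$ of $B$ mapping to $\cG$ over $\iota$. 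Everything else is a direct invocation of results already proved.
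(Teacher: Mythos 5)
Your proposal is correct and follows essentially the same route as the paper: Prop.~\ref{prop:smoothgroid} gives properties 1) and 2) of the Moerdijk--Mr{\v{c}}un theorem, the observation that any integration of $\iota$ gives rise to $\cB$ lets Prop.~\ref{prop:lift} supply property 3), and the uniqueness clause concludes. Your extra care in checking that the map $\tau$ from Prop.~\ref{prop:lift} is genuinely smooth with differential $Id_B$ (via the $\bs$-discrete embedded normal subgroupoid description, which indeed applies to an arbitrary integration $\tilde H\to\cG$ of $\iota$) is a detail the paper leaves implicit, and is a welcome addition rather than a deviation.
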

\begin{proof}
{
By Prop. \ref{prop:smoothgroid} the canonical map $\Phi\colon H^{\cG}(\cB)\to \cG$ 
satisfies properties 1) and 2) from  \cite[Thm. 2.3]{MMRC} recalled in the Introduction. Any Lie groupoid morphism $\tilde{H}\to G$ integrating the   inclusion $\iota \colon B\hookrightarrow A$ is a Lie groupoid morphism  giving rise to $\cB$. 
Hence, by  Prop. \ref{prop:lift}, property 3) from  that theorem holds too. The uniqueness statement in that theorem finishes the proof.}
\end{proof}

The following two examples generalize the elementary Example \ref{ex:Liegr}.

 \begin{ex}[Lie algebroids]\label{ex:GammaA}
For any integrable Lie algebroid $A$ take $\cB=\Gamma_c(A)$, and let $\cG$ be a  Lie groupoid  integrating 
$A$. Then $H^{\cG}(\cB)=\cG$ and $\Phi=Id_{\cG}$. {This follows taking $\Psi=Id_{\cG}$ in Cor. \ref{cor:HBimage}. {(It also follows directly  from Prop. \ref{prop:HGBisHmin})}.}
\end{ex}

\begin{ex}[Lie subalgebras]
\label{ex:Liegrrevisited} Let $\g$ a Lie algebra, $\mathfrak{k} $ a Lie subalgebra, and
fix a connnected Lie group $G$ integrating $\g$.
Let $\Psi \colon
K\to G$ be any morphism of Lie groups integrating the inclusion $\iota \colon \mathfrak{k} \hookrightarrow \g$, where $K$ is assumed to be connected. (For instance, take $K$ to be the simply connected integration of $\mathfrak{k}$.) Then $$H^G(\mathfrak{k})=K/ker(\Psi).$$ {This follows from Cor. \ref{cor:HBimage}, noticing} that  since the space of objects of $\cK$ is just a point, we have $\cI=\ker(\Psi)$.
Using Prop. \ref{prop:smoothgroid} we see that  $H^G(\mathfrak{k})$ is a Lie group integrating $\mathfrak{k}$, and the map $\Phi\colon H^G(\mathfrak{k})\to G$ induced by $\varphi$ is an injective immersion and group homomorphism. In other words,
 $(H^G(\mathfrak{k}),\Phi)$ is the Lie subgroup of $G$ integrating $\mathfrak{k}$.
   \end{ex}

\subsection{Dependence of \texorpdfstring{$H^{\cG}(\cB)$}{} on \texorpdfstring{$\cG$}{}} \label{section:vary}

Let $A\to M$ be a Lie algebroid and $\cB$ a singular subalgebroid. Fix a Lie groupoid $\cG$ integrating $A$. In \S \ref{sub:con} we constructed the holonomy  groupoid $H(\cB):=H^{\cG}(\cB)$ over $M$, as the quotient of a path-holonomy atlas of $ {\cG}$-bisubmersions\footnote{In this section we use the terminology ``$\cG$-bisubmersion''   instead ``bisubmersion'', to emphatize the dependence on the Lie groupoid $G$.}
 by the equivalence relation given by morphisms of $ {\cG}$-bisubmersions. Clearly the construction depends on $\cG$.  

Now take another Lie groupoid $\widetilde{\cG}$ integrating $A$, and so that $\cG$ is a quotient of it, \ie there is a surjective  Lie groupoid {morphism} $$\pi \colon \widetilde{\cG}\to\cG$$ with discrete fibers. Denote by $\widetilde{H}(\cB):=H^{\widetilde{\cG}}(\cB)$ the holonomy groupoid constructed using $\widetilde{\cG}$. In this subsection we describe $\widetilde{H}(\cB)$ in terms of $H(\cB)$.

\subsubsection{{A theorem describing $\widetilde{H}(\cB)$ in terms of $H(\cB)$}}

Consider the fiber product of the canonical groupoid morphism  $\Phi\colon H(\cB)\to \cG$ and $\pi \colon \widetilde{\cG}\to \cG$, \ie  $$H(\cB)\times_{\Phi,\pi}\widetilde{\cG}\rightrightarrows M$$ with the component-wise groupoid structure. Upon the identification between $M$ and the diagonal $\Delta M\subset M\times M$, it is the subgroupoid of the product groupoid $H(\cB)\times \widetilde{\cG}\rightrightarrows M\times M$ given by the preimage of $\Delta \cG$ under the groupoid morphism $(\Phi, \pi)\colon H(\cB)\times \widetilde{\cG}\to \cG\times \cG$. Notice that the latter morphism does not have connected fibers in general, so that $H(\cB)\times_{\Phi,\pi}\widetilde{\cG}$ will not be source-connected in general. Hence we consider the source-connected component of the identities:
\begin{align}\label{eq:pathschar}
(H(\cB)\times_{\Phi,\pi}\widetilde{\cG})_0=\{(h,\widetilde{g})\in
H(\cB)\times \widetilde{\cG}:& \text{ $\exists$ a continuous path $(h(t),\widetilde{g}(t))\subset (\bs_H,\widetilde{\bs})^{-1}(x,x)$}\\ 
&\text{from $(1_{x}^{H(\cB)},1_x^{\widetilde{\cG}})$ to $(h,\widetilde{g})$ with $\Phi(h(t))=\pi(\widetilde{g}(t))$}\},\nonumber
\end{align}
where $x:=\bs_H(h)=\widetilde{\bs}(\widetilde{g})\in M$.

\begin{ex}
Take the simple case $\cB=\{0\}$. {For any choice of $G$, we have that $H(\cB)$
is the trivial groupoid $M\rightrightarrows M$.}
We obtain $$H(\cB)\times_{\Phi,\pi}\widetilde{\cG}=M\times_{(
\iota_M,\pi)}\widetilde{\cG}\cong \pi^{-1}(1_M^{\cG})=\ker(\pi)$$
{where $\iota_M$ is the inclusion of the identity elements $1_M^{\cG}$ into $G$.}
Hence $(H(\cB)\times_{\Phi,\pi}\widetilde{\cG})_0$ consists of the identity elements of $\widetilde{\cG}\rightrightarrows M$, {and therefore is isomorphic to the trivial groupoid $M\rightrightarrows M$}.
\end{ex}

\begin{remark}\label{rem:unique}
As $\pi \colon \widetilde{\cG}\to \cG$ has discrete fibers, for any path $\gamma$ in the $\bs$-fiber of $\cG$ starting at $1_x^{\cG}$ there exists a unique lift starting at $1_x^{\widetilde{\cG}}$, \ie a unique path $\tau$ in $\widetilde{\cG}$ with $\gamma=\pi\circ \tau$ and $\tau(0)=1_x^{\widetilde{\cG}}$. 

This shows that, in the   characterization \eqref{eq:pathschar} of $(H(\cB)\times_{\Phi,\pi}\widetilde{\cG})_0$,  the path $\widetilde{g}(t)$ (hence in particular $\widetilde{g}$) is determined by the path $h(t)$: indeed, $\widetilde{g}(t)$ is the unique $\pi$-lift of $\Phi(h(t))$ starting at $1_x^{\widetilde{\cG}}$.
\end{remark}

Notice that, if we fix a local generating set $\balpha_1,\dots,\balpha_n$ for $\cB$, it gives rise to two path-holonomy bisubmersions: a $\cG$-bisubmersion and a $\widetilde{\cG}$-bisubmersion. The domains of both bisubmersions coincide (they are an open subset in $\RR^n\times M$). The domains of  the  compositions of  path-holonomy bisubmersions also coincide, hence if $\cU$ is a $\cG$-path-holonomy atlas, then $\coprod_{U \in \mathcal{U}} U$ will be the domain of a $\widetilde{\cG}$-path-holonomy atlas too. Given corresponding bisubmersions
$(U,\varphi,\cG)$ and $(U,\widetilde{\varphi},\widetilde{\cG})$, {from Def. \ref{dfn:pathhol} we see that} $\varphi=\pi \circ \widetilde{\varphi}\colon U\to \cG$: 
\begin{equation*} 
 \xymatrix{  U  \ar[dd]^{\varphi
 }\ar[dr]^{\widetilde{\varphi}}&\\
  &\widetilde{\cG}\ar[dl]^{\pi}\\
 \cG&}
\end{equation*}

We denote the quotient maps to the holonomy groupoids by $\natural|_U\colon U\to H(\cB)$ and 
$\widetilde{\natural}|_U\colon U\to \widetilde{H}(\cB)$ respectively,
as in \S\ref{sub:con}. 

{The central result of this subsection is the following.}

\begin{thm}\label{thm:tilde}
There is a canonical isomorphism over $Id_M$ of topological groupoids $$T\colon \widetilde{H}(\cB)\to (H(\cB)\times_{\Phi,\pi}\widetilde{\cG})_0 \quad \widetilde{h}\mapsto (\natural u, \widetilde{\Phi}(\widetilde{h}))$$ where $u$ is any point in a path-holonomy atlas with $\widetilde{\natural} u=\widetilde{h}$.
\end{thm}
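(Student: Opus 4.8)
The plan is to exhibit $T$ explicitly on representatives coming from bisubmersions, check it is well-defined and a bijection, and then verify it is a groupoid morphism with continuous inverse. First I would fix a $\cG$-path-holonomy atlas $\cU$ and use the observation made just before the statement: the same domains $\coprod_{U\in\cU}U$ serve as a $\widetilde{\cG}$-path-holonomy atlas, with corresponding bisubmersions $(U,\varphi,\cG)$ and $(U,\widetilde{\varphi},\widetilde{\cG})$ linked by $\varphi=\pi\circ\widetilde{\varphi}$. Given $\widetilde{h}\in\widetilde{H}(\cB)$, pick $u\in U$ with $\widetilde{\natural}\,u=\widetilde{h}$ and set $T(\widetilde{h}):=(\natural u,\widetilde{\Phi}(\widetilde h))$, where $\widetilde\Phi\colon\widetilde H(\cB)\to\widetilde{\cG}$ is the canonical morphism from Thm. \ref{thm:holgroidconstr}. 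The first thing to check is that the target really lands in $(H(\cB)\times_{\Phi,\pi}\widetilde{\cG})_0$: compatibility $\Phi(\natural u)=\varphi(u)=\pi(\widetilde\varphi(u))=\pi(\widetilde\Phi(\widetilde h))$ is immediate, and source-connectedness follows because $\cG$-bisubmersions (and hence $H(\cB)$) are source-connected, so one can join $1_x^{H(\cB)}$ to $\natural u$ by a path in a single bisubmersion $U$, push it down by $\varphi$ and by $\widetilde\varphi$ simultaneously, and use Remark \ref{rem:unique} to see the two pushforwards are $\pi$-related as required in \eqref{eq:pathschar}.

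Next I would check well-definedness of $T$, i.e. independence of the choice of $u$. Suppose $u\in U$ and $u'\in U'$ both represent $\widetilde h$, so $u\sim u'$ as points of $\widetilde{\cG}$-bisubmersions; by Remark \ref{rem:HGBbisec} this means $\widetilde\varphi(u)=\widetilde\varphi(u')$ and there are bisections $\bb$ through $u$, $\bb'$ through $u'$ with $\widetilde\varphi(\bb)=\widetilde\varphi(\bb')$. Applying $\pi$ gives $\varphi(\bb)=\varphi(\bb')$ and $\varphi(u)=\varphi(u')$, hence $u\sim u'$ as $\cG$-bisubmersion points, so $\natural u=\natural u'$; and $\widetilde\varphi(u)=\widetilde\varphi(u')$ gives the second coordinate is unchanged. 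For surjectivity onto $(H(\cB)\times_{\Phi,\pi}\widetilde{\cG})_0$: given $(h,\widetilde g)$ with a witnessing path $(h(t),\widetilde g(t))$, use that $h$ lies in the image of some $\natural|_U$ and lift appropriately; the cleanest route is to observe that the path $h(t)$ can be covered by finitely many bisubmersions in $\cU$ and realized as a product, so $(h,\widetilde g)$ is hit — here Remark \ref{rem:unique} is what forces $\widetilde g$ to be $\widetilde\Phi$ of the corresponding element of $\widetilde H(\cB)$. For injectivity: if $T(\widetilde h_1)=T(\widetilde h_2)$ then $\widetilde\Phi(\widetilde h_1)=\widetilde\Phi(\widetilde h_2)$ and $\natural u_1=\natural u_2$; from the latter, $u_1\sim u_2$ as $\cG$-points, giving bisections with $\varphi(\bb_1)=\varphi(\bb_2)$, and one upgrades this to $\widetilde\varphi(\bb_1)=\widetilde\varphi(\bb_2)$ using that $\pi$ has discrete fibers together with the equality $\widetilde\varphi(u_1)=\widetilde\Phi(\widetilde h_1)=\widetilde\Phi(\widetilde h_2)=\widetilde\varphi(u_2)$ and a connectedness/continuity argument on the bisections (two bisections of a $\cG$-bisubmersion with the same $\varphi$-image and agreeing at one point where their $\widetilde\varphi$-values agree must have the same $\widetilde\varphi$-image, since $\widetilde\varphi$ is a continuous lift of $\varphi$ along the discrete cover $\pi$).

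Finally I would check $T$ is a groupoid morphism over $Id_M$: it covers the identity since both $\natural u$ and $\widetilde\Phi(\widetilde h)$ have source/target $\bs_U(u),\bt_U(u)$; multiplicativity follows because composition of bisubmersions is the same on the level of domains for the $\cG$- and $\widetilde{\cG}$-atlases, and both $\natural$ and $\widetilde\Phi$ respect it (Thm. \ref{thm:holgroidconstr}); units and inverses are handled the same way. Continuity of $T$ is clear from the quotient topologies since on each bisubmersion $U$ the composite $u\mapsto(\natural u,\widetilde\varphi(u))$ is continuous; continuity of $T^{-1}$ follows because the inverse, read through the atlas, sends $(\natural u,\widetilde g)$ to $\widetilde\natural u$ and this is continuous on the domain $\coprod U$. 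The main obstacle I anticipate is the injectivity step: transporting the bisection data witnessing $u_1\sim u_2$ for $\cG$-bisubmersions up to bisection data witnessing $u_1\sim u_2$ for $\widetilde{\cG}$-bisubmersions. This is where the discreteness of $\ker\pi$ and the fact that $\widetilde\varphi$ is the unique continuous $\pi$-lift of $\varphi$ (Remark \ref{rem:unique}) must be combined carefully with source-connectedness of the bisections; everything else is a routine transfer of the construction in \S\ref{sub:con} along $\pi$.
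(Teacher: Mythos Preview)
Your overall strategy matches the paper's proof: same decomposition into well-definedness, groupoid morphism, continuity, injectivity, surjectivity, and homeomorphism, and the same key mechanism (discreteness of the fibres of $\pi$) for injectivity. Your injectivity argument works and is essentially the paper's kernel argument unwound to arbitrary pairs; the paper reduces to $\natural v = 1_x$ and $\widetilde\varphi(v)=1_x^{\widetilde{\cG}}$, but the core step --- two $\widetilde{\cG}$-bisections with the same $\pi$-image agreeing at one point must agree locally --- is identical.

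There are, however, two genuine gaps.

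\emph{Surjectivity.} You write that ``the path $h(t)$ can be covered by finitely many bisubmersions in $\cU$ and realised as a product''. The covering is fine (the $\natural U$ are open), but the crucial step is to lift the continuous path $h(t)$ in $H(\cB)$ to a continuous path $u(t)$ in a bisubmersion $U$ with $\natural u(t)=h(t)$; only then does Remark~\ref{rem:unique} force $\widetilde\varphi(u(t))=\widetilde g(t)$. The quotient map $\natural|_U$ is merely continuous to a topological space, so this path-lifting is not automatic. The paper handles this by invoking a result to appear in \cite{AZ4}: the restriction $H(\cB)|_L$ to each leaf $L$ carries a smooth structure for which $\natural\colon U|_L\to H(\cB)|_L$ is a submersion, which gives the local path-lifting. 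You either need to invoke that result, or circumvent it --- for instance, by first proving $T$ is open (so its image is an open subgroupoid containing the identities in a source-connected groupoid, hence everything). Your present argument does neither.

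\emph{Continuity of $T^{-1}$.} Your claim that ``the inverse, read through the atlas, sends $(\natural u,\widetilde g)$ to $\widetilde\natural u$ and this is continuous on the domain $\coprod U$'' is not a valid argument: the topology on $(H(\cB)\times_{\Phi,\pi}\widetilde{\cG})_0$ is the subspace topology of a product, not a quotient of $\coprod_U U$, so continuity on $\coprod_U U$ says nothing about continuity of $T^{-1}$. The paper instead shows that $T$ is \emph{open}: since $\pi$ is a local homeomorphism, the first projection $pr_1\colon H(\cB)\times_{\Phi,\pi}\widetilde{\cG}\to H(\cB)$ is a local homeomorphism, and $pr_1(T(\widetilde\natural\,U))=\natural U$ is open in $H(\cB)$ by Remark~\ref{rem:open}; hence $T(\widetilde\natural\,U)$ is open. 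You should replace your continuity-of-inverse paragraph with this openness argument.
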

{Clearly, under the above isomorphism, the canonical map $\widetilde{\Phi}\colon \widetilde{H}(\cB)\to \widetilde{\cG}$ corresponds to the  
{second projection $(h,\widetilde{g})\mapsto  \widetilde{g}$.}

\begin{remark}
The relevant diagram is
\begin{equation} \label{eq:bigdia}
 \xymatrix{
 &\coprod_{U \in \mathcal{U}} U \ar@{-->}[d]_{\widetilde{\natural}}\ar[dl]_{{\natural}}\ar[dr]^{\widetilde{\varphi}}&\\
  H(\cB)\ar[dr]_{\Phi}  & \widetilde{H}(\cB) \ar@{-->}[r]_{\widetilde{\Phi}} \ar@{-->}[l] &\widetilde{\cG}\ar[dl]^{\pi}\\
&\cG&}
\end{equation}
\end{remark}

\begin{proof}[Proof of theorem \ref{thm:tilde}]

\underline{Claim:} \emph{$T$ is well-defined.}
 
 We show that the map $$\widetilde{H}(\cB)\to H(\cB),\; \widetilde{\natural} u\mapsto {\natural} u$$ is well-defined.
 Let $U,V$ be $\widetilde{\cG}$-bisubmersions, and $u,v$ points with $\widetilde{\natural} u=\widetilde{\natural} v$. This means that there is a morphism of $\widetilde{\cG}$-bisubmersions $f\colon U\to V$ with
 $f(u)=v$ (possibly shrinking $U$). Composing $\widetilde{\varphi}_V\circ f=\widetilde{\varphi}_U\colon U\to \widetilde{\cG}$   with $\pi \colon \widetilde{\cG}\to \cG$ we find ${\varphi_V}\circ f={\varphi_U}\colon U\to {\cG}$ (using $\pi \circ \widetilde{\varphi}_U ={\varphi_U}$). This shows that $f$ is also a  morphism of $G$-bisubmersions, therefore ${\natural} u={\natural} v$.

Further, the image of $T$ is really contained in $(H(\cB)\times_{\Phi,\pi}\widetilde{\cG})_0$. {Indeed, it is contained in the fiber product $H(\cB)\times_{\Phi,\pi}\widetilde{\cG}$ because we have $\Phi(\natural u))=\varphi(u)=\pi(\widetilde{\varphi}(u))=\pi(\widetilde{\Phi}(\widetilde{h}))$ for all $u$. It is contained in the source-connected component of the identities because  $\widetilde{H}(\cB)$ is source-connected and, as we shall see immediately, $T$ is a continuous groupoid morphism.} \hfill$\bigtriangleup$

\underline{Claim:} \emph{$T$ is a groupoid morphism.}

One checks directly that $\widetilde{H}(\cB)\to H(\cB), \widetilde{\natural} u \mapsto \natural u$ is morphism of groupoids. Also, $\widetilde{\Phi}\colon \widetilde{H}(\cB)\to  \widetilde{\cG}$ is a morphism of groupoids by Thm. \ref{thm:holgroidconstr}.\hfill$\bigtriangleup$

\underline{Claim:} \emph{$T$  is continuous.}

{This holds since 
  $\widetilde{H}(\cB)\to H(\cB)$
is continuous (being the map induced by $\natural \colon \coprod_{U \in \mathcal{U}} U \to H(\cB)$ on the quotient $\widetilde{H}(\cB)$) and since 
 $\widetilde{\Phi}$ is continuous. 
}

\underline{Claim:} \emph{$T$ is injective.}

Since $T$ is a morphism of groupoids, it suffices to check that
if $\natural v=1_x^{H(\cB)}$ and  $\widetilde{\varphi}_V(v)=1_x^{\widetilde{\cG}}$ then 
$\widetilde{\natural} v =1_x^{\widetilde{H}(\cB)}$, for any element $v$ in a bisubmersion $V$. Let $(U,\widetilde{\varphi}_U,
\widetilde{\cG})$ be a path-holonomy $\widetilde{\cG}$-bisubmersion containing $(0,x)$, so $U \subset \RR^n\times M$. There
exists a morphism of $\cG$-bisubmersions $f\colon U\to V$ with $f(0,x)= v$, by the first assumption above. Notice that the diagram
\begin{equation*} 
 \xymatrix{
 U\ar[dd]^f\ar[r]^{\widetilde{\varphi}_U}&\widetilde{\cG}\ar[dr]^{\pi}&\\
 &&\cG\\
V\ar[r]^{\widetilde{\varphi}_V}&\widetilde{\cG}\ar[ur]_{\pi}&}
\end{equation*}
commutes, since $\varphi_U=\pi\circ\widetilde{\varphi}_U$.

Now consider  $S:=(\{0\}\times M)\cap U$. We have $\widetilde{\varphi}_U(S)=1_S^{\widetilde{\cG}}$, an open subset of the identity bisection of $\widetilde{\cG}$, and $(\widetilde{\varphi}_V\circ f)(S)$ is a bisection of $\widetilde{\cG}$ which, by the second hypothesis above, contains $1_x^{\widetilde{\cG}}$. Both bisections map under $\pi$ to $1_S^{\cG}$, an open subset of the identity bisection of $\cG$, by the commutativity of the above diagram. Since $\pi$ has \emph{discrete} fibers, it follows that the two bisections of $\widetilde{\cG}$ agree, \ie $\widetilde{\varphi}_U(S)=\widetilde{\varphi}_V(f(S))$. Hence corollary \ref{cor:crucial} b) implies that there is morphism of $\widetilde{\cG}$-bisubmersions $U\to V$ with $(0,x)\mapsto v$, hence $\widetilde{\natural}v=\widetilde{\natural}(0,x)=1_x^{\widetilde{H}(\cB)}$.\hfill$\bigtriangleup$

\underline{Claim:} \emph{$T$ is surjective.} 

Let $(h,\widetilde{g})\in
H(\cB)\times \widetilde{\cG}$ so that  there is a continuous path $(h(t),\widetilde{g}(t))$ from $(1_{x}^{H(\cB)},1_x^{\widetilde{\cG}})$ to $(h,\widetilde{g})$ with
$\bs_H(h(t))=\widetilde{\bs}(\widetilde{g}(t))=x$ and $\Phi(h(t))=\pi(\widetilde{g}(t))$.
We have to show that there is a bisubmersion $U$ in the path-holonomy atlas and $u\in U$ with $\natural u=h,\widetilde{\varphi}(u)=\widetilde{g}$. {Since $T$ is a groupoid morphism and every source-connected topological groupoid is generated by any symmetric neighbourhood of the identities, we can assume that $(h,\widetilde{g})$ is arbitrarily close to the set of identities. Hence, by Remark \ref{rem:open}, we can assume that there is a path holonomy bisubmersion $U_0$ such that $h\in \natural U_0$.}

Denote by $L\subset M$ the leaf of the foliation $\rho(\cB)$ through $x$. As we 
show in   \cite{AZ4},
$H(\cB)|_L:=\bs_H^{-1}(L)$ has a smooth structure such that 
for any $\cG$-bisubmersion $U$ in the path-holonomy atlas of $\cB$, the quotient map $\natural\colon U|_L:=\bs_U^{-1}(L)\to H(\cB)|_L$ is a submersion. Hence  there exists a continuous curve $u(t)$ {in $U_0$} with $\natural u(t)=h(t)$ and $u(0)=(0,x)$, where $(0,x)$ lies in a minimal path-holonomy $\cG$-bisubmersion. We claim that $u:=u(1)$ satisfies the above properties. 

By definition we have $\natural u(1)=h(1)=h$. Now consider the   part of   diagram \eqref{eq:bigdia} with solid arrows and the paths:
\begin{equation*} 
 \xymatrix{&u(t)  \ar[dl]_{{\natural}}\ar[dr]^{\widetilde{\varphi}}&\\
h(t) \ar[dr]_{\Phi}  &
 & \widetilde{\varphi}(u(t))\ar[dl]^{\pi}\\
 &{\Phi}(h(t))&}
\end{equation*}
The path $\widetilde{\varphi}(u(t))$ is a $\pi$-lift of $\Phi(h(t))$, and its starting point is 
$\widetilde{\varphi}((0,x))=1_x^{\widetilde{\cG}}$. The same holds for $\widetilde{g}(t)$. Hence by the uniqueness of the $\pi$-lift starting at $1_x^{\widetilde{\cG}}$ (see Remark \ref{rem:unique}) we obtain 
$\widetilde{\varphi}(u(t))=\widetilde{g}(t)$, and evaluating at $t=1$ we get $\widetilde{\varphi}(u)=\widetilde{g}$.\hfill$\bigtriangleup$

\underline{Claim:} \emph{$T$  is A homeomorphism.}
 It suffices to show that $T$ is an open map. Let $(U,\varphi,\cG)$ be a $G$-bisubmersion in the path-holonomy atlas. Then 
$\widetilde{\natural} U$ is open in $\widetilde{H}(\cB)$,
{by Remark \ref{rem:open}. We will show that its image $T(\widetilde{\natural} U)$ is open.
 The {Lie groupoid morphism $\pi \colon \widetilde{G}\to G$} is a local homeomorphism, hence the first projection $pr_1\colon H(\cB)\times_{\Phi,\pi}\widetilde{\cG}\to H(\cB)$ is also a local homeomorphism. 
Shrinking $U$ if necessary, we can assume that $T(\widetilde{\natural} U)=\{(\natural u, \widetilde{\varphi} u):u\in U\}$ is contained in an open subset $N$ of $(H(\cB)\times_{\Phi,\pi}\widetilde{\cG})_0$ such that $pr_1|_N\colon N\to pr_1(N)$ is a homeomorphism onto an open subset  of ${H}(\cB)$. The subset $pr_1(T(\widetilde{\natural} U))$ equals $\natural U$, which is open in $H(\cB)$  
by Remark \ref{rem:open}. Hence $T(\widetilde{\natural} U)$ is open in $N$ and therefore in $(H(\cB)\times_{\Phi,\pi}\widetilde{\cG})_0$.

Summarizing: for small enough bisubmersions $U$ in the path-holonomy atlas, $T$ maps the open subsets $\widetilde{\natural} U$ of $\widetilde{H}(\cB)$   to open subsets. Since any open subset of 
$\widetilde{H}(\cB)$ is a union of such $\widetilde{\natural} U$, we are done.} 
 \hfill$\bigtriangleup$
 
\end{proof}

\begin{remark}\label{rem:ker}  There is a groupoid morphism $$\widetilde{H}(\cB)\to H(\cB),\; \widetilde{\natural} u\mapsto {\natural} u$$(see the first two claims in the proof of theorem \ref{thm:tilde}), which is clearly surjective. Under the  
  canonical isomorphism $\widetilde{H}(\cB)\cong(H(\cB)\times_{\Phi,\pi}\widetilde{\cG})_0$ it is given by the projection onto the first component.  
{Its kernel is  
\begin{equation}\label{eq:kermor}
  (1_M^{H(\cB)}\times \widetilde{\cG})\cap (H(\cB)\times_{\Phi,\pi}\widetilde{\cG})_0,
\end{equation}
which is contained in
$\{(1_x^{H(\cB)},\widetilde{g}): x\in M, \pi(\widetilde{g})=  1_x^{\cG}\}\cong\ker \pi$. A more explicit description of the kernel can be obtained using equation \eqref{eq:pathschar}. (Here $\pi\colon\widetilde{\cG} \to \cG$ is the original covering map.)}
{An explicit example of the groupoid morphism $\widetilde{H}(\cB)\to H(\cB)$ is given in Example \ref{ex:notssc}.}

{Let $L$ be a leaf of $\cB$.
Then  $\widetilde{H}(\cB)|_L$ 
and $\widetilde{H}(\cB)|_L$ are
 transitive Lie groupoids over $L$  \cite[\S 2]{AZ4}. The kernel of the surjective morphism $\widetilde{H}(\cB)|_L\to \widetilde{H}(\cB)|_L$ is the restriction of   \eqref{eq:kermor} to $L$. This kernel is contained in the product of 
$1_L^{H(\cB)|_L}$ with $(\bs_{\widetilde{G}}^{-1}(L)\cap \bt_{\widetilde{G}}^{-1}(L))_{0}$, the Lie groupoid given by the source connected component of the restriction of $\widetilde{G}$ to $L$. This follows from the fact that $L$ is contained in a leaf of the Lie groupoid $\widetilde{G}$.
 }
\end{remark}

\subsubsection{Examples for {Theorem \ref{thm:tilde}.}}

{We present a few examples for Theorem \ref{thm:tilde}.   Ex. \ref{ex:notssc}   in particular shows that even when we use a source simply connected  Lie groupoid $G$   to construct the holonomy groupoid $H^G(\cB)$, the latter  might not be  source simply connected}.

\begin{ex}
When $\cB=\Gamma_c(A)$, Theorem \ref{thm:tilde} and Example \ref{ex:GammaA} recover the obvious isomorphism $\widetilde{\cG}\cong \cG\times_{Id,\pi}\widetilde{\cG}$.
\end{ex}

\begin{ex}\label{ex:regfol}
Consider the case of a singular foliation $\cF$.  First we integrate the Lie algebroid $TM$ to the pair groupoid $\cG:=M \times M$, giving rise to $H(\cF):=H^{\cG}(\cF)$, the holonomy groupoid of the singular foliation as in  \cite{AndrSk} (see example \ref{ex:singfo}). We have $\Phi=(\bt_H,\bs_H)\colon H(\cF)\to M\times M$.
We can also integrate $TM$  to the fundamental groupoid $\widetilde{\cG}:=\Pi(M)$.
 The construction of \S \ref{sub:con} gives rise to another topological groupoid  $ \widetilde{H}(\cF):=H^{\widetilde{\cG}}(\cF)$, which has a canonical groupoid morphism $\widetilde{\Phi}$ to $ \Pi(M)$.
By Theorem \ref{thm:tilde} we have $$\widetilde{H}(\cF)\cong (H(\cF)\times_{(\bt_H,\bs_H),\pi}\Pi(M))_0,$$
where $\pi \colon \Pi(M)\to M\times M$ is the target-source map of $\Pi(M)$ (sending the homotopy class of a path in $M$ to its endpoints).
\end{ex}
Example \ref{ex:regfol} can be made more explicit when $\cF$ is a regular foliation.

\begin{prop}\label{prop:regfol}
When $\cF$ is a regular foliation one obtains 
\begin{equation}\label{eq:paths}
(H(\cF)\times_{(\bt_H,\bs_H),\pi}\Pi(M))_0= \big\{([\gamma],\langle \gamma \rangle_M): \gamma \text{ is a path in a leaf of the foliation}\big\} 
\end{equation}
where $[\gamma]\in H(\cF)$ denotes the holonomy class of $\gamma$, and 
$\langle \gamma \rangle_M\in \Pi(M)$ its \emph{homotopy} class (fixing endpoints) as a path in $M$.
\end{prop}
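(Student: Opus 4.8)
The plan is to unwind both sides of the claimed equality using the description of $\widetilde{H}(\cF)=H^{\Pi(M)}(\cF)$ coming from Theorem~\ref{thm:tilde}, together with the well-known classical description of the holonomy groupoid of a regular foliation. First I would recall that when $\cF$ is regular, $H(\cF)$ is the usual holonomy groupoid of Ehresmann: its elements are triples $(x,y,[\gamma])$ where $\gamma$ is a path in the leaf through $x$ and $y$, and $[\gamma]$ is the holonomy class of $\gamma$ relative to transversals; two paths with the same endpoints are identified exactly when they induce the same germ of holonomy transport. The map $\Phi=(\bt_H,\bs_H)\colon H(\cF)\to M\times M$ sends such a class to the pair of endpoints. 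By Theorem~\ref{thm:tilde}, $\widetilde H(\cF)\cong (H(\cF)\times_{\Phi,\pi}\Pi(M))_0$, and since $\pi\colon\Pi(M)\to M\times M$ sends a homotopy class of a path to its endpoints, an element of the fiber product is a pair $(h,\langle\delta\rangle_M)$ with $h\in H(\cF)$, $\delta$ a path in $M$, and $\Phi(h)=(\delta(1),\delta(0))$ (matching the source/target conventions).

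The key step is to identify the source-connected component of the identity in this fiber product with the set on the right-hand side of equation~\eqref{eq:paths}. I would argue as follows. Given a path $\gamma$ contained in a leaf, set $h:=[\gamma]\in H(\cF)$ (its holonomy class) and $\langle\gamma\rangle_M\in\Pi(M)$ its homotopy class in $M$; then $(h,\langle\gamma\rangle_M)$ lies in the fiber product since both have the same endpoints, and it lies in the source-connected component because the family $\gamma_t:=\gamma|_{[0,t]}$ (suitably reparametrized) gives a continuous path from $(1^{H(\cF)}_{\gamma(0)},1^{\Pi(M)}_{\gamma(0)})$ to $(h,\langle\gamma\rangle_M)$ staying in the relevant source fiber and satisfying $\Phi(h_t)=\pi(\langle\gamma_t\rangle_M)$ for all $t$. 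Conversely, given $(h,\langle\delta\rangle_M)$ in the source-connected component, by definition there is a continuous path $(h(t),\langle\delta_t\rangle_M)$ from the identity to $(h,\langle\delta\rangle_M)$ with $\Phi(h(t))=\pi(\langle\delta_t\rangle_M)$ throughout; tracing the $H(\cF)$-component shows $h$ lies over a pair of points in the same leaf, so $h$ is the holonomy class of some leafwise path $\sigma$, and the uniqueness of path-lifting in the covering-type map $\Pi(M)\to M\times M$ (together with the fact that $\Phi(h(t))$ lifts continuously) forces $\langle\delta\rangle_M$ to be the homotopy class of a path contained in the leaf, indeed homotopic to $\sigma$ inside $M$. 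Thus every element of the component is of the form $([\gamma],\langle\gamma\rangle_M)$ with $\gamma$ leafwise.

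The main obstacle I anticipate is the bookkeeping around well-definedness and the matching of conventions: one must check that the assignment $\gamma\mapsto([\gamma],\langle\gamma\rangle_M)$ is well-defined (it descends to an equality of classes, not of representatives, which is automatic since both sides only remember homotopy/holonomy data), and that the right-hand side of \eqref{eq:paths} is really being described as a \emph{set of pairs}, not modulo some further relation — the pair $([\gamma],\langle\gamma\rangle_M)$ already records the homotopy class in $M$, so two leafwise paths that are $M$-homotopic but not leafwise-homotopic give distinct holonomy classes $[\gamma]$ in general yet could a priori collide; one checks this cannot happen once the endpoints and the $M$-homotopy class are fixed, because the source-connected component is precisely cut out by the continuous-path condition linking the two components. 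The other delicate point is verifying source-connectedness of the candidate elements: one needs that the leafwise path $\gamma$, truncated to $\gamma|_{[0,t]}$, yields a genuinely continuous path in $H(\cF)\times_{\Phi,\pi}\Pi(M)$, which uses the smoothness of $H(\cF)$ restricted to a leaf (a fact recalled in the excerpt and proven in \cite{AZ4}) and the continuity of $t\mapsto\langle\gamma|_{[0,t]}\rangle_M$ in $\Pi(M)$. Once these are in hand the proof is essentially a translation exercise.
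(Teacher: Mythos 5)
Your overall strategy is the same as the paper's: unwind the characterization of the source-connected component of the fiber product in terms of continuous paths in the source fibers, and identify both coordinates with data coming from a single leafwise path. The inclusion ``$\supset$'' (truncating $\gamma$ to $\gamma|_{[0,t]}$) matches the paper's argument, and your use of unique path lifting for $\pi\colon\Pi(M)\to M\times M$ correctly pins down the second coordinate: if $(h(t),\widetilde{g}(t))$ is the connecting path, then $\widetilde{g}(1)=\langle \gamma \rangle_M$ where $\gamma:=\bt_H\circ h(\cdot)$ is the leafwise path traced out by the targets.

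There is, however, a genuine gap in the converse inclusion: you never establish that $h=[\gamma]$ for this \emph{same} path $\gamma$. You only observe that $h$ is the holonomy class of ``some leafwise path $\sigma$'' (which is automatic for any element of the holonomy groupoid of a regular foliation) and then assert that $\langle\delta\rangle_M$ is homotopic to $\sigma$ in $M$ --- but unique path lifting in $\Pi(M)$ relates $\langle\delta\rangle_M$ to the trace $\gamma$, not to an arbitrary representative $\sigma$ of $h$. The proposition requires one path realizing both coordinates simultaneously, so the missing step is precisely: the endpoint of a continuous path $h(t)$ in the source fiber $\bs_H^{-1}(x)$ starting at $1_x$ equals the holonomy class of the leafwise path $t\mapsto\bt_H(h(t))$. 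This is the content of the paper's ``square'' argument: writing $h(t)=[\delta^t]$ for a homotopy $\{\delta^t\}$ of leafwise paths and considering $S(s,t)=\delta^t(s)$, the contractibility of the square forces the holonomy around its boundary to vanish, whence $[\delta^1]=[\gamma]$ with $\gamma(t)=\delta^t(1)$. (Alternatively, in the regular case you could note that $\bt_H\colon\bs_H^{-1}(x)\to L$ is the holonomy covering of the leaf and apply unique path lifting there as well, giving $h(t)=[\gamma|_{[0,t]}]$ directly.) Either way, this step must be supplied; without it the two coordinates of your pair are not tied to the same path. Your side remark about possible ``collisions'' between $M$-homotopic but not leafwise-homotopic paths is not needed: the right-hand side of \eqref{eq:paths} is simply a set of pairs and no further identification is being imposed there.
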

\begin{proof}
Use that, by equation \eqref{eq:pathschar}, $(H(\cF)\times_{(\bt_H,\bs_H),\pi}\Pi(M))_0$ equals  
 \begin{align*}
\big\{([\delta],\langle \sigma \rangle_M)\in
H(\cF)\times \Pi(M):& \exists \text{  homotopy $\{\delta^t\}$ in $L$ with $\delta^0=$(loop   with trivial holonomy), $\delta^1=\delta$}\\ &\exists \text{ 
 homotopy $\{\sigma^t\}$ in $M$ with $\sigma^0=$(contractible loop), $\sigma^1=\sigma$}\\ 
 & \text{ such that $\delta^t(0)=\sigma^t(0)= x$ and $\delta^t(1)=\sigma^t(1) \text{ for all $t$}$}\big\},
\end{align*}
where 
$L$ denotes the leaf through ${x:=\delta(0)=\sigma(0)}$.
Fix  a path $\delta$ in $L$ and a path $\sigma$ in $M$ as above (in particular, both start at $x$ and have the same endpoint). We first focus on $\delta$.
Consider the map
 $$S\colon [0,1]\times [0,1]\to L, (s,t)\mapsto \delta^t(s).$$
 \begin{figure}[htp] \centering{
\includegraphics[scale=0.75]{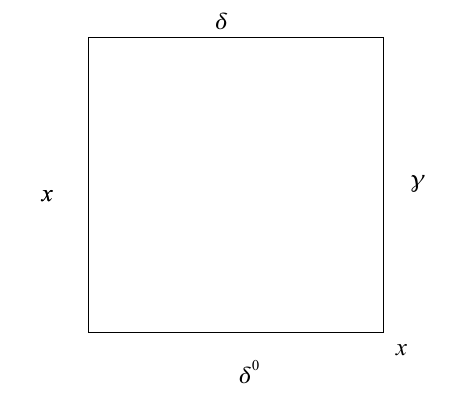}}
\end{figure} 

  As the square is contractible, the holonomy of the restriction of $S$ to the  boundary is zero. $S$ maps the left edge of the square to the constant path at $x$, and the lower edge to the loop  $\delta^0$ which has trivial holonomy. Considering the remaining two edges we conclude that
 $t \mapsto \gamma(t):=\delta^t(1)$ is a path in the leaf $L$ with the same  starting/ending point and same holonomy as $\delta$, \ie, $[\delta]=[\gamma]$.
 
Further $\bar{\gamma}\circ \sigma$, which is defined composing $\sigma$ with   $\widebar{\gamma}(t):=\gamma(1-t)$, is a contractible loop in $M$ based at $x$. Indeed, {recalling that $\gamma(t)=\sigma^t(1)$, we see that} the family of loops $\widebar{\gamma|_{[0,t]}}\circ \sigma^t$  parametrized by $t\in[0,1]$  provides a contraction, since at time $t=0$  it equals the contractible loop $\sigma^0$. In other words,  $\langle \sigma \rangle_M=\langle \gamma \rangle_M$. Altogether we get the inclusion ``$\subset$'' in equation \eqref{eq:paths}.
For the opposite inclusion, given a path $\gamma$ in a leaf, use the homotopy  $\{\gamma^t\}$ defined by $\gamma^t(s)=\gamma(st)$ to deform it to {the constant path at $\gamma(0)$}.
 \end{proof}

\begin{ex}\label{ex:notssc}
As above, let $\cF$ be a regular foliation.
{Denote by $D$ the  associated involutive distribution, which in particular is a Lie algebroid. We display three Lie groupoids integrating the Lie algebroid $D$. The first one is  $H(\cF)$, the   holonomy groupoid of $\cF$. The second is $Mon(\cF)$, the monodromy groupoid of $\cF$. It is a} source simply connected  Lie groupoid, consisting of all homotopy classes (fixing endpoints) $\langle \gamma \rangle_{leaf}$ of paths $\gamma$ in the leaves of the foliation. 
{The third one is  $\widetilde{H}(\cF)$ as in Ex. \ref{ex:regfol}. (It integrates $D$ since it is the minimal integral of $D$ over $\Pi(M)$, by Prop. \ref{prop:HGBisHmin}.)}

{As for any (source connected) Lie groupoid integrating $D$, the Lie groupoid $\widetilde{H}(\cF)$ is a quotient of $Mon(\cF)$ and maps surjectively onto $H(\cF)$.} 
 Due to Proposition \ref{prop:regfol} the quotient maps read
$$Mon(\cF)\to \widetilde{H}(\cF)\to H(\cF),\;\; \langle \gamma \rangle_{leaf} \mapsto
([\gamma], \langle \gamma \rangle_M)\mapsto [\gamma].$$
{In particular {we see that}, even though $\widetilde{H}(\cF)$ was constructed using a source simply connected Lie groupoid $\widetilde{\cG}$ in Ex. \ref{ex:regfol},   $\widetilde{H}(\cF)$ itself is   not source simply connected in general.}
\end{ex}

\section{Morphisms  of holonomy groupoids covering the identity}\label{section:morph}

In \S \ref{section:HB}, starting from a Lie groupoid $\cG$ and singular subalgebroid $\cB$ of the Lie algebroid $Lie(\cG)$, we constructed the holonomy  groupoid $H^{\cG}(\cB)$ endowed with a map of topological groupoids to $\cG$.
In this section we extend this construction to morphisms covering the identity: given a suitable ``morphism between singular subalgebroids'', we construct a morphism between the associated holonomy groupoids. 
More precisely, given a morphism of Lie groupoids $F\colon \cG_1\to \cG_2$ covering the identity on the base and singular subalgebroids  $\cB_i$ of $Lie(\cG_i)$ (${i=1,2}$)
with 
$F_*(\cB_1)\subset \cB_2$,   there is a canonical  morphism of topological groupoids 
$$H^{\cG_1}(\cB_1)\to H^{\cG_2}(\cB_2)$$ commuting with the canonical maps (see Theorem \ref{thm:morph}).

We consider the simple case of {``surjective morphisms between singular subalgebroids''} in \S\ref{subsec:fol}. 
The integration of arbitrary morphisms (covering the identity) then follows easily in \S\ref{sec:genmor}. All examples are collected in \S\ref{sec:morex}, where  
 we recover in a unified fashion several of the constructions we already gave.
 We describe the resulting functor in \S\ref{sec:functor}. 
 
{It is only for the sake of presentation that} in this section we restrict ourselves to morphisms covering the identity on the base manifold. Analogous results for morphisms covering surjective submersions hold, and  are collected in Appendix \ref{sec:appmorsub} (see Thm. \ref{thm:morphsub}).

\subsection{{Surjective morphisms covering the identity}}\label{subsec:fol}

\begin{prop}\label{prop:hbhfb}  
Let $F\colon \cG_1\to \cG_2$ be a morphism of Lie groupoids over $M$, covering $Id_M$.
Let $\cB_1$ be a singular subalgebroid of $A_1:=Lie(\cG_1)$, and  $$\cB_2:=F_*(\cB_1):=\{F_*(\balpha):\balpha\in \cB_1\},$$ which clearly is a singular subalgebroid  of $ Lie(\cG_2)$.

Then there is a canonical, surjective morphism of topological groupoids $$\Xi \colon H^{\cG_1}(\cB_1)\to H^{\cG_2}(\cB_2)$$ covering $Id_M$ and 
making the following diagram commute:
\begin{equation*} 
 \xymatrix{
H^{\cG_1}( {\cB}_1)  \ar[d]^{\Phi_1}   \ar@{-->}[r]^{\Xi} &H^{\cG_2}(\cB_2)  \ar[d]^{\Phi_2}     \\
\cG_1 \ar[r]^{F} &   \cG_2 }
\end{equation*}
\end{prop}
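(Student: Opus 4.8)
The plan is to construct $\Xi$ at the level of bisubmersions and then check it descends to the quotients. First I would fix a path holonomy atlas $\cU_1$ for $\cB_1$, built from minimal path holonomy bisubmersions $(U_i,\varphi_i,\cG_1)$ associated to local generators $\balpha^i_1,\dots,\balpha^i_{n_i}$ of $\cB_1$. The key observation is that $F_*\balpha^i_j$ are local generators of $\cB_2=F_*(\cB_1)$ (not necessarily minimal, but still spanning $\cB_2/I_x\cB_2$), so the \emph{same} parameter domain $U_i\subset \RR^{n_i}\times M$ carries a path holonomy bisubmersion $(U_i,\psi_i,\cG_2)$ for $\cB_2$, where $\psi_i(\lambda,y)=\exp_y\sum_j\lambda_j\overrightarrow{F_*\balpha^i_j}$. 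Since $F$ is a Lie groupoid morphism, $F$ sends right-invariant vector fields to right-invariant vector fields and flows to flows, so $\psi_i=F\circ\varphi_i$; this is the analogue of the identity $\varphi=\pi\circ\widetilde{\varphi}$ from \S\ref{section:vary} and of the Claim in the proof of Prop. \ref{prop:imagerelbi}. The same matching extends to inverses and to finite compositions (using that $F$ commutes with inversion and multiplication), so $\coprod_{U\in\cU_1}U$ is simultaneously the total space of a path holonomy atlas $\cU_2$ for $\cB_2$, with $\psi_U=F\circ\varphi_U$ for every $U$.

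Next I would define $\Xi$ on representatives by $\Xi(\natural_1 u):=\natural_2 u$, where $\natural_i\colon\coprod_U U\to H^{\cG_i}(\cB_i)$ are the quotient maps. To see this is well-defined I need: if $f\colon U\to V$ is a morphism of $\cG_1$-bisubmersions (so $\varphi_V\circ f=\varphi_U$) then it is also a morphism of $\cG_2$-bisubmersions, i.e. $\psi_V\circ f=\psi_U$. This is immediate by composing with $F$: $\psi_V\circ f=F\circ\varphi_V\circ f=F\circ\varphi_U=\psi_U$. Hence $u_1\sim u_2$ in the $\cB_1$-sense implies $u_1\sim u_2$ in the $\cB_2$-sense (using the characterization of $\sim$ via morphisms of bisubmersions from \S\ref{sub:con}), so $\Xi$ is a well-defined map $H^{\cG_1}(\cB_1)\to H^{\cG_2}(\cB_2)$. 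Surjectivity is clear since $\natural_2$ is surjective onto $H^{\cG_2}(\cB_2)$ and every $\cB_2$-bisubmersion in the atlas $\cU_2$ arises this way. Continuity follows because $\Xi\circ\natural_1=\natural_2$ and $H^{\cG_1}(\cB_1)$ carries the quotient topology. That $\Xi$ is a groupoid morphism covering $Id_M$ follows from the compatibility with source/target (both are $\bs_U,\bt_U$ read off the same $U$) and with multiplication (defined via composition of bisubmersions, which is preserved). Finally the square commutes: $\Phi_2\circ\Xi\circ\natural_1=\Phi_2\circ\natural_2=\psi_U=F\circ\varphi_U=F\circ\Phi_1\circ\natural_1$, and since $\natural_1$ is surjective, $\Phi_2\circ\Xi=F\circ\Phi_1$.

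The one point requiring genuine care — the main obstacle — is verifying that the matching of atlases $\cU_1\leftrightarrow\cU_2$ is legitimate, i.e. that $(U_i,\psi_i,\cG_2)$ really is a bisubmersion for $\cB_2$ in the sense of Def. \ref{dfn:bisubm2}. The issue is that the $\bt$-submersion condition (condition i) of Def. \ref{dfn:bisubm2}) might fail for $\psi_i$ even when it holds for $\varphi_i$, because $F$ can collapse directions; so in general one must first shrink $U_i$ to a smaller neighborhood of $(0,x)$ on which $\bt\circ\psi_i$ is a submersion, exactly as in Def. \ref{dfn:pathhol}. Once the domains are shrunk consistently (and one checks the shrinking is compatible with forming compositions, which is routine since submersivity is an open condition), Prop. \ref{prop:pathhol} guarantees $(U_i,\psi_i,\cG_2)$ is a path holonomy bisubmersion for $\cB_2$, and Prop. \ref{prop:pathholadapted}/\ref{prop:rem33} (from the appendix on atlases, invoked as in the proof of Prop. \ref{prop:B}) ensure the resulting $\cU_2$ is equivalent to a genuine path holonomy atlas, so that the quotient really computes $H^{\cG_2}(\cB_2)$. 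I would remark that a cleaner route to the same conclusion is to note that $F$ itself is a $\cG_2$-bisubmersion for $\cB_2$ — wait, that is not quite right since $F$ need not be a $\cB_2$-bisubmersion in general; the safe path is the atlas-matching argument above, which is why the shrinking bookkeeping is the real content of the proof.
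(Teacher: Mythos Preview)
Your proof is correct and follows essentially the same approach as the paper's: build the $\cG_2$-atlas by postcomposing the $\cG_1$-atlas with $F$, observe that $\cG_1$-morphisms of bisubmersions are automatically $\cG_2$-morphisms (so $\sim_1$ refines $\sim_2$), and invoke the appendix results (Prop.~\ref{prop:equivpathhol} and Rem.~\ref{rem:HHu}) to identify the quotient with $H^{\cG_2}(\cB_2)$.

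However, the ``main obstacle'' you identify is illusory. Since $F$ covers $Id_M$, we have $\bt_{\cG_2}\circ F=\bt_{\cG_1}$ and $\bs_{\cG_2}\circ F=\bs_{\cG_1}$ as maps $\cG_1\to M$, so $\bt\circ\psi_i=\bt\circ(F\circ\varphi_i)=\bt\circ\varphi_i$ and likewise for $\bs$. Thus condition~i) of Def.~\ref{dfn:bisubm2} holds for $(U_i,\psi_i,\cG_2)$ on the \emph{same} domain $U_i$, with no shrinking required; $F$ cannot ``collapse directions'' in the base because it acts as the identity there. Your shrinking-and-bookkeeping paragraph can simply be deleted.
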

\begin{proof}
Let $x \in M$ and $\balpha_1,\dots,\balpha_n \in \cB_1$ such that $[\balpha_1],\dots,[\balpha_n]$  is a basis of $\cB_1/I_x\cB_1$. Denote by $(U,\varphi, \cG_1)$ the associated path holonomy bisubmersion, where $U\subset \RR^n\times M$.

{\underline{Claim} \emph{$(U,F\circ \varphi,\cG_2)$ is 
the path holonomy bisubmersion for $\cB_2$ 
 associated to the  (not necessarily minimal) set of local generators $\{F_*(\balpha_1),\dots,F_*(\balpha_n)\}$  of $\cB_2$.  }
}

\begin{equation*} 
 \xymatrix{
U\ar[d]^{\varphi} &      \\
\cG_1 \ar[r]^{F} &   \cG_2 }
\end{equation*}
By   definition \ref{dfn:pathhol} we have
$\varphi\colon U  \to \cG_1, (\lambda,x)\mapsto \exp_x \sum \lambda_i \overset{\rightarrow}{\balpha_i}$.
Composing with $F$ we obtain
\begin{equation*}
(F\circ \varphi)((\lambda,x))=F(\exp_x \sum \lambda_i \overset{\rightarrow}{\balpha_i})=\exp_{f(x)} \sum \lambda_i \overrightarrow{F_*(\balpha_i)}.
\end{equation*}
Here the second equality holds because  $F$ being a Lie groupoid morphism  implies that $F_*(\overset{\rightarrow}{\balpha_i})=\overrightarrow{F_*(\balpha_i)}$. 
     \hfill$\bigtriangleup$
 
 Consider a family $(U_i,\varphi_i,\cG_1)_{i \in I}$ of {path holonomy} bisubmersions for $\cB_1$  such that $M = \cup_{i \in I}\bs_{i}(U_i)$. Let $\mathcal{U}$ be   the  path holonomy atlas it generates (definition \ref{def:pathholatlas}), \ie the collection of the $U_i$'s together with their inverses and finite compositions. Since $F$ is a Lie groupoid morphism over $Id_M$,    the family  $\{(U,F\circ \varphi, \cG_2):U
\in \cU\}$
 defines an atlas of bisubmersions for $\cB_2$. 
 {Denote by $\sim_{i}$   the equivalence relation defined on $\coprod_{U \in \mathcal{U}}U$ viewing the $U$'s as $G_i$-bisubmersions  for ${\cB_i}$, for $i=1,2$.
 The equivalence classes of  $\sim_{1}$ are contained in those of $\sim_{2}$,} inducing a surjective morphism of topological groupoids
$$ H^{\cG_1}(\cB_1)=\coprod_{U \in \mathcal{U}} U/\sim_{1}\to \coprod_{U \in \mathcal{U}} U/\sim_{2}.$$
The latter groupoid is the holonomy groupoid  $H^{\cG_2}(\cB_2)$,   by Proposition \ref{prop:equivpathhol} and Rem. \ref{rem:HHu}. {The morphism is independent of the chosen path-holonomy bisubmersions and hence canonical.}
\end{proof}

{We consider the special case in which the  Lie groupoid morphism $F\colon \cG_1\to \cG_2$ is injective (then $F_*\colon A_1\to A_2$ is injective, and therefore induces an isomorphism of $C^{\infty}(M)$-modules $\cB_1\cong \cB_2$). We show that in that case it makes no difference whether we regard  $\cB_1$ a singular subalgebroid of $Lie(\cG_1)$ or as a singular subalgebroid of $Lie(\cG_2)$, for the 
 corresponding holonomy groupoids are isomorphic.}

\begin{cor}\label{prop:morphG}
If the Lie groupoid morphism $F\colon \cG_1\to \cG_2$ is injective, then the canonical morphism of topological groupoids $\Xi \colon H^{\cG_1}(\cB_1)\to H^{\cG_2}(\cB_2)$ of Prop. \ref{prop:hbhfb} is an isomorphism.
\end{cor}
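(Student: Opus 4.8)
The plan is to exhibit an explicit inverse to $\Xi$ by running the construction of Proposition \ref{prop:hbhfb} ``in reverse'', using that $F$ is injective. First I would observe that since $F\colon \cG_1 \to \cG_2$ is an injective morphism of Lie groupoids covering $Id_M$, the induced Lie algebroid morphism $F_*\colon A_1 \to A_2$ is injective; hence $F_*$ restricts to an isomorphism of $C^\infty(M)$-modules $\cB_1 \xrightarrow{\sim} \cB_2$. In particular, if $\balpha_1,\dots,\balpha_n \in \cB_1$ induce a basis of $\cB_1/I_x\cB_1$, then $F_*(\balpha_1),\dots,F_*(\balpha_n)$ induce a basis of $\cB_2/I_x\cB_2$; so the path holonomy bisubmersion $(U, F\circ\varphi, \cG_2)$ for $\cB_2$ produced in the Claim in the proof of Prop. \ref{prop:hbhfb} is in fact \emph{minimal} whenever $(U,\varphi,\cG_1)$ is. This means that a given path holonomy atlas $\cU$ for $\cB_1$ yields, via $U \mapsto (U, F\circ\varphi, \cG_2)$, a genuine path holonomy atlas for $\cB_2$ (not merely an atlas in the sense of the appendix), with the \emph{same} underlying manifolds $\coprod_{U\in\cU}U$.

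The key step is then to show that the two equivalence relations $\sim_1$ and $\sim_2$ on $\coprod_{U\in\cU}U$ (from regarding the $U$'s as $\cG_1$- resp.\ $\cG_2$-bisubmersions) actually coincide. The inclusion of $\sim_1$-classes into $\sim_2$-classes is exactly the content of Prop. \ref{prop:hbhfb}, so $\Xi$ is well-defined and surjective; it remains to prove the reverse inclusion, i.e.\ that $\Xi$ is injective. Using the bisection characterization of $\sim$ from Rem. \ref{rem:HGBbisec}: if $u_1 \in U_1$, $u_2 \in U_2$ satisfy $(F\circ\varphi_1)(u_1) = (F\circ\varphi_2)(u_2)$ and there exist bisections $\bb_1 \ni u_1$, $\bb_2 \ni u_2$ with $(F\circ\varphi_1)(\bb_1) = (F\circ\varphi_2)(\bb_2)$ (as bisections of $\cG_2$), then I want to deduce $\varphi_1(u_1) = \varphi_2(u_2)$ and $\varphi_1(\bb_1) = \varphi_2(\bb_2)$ (as bisections of $\cG_1$). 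Both follow immediately from injectivity of $F$: $F$ injective on arrows forces $\varphi_1(u_1) = \varphi_2(u_2)$, and $F$ restricted to $\varphi_1(\bb_1)$ resp.\ $\varphi_2(\bb_2)$ is injective, so equality of their $F$-images gives equality of the bisections themselves. Hence $u_1 \sim_1 u_2$, proving $\Xi$ injective.

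To finish, I would note that $\Xi$ is a continuous open map: it is the map induced on quotients by the identity map on $\coprod_{U\in\cU}U$, so continuity is automatic, and openness follows because for any $U \in \cU$ the image $\Xi(\natural_1 U) = \natural_2 U$ is open in $H^{\cG_2}(\cB_2)$ by Rem. \ref{rem:open}, and every open set in $H^{\cG_1}(\cB_1)$ is a union of such $\natural_1 U$. A bijective, continuous, open groupoid morphism covering $Id_M$ is an isomorphism of topological groupoids, and it commutes with $\Phi_1$, $\Phi_2$, $F$ by Prop. \ref{prop:hbhfb}. The main obstacle I anticipate is purely bookkeeping: making sure that the atlas $\{(U, F\circ\varphi,\cG_2): U \in \cU\}$ really is a path holonomy atlas for $\cB_2$ (and not something one only knows to be \emph{equivalent} to one), which is where injectivity of $F_*$ — guaranteeing minimality is preserved — is essential; once that is pinned down, the injectivity argument via bisections is short.
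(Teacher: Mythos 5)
Your proposal is correct and follows essentially the same route as the paper: the heart of both arguments is that injectivity of $F$ forces the equivalence relation $\sim_2$ to be contained in $\sim_1$, so $\Xi$ is injective (surjectivity and compatibility with $\Phi_1,\Phi_2,F$ already being given by Prop.~\ref{prop:hbhfb}). The only cosmetic difference is that you verify this containment via the bisection characterization of Rem.~\ref{rem:HGBbisec}, whereas the paper applies injectivity of $F$ directly to the defining identity $(F\circ\varphi_U)=(F\circ\varphi_V)\circ\tau$ of a morphism of $\cG_2$-bisubmersions to conclude that $\tau$ is already a morphism of $\cG_1$-bisubmersions; these are equivalent formulations, and your extra care about minimality of the induced path-holonomy atlas and about openness of $\Xi$ is harmless but not needed.
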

\begin{proof}
{We just have to show that $\Xi$ is injective.
To this aim, we show that the equivalence classes of  the equivalence relation $\sim_2$ appearing in the proof of Prop. \ref{prop:hbhfb} are contained in those of $\sim_1$. Let $u\in U$ and $v\in V$ be points of bisubmersions in $\mathcal{U}$. Assume that $u\sim_2 v$.
 By definition, this means that there is a locally defined map $\tau\colon U\to V$  taking $u$ to $v$ and which is a morphism of $G_2$-bisubmersions, that is, $$(F \circ \varphi_U)=(F \circ\varphi_V)\circ \tau.$$
Since $F$ is injective, it follows that  $\varphi_U= \varphi_V\circ \tau$, that is, $\tau$ is a morphism of $G_1$-bisubmersions. In particular, $u\sim_1 v$.}
\end{proof}

\begin{remark}
If we only assume that  Lie algebroid map $F_*$ is injective, but   $F\colon \cG_1\to \cG_2$   itself not, then $\Xi$ is not injective in general.
This can be seen already in the case that $\cG_1$ and $\cG_2$ are non-isomorphic Lie groupoids integrating the same Lie algebroid $A$, and $F\colon \cG_1\to \cG_2$ is a  surjective but not injective   map
differentiating the identity at the level of Lie algebroids {(use Ex. \ref{ex:GammaA})}. 
 \end{remark}

\subsection{{Arbitrary morphisms covering the identity}}
\label{sec:genmor} 
 
We now extend  Prop. \ref{prop:hbhfb}  by 
removing the assumption that the map $F_*|_{\cB_1} \colon \cB_1\to \cB_2$ be surjective.

We first look at two singular subalgebroids of the same Lie algebroid, one containing the other.

\begin{lemma}\label{prop:morph0} 
Let $\cG$ be a   Lie groupoid over $M$, and denote its Lie algebroid by $A$. Let $\cB,\widetilde{\cB}$ be a singular subalgebroids of $A$, with $\cB \subset \widetilde{\cB}$.

Then there is a canonical  morphism of topological groupoids $H^G(\cB)\to H^G(\tilde{\cB})$ making the following diagram commute:
\begin{equation*}
\xymatrix{
H^G( {\cB})  \ar[rd]_{\Phi}    \ar@{-->}[rr] & & H^G(\widetilde{\cB})  \ar[ld]^{\widetilde{\Phi}}  \\
&\cG & }
\end{equation*}
\end{lemma}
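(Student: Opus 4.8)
The strategy is to compare path-holonomy atlases directly. Fix a family of minimal path-holonomy bisubmersions $(U_i,\varphi_i,\cG)_{i\in I}$ for $\cB$ with $M=\cup_{i\in I}\bs_{U_i}(U_i)$, and let $\cU$ be the path-holonomy atlas of $\cB$ it generates (Def. \ref{def:pathholatlas}). The key observation is that each $(U_i,\varphi_i,\cG)$ is \emph{also} a bisubmersion for $\widetilde{\cB}$: condition i) of Def. \ref{dfn:bisubm2} is unchanged; condition ii) for $\widetilde{\cB}$ follows from Prop. \ref{prop:pathhol} applied to $\widetilde{\cB}$, since the generators $\balpha_1,\dots,\balpha_n$ of $\cB$ near $x$ still lie in $\widetilde{\cB}$ (though they need no longer span $\widetilde{\cB}/I_x\widetilde{\cB}$, which is fine as Prop. \ref{prop:pathhol} only requires a spanning set to be contained in the larger one — we need to check this, but it should reduce to the fact that the constructions of \S\ref{subsubsec:phrel} go through for any finite subset of $\widetilde{\cB}$, using that $\rar{\cB}\subset\rar{\widetilde{\cB}}$). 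Hence $\cU$, regarded with its elements as $\cG$-bisubmersions for $\widetilde{\cB}$, is an atlas for $\widetilde{\cB}$ (stable under inverses and compositions by Prop. \ref{prop:inv} and Prop. \ref{prop:compo} applied for $\widetilde{\cB}$).

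\textbf{Next step.} I would then invoke the same machinery used in the proof of Prop. \ref{prop:B}: by Prop. \ref{prop:crucial}, every path-holonomy bisubmersion of $\cB$ admits, near each identity point, a morphism of bisubmersions to a path-holonomy bisubmersion of $\widetilde{\cB}$, so $\cU$ (viewed for $\widetilde{\cB}$) is adapted to a path-holonomy atlas of $\widetilde{\cB}$, and conversely one can enlarge $\cU$ to cover a path-holonomy atlas of $\widetilde{\cB}$ as well. Using the Appendix results on atlases (Prop. \ref{prop:rem33} and the equivalence of atlases), one obtains that $H^G(\widetilde{\cB})$ is the quotient of (an enlargement of) $\coprod_{U\in\cU}U$ by the equivalence relation $\sim_{\widetilde{\cB}}$ defined via morphisms of $\widetilde{\cB}$-bisubmersions. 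Since any morphism of $\cG$-bisubmersions for $\cB$ (a map $f$ with $\varphi_V\circ f=\varphi_U$) is automatically a morphism of $\cG$-bisubmersions for $\widetilde{\cB}$ — the defining equation $\varphi_V\circ f=\varphi_U$ is literally the same — the equivalence classes of $\sim_{\cB}$ refine those of $\sim_{\widetilde{\cB}}$. This yields a well-defined, continuous, surjective-onto-its-image groupoid morphism $H^G(\cB)\to H^G(\widetilde{\cB})$. Commutativity of the triangle is immediate: $\Phi$ and $\widetilde{\Phi}$ are both determined by $q_U\mapsto\varphi_U$ on the common bisubmersions, so $\widetilde{\Phi}\circ(H^G(\cB)\to H^G(\widetilde{\cB}))=\Phi$.

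\textbf{Canonicity.} Finally I would check independence of the choices. Two path-holonomy atlases of $\cB$ are equivalent (this is the content of the atlas machinery in the Appendix, cf. Prop. \ref{prop:equivpathhol}), hence produce canonically isomorphic $H^G(\cB)$; the induced maps to $H^G(\widetilde{\cB})$ commute with these isomorphisms because on overlapping bisubmersions the relations $\sim_{\widetilde{\cB}}$ agree. Thus the morphism is canonical.

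\textbf{Main obstacle.} The delicate point is verifying that a path-holonomy bisubmersion $(U,\varphi,\cG)$ for $\cB$ genuinely satisfies all three conditions of Def. \ref{dfn:bisubm2} \emph{as a bisubmersion for} $\widetilde{\cB}$ — in particular condition iii), the equality $\varphi^{-1}(\rar{\widetilde{\cB}})=\Gamma_c(U,\ker d\bs_U)$. Here one must be careful: $\varphi^{-1}(\rar{\cB})=\Gamma_c(U,\ker d\bs_U)$ already, and the inclusion $\rar{\cB}\subset\rar{\widetilde{\cB}}$ gives $\varphi^{-1}(\rar{\cB})\subset\varphi^{-1}(\rar{\widetilde{\cB}})$, so one gets $\supset$ for free; the reverse inclusion $\varphi^{-1}(\rar{\widetilde{\cB}})\subset\Gamma_c(U,\ker d\bs_U)$ holds because $\rar{\widetilde{\cB}}\subset\Gamma(\cG,\ker d\bs)$, so any $\varphi$-projectable vector field mapping into $\rar{\widetilde{\cB}}$ lands in $\ker d\bs$ hence its lift (which can be chosen in $\ker d\bs_U$ by the argument of Remark \ref{rem:condii}) lies in $\Gamma_c(U,\ker d\bs_U)$. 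So condition iii) in fact follows formally. The only real content is condition ii) for $\widetilde{\cB}$, which needs the full strength of Prop. \ref{prop:pathhol}'s proof (via the auxiliary bisubmersion $(V,\bt_V,\bs_V)$ for $\rar{\widetilde{\cB}}$) — and there one must note that $\rar{\balpha_1},\dots,\rar{\balpha_n}$ need not generate $\rar{\widetilde{\cB}}$, which is why we cannot simply cite the minimality statements and must instead re-run the argument with a spanning-set-contained-in-the-larger-module. I would flag this as the step requiring the most care, but it is essentially a repetition of arguments already in \S\ref{subsubsec:phrel}.
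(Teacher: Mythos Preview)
Your proof has a genuine gap: the claim that a path-holonomy bisubmersion $(U,\varphi,\cG)$ for $\cB$ is also a bisubmersion for $\widetilde{\cB}$ is \emph{false} in general. The failure is precisely at condition ii) of Def.~\ref{dfn:bisubm2}, which you correctly flag as the delicate point but then incorrectly dismiss. Concretely, at a point $u=(0,x)\in U\subset\RR^n\times M$ we have $\varphi(u)=1_x$, and the image of $d_u\varphi$ intersected with $\ker d\bs|_{1_x}=A_x$ is exactly the span of $\balpha_1|_x,\dots,\balpha_n|_x$, i.e.\ the evaluation of $\cB$ at $x$. If $\balpha\in\widetilde{\cB}$ satisfies $\balpha|_x\notin\operatorname{span}\{\balpha_i|_x\}$ (which happens whenever the natural map $\cB/I_x\cB\to\widetilde{\cB}/I_x\widetilde{\cB}$ is not surjective), then $\rar{\balpha}|_{1_x}$ does not lie in the image of $d_u\varphi$, so no vector field on $U$ can be $\varphi$-related to $\rar{\balpha}$. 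A minimal example: take $A=TM$, $\cB=0$, $\widetilde{\cB}=\Gamma_c(TM)$; the path-holonomy bisubmersion for $\cB$ is the identity section $M\hookrightarrow\cG$, which cannot lift any nonzero $\rar{\balpha}$. Your appeal to Prop.~\ref{prop:pathhol} does not help, since that proposition and its proof (via Remark~\ref{rem:FB} b) and \cite[Prop.~2.10(b)]{AndrSk}) require the $\balpha_i$ to \emph{span} $\widetilde{\cB}/I_x\widetilde{\cB}$, not merely to lie in $\widetilde{\cB}$.

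The paper's proof addresses this directly: rather than reusing $(U_0,\varphi,\cG)$ as a bisubmersion for $\widetilde{\cB}$, it \emph{enlarges} it. One completes $\{\balpha_1,\dots,\balpha_n\}$ to a generating set $\{\balpha_1,\dots,\balpha_n,\bgamma_1,\dots,\bgamma_k\}$ of $\widetilde{\cB}$ near $x$, builds the corresponding path-holonomy bisubmersion $(\widetilde{U}_0,\widetilde{\varphi},\cG)$ for $\widetilde{\cB}$ with $\widetilde{U}_0\subset\RR^n\times\RR^k\times M$, and then uses the inclusion $\iota\colon U_0\hookrightarrow\widetilde{U}_0$, $(\lambda,y)\mapsto(\lambda,0,y)$, which satisfies $\widetilde{\varphi}\circ\iota=\varphi$. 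This $\iota$ carries bisections to bisections inducing the same bisection of $\cG$, so by Remark~\ref{rem:HGBbisec} it descends to the desired morphism $H^G(\cB)\to H^G(\widetilde{\cB})$. Your argument that the defining equation of a morphism of bisubmersions is literally the same regardless of the ambient singular subalgebroid is correct and is what makes $\iota$ work; what is missing from your proposal is the construction of the larger target bisubmersion into which to map.
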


\begin{proof} 
Let $x\in M$. Let  $\{\balpha_1,\dots,\balpha_n\}$ be a minimal set of local generators of $\cB$ near $x$, that is, 
$[\balpha_1],\dots,[\balpha_n]$ form a basis of the vector space
$\cB/I_x\cB$.  Let  
$(U_0,\varphi,\cG)$ be the corresponding (minimal)   path holonomy bisubmersion for $\cB$, hence $U_0\subset \RR^n\times M$. 

The inclusion $\cB\subset \widetilde{\cB}$ induces a linear map $J\colon \cB/I_x\cB \to \widetilde{\cB}/I_x\widetilde{\cB}$, which is generally not injective. Completing  the image of the above basis to a spanning set of 
$\widetilde{\cB}/I_x\widetilde{\cB}$ we obtain a generating set 
$\{\balpha_1,\dots,\balpha_n, \bgamma_1,\dots,\bgamma_k\}$ of 
$\widetilde{\cB}$ near $x$, {see Remark \ref{rem:basis}}.  Let $(\widetilde{U}_0,\widetilde{\varphi},\cG)$ 
 be the corresponding     path holonomy bisubmersion for $\widetilde{\cB}$, so $\widetilde{U}_0\subset \RR^n\times \RR^k \times M$. The inclusion 
$$\iota \colon U_0\to \widetilde{U}_0, (\lambda,y)\mapsto (\lambda,0,y)$$
commutes with the maps to $\varphi\colon U_0\to \cG$ and $\widetilde{\varphi} \colon \widetilde{U}_0\to \cG$ associated to the bisubmersions 
as in Definition \ref{dfn:pathhol}. 

Let $\cU$ denote the atlas for $\cB$ generated by bisubmersions $U_0$ as above, and $\widetilde{\cU}$   the atlas for $\widetilde{\cB}$ generated by the corresponding $\widetilde{U}_0$ as above. {The inclusion} 
 $\iota$ extends in a straightforward way to compositions of bisubmersions, and hence to a map
\begin{equation}\label{eq:utildeu}
\iota\colon \coprod_{U\in \cU}U   \to \coprod_{\widetilde{U}\in \widetilde{\cU}}\widetilde{U}
\end{equation}
 commuting with the canonical maps to $\cG$.

The latter property assures that 
if  {$u\in U$ for some element $U$ of the atlas $\cU$}, and $\bb$ is a bisection of $U$ through $u$, then its image $\iota(\bb)$ is a bisection of $\widetilde{U}$ through $\iota(u)$ {and both carry the same bisection of $G$}. {By Remark \ref{rem:HGBbisec}} this implies that the map \eqref{eq:utildeu} descends to a morphism of topological groupoids
\begin{equation}\label{eq:utildeugpd}
 H^G(\cB)=\coprod_{U\in \cU}U/\sim  \;\; \to\;\; \coprod_{\widetilde{U}\in \widetilde{\cU}}\widetilde{U}/\sim = H^G(\widetilde{\cB})
\end{equation}
which commutes the canonical maps to $\cG$, {and also that this morphism is 
independent of the chosen atlas $\cU$ and hence
canonical}.
{Notice that the topological groupoid on the r.h.s. is really $H^G(\widetilde{\cB})$,   by Proposition \ref{prop:equivpathhol} and Rem. \ref{rem:HHu}.}
\end{proof}

\begin{remark}
{The morphism $H^G(\cB)\to H^G(\tilde{\cB})$ in Lemma \ref{prop:morph0} is not injective in general. For instance, taking $G=M\times M$, $\cB$  a   foliation and $\widetilde{\cB}=\Gamma_c(TM)$, this map is the target-source map of the holonomy groupoid of the foliation.}
\end{remark}

The next theorem generalizes Prop. \ref{prop:hbhfb} and establishes the functoriality of the holonomy groupoid construction: 

\begin{thm}\label{thm:morph}
Let $F\colon \cG_1\to \cG_2$ be a morphism of Lie groupoids covering $Id_M$. Let $\cB_i$ be a singular subalgebroid of $Lie(\cG_i)$ for $i=1,2$,
 such that
$F_*(\cB_1)\subset \cB_2$.
Then there is a canonical morphism of topological groupoids $$\Xi \colon H^{\cG_1}(\cB_1)\to H^{\cG_2}(\cB_2)$$ covering $Id_M$ and 
making the following diagram commute:
\begin{equation*} 
 \xymatrix{
H^{\cG_1}( {\cB}_1)  \ar[d]^{\Phi_1}   \ar@{-->}[r]^{\Xi} &H^{\cG_2}(\cB_2)  \ar[d]^{\Phi_2}     \\
\cG_1 \ar[r]^{F} &   \cG_2 }
\end{equation*}.
\end{thm}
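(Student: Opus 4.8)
The plan is to factor the morphism $F\colon \cG_1\to \cG_2$ through its image and reduce Theorem \ref{thm:morph} to the two results already proved, namely the ``surjective'' case in Prop. \ref{prop:hbhfb} and the ``inclusion of subalgebroids'' case in Lemma \ref{prop:morph0}. First I would set $\cB_1' := F_*(\cB_1)$, which is a singular subalgebroid of $Lie(\cG_2)$ by Prop. \ref{prop:hbhfb} (or rather by \S \ref{subsec:morph}). By hypothesis $\cB_1'\subset \cB_2$, both being singular subalgebroids of $Lie(\cG_2)$. Now I split $F$ conceptually as ``$F$ viewed as a surjection onto its image'' followed by ``the inclusion $\cB_1'\hookrightarrow\cB_2$''.

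The key steps, in order, are as follows. Apply Prop. \ref{prop:hbhfb} to the Lie groupoid morphism $F\colon \cG_1\to \cG_2$ and the singular subalgebroid $\cB_1$ of $Lie(\cG_1)$: this yields a canonical surjective morphism of topological groupoids $\Xi'\colon H^{\cG_1}(\cB_1)\to H^{\cG_2}(\cB_1')$ covering $Id_M$, commuting with the canonical maps $\Phi_1$ and $\Phi_2$ to $\cG_1$ and $\cG_2$ respectively via $F$. Next, apply Lemma \ref{prop:morph0} with $\cG := \cG_2$, $\cB := \cB_1'$, and $\widetilde{\cB} := \cB_2$: since $\cB_1'\subset\cB_2$, this gives a canonical morphism of topological groupoids $\Xi''\colon H^{\cG_2}(\cB_1')\to H^{\cG_2}(\cB_2)$ commuting with the canonical maps to $\cG_2$. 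Finally, define $\Xi := \Xi''\circ\Xi'$. It is a morphism of topological groupoids covering $Id_M$. Commutativity of the required square follows by pasting the commuting diagrams: $\Phi_2\circ\Xi = \Phi_2\circ\Xi''\circ\Xi' = \Phi_{\cB_1'}\circ\Xi'$ (where $\Phi_{\cB_1'}$ is the canonical map $H^{\cG_2}(\cB_1')\to\cG_2$, using Lemma \ref{prop:morph0}) and then $\Phi_{\cB_1'}\circ\Xi' = F\circ\Phi_1$ (using Prop. \ref{prop:hbhfb}), so $\Phi_2\circ\Xi = F\circ\Phi_1$, as desired.

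It remains to address canonicity. I would argue that $\Xi$ does not depend on auxiliary choices: $\Xi'$ is canonical by Prop. \ref{prop:hbhfb} and $\Xi''$ is canonical by Lemma \ref{prop:morph0}, so their composite is canonical. One can also describe $\Xi$ directly in terms of path-holonomy atlases, which is worth spelling out briefly for later use (the functoriality statement of \S \ref{sec:functor}): choosing a minimal set of local generators $\balpha_1,\dots,\balpha_n$ of $\cB_1$ near a point $x$, the path-holonomy bisubmersion $(U,\varphi,\cG_1)$ they define maps under $F$ to the path-holonomy bisubmersion $(U,F\circ\varphi,\cG_2)$ of $\cB_2$ associated to the (in general non-minimal) local generators $F_*(\balpha_1),\dots,F_*(\balpha_n)$ of $\cB_2$ (by the Claim in the proof of Prop. \ref{prop:hbhfb} together with the observation in Lemma \ref{prop:morph0} that completing to generators of the larger module corresponds to adding extra coordinates). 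Thus $F$ induces a map of the disjoint unions of path-holonomy bisubmersions commuting with the canonical maps to the groupoids, and this map carries bisections to bisections carrying the same bisection of $\cG_2$; by Remark \ref{rem:HGBbisec} it descends to $\Xi$, confirming it is the composite just constructed.

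The main obstacle I anticipate is mostly bookkeeping rather than a genuine difficulty: one must be careful that the path-holonomy atlas for $\cB_2$ generated by the images $\{(U,F\circ\varphi,\cG_2):U\in\cU\}$ is genuinely an atlas in the required sense and is equivalent to an honest path-holonomy atlas for $\cB_2$, so that the quotient really is $H^{\cG_2}(\cB_2)$ — this is exactly where one invokes Proposition \ref{prop:equivpathhol} and Rem. \ref{rem:HHu}, as already done inside the proofs of Prop. \ref{prop:hbhfb} and Lemma \ref{prop:morph0}. Given that both of those lemmas have been established, the composition argument is essentially formal, and no new transversality or smoothness input is needed.
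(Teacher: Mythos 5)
Your proposal is correct and is essentially identical to the paper's own proof: the paper also defines $\Xi$ as the composite of the surjective morphism $H^{\cG_1}(\cB_1)\to H^{\cG_2}(F_*(\cB_1))$ from Prop.~\ref{prop:hbhfb} with the morphism $H^{\cG_2}(F_*(\cB_1))\to H^{\cG_2}(\cB_2)$ from Lemma~\ref{prop:morph0}. The extra remarks on canonicity and the atlas-level description are consistent with the arguments already carried out inside those two results.
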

\begin{proof}
Compose the canonical morphism   $\Xi \colon H^{{G_1}}(\cB_1)\to H^{{G_2}}(F_*(\cB_1))$ given by Prop. \ref{prop:hbhfb} with the canonical morphism  $H^{{G_2}}(F_*(\cB_1))\to H^{{G_2}}(\cB_2)$   given by Lemma \ref{prop:morph0}.
\end{proof}

 \subsection{Examples}\label{sec:morex}

We display a few examples for
Prop. \ref{prop:hbhfb}, recovering in a unified manner  several results obtained so far.  {Then we display   examples} for Thm. \ref{thm:morph}.

\begin{ex}[Images of Lie algebroid morphisms]
Let $F\colon \cK\to\cG$ be a Lie groupoid morphism over $Id_M$.
Applying Prop. \ref{prop:hbhfb} with $\cB_1:=\Gamma_c(Lie(\cK))$ 
{we obtain a canonical surjective morphism $$H^{\cK}(\cB_1)\to 
H^{\cG}(F_*(\cB_1)).$$
In particular the latter groupoid} is a quotient of {$H^{\cK}(\cB_1)=K$ (here we used Ex. \ref{ex:GammaA})}, recovering part of Cor. \ref{cor:HBimage}.
\end{ex}

\begin{ex}[The underlying singular foliation $\cF_{\cB}$]\label{ex:folia}
Let $\cG\rightrightarrows M$ be a Lie groupoid, and $\cB$ a singular subalgebroid of $A:=Lie(\cG)$. Consider the Lie groupoid morphism
$$F=(\bt,\bs)\colon \cG \to M\times M$$ over the identity $Id_M$. The corresponding Lie algebroid morphism is   the anchor map $\rho \colon A\to TM$, hence $F_*(\cB)=\cF_{\cB}$, the singular foliation on $M$ induced by $\cB$.
Prop. \ref{prop:hbhfb} implies the existence of a canonical surjective morphism of topological groupoids 
\begin{equation}\label{eq:hghfact}
H^{\cG}(\cB)\to H(\cF_{\cB})
\end{equation}
to the holonomy groupoid \cite{AndrSk} of the singular foliation $\cF_{\cB}$.

{We describe explicitly the kernel of the morphism $H^{\cG}(\cB)\to H(\cF_{\cB})$. In the special case that 
$H^{\cG}(\cB)$ is a Lie groupoid,
the kernel is 
$$\{h\in H^{\cG}(\cB): \text{$\exists$ a local bisection through $h$ carrying the identity diffeomorphism on $M$}\}.$$
Indeed, when $H^{\cG}(\cB)$ is a Lie groupoid\footnote{As an aside, this is equivalent to $\cB$ being  a projective singular subalgebroid \cite[\S 3.2]{AZ4}.}
the canonical morphism $\Phi\colon H^{\cG}(\cB)\to G$ is a Lie groupoid morphism giving rise to $\cB$ 
\cite[\S 3.2]{AZ4}. Hence it is a bisubmersion for $\cB$, by Prop. \ref{prop:imagerelbi}. Actually it is an atlas of bisubmersions, equivalent to the path-holonomy atlas (the argument is exactly as  in \cite[Ex 3.4(4)]{AndrSk}). The construction of the morphism in Prop. \ref{prop:hbhfb}  then implies the claim.}

{In the general case, the kernel is still described as indicated above, but 
the smoothness of the bisection is to be understood w.r.t. the canonical diffeology on $H^{\cG}(\cB)$ \cite[\S 4.2]{AZ4}.
}
\end{ex}

\begin{remark} 
{We give an alternative description of the morphism \eqref{eq:hghfact}  in the special case that 
$\cB$  arises from a Lie groupoid morphism (see Def. \ref{def:arises}), i.e
 there is a Lie groupoid morphism  $\Psi\colon \cK\to\cG$ over $Id_M$ such that   $\cB=\Psi_*(\Gamma_c(Lie(\cK)))$.
The composition $$\cK\overset{\Psi}{\to} G\overset{(\bt_G,\bs_G)}{\to} M\times M$$
 is the target-source map $(\bt_K,\bs_K)$ of $\cK$. It gives rise to the singular foliation $\cF_\cB$, hence
$H(\cF_\cB)=\cK/\sim_2$ where $\sim_2$ {identifies two points of $\cK$ if{f} there are bisections through them that carry the same diffeomorphism of $M$ (see Example \ref{ex:explicitgr})}. Since by assumption $\cB$ arises from the Lie groupoid morphism $\Psi$,  
by Prop. \ref{cor:HBimage}   we have
$H^{\cG}(\cB)=\cK/\sim_1,$  where $\sim_1$ {identifies two points of $\cK$ if{f} there are bisections through them that map under $\Psi$ to the same bisection of $G$ 
 (see Rem. \ref{rem:bisec})}. The equivalence classes of $\sim_1$ are contained in those of $\sim_2$, giving rise to a morphism $H^{\cG}(\cB)\to H(\cF_{\cB})$. The latter agrees with
the morphism \eqref{eq:hghfact}.
}
\end{remark}

\begin{ex}[{Covers of the Lie groupoid $\cG$}]
Let $$F\colon  \widetilde{\cG}\to\cG$$ a morphism of Lie groupoids integrating the identity on the Lie algebroid $A:=Lie(\widetilde{\cG})=Lie(\cG)$. Let $\cB$ be a singular subalgebroid of $A$. 
Clearly, $F_*(\cB)=\cB$. Prop. \ref{prop:hbhfb}  implies the existence of a canonical surjective  morphism of topological groupoids 
$$ {H}^{\widetilde{\cG}}(\cB)\to H^{\cG}({\cB})$$ from the holonomy groupoid of $\cB$ constructed using $\widetilde{\cG}$ to the 
 holonomy groupoid of $\cB$ constructed using ${\cG}$. {Under the identification of 
Theorem \ref{thm:tilde}, this map is just the first projection, since both maps are induced by the identity map at the level of bisubmersions.}
{Hence this map is exactly the one discussed in Remark \ref{rem:ker}.}
\end{ex}

 {We now present examples of Thm. \ref{thm:morph}. }
 We start with a simple statement about singular foliations:
 \begin{ex}[Singular foliations]\label{ex:morsingfol}
Let $\cF_1$, $\cF_2$ be singular foliations on a manifold $M$, with $\cF_1 \subset \cF_2$.
Then there is a canonical  morphism of topological groupoids $H(\cF_1)\to H(\cF_2)$ covering the identity. {This follows applying Thm. \ref{thm:morph} with $F$  the identity map on the pair groupoid $M\times M$.}
\end{ex}

{The following example} shows that the canonical map $\Phi\colon H^{\cG}(\cB)\to \cG$ arises 
from the inclusion $\cB\hookrightarrow \Gamma_c(A)$.

\begin{ex}[{Recovering $\Phi$}]\label{ex:Phi}
{Let $G$ be a Lie groupoid, and $\cB$ be a {singular} subalgebroid of $A:=Lie(G)$.} By Lemma \ref{prop:morph0}, the inclusion $\cB\subset \Gamma_c(A)$ induces a  
 canonical  morphism of topological groupoids $H^{\cG}(\cB)\to H^{\cG}(\Gamma_c(A))$ making the following diagram commute:
\begin{equation*}
\xymatrix{
H( {\cB})  \ar[rd]_{\Phi}    \ar@{-->}[rr] & & H^{\cG}(\Gamma_c(A))  \ar[ld]  \\
&\cG & }
\end{equation*}
But $H^{\cG}(\Gamma_c(A))=\cG$ and the right map above is $Id_{\cG}$, by Ex. \ref{ex:GammaA}. Hence the canonical morphism dotted above {is exactly $\Phi$}.
\end{ex}

{Specializing Thm. \ref{thm:morph} to wide Lie subalgebroids and using Prop. \ref
{prop:HGBisHmin} to relate holonomy groupoids with minimal integrals we obtain the following example.
\begin{ex}
  Let $F\colon \cG_1\to \cG_2$ be a morphism of Lie groupoids covering the identity on $M$. Let $B_i$ be a wide Lie subalgebroid of $Lie(\cG_i)$ for $i=1,2$,
 such that
$F_*(B_1)\subset B_2$. Denote by $H_{min}^i$ the minimal integral of $B_i$ over $G_i$.
Then there is a canonical morphism of Lie groupoids $$\Xi \colon H_{min}^1\to H_{min}^2$$ covering $Id_M$ and which, together with $F$, intertwines the  immersions $H_{min}^i\to G_i$ integrating the inclusions.
\end{ex}
 }

\subsection{The integration functor}
\label{sec:functor}

The purpose of this subsection is put in place our ``integration'' process, describing it as a functor. The term ``integration''  is in quotes, since we have not specified in which sense the holonomy groupoid $H^{\cG}(\cB)$ is an integration of the singular subalgebroid $\cB$ of $Lie(\cG)$.  
Indeed this is the topic of
a separate publication \cite{AZ4}.

We fix a manifold $M$ and  consider two categories. The first one, denoted by $\textsf{SingSub}^{Gpd}_M$, is:
\begin{itemize}
\item objects: 
\vspace{-1mm}\begin{align*}
\{(\cG,\cB)|\;& \cG \text{ a  Lie groupoid over $M$},\\ 
&\cB \text{ a singular subalgebroid of } Lie(\cG)\}
\end{align*}
\item arrows from $(\cG_1,\cB_1)$ to $(\cG_2,\cB_2)$: 
\vspace{-1mm}
\begin{align*}
\{F\colon \cG_1\to \cG_2 &\text{ a morphism of Lie groupoids covering $Id_M$},\\ &\text{ such that } F_*(\cB_1)\subset \cB_2\}
\end{align*}
\end{itemize}

The second
category,  denoted by $\textsf{TopGrd}_M$, is:
\begin{itemize}
\item objects:
\vspace{-1mm}
 \begin{align*}
\{\Phi\colon H\to \cG|&\; H \text{ a topological groupoid over $M$},\\ 
& \;\cG \text{ a Lie groupoid over $M$,}\\
&\; \Phi  \text{ a morphism of topological groupoids covering $Id_M$}\}
\end{align*}
\item arrows from $(\Phi_1\colon H_1\to \cG_1)$ to  $(\Phi_2\colon H_2\to \cG_2)$: 
\vspace{-1mm}
\begin{align*}
\{(\Xi,F)|& \;\Xi\colon H_1\to H_2 \text{ a morphism of topological groupoids over $Id_M$},\\ 
&\; F \colon \cG_1\to \cG_2 \text{ a morphism of Lie groupoids over $Id_M$},\\ 
& \;\text{s.t.  
the diagram below commutes}\}
\end{align*}
\begin{equation*} 
\xymatrix{
H_1  \ar[d]_{\Phi_1}    \ar[r]^{\Xi} &H_2  \ar[d]^{\Phi_2}     \\
\cG_1 \ar[r]^{F} &   \cG_2 }
\end{equation*}
\end{itemize}

Our construction  provides a functor 
\begin{align*}
 \text{$\textsf{SingSub}^{Gpd}_M$}\;\;\;\;\;&\to\;\;\;\;\;\text{$\textsf{TopGrd}_M$}\\
(\cG,\cB)\;\;\;\;\;&\mapsto\;\;\;\;\;  H^{\cG}(\cB)\\
F\;\;\;\;\;&\mapsto\;\;\;\;\;  (\Xi, F)
\end{align*}

where $H^{\cG}(\cB)$ is constructed as  in Definition \ref{def:holgroupoid} and 
$\Xi$ is constructed as in Theorem \ref{thm:morph}. This is really a functor, due to the canonicity of our constructions.

 \appendix
\section{The convolution algebra of a singular subalgebroid\\ (by Iakovos Androulidakis)}\label{section:convsing}

Here we explain how the construction of the $C^*$-algebra(s) of a singular foliation given in \cite[\S 4]{AndrSk} can be adapted to the context of singular subalgebroids. The only thing that needs to be explained here is the construction of the $\ast$-algebra associated with a singular subalgebroid $\cB$; its completion(s) is exactly as given in \cite[\S 4.4, \S 4.5]{AndrSk}. In fact, this appendix aims to exhibit that it is possible to do Noncommutative Geometry with singular subalgebroids; {recall from \cite{AndrSk}, \cite{IakAnal}, \cite{PseudodiffCalcSingFol} that this convolution algebra is the starting point in order to develop longitudinal pseudodifferential operators and the associated index theory.}

 {The $\ast$-algebra of a singular subalgebroid $\cB$ (Def. \ref{def:*algcB}) is constructed using the holonomy groupoid $H^G(\cB)$, or more precisely a path holonomy atlas
for $\cB$.}

\begin{remark}
It turns out that a singular subalgebroid $\cB$ corresponds to a singular foliation $\rar{\cB}$ on $\cG$. Hence, following  \cite{MMRC}, we can
consider the holonomy groupoid $H(\rar{\cB})$ of the singular foliation. The latter has an induced action of $\cG$, and the quotient is the holonomy groupoid $H^G(\cB)$, as 
is shown in \cite{AZ4}. This construction is satisfactory from a geometric point of view, but is less suited for the purposes of $*$-algebras, for it is not clear that the $*$-algebra associated to $H(\rar{\cB})$ and the $\cG$ action on $H(\rar{\cB})$ give rise to the $*$-algebras of
$H^G(\cB)$.
\end{remark}

\subsection*{Remarks on the topology of the holonomy groupoid}

Just like the case of singular foliations, the topology of $H^G(\cB)$ is quite bad. 
Here we extend some statements from  \cite[\S 3.3]{AndrSk}.
We will need this material for the construction of the convolution algebra.
{In fact, our constructions can be carried out for the groupoid associated to any atlas of $\cB$ (\cf Appendix \ref{sec:atlases}), not only the path-holonomy one, so we give them in full generality.}

Fix a singular subalgebroid $\cB$.
Fix an atlas $\cU = (U_i,\varphi_i,\cG)_{i \in I}$ (see Appendix \ref{sec:atlases})   and let $H(\cB)^{\cU}$ be the associated groupoid. For every bisubmersion $(U,\varphi,\cG)$ {adapted to $\cU$,} consider the set $$\Gamma_{U} = \{u \in U : \text{{dim$(U)$} = dim$(M)$ + dim$(\cB_{\bs(u)})$}\} \subset U,$$ 
where $\cB_{\bs(u)}=\cB/I_{\bs(u)}\cB$ for $I_{\bs(u)}$ the smooth functions on $M$ vanishing at {$\bs(u)$}. {Note that $\Gamma_{U}$  is either
  empty or it} consists of a union of fibers of $\bs\colon U\to M$. It is an open subset of $U$ when $U$ is endowed with the smooth structure along the leaves of the regular foliation $\varphi^{-1}(\rar{\cB}) = \Gamma_c(U;\ker d\bs_U)$, {\ie the \emph{longitudinal smooth structure} in \cite[Prop. 1.14]{AndrSk}.}} 
  
\begin{lemma}\label{lem:sm1}
For every $h \in H(\cB)^{\cU}$ there exists a bisubmersion $(U,\psi,\cG)$ {adapted to $\cU$} and $u \in \Gamma_U$ such that $q_{U}(u)=h$.
\end{lemma}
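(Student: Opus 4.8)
The statement asserts that every element of the holonomy groupoid admits a representative lying in the ``regular part'' $\Gamma_U$ of some bisubmersion of the atlas. The key idea, mirroring \cite[\S 3.3]{AndrSk}, is that a general bisubmersion is locally ``too big'': by Proposition \ref{prop:crucial} it admits a submersion onto a \emph{minimal} path holonomy bisubmersion, and minimal bisubmersions have exactly the dimension $\dim M + \dim \cB_x$ appearing in the definition of $\Gamma_U$. So the plan is: given $h\in H(\cB)^{\cU}$, pick any bisubmersion $(V,\chi,\cG)$ adapted to $\cU$ and $v\in V$ with $q_V(v)=h$; then produce from it a new adapted bisubmersion $(U,\psi,\cG)$ together with $u\in U$ such that $q_U(u)=h$ and $u$ lies in $\Gamma_U$.

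\textbf{Key steps.} First I would reduce to the case where $v$ carries a local bisection $\bc$ of $\cG$; this is always possible since bisections through any point of a bisubmersion exist (the argument of \cite[Prop. 2.7]{AndrSk}, using only that $\bs_V,\bt_V$ are submersions). Next, using Proposition \ref{prop:preserveb} b) (and Remark \ref{rem:ccinv}), I replace $(V,\chi,\cG)$ by the adapted bisubmersion $(V, L_{\bc^{-1}}\circ \chi,\cG)$, in which the same point $v$ now carries the \emph{identity} bisection $1_M$ of $\cG$; one must check this replacement keeps the bisubmersion adapted to $\cU$ and does not change the class $h$ up to the known left-translation isomorphism on $\cG$ — here I would invoke Corollary \ref{cor:crucial} b) exactly as in the proof of Cor. \ref{cor:crucial} itself. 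Having arranged that $v$ carries $1_M$, let $x:=\bt_V(v)=\bs_V(v)$, choose $\balpha_1,\dots,\balpha_n\in\cB$ forming a \emph{minimal} set of local generators of $\cB$ near $x$, and let $(U_0,\varphi_0,\cG)$ be the associated minimal path holonomy bisubmersion, so $\dim U_0 = n + \dim M = \dim\cB_x + \dim M$. Since path holonomy bisubmersions belong to (or are adapted to) $\cU$, Proposition \ref{prop:crucial} provides, after shrinking $V$ around $v$, a submersion $g\colon V\to U_0$ which is a morphism of bisubmersions with $g(v)=(0,x)$. This forces $q_{U_0}((0,x)) = q_V(v) = h$, and by construction $(0,x)$ has $\bs_{U_0}(0,x)=x$ with $\dim U_0 = \dim M + \dim\cB_x$, so $(0,x)\in\Gamma_{U_0}$. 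Taking $(U,\psi,\cG):=(U_0,\varphi_0,\cG)$ and $u:=(0,x)$ finishes the argument.

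\textbf{Main obstacle.} The routine parts (existence of bisections, Prop. \ref{prop:crucial}) are already in place; the delicate point is the very first reduction, namely that one can always replace a given representative $(V,v)$ by one in which $v$ carries a bisection of $\cG$ contained in $1_M$. One cannot expect an arbitrary $v$ to carry the identity bisection outright — a bisection of $V$ through $v$ carries \emph{some} bisection $\bc$ of $\cG$, not necessarily the identity — so the honest move is the left-translation trick via $L_{\bc^{-1}}$, and the genuinely careful step is verifying that $(V,L_{\bc^{-1}}\circ\chi,\cG)$ is still adapted to the atlas $\cU$ (not merely a bisubmersion) and that the quotient map is compatible, i.e.\ $q_{L_{\bc^{-1}}\circ\chi}(v)$ corresponds to $h$ under the groupoid structure. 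I expect this bookkeeping — tracking how modification by a bisection interacts with ``adapted to $\cU$'' and with $\natural$ — to be the real content, with everything downstream being a direct application of Proposition \ref{prop:crucial}.
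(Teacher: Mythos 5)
Your strategy assembles the right ingredients (a bisection through the representative, the minimal path holonomy bisubmersion at $\bs(h)$, and modification by a left translation $L_{\bc}$), but you apply the translation to the wrong bisubmersion, and this breaks the final identification. After you replace $(V,\chi,\cG)$ by $(V,L_{\bc^{-1}}\circ\chi,\cG)$, the point $v$ no longer represents $h$: in the modified bisubmersion $v$ maps to $1_x$ and carries the identity bisection, so its class in $H(\cB)^{\cU}$ is the unit $1_x$. The morphism $g$ you obtain from Proposition \ref{prop:crucial} is a morphism out of $(V',L_{\bc^{-1}}\circ\chi,\cG)$, not out of $(V',\chi,\cG)$, hence it only shows $q_{U_0}((0,x))=1_x$; the asserted chain $q_{U_0}((0,x))=q_V(v)=h$ is false whenever $h$ is not a unit, and the concluding choice $(U,\psi,\cG):=(U_0,\varphi_0,\cG)$, $u:=(0,x)$ cannot work, since $(0,x)$ lies on the bisection $\{0\}\times M$ of $U_0$ carrying $1_M$ and therefore always represents a unit of the holonomy groupoid.

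The repair is to translate in the other direction, which is what the paper does: leave $(V,\chi,\cG)$ and $v$ untouched, and instead set $\psi:=L_{\bc}\circ\varphi_0$ on the minimal path holonomy bisubmersion $U_0$ at $x=\bs(h)$ (this is again a bisubmersion by Remark \ref{rem:ccinv}). Then $(0,x)\in(U_0,\psi,\cG)$ carries the bisection $\bc$ of $\cG$ --- the same one carried by $v$ in $(V,\chi,\cG)$ --- so Corollary \ref{cor:crucial} b) gives $q_{U_0}((0,x))=q_V(v)=h$ directly; Proposition \ref{prop:crucial} is not needed. Since left translation does not change the source, $\bs\circ\psi=\bs\circ\varphi_0$, and minimality of $U_0$ gives $\dim U_0=\dim M+\dim(\cB/I_x\cB)$, so $(0,x)\in\Gamma_{U_0}$ as required. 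Your worry about adaptedness is legitimate but resolves by the same mechanism: every point of $(U_0,\psi,\cG)$ carries a bisection of $\cG$ that is also carried by a point of some element of $\cU$, so Corollary \ref{cor:crucial} b) supplies the local morphisms witnessing that $(U_0,\psi,\cG)$ is adapted to $\cU$.
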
 
\begin{proof}
Let $(W,\varphi,G)$ be a bisubmersion {in $\cU$, let} $w \in W$ such that $q_W(w)=h$ and $\bb\subset W$ a bisection through $w$. 
Consider the minimal path holonomy bisubmersion $(U, \psi_0,\cG)$ constructed by a basis of $\cB_{\bs(h)}$ (see Definition \ref{dfn:pathhol} and   Proposition \ref{prop:pathhol}). Then $u:=(\bs(h),0)\in U$ carries the identity bisection of $\cG$.
Consider the map $\psi:=L_{\varphi(\bb)}\circ\psi_0$, where $L_{\varphi(\bb)}$ is the diffeomorphism of $G$ defined by left multiplication by the bisection $\varphi(\bb)$.
{As shown in {Remark \ref{rem:ccinv}}} we obtain a new bisubmersion $(U,\psi,\cG)$.
{The points $u\in (U,\psi,\cG)$ and $w\in W$ both carry the bisection $\varphi(\bb)$ of $G$, hence under the quotient map to {$H(\cB)^{\cU}$}, the point $u$ also maps to $h$. Further $u\in \Gamma_{U}$, because
the path holonomy bisubmersion $(U, \psi_0,\cG)$ is minimal at $\bs(h)$, and
since the source maps of $(U, \psi_0,\cG)$
and $(U,\psi,\cG)$ agree.} 
\end{proof}

\begin{lemma}\label{lem:sm2}
Consider bisubmersions $(U,\varphi,\cG)$ and $(U',\varphi',\cG)$ and $f\colon U \to U'$ a morphism of bisubmersions. Let $u \in U$.
\begin{enumerate}
\item If $u \in \Gamma_U$, then $(df)_u$ is injective;
\item If $f(u) \in \Gamma_{U'}$ then $(df)_u$ is surjective
\end{enumerate}
\end{lemma}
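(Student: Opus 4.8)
The plan is to analyze the differential $(df)_u$ of a morphism of bisubmersions $f\colon U\to U'$ by decomposing the tangent spaces using the source/target foliations, exactly as in \cite[Prop. 3.10]{AndrSk}. First I would recall the basic linear-algebraic setup: since $f$ is a morphism of bisubmersions, $\bs_{U'}\circ f=\bs_U$ and $\bt_{U'}\circ f=\bt_U$, so $(df)_u$ maps $\ker d\bs_U$ into $\ker d\bs_{U'}$ and $\ker d\bt_U$ into $\ker d\bt_{U'}$. Moreover $(df)_u$ restricted to $\ker(d\bs_U)_u\cap\ker(d\bt_U)_u$ is automatically injective: a vector there that maps to $0$ lies in the kernel of $d\varphi_u=d\varphi'_{f(u)}\circ(df)_u$ as well, and one uses that on a bisubmersion the combined map $(d\varphi_u, d\bs_{U,u})$ — or equivalently the fact that $\varphi$ together with one of the submersions separates tangent directions in $\ker d\bs_U\cap\ker d\bt_U$ — has no kernel. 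Concretely, if $X\in\ker(d\bs_U)_u\cap\ker(d\bt_U)_u$ with $(df)_u X=0$, then $d\varphi_u(X)=0$; picking local vector fields in $\Gamma(U,\ker d\bs_U)^{proj,\cB}$ spanning $\ker d\bs_U$ near $u$ (Lemma \ref{lem:bisubm2} c)) and using the well-defined surjective map $d\varphi\colon\Gamma_c(U;\ker d\bs_U)\to\varphi^*(\rar\cB)$ from Remark \ref{donto}, one argues as in Lemma \ref{lem:linindip} that $X=0$.

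Next I would set up the dimension bookkeeping. Let $x=\bs_U(u)=\bs_{U'}(f(u))$ and $r:=\dim\cB_x$. Since $\bs_U,\bt_U$ are submersions and $\bt_U^{-1}(\cF_{\cB})=\Gamma_c(U,\ker d\bt_U)+\Gamma_c(U,\ker d\bs_U)$ (Lemma \ref{lem:usualbi}), the standard bisubmersion dimension count gives $\dim(\ker d\bs_U)_u+\dim(\ker d\bt_U)_u-\dim T_uU = \dim\ker(d\bs_U)_u\cap\ker(d\bt_U)_u =: d_U(u)$, and $\dim U=\dim M+\dim(\ker d\bt_U)_u$, so $d_U(u)=\dim(\ker d\bt_U)_u-\dim M$... more usefully, $\dim U-\dim M-d_U(u)=\dim M$? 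I should be careful: the cleanest identity is $\dim U = \dim M + \dim (\ker d\bs_U)_u$ (since $\bs_U$ is a submersion), and the condition $u\in\Gamma_U$ says $\dim U=\dim M+r$, i.e. $\dim(\ker d\bs_U)_u=r$. In general $\dim(\ker d\bs_U)_u\ge r$ because $\{\rar\balpha\}$ with $\balpha$ among $r$ independent generators lift to linearly independent vectors in $\ker d\bs_U$ at $u$ (Lemma \ref{lem:linindip}); so $u\in\Gamma_U$ exactly when this minimal dimension is achieved, i.e. when those $r$ lifts \emph{span} $\ker(d\bs_U)_u$.

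With this, the two claims follow. For (1): if $u\in\Gamma_U$, then $\ker(d\bs_U)_u$ is spanned by lifts $Y_1(u),\dots,Y_r(u)$ of $\rar{\balpha_1},\dots,\rar{\balpha_r}$ with $[\balpha_i]$ a basis of $\cB_x$; since $f$ is a morphism of bisubmersions, $(df)_u Y_i(u)$ is a lift to $U'$ of the same $\rar{\balpha_i}$, hence by Lemma \ref{lem:linindip} applied to $(U',\varphi',\cG)$ the vectors $(df)_u Y_i(u)$ are linearly independent; combined with the injectivity of $(df)_u$ on $\ker d\bs_U\cap\ker d\bt_U$ noted above and a diagram chase over the splitting $T_uU\to T_uM$ via $d\bt_U$, we get $\ker(d f)_u=0$. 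Actually the quickest route: $\ker(df)_u\subset\ker(d\bs_U)_u$ (since $\bs_{U'}\circ f=\bs_U$ is a submersion and... no); instead, $\ker(df)_u$ maps to $0$ under $d\bs_U$ and under $d\bt_U$, so lies in $\ker d\bs_U\cap\ker d\bt_U$, where we already showed $(df)_u$ is injective — done. For (2): if $f(u)\in\Gamma_{U'}$, then $\dim(\ker d\bs_{U'})_{f(u)}=r$ and $(df)_u$ sends the $r$ linearly independent vectors $Y_1(u),\dots,Y_r(u)\in\ker(d\bs_U)_u$ (lifts of $\rar{\balpha_i}$) to $r$ linearly independent vectors in the $r$-dimensional space $\ker(d\bs_{U'})_{f(u)}$, hence onto it; together with $d\bt_{U'}\circ(df)_u=d\bt_U$ surjective onto $T_xM$ and $T_{f(u)}U'=\ker(d\bs_{U'})_{f(u)}\oplus(\text{a complement mapping isomorphically to }T_xM\text{ under }d\bt_{U'})$ — wait, that splitting isn't automatic; better: $\dim U'=\dim M+r$, and $(df)_u$ already hits all of $\ker d\bs_{U'}$ (dimension $r$) plus, via the submersion $d\bt_U$ factoring through $d\bt_{U'}\circ df$, all of $T_xM$ transversally, giving $\mathrm{rank}(df)_u=\dim U'$, i.e. surjectivity. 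The main obstacle I anticipate is making the dimension/transversality bookkeeping fully rigorous without circularity — in particular justifying $\dim(\ker d\bs_U)_u\ge\dim\cB_x$ cleanly (via Lemma \ref{lem:linindip}) and correctly identifying when equality holds; once that is pinned down, both assertions reduce to the injectivity statement on $\ker d\bs_U\cap\ker d\bt_U$ plus counting, essentially verbatim as \cite[Prop. 3.10]{AndrSk}.
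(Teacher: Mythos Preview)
Your general claim in the first paragraph --- that $(df)_u$ is automatically injective on $\ker(d\bs_U)_u\cap\ker(d\bt_U)_u$ for \emph{any} bisubmersion --- is false. Take a non-minimal path holonomy bisubmersion built from a redundant generating set, say $\{\balpha,\balpha\}$: at $(0,0,x)$ the vector $(1,-1,0)$ lies in $\ker d\bs_U\cap\ker d\bt_U\cap\ker d\varphi_u$, so your argument ``$d\varphi_u X=0$ forces $X=0$'' breaks down. Lemma~\ref{lem:linindip} does not help here: it goes in the opposite direction (linear independence in $\cB/I_x\cB$ implies linear independence of lifts), and says nothing about vectors mapping to zero under $d\varphi$. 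Fortunately your proof of (a) only invokes this claim when $u\in\Gamma_U$, where it \emph{does} hold by the dimension count you set up --- but you should state it under that hypothesis, not as a general fact.

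The paper's proof is considerably shorter and avoids the intersection $\ker d\bs_U\cap\ker d\bt_U$ entirely. Since $\bs_{U'}\circ f=\bs_U$ with both submersions, $(df)_u$ is injective/surjective iff its restriction $\ker(d\bs_U)_u\to\ker(d\bs_{U'})_{f(u)}$ is. Then one looks at the composite
\[
\ker(d\bs_U)_u \xrightarrow{(df)_u} \ker(d\bs_{U'})_{f(u)} \xrightarrow{\varphi'_*} \rar{\cB}_{\varphi(u)},
\]
whose composition is $\varphi_*$. Both $\varphi_*$ and $\varphi'_*$ are surjective (Remark~\ref{donto}); the condition $u\in\Gamma_U$ (resp.\ $f(u)\in\Gamma_{U'}$) says exactly that $\varphi_*$ (resp.\ $\varphi'_*$) is an isomorphism, and (a), (b) follow immediately. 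Your route via Lemma~\ref{lem:linindip} and explicit lifts eventually recovers this, but with extra bookkeeping; in particular your surjectivity argument in (b) would be cleaner using $\bs$ rather than $\bt$ (the image of $(df)_u$ contains $\ker d\bs'_{f(u)}$ and surjects onto $T_xM$ via $d\bs'_{f(u)}$, so equals $T_{f(u)}U'$ by the short exact sequence for the submersion $\bs_{U'}$).
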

\begin{proof}{We denote by $\bs$ and $\bs'$  the source maps of $U$ and $U'$.} 
Since $\bs$ and $\bs'$ are submersions and $\bs'\circ f = \bs$, $(df)_u \colon T_u U \to T_{f(u)}U'$ is injective or surjective if and only if the restriction $(df)_u|_{\ker (d\bs)_u} \colon \ker (d\bs)_u \to \ker (d\bs')_{f(u)}$ is injective or surjective. Consider the composition $$\ker (d\bs)_u \stackrel{(df)_u|_{\ker (d\bs)_u}}{\longrightarrow} \ker (d\bs')_{f(u)} \stackrel{\varphi'_*}{\longrightarrow} {\vec{\cB}_{\varphi(u)}}.$$
 By the definition of bisubmersions, the maps $\varphi'_*$ and $\varphi_* = \varphi'_* \circ (df)_u$ are onto.
 
 If $u \in \Gamma_U$, then {by dimension reasons} $\varphi_* \colon \ker(d\bs)_u \to \rar{\cB}$ is an isomorphism, {implying a)}.
 
 If $f(u) \in \Gamma_{U'}$ then $\varphi'_* \colon \ker(d\bs')_{f(u)} \to \rar{\cB}{{_{\varphi(u)}}}$ is an isomorphism, {implying b)}.
\end{proof}
 
\subsection*{Preliminaries on densities}

Let $(U,\varphi_U,\cG)$ be a bisubmersion. Just like in \cite[\S 4]{AndrSk}, we are going to work with bundles of ({complex}) $a$-densities ($a \in \R$) associated with a vector bundle $E \to U$. When $E=(\ker d\bs_U \times \ker d\bt_U) \to U$, we write $\Omega^a(U)$. Let us also denote $\Gamma_c(\Omega^a(U))$ the space of compactly supported sections of $\Omega^a(U)$. Now put $a = \frac{1}{2}$ and recall from \cite[\S 4.1, \S 4.2]{AndrSk}:
\begin{enumerate}
\item If $(V,\varphi_V,\cG)$ is another bisubmersion, then $\hd{U\circ V}_{(u,v)} = (\hd{U})_u \otimes (\hd{V})_v$  
 {at all $(u,v)\in U\circ V$.} So if $f \in \shd{U}$ and $g \in \shd{V}$, we  obtain an element $f \otimes g \in \shd{U\circ V}$ defined by
 $$f \otimes g \colon (u,v) \mapsto f(u)\otimes g(v).$$
\item If $\kappa \colon U \to U^{-1}$ is the identity isomorphism {(where $U^{-1}$ denotes the inverse bisubmersion)}
we put 
\begin{align*}
  \shd{U}&\to \shd{U^{-1}}\\
 f &\mapsto{f^{\ast}:=\overline{f}\circ \kappa^{-1} }
\end{align*}
\item If $p \colon U \to V$ is a submersion and a morphism of bisubmersions we have $\hd{U}=\Omega^1(\ker dp) \otimes p^{\ast}(\hd{V})$.
 So integration along the fibers of $p$ gives a map $$p_{!} \colon\shd{U} \to \shd{V}$$ If $p$ is a surjective submersion then $p_{!}$ is onto.
\end{enumerate}

\subsection*{The \texorpdfstring{$\ast$}{Lg}-algebra of an atlas of bisubmersions}\label{sec:Lgalgebra}

Let us fix an atlas of bisubmersions $\cU = (U_i,\varphi_i,\cG)_{i \in I}$ for the singular subalgebroid $\cB$. Consider the disjoint union $U:=\coprod_{i \in I}U_i$ and $\varphi \colon U \to \cG$ the map defined by $\varphi|_{U_i}=\varphi_i$. It is easy to see that $(U,\varphi,\cG)$ is a bisubmersion and $\shd{U}=\bigoplus_{i \in I}\shd{U_i}$.

{For the construction of the convolution algebra, will have to identify smooth densities between two different bisubmersions, and this can be done by means of integration along the fibers of a submersion. Since there exist bisubmersions which are adapted to $\cU$, but not necessarily through a submersive morphism, the next lemma is in order. Its proof is exactly the same as \cite[Lem. 4.3]{AndrSk}, using our Lemma \ref{lem:sm1}, so we omit it.}

\begin{lemma}\label{lem:algebra}
Let $(V,\varphi_V,\cG)$ be a bisubmersion adapted to $\cU$. The following two statements hold:
\begin{enumerate}
\item Let $v \in V$. Then there exists a bisubmersion $(W,\varphi_W,\cG)$ and submersions $p \colon W \to U$, $q \colon W \to V$ which are morphisms of bisubmersions such that $v \in q(W)$.
{
\begin{equation*}  
 \xymatrix{
 & W \ar[dl]_{q} \ar[dr]^{p}&\\
 V&&U 
 }
\end{equation*}
}

\item Let $f \in \shd{V}$. Then there exists a bisubmersion $(W,\varphi_W,\cG)$ and submersions $p \colon W \to U$, $q\colon W \to V$ which are morphisms of bisubmersions and $g \in \shd{W}$ such that $q_{!}(g)=f$.
\end{enumerate}
\end{lemma}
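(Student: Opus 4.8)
The plan follows closely the proof of \cite[Lem.~4.3]{AndrSk}, the new structural inputs being Lemma \ref{lem:sm1}, Lemma \ref{lem:sm2}, Lemma \ref{lem:UV} (with Remark \ref{rem:pq}) and Corollary \ref{cor:crucial}.

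For part (1), fix $v\in V$ and set $h:=q_V(v)$. By Lemma \ref{lem:sm1} there is a bisubmersion $(V_0,\psi_0,\cG)$ adapted to $\cU$ and a point $v_0\in\Gamma_{V_0}$ with $q_{V_0}(v_0)=h$. Since $q_{V_0}(v_0)=q_V(v)$, Remark \ref{rem:HGBbisec} shows that $v$ and $v_0$ carry a common bisection of $\cG$, so Corollary \ref{cor:crucial}(2) yields a morphism of bisubmersions $g\colon V'\to V_0$ on a neighbourhood $V'$ of $v$ with $g(v)=v_0$; as $g(v)=v_0\in\Gamma_{V_0}$, Lemma \ref{lem:sm2}(2) gives that $(dg)_v$ is onto, so after shrinking $V'$ we may take $g$ a submersion. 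On the other side, $V_0$ being adapted to $\cU$ gives a morphism of bisubmersions $b\colon V_0'\to U_j$ near $v_0$ into some $U_j\in\cU$, and Corollary \ref{cor:crucial}(3) produces a morphism $b'\colon N_j\to V_0$ on a neighbourhood $N_j\subset U_j$ of $b(v_0)$ with $b'(b(v_0))=v_0$; again by Lemma \ref{lem:sm2}(2), $b'$ is a submersion after shrinking. Now put $W:=V'\times_{g,b'}N_j$, the fibre product over $V_0$; since $g$ and $b'$ are submersions this is a manifold and the projections $q:=\mathrm{pr}_{V'}\colon W\to V'\hookrightarrow V$ and $p:=\mathrm{pr}_{N_j}\colon W\to N_j\hookrightarrow U$ are submersions. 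On $W$ one has $\varphi_{V'}\circ q=\psi_0\circ(g\circ q)=\psi_0\circ(b'\circ p)=\varphi_{U_j}\circ p$, so this common map $\varphi_W$ makes $q$ and $p$ morphisms of bisubmersions; moreover $g\circ q\colon W\to V_0$ is a submersion and $\varphi_W=\psi_0\circ(g\circ q)$, so $(W,\varphi_W,\cG)$ is a bisubmersion for $\cB$ by Lemma \ref{lem:UV}. Finally $(v,b(v_0))\in W$ and $q(v,b(v_0))=v$, so $v\in q(W)$.

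For part (2), by part (1) every $v\in V$ lies in $q_v(W_v)$ for some bisubmersion $W_v$ with submersive morphisms $q_v\colon W_v\to V$, $p_v\colon W_v\to U$; since submersions are open, the sets $q_v(W_v)$ form an open cover of $V$. Given $f\in\shd{V}$, choose finitely many $v_1,\dots,v_k$ whose sets $q_{v_a}(W_{v_a})$ cover $\mathrm{supp}(f)$ and a subordinate partition of unity, writing $f=\sum_{a=1}^k f_a$ with $\mathrm{supp}(f_a)\subset q_{v_a}(W_{v_a})$. Each $q_{v_a}\colon W_{v_a}\to q_{v_a}(W_{v_a})$ is a surjective submersion and a morphism of bisubmersions, so by item (3) of the density preliminaries $(q_{v_a})_!$ is onto and there is $g_a\in\shd{W_{v_a}}$ with $(q_{v_a})_!(g_a)=f_a$. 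Then $W:=\coprod_a W_{v_a}$ (with the evident map to $\cG$, which is a bisubmersion for $\cB$), $p:=\coprod_a p_{v_a}$, $q:=\coprod_a q_{v_a}$ and $g:=\bigoplus_a g_a\in\shd{W}=\bigoplus_a\shd{W_{v_a}}$ satisfy $q_!(g)=\sum_a(q_{v_a})_!(g_a)=f$.

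The main obstacle is to upgrade the abstract morphisms of bisubmersions delivered by Corollary \ref{cor:crucial} to \emph{submersions}: this is exactly what Lemma \ref{lem:sm2} provides, once everything is routed through a bisubmersion $V_0$ whose relevant point lies in $\Gamma_{V_0}$, and the fibre product $W:=V'\times_{g,b'}N_j$ then simultaneously submerses onto $V$ and onto $U$. The remaining points are routine from Definition \ref{dfn:bisubm2}: that a fibre product of two submersive morphisms of bisubmersions over a common bisubmersion is again a bisubmersion (reduce to Lemma \ref{lem:UV}), and that a finite disjoint union of bisubmersions for $\cB$ is a bisubmersion for $\cB$.
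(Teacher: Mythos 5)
Your proof is correct and follows essentially the route the paper intends: the paper omits the proof of this lemma, stating it is ``exactly the same as \cite[Lem.~4.3]{AndrSk}'' using Lemma \ref{lem:sm1}, and your argument is precisely that reconstruction (routing through a point of $\Gamma_{V_0}$ via Lemma \ref{lem:sm1}, upgrading the morphisms from Corollary \ref{cor:crucial} to submersions via Lemma \ref{lem:sm2}, and forming the fibre product over $V_0$). The reduction of part (2) to part (1) by a partition of unity and surjectivity of $q_!$ for surjective submersions is likewise the standard argument from \cite{AndrSk}.
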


Analogously to \cite[\S 4.3]{AndrSk}, Lemma \ref{lem:algebra} allows us to consider the quotient {vector space}
$$\cA_{\cU} = \bigotimes_{i \in I}\shd{U_i}/\cI$$ Here $\cI$ is the 
{linear}
 subspace spanned by $p_{!}f$, where $p\colon W \to U$ is a submersion and morphism of bisubmersions and $f \in \shd{W}$ is such that there exists a submersion and morphism of bisubmersions $q\colon W \to V$ with $q_{!}(f)=0$.
 
\begin{remark}
{A bisubmersion $(U,\varphi,\cG)$ for the singular subalgebroid $\cB$ can be viewed a bisubmersion $(U,\bt_U,\bs_U)$ for the underlying foliation $\cF_{\cB}$, see Remark \ref{rem:FB}. In spite of this, 
the space $\cA_{\cU}$ constructed here differs from the one constructed in \cite[\S 4.3]{AndrSk}
out of the singular foliation $\cF_{\cB}$. That is because, while every morphism of bisubmersions $p\colon (U,\varphi,\cG) \to (U',\varphi',\cG)$ is a morphism of bisubmersions $p\colon(U,\bt_U,\bs_U) \to (U',\bt_{U'},\bs_{U'})$,   the converse is not true.}
\end{remark}
The proofs of \cite[Prop. 4.4, 4.5]{AndrSk} go through verbatim in the context of bisubmersions to endow $\cA_{\cU}$ with the following $\ast$-algebra structure:
\begin{itemize}
\item Given a bisubmersion $(V,\varphi_V,\cG)$ adapted to $\cU$ there is a linear map $Q_V\colon \shd{V} \to \cA_{\cU}$ such that:
\begin{enumerate}
\item If $(V,\varphi_V,\cG)=(U_i,\varphi_i,\cG)$ then $Q_V$ is the quotient map 
\item If $p\colon W \to V$ is a morphism of bisubmersions which is a submersion, then $Q_W = Q_V \circ p_{!}$.
\end{enumerate}
\item If $(V,\varphi_V,\cG)$, $(W,\varphi_W,\cG)$ are bisubmersions adapted to $\cU$ then for sections $f \in \shd{{V}}$, $g \in \shd{W}$, {the $*$-involution and product in $\cA_{\cU}$ are defined as follows}: $$(Q_V(f))^{\ast} = Q_{V^{-1}}(f^{\ast}), \qquad Q_V(f)Q_W(g)=Q_{V\circ W}(f\otimes g)$$
\end{itemize}
\begin{definition}\label{def:*algcB}
\begin{enumerate}
\item The \emph{$\ast$-algebra of the atlas $\cU$} is $\cA_{\cU}$ with the above operations. 
\item The \emph{$\ast$-algebra $\cA(\cB)$ of the singular subalgebroid $\cB$} is $\cA_{\cU}$ for $\cU$ the path-holonomy atlas of bisubmersions.
\end{enumerate}
\end{definition}
The completion of the $\ast$-algebra $\cA_{\cU}$ is verbatim as explained in \cite[\S 4.4, \S 4.5]{AndrSk} (but using the foliation $\cF_{\cB}$ for the definition of the $L^1$-norm in \cite[\S 4.4]{AndrSk}). This way we obtain the full and reduced $C^{\ast}$-algebras $C^{\ast}(\cU)$ and $C^{\ast}_r(\cU)$ respectively. When $\cU$ is the path-holonomy atlas of bisubmersions associated with $\cB$, we write $C^{\ast}(\cB)$ and $C^{\ast}_r(\cB)$.

\begin{remark}
{
The $C^*$-algebra $C^{\ast}(\cB)$  enjoys functorial properties. Consider for instance the anchor  $\rho \colon A\to TM$,  a Lie algebroid morphism
which maps  a given singular subalgebroid  $\cB$ to the underlying singular foliation $\cF_{\cB}$.  There is an  induced canonical morphism $H^{\cG}(\cB)\to H(\cF_{\cB})$, as we saw in Ex. 
\ref{ex:folia}. One can show that the anchor also induces a 
$\ast$-homomorphism $C^{\ast}(\rho) : C^{\ast}(\cB) \to C^{\ast}(\cF_{\cB})$,
given by integration along fibers of  submersive morphisms between bisubmersions of $\cB$ and of $\cF_{\cB}$.}
\end{remark}

\section{{A technical lemma about bisubmersions}}
\label{app:bisum}

The proof of Prop. \ref{prop:preserveb} relies on the following proposition, which relates bisubmersions for a \emph{singular subalgebroid} on $M$ with bisubmersions for the induced \emph{singular foliation} on $G$.

\begin{prop}\label{lem:bisubrarb} 
Let   $(U,\varphi,\cG)$ be a bisubmersion for a singular subalgebroid $\cB$.
Then  $$\widehat{U}:=U \times_{\bs_U, \bt}G,$$
with target and source maps   
\begin{align*}
\bt_{\widehat{U}}(u,g):=&\;\varphi(u)\cdot g\\
 \bs_{\widehat{U}}(u,g):=&\;g
\end{align*}  
to $G$, is a bisubmersion  for the singular foliation $\rar{\cB}$ on $G$ (in the sense of \cite{AndrSk}, see  \S \ref{section:usualbisub}). 
\end{prop}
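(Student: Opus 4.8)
The statement asserts that $(\widehat{U},\bt_{\widehat U},\bs_{\widehat U})$ is a bisubmersion for the singular foliation $\rar{\cB}$ on $G$ in the sense of \cite{AndrSk}, i.e. that $\bt_{\widehat U}$ and $\bs_{\widehat U}$ are submersions and that
\begin{equation*}
\bt_{\widehat U}^{-1}(\rar{\cB})=\Gamma_c(\widehat U,\ker d\bt_{\widehat U})+\Gamma_c(\widehat U,\ker d\bs_{\widehat U})=\bs_{\widehat U}^{-1}(\rar{\cB}).
\end{equation*}
The basic idea is that $\widehat U=U\times_{\bs_U,\bt}G$ is, up to an automorphism, the composition of the bisubmersion $(U,\varphi,\cG)$ with the ``identity'' bisubmersion $(\cG,\id_\cG,\cG)$ for $\Gamma_c(A)$, but now everything is pushed to the level of $G$; so I expect to be able to import the composition argument (Prop.~\ref{prop:compo}) and the characterization results (Lemma \ref{lem:bisubm2}, Remark \ref{donto}) almost verbatim, after translating between ``bisubmersion for $\cB$'' and ``bisubmersion for $\rar{\cB}$''.

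\textbf{Step 1: the submersion property.} The map $\bs_{\widehat U}(u,g)=g$ is the composition of the projection $\widehat U\to G$ with $\id_G$; it is a submersion because $\widehat U$ is a fibre product along the submersion $\bs_U$ (so the projection to the second factor $G$ is a submersion). For $\bt_{\widehat U}(u,g)=\varphi(u)\cdot g$, I would observe that right-translation $R_g$ on $G$ is a diffeomorphism, so it suffices to know that $(u,g)\mapsto \varphi(u)$ composed with left-multiplication is submersive in the appropriate sense; more directly, $\bt_{\widehat U}$ equals $m\circ(\varphi\times\id_G)$ restricted to the fibre product, and since $\bt_U=\bt\circ\varphi$ is a submersion and $m\colon G\times_{\bs,\bt}G\to G$ is a submersion, the composite is a submersion. (Alternatively: fix $g$; then $u\mapsto \varphi(u)\cdot g$ is submersive since $\varphi$ composed with $\bt$ is, and $R_g$ is a diffeo; and the $G$-direction is handled by left-translation being a diffeo.)

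\textbf{Step 2: the foliation identity via Lemma \ref{lem:bisubm2}/Remark \ref{donto}.} For the key equality I would use the characterization in Remark \ref{donto}: $\bs_{\widehat U}^{-1}(\rar{\cB})=\Gamma_c(\widehat U,\ker d\bs_{\widehat U})$ is equivalent to the well-definedness and surjectivity of the map of $C^\infty(\widehat U)$-modules $d\bs_{\widehat U}\colon \Gamma_c(\widehat U;\ker d\bs_{\widehat U})\to \bs_{\widehat U}^*(\rar{\cB})$. Since $\bs_{\widehat U}$ is just the projection to $G$, and since right-invariant vector fields $\rar{\balpha}$ on $G$ can be lifted to $\widehat U$ along this projection (because $\ker d\bs_{\widehat U}$ surjects onto $TG$ under $d\bs_{\widehat U}$, the fibre being the $\bs_U$-kernel direction in $U$), this lifting property together with the fact that $\rar{\cB}$ is spanned by the $\rar{\balpha}$ gives both conditions; here one invokes Lemma \ref{lem:bisubm2} c) pointwise. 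For the $\bt$-side, I would conjugate: $\bt_{\widehat U}(u,g)=\varphi(u)g$, and by condition ii) of Def.~\ref{dfn:bisubm2} for $(U,\varphi,\cG)$ every $\rar{\balpha}$ lifts through $\varphi$ to a vector field on $U$ tangent to $\ker d\bs_U$; combining such a lift (in the $U$-direction) with the obvious $\rar{\balpha}$ in the $G$-direction produces, via the same algebra as in the proof of Prop.~\ref{prop:compo} (the ``families $(*)$ and $(**)$'' bookkeeping), a spanning set of $\ker d\bt_{\widehat U}$ consisting of vector fields $\bt_{\widehat U}$-related to elements of $\rar{\cB}$; again Lemma \ref{lem:bisubm2} c) finishes. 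The decomposition $\bt_{\widehat U}^{-1}(\rar{\cB})=\Gamma_c(\ker d\bt_{\widehat U})+\Gamma_c(\ker d\bs_{\widehat U})$ then follows by the same ``write $Z=(Z-W)+W$'' trick used in the proof of Lemma \ref{lem:usualbi}, where $W$ is a lift of $\rar{\balpha}$ landing in $\ker d\bs_{\widehat U}$.

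\textbf{Main obstacle.} The genuinely delicate point is \emph{condition ii) for the $\bt$-side}: one must check that for every $\rar{\balpha}\in\rar{\cB}$ there is a vector field on $\widehat U$ that is $\bt_{\widehat U}$-related to $\rar{\balpha}$, and crucially that the resulting candidates span $\ker d\bt_{\widehat U}$ at every point — this is where the fibre-product structure of $\widehat U$ and the compatibility of the right-translation $g\mapsto \varphi(u)g$ with right-invariance of $\rar{\balpha}$ must be used carefully, exactly as in the ``Claim'' in the proof of Prop.~\ref{prop:compo}. I expect the cleanest route is to exhibit $\widehat U$ explicitly as (isomorphic to) the composition $U\circ \cG$ of the bisubmersion $(U,\varphi,\cG)$ for $\cB$ with the bisubmersion $(\cG,\id_\cG,\cG)$ for $\Gamma_c(A)$, reinterpreted on $G$ rather than $M$ — which would let me quote Prop.~\ref{prop:compo} directly and reduce the whole proof to a bookkeeping identification of source/target maps. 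I would try that identification first, and only fall back on the hands-on Lemma \ref{lem:bisubm2} c) computation if the composition picture does not match the stated source and target maps on the nose.
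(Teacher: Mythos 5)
Your fallback ``hands-on'' route is the one the paper actually takes, but your preferred strategy and two of the concrete steps do not work. First, the reduction to Prop.~\ref{prop:compo}: the underlying set of $\widehat{U}$ is indeed $U\times_{\bs_U,\bt}\cG=U\circ \cG$ and $\varphi\cdot\id$ is $\bt_{\widehat{U}}$, but $(\cG,\id,\cG)$ is a bisubmersion for $\Gamma_c(A)$, not for $\cB$, so Prop.~\ref{prop:compo} does not apply; and even if it did, a composition of bisubmersions for $\cB$ is again a bisubmersion \emph{over $M$} (both structure maps land in $M$), whereas the object to be produced is an Androulidakis--Skandalis bisubmersion \emph{over $G$} for the foliation $\rar{\cB}$, with $\bs_{\widehat{U}}(u,g)=g\in G$. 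Second, the submersivity of $\bt_{\widehat{U}}$: postcomposing with the submersion $m$ proves nothing, and the map $u\mapsto \varphi(u)\cdot g$ with $g$ fixed is \emph{not} submersive in general, since $\varphi$ is typically not a submersion (varying $u$ alone only produces $(R_g)_*d\varphi(\ker d\bs_U)$). One must vary $u$ and $g$ simultaneously inside the fibre product; the paper does this with a bisection $\bb$ of $U$ through $u$, writing any $w\in T_{\varphi(u)g}G$ as $(L_{\bc})_*v$ for $\bc=\varphi(\bb)$ and exhibiting it as $(\bt_{\widehat{U}})_*(\bb_*\bt_*v,\,v)$.

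The more serious gap is on the $\bs$-side. The identity you propose to prove, $\bs_{\widehat{U}}^{-1}(\rar{\cB})=\Gamma_c(\widehat{U},\ker d\bs_{\widehat{U}})$, is not the required condition and is false: the left-hand side contains lifts of nonzero elements of $\rar{\cB}$. (Remark~\ref{donto} and Lemma~\ref{lem:bisubm2} characterize condition iii) of Def.~\ref{dfn:bisubm2} for maps \emph{to a groupoid}; here you must verify eq.~\eqref{eq:bisubfol}, i.e.\ $\bs_{\widehat{U}}^{-1}(\rar{\cB})=\Gamma_c(\ker d\bt_{\widehat{U}})+\Gamma_c(\ker d\bs_{\widehat{U}})$.) The missing ingredient is an explicit description of $\Gamma_c(\ker d\bt_{\widehat{U}})$: the paper's Lemma~\ref{lem:kerbtw} shows it is generated by pairs $(Y,-\rar{\balpha})$ with $Y\in\Gamma(\ker d\bt_U)$ $\varphi$-related to $\lar{\balpha}$, and this is exactly where the \emph{left}-invariant halves of conditions ii) and iii) of Def.~\ref{dfn:bisubm2} enter --- your proposal never invokes them. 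With that lemma, both inclusions for the $\bs$-side follow by your $Z=(Z-W)+W$ trick; without it the second equality is not established. By contrast, your treatment of the $\bt$-side (lift $\rar{\balpha}$ through $\varphi$ into $\ker d\bs_U$, pair it with $0$, use right-invariance of $\rar{\balpha}$ under $R_g$, and subtract) does match the paper's argument.
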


The main step in the proof of Prop. \ref{lem:bisubrarb} is to check that eq. \eqref{eq:bisubfol} holds, \ie that

\begin{equation}\label{eq:bisubfolwidehat}
\bt_{\widehat{U}}^{-1}(\rar{\cB})=\Gamma_c(\ker d\bt_{\widehat{U}}) + \Gamma_c( \ker d\bs_{\widehat{U}})=\bs_{\widehat{U}}^{-1}(\rar{\cB}).
\end{equation}

We start making these submodules more explicit.
By Lemma \ref{lem:fix} we have
\begin{align}\nonumber
\bt_{\widehat{U}}^{-1}(\rar{\cB})&=Span_{C^{\infty}_{c}(\widehat{U})} \bigcup_{\balpha \in \cB}\big\{(Y,Z): Y\in \vX({U}), Z\in \Gamma (\ker d\bs_{U}) 
\text{ s.t. $(d_u\bs_U)Y=(d_g\bt)Z$ for all $(u,g)\in \widehat{U}$}\\
&\label{eq:btwub}
\qquad\qquad\qquad\qquad\qquad\qquad\qquad\qquad\qquad\qquad\;\;\;\text{ and $(Y,Z)$ is  $\bt_{\widehat{U}}$-related to $\rar{\balpha}$}\big\}\\
\label{eq:bswub}
\bs_{\widehat{U}}^{-1}(\rar{\cB})&= Span_{C^{\infty}_{c}(\widehat{U})} \bigcup_{\balpha \in \cB}\big\{(Y,\rar{\balpha}): Y\in \vX(U) \text{ s.t. $(d_u\bs_U)Y=(d_g\bt)\rar{\balpha}$ for all $(u,g)\in \widehat{U}$}\big\}.
\end{align}

\begin{lemma}\label{lem:kerbtw}
$$\Gamma_c(\ker d\bt_{\widehat{U}})=Span_{C^{\infty}_{c}(\widehat{U})}\bigcup_{\balpha \in \cB} \big\{
(Y,-\rar{\balpha}): Y\in \Gamma(\ker d\bt_{U}) \text{ s.t. $Y$ is $\varphi$-related to $\lar{\balpha}$}\big\}.$$
\end{lemma}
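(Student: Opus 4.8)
\textbf{Proof plan for Lemma \ref{lem:kerbtw}.}

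The plan is to compute $\ker d\bt_{\widehat U}$ pointwise first, then to upgrade the pointwise description to a statement about modules of sections by the usual frame-and-partition-of-unity argument, exactly as in Lemma \ref{lem:bisubm2} c). Fix $(u,g)\in\widehat U = U\times_{\bs_U,\bt}G$. A tangent vector $(X,V)\in T_uU\times T_gG$ lies in $T_{(u,g)}\widehat U$ iff $(d\bs_U)X=(d\bt)V$, and it lies in $\ker d\bt_{\widehat U}$ iff additionally $d\bt_{\widehat U}(X,V)=0$. Since $\bt_{\widehat U}(u,g)=\varphi(u)\cdot g$, differentiating the multiplication map $m$ gives $d\bt_{\widehat U}(X,V)=dm(d\varphi(X),V)$, so the condition is that $(d\varphi(X),V)$ lies in the kernel of $dm$ at $(\varphi(u),g)$. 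The kernel of $dm$ along composable pairs is well understood: a pair $(\xi,\eta)$ with $\xi\in T_{\varphi(u)}G$, $\eta\in T_gG$, $d\bs(\xi)=d\bt(\eta)$, is sent to $0$ iff $\eta = -(dR_g\circ dL_{?})\dots$; more concretely, writing $\eta$ arbitrarily in $\ker d\bt\oplus(\text{lift of }d\bt(\eta))$, the kernel of $dm$ is spanned by elements of the form $(\rar{a}, -\lar{a})$ for $a\in A$ (the infinitesimal version of $g\cdot g^{-1}=1$, as in the proof of Lemma \ref{lem:kappa}, eq. \eqref{eq:oneminusone}) together with $(0,\zeta)$ for $\zeta\in\ker d\bt_g$. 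Hence $\ker d\bt_{\widehat U}$ at $(u,g)$ consists of $(X,V)$ such that $d\varphi(X) = \rar{a}_{\varphi(u)}$ and $V = -\lar{a}_g + \zeta$ for some $a\in A$ and $\zeta\in\ker d\bt_g$; equivalently $X$ projects under $d\varphi$ to some right-invariant vector $\rar{a}$, $V+\lar{a}_g\in\ker d\bt_g$.

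The key point is then that the ``$X$-component'' can be taken to lie in $\Gamma(\ker d\bt_U)$ (adjusting $V$): indeed if $X_0\in T_uU$ is any vector with $d\varphi(X_0)=\rar{a}$ then $d\bt_U(X_0)=d\bt(\rar{a})=\rho(a)$, so modifying $X_0$ by an element of $\ker d\bt_U$ we can realize any compatible $(u,g)$-datum, and to keep the $G$-component in the distinguished form $-\lar{\balpha}$ we impose $a=\balpha_u$, i.e. $X$ is $\varphi$-related to $\lar{\balpha}$ and lies in $\ker d\bt_U$, while the $G$-component is $-\lar{\balpha}$ exactly. This identifies the spanning set on the right-hand side of the claimed equality as a subset of $\Gamma_c(\ker d\bt_{\widehat U})$ which spans $\ker d\bt_{\widehat U}$ at every point of $\widehat U$: the span at $(u,g)$ of the vectors $(Y,-\rar{\balpha})$ (using the notation of the statement, where the second slot is $-\rar{\balpha}$ viewed in $\ker d\bt$ via the identification) together with $\Gamma_c(\ker d\bt_U)\times\{0\}$-type elements and $\{0\}\times\Gamma_c(\ker d\bt_G)$-type elements exhausts $\ker d\bt_{\widehat U}$; here condition ii) of Definition \ref{dfn:bisubm2} (applied to $\lar{\balpha}$) guarantees the existence of $Y\in\vX(U)$, $\varphi$-related to $\lar{\balpha}$, and condition iii) together with Lemma \ref{lem:bisubm2} lets us arrange such $Y$'s to span $\ker d\bt_U$ fibrewise. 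Once the pointwise spanning is established, a compact-support partition-of-unity argument (verbatim as in the proof of Lemma \ref{lem:bisubm2}, using that any compactly supported section of $\ker d\bt_{\widehat U}$ can be cut into pieces each supported where a chosen frame is defined) upgrades it to the asserted equality of $C^\infty_c(\widehat U)$-modules.

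The main obstacle I expect is bookkeeping the identification between $\ker d\bt\subset TG$ and pullbacks of $A$ via left- versus right-translations, so that the ``$-\rar{\balpha}$'' in the statement matches the ``$-\lar{\balpha}$-up-to-$\ker d\bt$'' that comes out of the kernel-of-$dm$ computation; concretely one must check that the element $(Y,-\rar{\balpha})$ as written is genuinely tangent to $\widehat U$ (the compatibility $d\bs_U(Y)=d\bt(-\rar{\balpha})$, which holds because $Y$ is $\varphi$-related to $\lar{\balpha}$ hence $\bt_U$-related to $0$ — wait, rather $\bs_U$-related to $\rho(\balpha)=d\bt(\rar{\balpha})$: this sign/side matching is the delicate part) and genuinely in $\ker d\bt_{\widehat U}$. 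Everything else — the pointwise dimension count and the partition-of-unity globalization — is routine and parallels arguments already carried out in the paper, so I would present those briefly and spend the bulk of the written proof on the linear-algebra identification of $\ker(dm)$ and the resulting description of $\ker d\bt_{\widehat U}$.
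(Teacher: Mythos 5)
Your overall strategy (a pointwise computation of $\ker d\bt_{\widehat U}$ via the kernel of the differentiated multiplication, followed by a frame/partition-of-unity globalization) is viable in principle, but the linear algebra at its core is wrong, and with your description of the kernel the lemma would actually be false. At a composable pair $(h,g)$ the kernel of $dm$ restricted to $T_{(h,g)}(\cG\times_{\bs,\bt}\cG)$ is exactly $\{(\lar{a}_h,-\rar{a}_g):a\in A_{\bs(h)}\}$ (differentiate the constant curve $t\mapsto (h\delta(t))\cdot(\delta(t)^{-1}g)$): the first slot is \emph{left}-invariant and the second is minus a \emph{right}-invariant vector, the opposite of your $(\rar{a},-\lar{a})$. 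A dimension count confirms this: $\ker dm$ has dimension $\mathrm{rk}\,A$, whereas your proposed span, which also contains all $(0,\zeta)$ with $\zeta\in\ker d\bt_g$, has dimension $2\,\mathrm{rk}\,A$. Those extra elements are \emph{not} in the kernel -- $dm(0,\zeta)=dL_h(\zeta)\neq 0$ for $\zeta\neq 0$ because $L_h$ is a diffeomorphism between $\bt$-fibers -- and this injectivity is precisely the fact the proof needs at its final step, not something to be absorbed into the spanning set. Note also that $\rar{\balpha}$ lies in $\ker d\bs$, not in $\ker d\bt$, so ``$-\rar{\balpha}$ viewed in $\ker d\bt$'' does not parse; the second slot of $(Y,-\rar{\balpha}_g)$ lives in $\ker d_g\bs$, consistently with $\bs\circ\bt_{\widehat U}=\bs\circ pr_G$.

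Once the kernel of $dm$ is corrected, the right pointwise statement is: $(Y,Z)\in\ker d_{(u,g)}\bt_{\widehat U}$ iff $Y\in\ker d_u\bt_U$, $Z\in\ker d_g\bs$, $d_u\bs_U(Y)=d_g\bt(Z)$ and $(d_u\varphi(Y))\cdot Z=0$, the last condition forcing $d\varphi(Y)=\lar{a}_{\varphi(u)}$ and $Z=-\rar{a}_g$ for a single $a$ determined by $Y$ -- there is no extra $\zeta$, and no freedom to ``adjust $V$ by $\ker d\bt_g$'' or to push the first component into $\ker d\bt_U$ by hand (it is automatically there, since $\bt\circ\bt_{\widehat U}=\bt_U\circ pr_U$). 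The paper's proof then closes the argument more directly than your globalization step: for ``$\supset$'' it checks $\lar{\balpha}\cdot(i_*\lar{\balpha})=0$; for ``$\subset$'' it takes $(Y,Z)$ in the left-hand side, expands $Y=\sum_i f_iY_i$ with $Y_i$ $\varphi$-related to $\lar{\balpha_i}$ using $\varphi^{-1}(\lar{\cB})=\Gamma_c(\ker d\bt_U)$ and Lemma \ref{lem:bisubm2} b), subtracts $\sum_i (pr_U)^*f_i\,(Y_i,-\rar{\balpha_i})$, and observes that the remainder, of the form $(0,*)$, must vanish by the injectivity of $dL_{\varphi(u)}$ on $\ker d\bt$. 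I recommend redoing the computation of $\ker dm$ from the curve above before attempting either route.
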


\begin{proof}
At every point $(u,g)\in \widehat{U}$, we have 
$$\ker d_{(u,g)}\bt_{\widehat{U}}=\{(Y,Z): Y\in \ker(d_u\bt_U), Z\in \ker(d_g\bs) \text{ s.t. $d_u\bs_{U}(Y)=d_g\bt(Z) \text{ and }(d_u\varphi (Y))\cdot Z_g=0$}\}.$$

The inclusion ``$\supset$'' in the statement of the lemma holds because 
$\lar{\balpha}\cdot (-\rar{\balpha})=\lar{\balpha}\cdot (i_*\lar{\balpha})=0$
for all  $\balpha \in \cB$, where $i$ is the inversion on $G$. For the opposite inclusion, let $(Y,Z)\in \Gamma_c(\ker d\bt_{\widehat{U}})$. In particular 
$Y\in  {\Gamma_c(U,\ker d\bt_U)}$, so
by Lemma \ref{lem:bisubm2} b) (applied to $\bt_U$) we have $Y=\sum f_i Y_i$ where $f_i\in C^{\infty}_c(U)$ and the $Y_i\in  {\Gamma(U,\ker d\bt_U)}$ are $\varphi$-related to  $\lar{\balpha_i}$ for suitable  ${\balpha_i\in \cB}$. As seen earlier, each $(Y_i,-\rar{\balpha_i})\in \Gamma(\ker d\bt_{\widehat{U}})$, so the difference
$(Y,Z)-\sum (pr_U)^*f_i \cdot (Y_i,-\rar{\balpha_i})$ lies in $\Gamma(\ker d\bt_{\widehat{U}})$ too. Now this difference is of the form $(0,*)$, and the condition  $ (d\varphi (0))\cdot *=0$ implies that $*=0$. Hence this difference is zero. Multiplying by an element of  $C^{\infty}_c(\widehat{U})$ which is $1$ on $supp(Y,Z)$, we are done.
\end{proof}

\begin{proof}[Proof of Prop. \ref{lem:bisubrarb}]
The map $\bs_{\widehat{U}}$ is a submersion because $\bs_U$ is. We now show that 
$\bt_{\widehat{U}}\colon \widehat{U}\to G$ is a submersion, by showing that its derivative at any point $(u,g)\in \widehat{U}$ is surjective. Let $\bb\colon M\to U$ be a bisection of $U$ through $u$. Then $\bc:=\varphi\circ \bb\colon M\to G$ is a bisection of $G$ through $\varphi(u)$. Since the left multiplication $L_{\bc}$ is a diffeomorphism, any vector in $T_{( \varphi(u)\cdot g)}G$ can be written 
as $(L_{\bc})_*v$ for some $v\in T_gG$, \ie as $(\bc_*\bt_*v)\cdot v$, which is the image under $(\bt_{\widehat{U}})_*$ of $(\bb_*\bt_*v, v)\in T_{(u,g)}\widehat{U}$.

We now prove the first equality in eq. \eqref{eq:bisubfolwidehat}. For ``$\supset$'', it suffices to show that $\bt_{\widehat{U}}^{-1}(\rar{\cB}) \supset\Gamma_c(\ker d\bs_{\widehat{U}})$. By Lemma \ref{lem:bisubm2} b), it suffices to consider $(Y,0)$ where $Y\in {\Gamma(U,\ker d\bs_U)}$ is $\varphi$-related to $\rar{\balpha}$ for some $\balpha\in \cB$. Such an element   
is as on the r.h.s. of eq. \eqref{eq:btwub},
since $\rar{\balpha}\cdot 0=\rar{\balpha}$. For ``$\subset$'', let $(Y,Z)$ be as in the r.h.s. of eq. \eqref{eq:btwub}. Since $(U,\varphi,G)$ is a bisubmersion for $\cB$, there is $Y'\in \Gamma(\ker d\bs_{U})$ which is $\varphi$-related to $\rar{\balpha}$. Then $(Y',0)$ is $\bt_{\widehat{U}}$-related to $\rar{\balpha}$. Hence the difference satisfies $(Y,Z)-(Y',0)\in \Gamma(\ker d\bt_{\widehat{U}}),$ and therefore
$(Y,Z)\in \Gamma(\ker d\bt_{\widehat{U}}) + \Gamma( \ker d\bs_{\widehat{U}})$. 
Taking  $C^{\infty}_c(\widehat{U})$-linear combinations we are done.

We are left with proving the second equality in eq. \eqref{eq:bisubfolwidehat}.
For ``$\subset$'', it suffices to show that $\Gamma_c(\ker d\bt_{\widehat{U}}) \subset\bs_{\widehat{U}}^{-1}(\rar{\cB})$, which is easily seen to hold using Lemma \ref{lem:kerbtw} and $i_*\lar{\balpha}=-\rar{\balpha}$. For the inclusion ``$\supset$'', it suffices to consider elements $(Y,\rar{\balpha})$ as on the r.h.s. of eq. \eqref{eq:bswub}.
Since $(U,\varphi,G)$ is a bisubmersion for $\cB$, there is $Y'\in \Gamma(\ker d\bt_{U})$ which is $\varphi$-related to $-\lar{\balpha}$. By Lemma \ref{lem:kerbtw} we have $(Y', \rar{\balpha})\in \Gamma(\ker d\bt_{\widehat{U}})$. Since $Y-Y'\in \Gamma (\ker d\bs_{U})$, the difference satisfies
$(Y, \rar{\balpha})-(Y', \rar{\balpha})=(Y-Y',0)\in \Gamma(\ker d\bs_{\widehat{U}})$.
Therefore $(Y, \rar{\balpha})\in \Gamma(\ker d\bt_{\widehat{U}}) + \Gamma( \ker d\bs_{\widehat{U}}).$
\end{proof}

\section{On atlases and holonomy groupoids}\label{sec:atlases}

We recall some material from \cite[\S 3.1]{AndrSk}, spelling out part of it and rephrasing it in the context of singular subalgebroids (rather than singular foliations). {This material is used in Prop. \ref{prop:B}, and in the construction of morphisms between holonomy groupoids (covering the identity in \S \ref{section:morph} and
 covering  submersions in Appendix \ref{sec:appmorsub}).}
 
Fix a singular subalgebroid $\cB$ of a Lie algebroid $A$, and a Lie groupoid $\cG$ integrating $A$. 
 
\begin{definition}\label{def:atlas} 
Let $\cU:=(U_i,\varphi_i,\cG)_{i\in I}$ be a family of   bisubmersions for $\cB$.
\begin{enumerate}
\item Let $(U,\varphi,\cG)$ be a   bisubmersion of $\cB$. We say that $(U,\varphi,\cG)$ is {\bf adapted} to $\mathcal{U}$   if for every point $u\in U$ there is an open subset $U'\subset U$ containing $u$, an index $i \in I$ and a morphism of   bisubmersions $U' \to U_i$.
\item  $\cU$ is an {\bf atlas} if
\begin{enumerate}
\item $\cup_{i \in I}\bs_{i}(U_i)=M$,
\item the inverse of every element of $\cU$ is adapted to $\cU$,
\item the composition of any two elements of $\cU$ is adapted to $\cU$.
\end{enumerate}
\item Let $\cU$ and $\cV$ be two atlases. $\cU$ is adapted to $\cV$ is every element of $\cU$ is adapted to $\cV$. The atlases $\cU$ and $\cV$ are equivalent if they are adapted to each other. 
\end{enumerate}
\end{definition}

Let $\cU$ be an atlas for $\cB$.   
By Corollary \ref{cor:crucial} c),  the relation  
 \begin{align*}
u_1 \sim u_2 \Leftrightarrow &\;\;\text{there is an open neighborhood  $U'_1$ of $u_1$,}\\
&\;\;\text{there is a morphism of bisubmersions } f\colon U_1' \to U_2 \text{  such that } f(u_1)=u_2,   
\end{align*}

 is an equivalence relation on {the disjoint union} $\coprod_{U\in \cU}U$. (It can be expressed in terms of bisections too, as in Rem. \ref{rem:HGBbisec}.) The quotient
 \begin{equation}\label{eq:HUcb}
H(\cB)^{\cU}:=\coprod_{U\in \cU}U/\sim
\end{equation}
 is a topological groupoid.  It comes with a canonical morphism of topological groupoids $\Phi\colon H(\cB)^{\cU}\to \cG$, {induced by the bisubmersions $\varphi\colon U\to G$ in $\cU$.} {This is proven in a way similar to \cite[Prop. 3.2]{AndrSk} and Thm. \ref{thm:holgroidconstr}.}
 
 {We now display some properties of this construction.} 
The following is \cite[Rem. 3.3]{AndrSk}, with details added.
\begin{prop}\label{prop:rem33}
Let the atlas $\cU$ be adapted to the atlas $\cV$. Then
\begin{itemize}
\item [i)] there is a canonical, injective morphism of topological groupoids ${\Theta}\colon H(\cB)^{\cU}\to H(\cB)^{\cV}$
\item [ii)] {${\Theta}$ commutes with the canonical maps from $H(\cB)^{\cU}$ and $H(\cB)^{\cV}$ to $\cG$.}
\end{itemize}
 
\end{prop}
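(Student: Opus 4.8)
\emph{Plan.} I would construct $\Theta$ on representatives, then verify in turn that it is well defined (hence canonical), that it is a continuous morphism of topological groupoids, that it is injective, and that it commutes with the canonical maps to $\cG$. The only nontrivial inputs are Corollary \ref{cor:crucial} and the bisection description of the defining equivalence relations (as in Remark \ref{rem:HGBbisec}, which applies verbatim to the relation defining $H(\cB)^{\cU}$ for any atlas $\cU$). Concretely: let $u\in U$ for some $(U,\varphi_U,\cG)\in\cU$; since $\cU$ is adapted to $\cV$ (Definition \ref{def:atlas}) there is an open $U'\ni u$, an element $(V,\psi_V,\cG)\in\cV$, and a morphism of bisubmersions $f\colon U'\to V$, and I set $\Theta([u]_{\cU}):=[f(u)]_{\cV}$, where $[\cdot]_{\cU}$, $[\cdot]_{\cV}$ denote classes for the relations $\sim$ defining $H(\cB)^{\cU}$, $H(\cB)^{\cV}$.

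\emph{Well-definedness.} First, independence of the auxiliary morphism $f$: given a second morphism $f'\colon U''\to V'$ near $u$ (with $(V',\psi_{V'},\cG)\in\cV$), choose a bisection $\bb$ of $U$ through $u$ inside $U'\cap U''$ (bisections through any point exist because $\bs_U,\bt_U$ are submersions). Then $f(\bb)$ and $f'(\bb)$ are bisections of $V$ resp.\ $V'$ through $f(u)$ resp.\ $f'(u)$, and each carries the single bisection $\varphi_U(\bb)$ of $\cG$ (since $\psi_V\circ f=\varphi_U=\psi_{V'}\circ f'$); Corollary \ref{cor:crucial}(b) then produces a morphism of bisubmersions taking $f(u)$ to $f'(u)$, so $[f(u)]_{\cV}=[f'(u)]_{\cV}$. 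Second, independence of the representative: if $u\sim u'$ there is a morphism of bisubmersions $g$ near $u$ with $g(u)=u'$, and composing $g$ with a morphism $f'$ computing $\Theta([u']_{\cU})$ gives a morphism near $u$ sending $u$ to $f'(u')$; the first point then yields $\Theta([u]_{\cU})=\Theta([u']_{\cU})$. These two points also show $\Theta$ is canonical, i.e.\ independent of all auxiliary choices.

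\emph{Morphism, continuity, injectivity, compatibility.} Morphisms of bisubmersions intertwine source and target maps, so $\Theta$ covers $Id_M$ and respects the source/target maps of $H(\cB)^{\cU}$ and $H(\cB)^{\cV}$; it sends units to units since the image under $f$ of a bisection of $U$ carried into $1_M$ is again carried into $1_M$. Multiplicativity reduces, choosing morphisms into $\cV$ for the two factors, to the fact that $\cV$ being an atlas makes the relevant composition of bisubmersions adapted to $\cV$, so that $(f,h)$ followed by a morphism into an element of $\cV$ computes $\Theta$ of the product; inverses are handled likewise via closure of atlases under inverses. Continuity is immediate: $\Theta$ is continuous iff its composition with the quotient map $\coprod_{U\in\cU}U\to H(\cB)^{\cU}$ is, and locally around $u\in U$ that composition equals the quotient map of $\cV$ precomposed with the smooth map $f$. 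For injectivity, $\Theta([u]_{\cU})=\Theta([u']_{\cU})$ means $f'(u')\sim f(u)$ in bisubmersions of $\cV$; since $\sim$ is symmetric (Corollary \ref{cor:crucial}(c)) there is a morphism of bisubmersions $\ell$ with $\ell(f'(u'))=f(u)$, and Corollary \ref{cor:crucial}(c) applied to $f$ gives a morphism $m$ with $m(f(u))=u$, so $m\circ\ell\circ f'$ is a morphism of bisubmersions sending $u'$ to $u$, whence $[u]_{\cU}=[u']_{\cU}$. Finally, part (ii): the canonical maps $\Phi_{\cU},\Phi_{\cV}$ to $\cG$ restrict to $\varphi_U$ resp.\ $\psi_V$ on each bisubmersion, and $\psi_V\circ f=\varphi_U$ for any morphism $f$ of bisubmersions, so $\Phi_{\cV}\circ\Theta=\Phi_{\cU}$.

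\emph{Main obstacle.} I expect the well-definedness step — precisely the independence of $\Theta([u]_{\cU})$ from the chosen local morphism into $\cV$ — to be the crux: this is exactly where Corollary \ref{cor:crucial}(b) and the manipulation of bisections carrying a common bisection of $\cG$ are essential. Once that is in place, multiplicativity, continuity, injectivity and compatibility with $\Phi$ are routine bookkeeping with morphisms of bisubmersions and with the atlas axioms.
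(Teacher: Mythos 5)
Your proposal is correct and follows essentially the same route as the paper: define $\Theta$ on representatives via local morphisms into $\cV$, establish independence of the chosen morphism and of the representative using Corollary \ref{cor:crucial}, and then verify the morphism, injectivity, and compatibility properties by composing morphisms of bisubmersions. The only cosmetic difference is that where the paper invokes Corollary \ref{cor:crucial}(c) to ``invert'' one of the local morphisms, you inline the underlying bisection argument and appeal to Corollary \ref{cor:crucial}(b) directly — these are interchangeable, since (c) is deduced from (b) by exactly that manipulation.
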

\begin{proof}
i)  Take an element $U$ of $\cU$ and $u\in U$. By definition, there is an open subset $U'\subset U$ containing $u$, an element $V_1$ of $\cV$, and a morphism of   bisubmersions $g_1\colon U' \to V_1$. Assume that there is another element $V_2$ of $\cV$, and a morphism of   bisubmersions $g_2\colon U' \to V_2$. Then there is a 
morphism of   bisubmersions $ V'_1\to V_2$ -- obtained  applying Cor. \ref{cor:crucial} c) to $g_1$ -- {defined on a neighborhood of $g_1(u)$ in $V_1$,} making this diagram commute:
\begin{equation*} 
 \xymatrix{
 &V_1 \ar@{-->}[dd]   \\
U' \ar[ur]^{g_1}\ar[dr]_{g_2} & \\
 &V_2\\
 }
\end{equation*}
This gives a well-defined map
$\coprod_{U \in \cU}U\to H(\cB)^{\cV}$, mapping a point $u_1$ to $[g_1(u_1)]$, where $g_1$ is any morphism of   bisubmersions into an element of $\cV$.

Now consider two morphisms of   bisubmersions $g_1 \colon U_1\to V_1$ and $g_2 \colon U_2\to V_2$, where $U_1,U_2$ are open subsets of elements of $\cU$, and $V_1,V_2\in \cV$. Suppose $f$ is a   morphism of   bisubmersions mapping a point $u_1\in U_1$ to $u_2\in U_2$. 
Then there is a  morphism of   bisubmersions mapping $g_1(u_1)$ to $g_2(u_2)$. This can be seen using
Cor. \ref{cor:crucial} c) to ``invert'' the top horizontal map in 
 \begin{equation*} 
 \xymatrix{
 {U_1}\ar^{f}[d]\ar[r]^{g_1}& {V_1}\ar@{-->}[d]\\ 
 {U_2}\ar[r]^{g_2}& {V_2}\\
 }
  \end{equation*}
Hence, by  ${\Theta}([u_1]):=[g_1(u_1)]$ we obtain a well-defined  map $ {\Theta}\colon H(\cB)^{\cU} \to  H(\cB)^{\cV}$.

The injectivity of ${\Theta}$ can be seen applying the above reasoning to the bottom horizontal map in the diagram
\begin{equation*} 
 \xymatrix{
 {U_1}\ar@{-->}[d]\ar[r] & {V_1}\ar[d]\\ 
 {U_2}\ar[r] & {V_2}\\
 }
  \end{equation*}

Last, we show that ${\Theta}$ is a groupoid morphism. We indicate only how to check that the groupoid multiplications are preserved. Again, consider two morphisms of   bisubmersions $g_1 \colon U_1\to V_1$ and $g_2 \colon U_2\to V_2$, where $U_1,U_2$ are open subsets of elements of $\cU$, and $V_1,V_2\in \cV$. 
Let $u_1\in U_1$ and $u_2\in U_2$ so that their product $[u_1]\cdot[u_2]$ in $H(\cB)^{\cU}$ is defined. Then $g_1\times g_2$,
which clearly maps $u_1\circ u_2$ to $g_1(u_1)\circ g_2(u_2)$,
 is a morphism of   bisubmersions,     as can be seen from the commutativity of the diagram below. 
\begin{equation*}
\xymatrix{
U_1\circ U_2 \ar[rd]_{  }  \ar[rr]^{g_1\times g_2} & & V_1\circ V_2 \ar[ld]^{} \\
&\cG\times_{\bs,\bt}\cG\ar[d]^{\text{multiplication}}&\\
&\cG& }
\end{equation*} Hence $${\Theta}([u_1]\cdot[u_2])=[(g_1\times g_2)(u_1\circ u_2)]=[g_1(u_1)\circ g_2(u_2)]={\Theta}([u_1])\cdot {\Theta}([u_2]).$$

ii) {The morphism ${\Theta}\colon H(\cB)^{\cU}\to H(\cB)^{\cV}$ commutes with the maps to $G$ because ${\Theta}$ is constructed assembling morphisms of bisubmersions, which by definition commute with the respective maps to $G$ (see the commutative diagram in Def. \ref{def:morph}).}
\end{proof}

\begin{prop}\label{prop:pathholadapted}
Let $\cU$ be a path holonomy atlas as in definition \ref{def:pathholatlas}.
Then $\cU$ is adapted to any atlas $\cV$. \end{prop}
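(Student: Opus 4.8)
To show that a path holonomy atlas $\cU$ is adapted to an arbitrary atlas $\cV$, I must verify that every bisubmersion in $\cU$ is adapted to $\cV$, that is, that around every point of such a bisubmersion there is a morphism of bisubmersions to some element of $\cV$. By Definition \ref{def:pathholatlas}, an element of $\cU$ is either a (source-connected, minimal) path holonomy bisubmersion, or an inverse of one, or a finite composition of such. Since we already know (Remark \ref{rem:invphbisub}) that the inverse of a path holonomy bisubmersion is isomorphic to itself, and since adaptedness is easily checked to be stable under composition of bisubmersions (a local morphism $U_1'\to V_1$ and $U_2'\to V_2$ induce a local morphism $U_1'\circ U_2'\to V_1\circ V_2$, and $V_1\circ V_2$ is adapted to $\cV$ because $\cV$ is an atlas), it suffices to treat the case where $(U_0,\varphi_0,\cG)$ is a single minimal path holonomy bisubmersion, say associated to a minimal set of local generators $\balpha_1,\dots,\balpha_n$ of $\cB$ near a point $x\in M$.

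\textbf{Key step.} Fix such a $U_0$ and an arbitrary point $u\in U_0$. Set $y:=\bt_{U_0}(u)\in M$. The main tool is Proposition \ref{prop:crucial} (and its Corollary \ref{cor:crucial}), which compares an arbitrary bisubmersion with a path holonomy bisubmersion, provided we are looking at points carrying the identity bisection. Concretely, I would argue as follows. Since $\cV$ is an atlas, $\cup_{V\in\cV}\bs_V(V)=M$, so there is $V\in\cV$ and a point $v\in V$ with $\bs_V(v)=y$ and such that $v$ carries the identity bisection $1_M$ of $\cG$ at $v$ (such a point exists: the identity elements of $\cG$ are carried by every bisubmersion, as in the proof that identity bisections are carried, cf.\ the discussion before Theorem \ref{thm:holgroidconstr}; alternatively pass to a path holonomy refinement). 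Meanwhile, in $U_0$, the point $u'=(0,y)$ also satisfies $\varphi_0(u')=1_y$ and carries $1_M$. Now I apply Corollary \ref{cor:crucial}(a) (or Proposition \ref{prop:crucial} directly) to the two bisubmersions $U_0$ and $V$ at the points $u'$ and $v$: since both carry $1_M$ and $\varphi_0(u')=1_y=\varphi_V(v)$, there is a neighborhood of $u'$ in $U_0$ and a morphism of bisubmersions to $V$ sending $u'\mapsto v$. This handles the point $u'=(0,y)$.

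\textbf{Propagating along $U_0$.} The remaining issue is that I need a morphism to $\cV$ near \emph{every} $u\in U_0$, not just near the zero section. Here I would use the structure of a path holonomy bisubmersion together with the bisection argument of Corollary \ref{cor:crucial}(b): any point $u\in U_0$ lies on a bisection $\bb$ of $U_0$, which carries a bisection $\bc:=\varphi_0(\bb)$ of $\cG$. By Proposition \ref{prop:preserveb}, $(V, L_{\bc^{-1}}\circ\varphi_V,\cG)$ is again a bisubmersion for $\cB$, and by picking $v\in V$ appropriately on a bisection carrying $\bc$ we reduce, via left-translation by $\bc^{-1}$, to the situation where both relevant points carry $1_M$; then Corollary \ref{cor:crucial}(b) yields a local morphism $U_0\to V$ sending $u$ to $v$, which is the desired morphism of bisubmersions. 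Since the Lie groupoid $\cG$ — hence any bisubmersion over it, and in particular $U_0$ — is covered by such bisections, this produces the required local morphisms everywhere on $U_0$, completing the verification that $U_0$ is adapted to $\cV$. The main obstacle is the bookkeeping of which point $v\in V$ to choose and checking that it carries the appropriate bisection; but this is exactly the kind of argument already carried out in the proof of Corollary \ref{cor:crucial}(b) and in \cite[Prop.\ 3.5]{AndrSk}, so it is routine modulo care. Finally I assemble the three cases (path holonomy bisubmersions, inverses, compositions) to conclude that all of $\cU$ is adapted to $\cV$.
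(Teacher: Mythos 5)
Your reduction to a single minimal path holonomy bisubmersion (handling inverses via Remark \ref{rem:invphbisub} and compositions by composing local morphisms) and your treatment of the points $(0,y)$ on the identity section via Proposition \ref{prop:crucial} and Corollary \ref{cor:crucial} match the paper's proof. (A small caveat there: not every bisubmersion in $\cV$ carries $1_M$ at some point, so you should either take a preimage of the identity $1_y\in H(\cB)^{\cV}$ under the quotient map, or use a composition $V\circ \bar{V}$, which is adapted to $\cV$; this is easily repaired.)

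The genuine gap is in your propagation step. To apply Corollary \ref{cor:crucial} b) at an arbitrary $u\in U_0$ you need a bisubmersion $V$ adapted to the \emph{arbitrary} atlas $\cV$ and a point $v\in V$ carrying the same bisection $\bc=\varphi_0(\bb)$ of $\cG$ that $U_0$ carries at $u$ — in particular a point $v$ with $\varphi_V(v)=\varphi_0(u)$. You assert that such a $v$ can be "picked appropriately", but its existence is exactly what has to be proven: a priori nothing guarantees that the element $\varphi_0(u)\in\cG$ is even reached by $\cV$, let alone with the right bisection. The covering of $U_0$ by bisections does not address this. The paper closes this gap with a scaling argument specific to path holonomy bisubmersions: for $(\lambda,x)\in U_0$ choose $k$ so that $(\lambda/k,x)$ lies in the neighborhood $N$ of the identity section already shown to be adapted to $\cV$; then the constant bisections with values $\lambda$ in $U_0$ and $(\lambda/k,\dots,\lambda/k)$ in $N\circ\dots\circ N$ carry the same bisection of $\cG$ (because the time-$1$ flow of $\sum\lambda_i\rar{\balpha_i}$ is the $k$-fold product of the time-$1/k$ flows), so Corollary \ref{cor:crucial} b) gives a local morphism $U_0\to N^{\circ k}$, and $N^{\circ k}$ is adapted to $\cV$ since $\cV$ is closed under composition up to adaptedness. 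Without this (or an equivalent decomposition of $u$ into factors near the identity), your argument does not go through.
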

 \begin{remark}
1) In particular, any two path holonomy atlases are adapted to each other. Proposition \ref{prop:rem33} implies that the corresponding topological groupoids agree. (We denote them $H(\cB)$ along the whole paper).

2) For any atlas $\cV$, there is a canonical
injective morphism of topological groupoids $H(\cB) \to H(\cB)^{\cV}$,
as a consequence of proposition \ref{prop:rem33} and proposition \ref{prop:pathholadapted}.

In the converse, we can only say that there is an injective  morphism of local topological groupoids from \emph{a neighborhood of the identity section} of $H(\cB)^{\cV}$ to $H(\cB)$. 
This can be seen using proposition \ref{prop:crucial} and proceeding as in the proof of proposition \ref{prop:rem33}.
\end{remark}

\begin{proof}
Let $U$ be a {minimal} path holonomy   bisubmersion lying in $\cU$ and $x\in M$ with $(0,x)\in U$. Take a preimage $v$ of the identity element $1_x$ under the quotient map $\natural \colon \coprod_{V \in \cV}V\to H(\cB)^{\cV}$, and suppose that  $v\in V$. Then by proposition \ref 
{prop:crucial} and Cor. \ref{cor:crucial} c)
there exists an open neighbourhood $U'$ of $(0,x)$ in $U$ and a morphism of   bisubmersions $U'\to V$ mapping $(0,x)$ to $v$.  
Repeating for all $x$, we see that there is a neighbourhood $N$ of $U\cap M$ in $U$, such that every point of $N$ lies in the domain of some morphism of   bisubmersions into some element of $\cV$. In other words, $N$ is adapted to $\cV$.

This implies that the same holds for any arbitrary point of $U$: if $(\lambda,x)\in U$, then there is a positive integer $k$ such that
$(\lambda/k,x)\in N$, and 
there is a neighborhood $U $ of $(\lambda,x)$ and a morphism of   bisubmersions $U \to N\circ\dots\circ N$ {mapping} $(\lambda,x)$ to
$(\lambda/k,\dots)\circ \dots(\lambda/k,\bt_U({(\lambda/k,x)}))\circ (\lambda/k,x)$. (It exists by Cor. \ref{cor:crucial} b) since the constant bisection of $U$ with value $\lambda$ and the constant bisection of $N\circ\dots\circ N$ with value $(\lambda/k,\dots,\lambda/k)$ map to the same bisection of $\cG$.)  
Using that $N$ is is adapted to $\cV$, we obtain a morphism of   bisubmersions $U'\to V_k\circ\dots\circ V_1$ for some elements $V_i$ of $\cV$. This shows that $U$ is adapted to $\cV$.

For elements of $\cU$ which are compositions  $U_n\circ\dots\circ U_1$ of path holonomy   bisubmersions, apply the above to each $U_i$. {Notice that we do not need to consider inverses of path holonomy bisubmersions, due to Remark. \ref 
{rem:invphbisub}.}\end{proof}

\begin{prop}\label{prop:equivpathhol}
Let $\cU$ be an atlas generated by path-holonomy   bisubmersions  as in definition \ref{dfn:pathhol} {(not necessarily minimal ones)}. 
Then $\cU$ is adapted to a path-holonomy atlas.
 \end{prop}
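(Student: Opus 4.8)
The goal is to show that an atlas $\cU$ generated by (not necessarily minimal) path-holonomy bisubmersions is adapted to a path-holonomy atlas, i.e. to one generated by \emph{minimal} path-holonomy bisubmersions. The plan is to reduce, as in the proof of Proposition \ref{prop:pathholadapted}, to the case of a single path-holonomy bisubmersion $(U,\varphi,\cG)$ built from local generators $\balpha_1,\dots,\balpha_n\in\cB$ near a point $x$, which need not form a minimal generating set. So $U\subset\RR^n\times M$ with $\varphi(\lambda,y)=\exp_y\sum\lambda_i\rar{\balpha_i}$, and we must show $U$ is adapted to some path-holonomy atlas $\cV$.

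First I would fix $\cV$ to be any path-holonomy atlas, say one generated by minimal path-holonomy bisubmersions $(V_j,\varphi_j,\cG)_{j\in J}$ covering $M$ in the source. The key observation is that Proposition \ref{prop:crucial} applies to \emph{any} bisubmersion, provided we compare it against the minimal path-holonomy bisubmersion at the relevant point: given $y\in M$ and the point $u_0=(0,y)\in U$, we have $\varphi(u_0)=1_y$, and $u_0$ carries the identity bisection $1_M$ of $\cG$ (the constant bisection $\{0\}\times M$ of $U$ maps into $1_M$). Let $(V_0,\varphi_0,\cG)$ be a minimal path-holonomy bisubmersion around $y$ lying in $\cV$, with point $v_0=(0,y)$. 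Since $v_0$ also carries $1_M$, by Corollary \ref{cor:crucial} a) there is an open neighbourhood $U'$ of $u_0$ in $U$ and a morphism of bisubmersions $U'\to V_0$. This shows that a neighbourhood $N$ of $U\cap M=\{0\}\times M$ in $U$ is adapted to $\cV$.

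Next I would propagate this from a neighbourhood of the zero section to all of $U$, using exactly the rescaling-and-composition trick in the proof of Proposition \ref{prop:pathholadapted}: for $(\lambda,x)\in U$, pick $k$ with $(\lambda/k,x)\in N$; by Corollary \ref{cor:crucial} b) (applied to the constant bisection of $U$ with value $\lambda$ and the constant bisection of $N\circ\cdots\circ N$ with value $(\lambda/k,\dots,\lambda/k)$, which map to the same bisection of $\cG$), there is a neighbourhood of $(\lambda,x)$ admitting a morphism of bisubmersions into $N\circ\cdots\circ N$ ($k$ factors). Composing with morphisms of $N$ into elements of $\cV$ and using that compositions of elements of $\cV$ are adapted to $\cV$ (axiom (c) of an atlas), we conclude that $(\lambda,x)$ has a neighbourhood admitting a morphism of bisubmersions into an element of $\cV$; hence $U$ is adapted to $\cV$.

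Finally, a general element of $\cU$ is a finite composition $U_m\circ\cdots\circ U_1$ of path-holonomy bisubmersions (we may omit inverses by Remark \ref{rem:invphbisub}); applying the above to each factor $U_i$ and using axiom (c) for $\cV$ once more shows that this composition is adapted to $\cV$ as well. Thus every element of $\cU$ is adapted to the path-holonomy atlas $\cV$, which is precisely the assertion. The only mildly delicate point — and the one I would be most careful about — is the very first step, namely checking that Proposition \ref{prop:crucial} genuinely applies when the starting bisubmersion $U$ is a \emph{non-minimal} path-holonomy bisubmersion; but inspecting that proof, the hypotheses used are only that $U$ is a bisubmersion and that the distinguished point carries $1_M$, both of which hold here, so no real obstacle arises.
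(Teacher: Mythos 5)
Your proof is correct, but it takes a genuinely different route from the one in the paper. The paper's argument is local and completely explicit: near a point it extracts a minimal subset $\{\balpha_1,\dots,\balpha_k\}$ from the given generators, writes the remaining ones as $\balpha_a=\sum_{i\le k}f^i_a\balpha_i$, and asserts that the affine reparametrization $(\lambda_1,\dots,\lambda_n;y)\mapsto(\lambda_1+\sum_{a>k}\lambda_af^1_a(y),\dots,\lambda_k+\sum_{a>k}\lambda_af^k_a(y);y)$ is a morphism of bisubmersions onto the minimal path-holonomy bisubmersion built from $\balpha_1,\dots,\balpha_k$ — a one-line construction that needs neither Prop. \ref{prop:crucial} nor any composition of bisubmersions. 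You instead observe that the proof of Prop. \ref{prop:pathholadapted} never uses minimality of the \emph{source} bisubmersion (Prop. \ref{prop:crucial} only requires the target to be minimal), and rerun it: comparison near the identity section via Cor. \ref{cor:crucial} a), then propagation by subdividing $\lambda$ and invoking the flow identity $\psi_\epsilon\ast\psi_\sigma=\psi_{\epsilon+\sigma}$. Your route is longer and softer, but it arguably buys robustness: the paper's ``straightforward'' check is delicate, since the coefficients $f^i_a$ are non-constant, so the time-$1$ flow of $\sum_{i\le n}\lambda_i\rar{\balpha_i}=\sum_{i\le k}\bigl(\lambda_i+\sum_{a>k}\lambda_a(f^i_a\circ\bt)\bigr)\rar{\balpha_i}$ need not agree with that of the frozen-coefficient field $\sum_{i\le k}\bigl(\lambda_i+\sum_{a>k}\lambda_af^i_a(y)\bigr)\rar{\balpha_i}$; already for $\balpha_1=\partial_x$, $\balpha_2=x\partial_x$ on $M=\RR$ the two exponentials differ at second order in $\lambda$, so the affine map as written is not literally a morphism of bisubmersions. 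Your argument sidesteps this issue entirely. The only points you should tighten are the ones inherited from the proof of Prop. \ref{prop:pathholadapted} itself: arranging that $(0,x)$ and $(\lambda/k,x)$ actually lie in $U$ (e.g. by shrinking $U$ to product form), and that the chosen element of $\cV$ really contains a point over $1_y$ carrying the identity bisection.
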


\begin{remark}\label{rem:HHu}
Such an atlas $\cU$ is   equivalent to a path-holonomy atlas, by proposition
\ref{prop:pathholadapted} and proposition \ref{prop:equivpathhol}, hence $H(\cB)^{\cU}=H(\cB)$.
\end{remark}

\begin{proof}
 Let  $\{\balpha_1,\dots,\balpha_n\}$ be a set of local generators  of $\cB$, giving rise to the   bisubmersion $(U,\varphi,\cG)$. {Let $x\in \bs_U(U)\subset M$.} We may assume that, for some $k\le n$,  
$\{\balpha_1,\dots,\balpha_k\}$ is a minimal set of local generators {at $x$ (see Remark \ref{rem:basis})}. So {on a neighborhood $M'$ of $x$}, for $a> k$, we can write $\balpha_a=\sum_{i=1}^k f^i_a\balpha_i$ for   functions $f^i_a$ on ${M'}$. It is straightforward to check that 
 $$\RR^n \times M\to \RR^k \times M, (\lambda_1,\cdots,\lambda_n; x)\mapsto (\lambda_1+\sum_{a=k+1}^n \lambda_a f^1_a, \cdots,\lambda_k+\sum_{a=k+1}^n \lambda_a f^k_a; x)$$
restricts to a morphism of bisubmersions from ${U|_{M'}}$ to the minimal path holonomy   bisubmersion constructed using $\{\balpha_1,\dots,\balpha_k\}$. 
 
As such   bisubmersions $(U,\varphi,\cG)$ generate the atlas $\cU$, we are done.
\end{proof}

\section{Morphisms  of holonomy groupoids  covering  submersions}\label{sec:appmorsub}

 {In \S \ref{section:HB}, starting from a Lie groupoid $\cG$ and singular subalgebroid $\cB$ of the Lie algebroid $Lie(\cG)$, we constructed a holonomy  groupoid $H^{\cG}(\cB)$ endowed with a map of topological groupoids to $\cG$. In \S \ref{section:morph} we extended this construction to morphisms \emph{covering the identity on the base}, obtaining canonical  morphisms of topological groupoids 
$$H^{\cG_1}(\cB_1)\to H^{\cG_2}(\cB_2)$$ commuting with the canonical maps.
In this appendix we do the same for morphisms \emph{covering surjective submersions}, see Theorem \ref{thm:morphsub}.} 
 
Let $F_* \colon A_1\to A_2$ be a morphism of Lie algebroids, \emph{covering a surjective submersion} $f \colon M_1\to M_2$. Let $\cB_1$ be a singular subalgebroid of $A_1$ satisfying the following condition:
\begin{align}\label{star}
& {\widehat{\cB_1}}^{proj}:=\{\balpha\in {\widehat{\cB_1}}: \balpha \text{ is $F_*$-projectable to a section of $A_2$}\}\\
&\;\;\;\;\;\;\;\;\;\;\;\;\;\text{generates $\cB_1$ as a ${C^{\infty}_c(M_1)}$-module.}\nonumber
\end{align}
{Recall that the global hull $\widehat{\cB_1}$ was defined in \S \ref{section:singdef}.}
Here by ``$\balpha$ is $F_*$-projectable to a section of $A_2$'' we mean that  there exists $\mathbf{b}\in \Gamma(A_2)$ such that $F_*(\balpha|_x)=\mathbf{b}|_{f(x)}$
for all $x\in M_1$. As  $f$ is surjective, such a section $\mathbf{b}$ is unique, and will be denoted by $F_*\balpha$.

 \begin{lemma}\label{lem:FB2}
Condition $\eqref{star}$ implies that $$F_*(\cB_1):=Span_{{C_c^{\infty}(M_2)}}\{F_*\balpha : \balpha\in {\widehat{\cB_1}}^{proj}\}$$ is a singular subalgebroid of $A_2$. 
\end{lemma}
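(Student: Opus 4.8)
\textbf{Proof plan for Lemma \ref{lem:FB2}.}

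The plan is to verify the three defining properties of a singular subalgebroid for $F_*(\cB_1)$: that it is a $C^\infty(M_2)$-submodule of $\Gamma_c(A_2)$, that it is involutive (closed under the Lie bracket), and that it is locally finitely generated. The submodule property is built into the definition as a $C^\infty_c(M_2)$-span, so the real content is involutivity and local finite generation. The key tool I would establish first is a \emph{compatibility lemma} for the operation $\balpha \mapsto F_*\balpha$ on projectable sections: if $\balpha, \balpha' \in \widehat{\cB_1}^{proj}$ then $[\balpha,\balpha']\in \widehat{\cB_1}^{proj}$ and $F_*[\balpha,\balpha'] = [F_*\balpha, F_*\balpha']$; moreover, for $g \in C^\infty(M_1)$ of the form $g = f^*h$ with $h \in C^\infty(M_2)$, the section $g\balpha$ is again projectable with $F_*(g\balpha) = h\cdot F_*\balpha$. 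These are the standard functoriality properties of a Lie algebroid morphism restricted to projectable sections; since $\widehat{\cB_1}$ is involutive (it is a module over which $\cB_1$ is dense, and involutivity of $\cB_1$ passes to $\widehat{\cB_1}$ by a standard argument) the bracket of two elements of $\widehat{\cB_1}^{proj}$ lands in $\widehat{\cB_1}$ and is manifestly projectable, hence lies in $\widehat{\cB_1}^{proj}$.

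\textbf{Involutivity.} Given two generators $\mathbf{b} = F_*\balpha$ and $\mathbf{b}' = F_*\balpha'$ of $F_*(\cB_1)$ with $\balpha,\balpha'\in\widehat{\cB_1}^{proj}$, and functions $h, h' \in C^\infty_c(M_2)$, I would compute $[h\mathbf{b}, h'\mathbf{b}']$ using the Leibniz rule for the Lie algebroid bracket on $A_2$; this produces terms of the form $h h' [\mathbf{b},\mathbf{b}']$, $h\,(\rho_{A_2}(\mathbf{b})h')\,\mathbf{b}'$, and $h'\,(\rho_{A_2}(\mathbf{b}')h)\,\mathbf{b}$. The last two are clearly in $F_*(\cB_1)$ (they are $C^\infty_c(M_2)$-multiples of $\mathbf{b}'$ and $\mathbf{b}$). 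For the first, I invoke the compatibility lemma: $[\mathbf{b},\mathbf{b}'] = F_*[\balpha,\balpha']$ and $[\balpha,\balpha']\in\widehat{\cB_1}^{proj}$. But one must be slightly careful here — to conclude $[\mathbf{b},\mathbf{b}']\in F_*(\cB_1)$ we need $[\balpha,\balpha']$ to be a \emph{compactly-supported-coefficient} combination of elements of $\widehat{\cB_1}^{proj}$, which follows from condition \eqref{star} together with the fact that $[\balpha,\balpha']$ already lies in $\widehat{\cB_1}$ and is projectable, so upon multiplying by a bump function it lies in $\cB_1$ and is expressed via \eqref{star}; pushing forward and using $f$-surjectivity plus a partition-of-unity argument on $M_2$ (choosing the bump functions to be pulled back from $M_2$, possible since $f$ is a submersion) gives the required expression. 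This is where condition \eqref{star} is essential and is the main subtlety.

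\textbf{Local finite generation.} Fix $y \in M_2$ and a point $x \in M_1$ with $f(x) = y$. Since $\cB_1$ is locally finitely generated, pick a neighborhood $U \ni x$ and finitely many $Y_1,\dots,Y_n\in\widehat{i_U^*\cB_1}$ generating $i_U^*\cB_1$; by shrinking $U$ and using condition \eqref{star} (which lets us rewrite generators in terms of \emph{projectable} ones, at least locally) I would arrange the $Y_j$ to be restrictions of elements of $\widehat{\cB_1}^{proj}$. Because $f$ is a submersion, after further shrinking I can assume $f(U)$ is open in $M_2$ and $f|_U$ has a section; then $F_*Y_1,\dots,F_*Y_n$, suitably interpreted over $f(U)$, give a finite generating set for $i_{f(U)}^*(F_*(\cB_1))$ — here I use surjectivity of $f$ to see that every element of $F_*(\cB_1)$ supported in $f(U)$ pulls back to something expressible through the $Y_j$, and then pushes forward. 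The hard part will be handling the bookkeeping between supports on $M_1$ and $M_2$ and ensuring the local finiteness survives the pushforward; this is analogous to, and can be modeled on, the treatment of pullback foliations in Definition \ref{def:pullback} and Lemma \ref{lem:fix}, now run in the ``pushforward'' direction using the submersion $f$. I expect no conceptual obstacle beyond this, since all the genuinely new input is isolated in condition \eqref{star}.
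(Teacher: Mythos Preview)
Your approach is the same as the paper's: verify that $F_*(\cB_1)$ is a submodule, is involutive, and is locally finitely generated. Two comparisons are worth making.

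For involutivity you overcomplicate matters and misplace the role of condition~\eqref{star}. Once you have established (as you correctly do) that $[\balpha,\balpha'] \in \widehat{\cB_1}^{proj}$ whenever $\balpha,\balpha' \in \widehat{\cB_1}^{proj}$, you are already done: $hh'\,[F_*\balpha, F_*\balpha'] = hh'\cdot F_*[\balpha,\balpha']$ lies in $F_*(\cB_1)$ \emph{by definition}, since $F_*(\cB_1)$ is the $C_c^\infty(M_2)$-span of $\{F_*\bgamma : \bgamma \in \widehat{\cB_1}^{proj}\}$. Your detour through~\eqref{star} and partitions of unity is unnecessary here; the paper disposes of involutivity in one sentence and does not invoke~\eqref{star} at that stage. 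Condition~\eqref{star} is used \emph{only} for local finite generation, to guarantee that one can choose local generators of $\cB_1$ that are $F_*$-projectable.

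For local finite generation your sketch is correct in spirit but less explicit than the paper's. Having chosen projectable local generators $\balpha_1,\dots,\balpha_n$ of $\cB_1$ near $x$, the paper writes an arbitrary $\balpha \in \widehat{\cB_1}^{proj}$ as $\balpha=\sum_i g_i\balpha_i$ near $x$ with $g_i \in C^\infty(M_1)$, and then restricts this identity to a small submanifold $S\subset M_1$ through $x$ transverse to the $f$-fibres. Since $f|_S$ is a diffeomorphism onto an open subset of $M_2$ and both sides are $F_*$-projectable, applying $F_*$ yields $F_*\balpha = \sum_i h_i\, F_*\balpha_i$ on $f(S)$ with $h_i := g_i|_S \circ (f|_S)^{-1}\in C^\infty(M_2)$. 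This is precisely the ``section of $f$'' you allude to; making it explicit replaces your vaguer ``pulls back \dots\ then pushes forward'' and avoids any bookkeeping difficulties.
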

 \begin{proof}
First notice that $F_*(\cB_1)$ is a well-defined $C^{\infty}(M_2)$-submodule of ${\Gamma_c(A_2)}$, since $f$ is surjective.

 $F_*(\cB_1)$  is involutive: the fact that $F_*$ is a morphism of Lie algebroids implies that if $\balpha,\mathbf{b}\in \Gamma(A_1)$ are $F_*$-projectable, then their bracket also is, and  $F_*[\balpha,\mathbf{b}]=[F_*\balpha,F_*\mathbf{b}]$ \cite[\S3.4, equation  (24)]{MK2}.

 We now show that $F_*(\cB_1)$ is locally finitely generated. {For all $x\in M_1$}, the restriction to 
 ${\widehat{\cB_1}}^{proj}$  of the linear map $\cB_1\to {\cB_1}/{I_x\cB_1}$   is surjective, as a consequence of condition $\eqref{star}$. 
Choosing $\{\balpha_1,\dots,\balpha_n\}\subset {\widehat{\cB_1}}^{proj}$ so that its image forms a basis of ${\cB_1}/{I_x\cB_1}$, we obtain a set of generators of $\cB_1$ near $x$ (see Remark \ref{rem:basis}, {which remains true for global hulls}). {We claim that} $\{F_*\balpha_1,\dots,F_*\balpha_n\}$ is a  set of generators of $F_*(\cB_1)$ near $f(x)$.
{To this aim, it suffices to show that $F_*\alpha$ is a $C^{\infty}(M_2)$-linear combination of the $F_*\balpha_i$'s, for all $ \balpha\in {\widehat{\cB_1}}^{proj}$. We can write 
\begin{equation}\label{ref:balphalc}
  \balpha=\sum g_i \balpha_i
\end{equation} nearby $x$, for some $g_i\in C^{\infty}(M_1)$.
Take a small enough submanifold $S$ of $M_1$ through $x$ which is transverse to the $f$-fibers (hence $f|_S$ is a diffeomorphism onto an open subset of $M_2$). Restricting the equation \eqref{ref:balphalc} to $S$ and applying $F_*$ gives the desired conclusion.}
\end{proof}

We display two classes of singular subalgebroids that satisfy condition $\eqref{star}$.
\begin{lemma}\label{lem:diffeo}
i) If $f$ is a diffeomorphism,  any singular subalgebroid $\cB_1$ satisfies condition $\eqref{star}$.

ii) Assume that  $F_* \colon A_1\to A_2$ has constant rank  and $\cB_2$ is a singular subalgebroid  of $A_2$,  such that   any element of $\cB_2$ can be $F_*$-lifted to a section of $A_1$.
Then  $$\cB_1:={Span_{C_c^{\infty}(M_1)}\{\balpha\in \Gamma(A_1): \text{$\balpha$ is $F_*$-projectable to an element of $\cB_2$} \},}$$ 
is a singular subalgebroid of $A_1$  satisfying condition $\eqref{star}$. Further  $F_*(\cB_1)=\cB_2$.
\end{lemma}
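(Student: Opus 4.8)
The statement to prove is Lemma \ref{lem:diffeo}, which has two parts. Part i) concerns the case when $f$ is a diffeomorphism; part ii) concerns the case when $F_*$ has constant rank and there is a given singular subalgebroid $\cB_2$ of $A_2$ whose elements can be $F_*$-lifted.

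\textbf{Proof plan for part i).} If $f$ is a diffeomorphism, then so is the vector bundle map $F_*$ on each fibre followed by the base diffeomorphism, in the sense that every section $\balpha\in \Gamma(A_1)$ is automatically $F_*$-projectable: define $F_*\balpha := F_*\circ \balpha \circ f^{-1}\in \Gamma(A_2)$, which is a genuine smooth section precisely because $f^{-1}$ is smooth. Hence $\widehat{\cB_1}^{proj}=\widehat{\cB_1}\supset \cB_1$, and since $\cB_1$ is locally finitely generated it is a fortiori generated by $\widehat{\cB_1}^{proj}$ as a $C^\infty_c(M_1)$-module (indeed $\cB_1$ generates itself). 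So condition $\eqref{star}$ holds trivially. This part is essentially immediate; I would state it in one or two sentences.

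\textbf{Proof plan for part ii).} Define $\cB_1:=\mathrm{Span}_{C^\infty_c(M_1)}\{\balpha\in\Gamma(A_1): \balpha \text{ is }F_*\text{-projectable to an element of }\cB_2\}$. There are several things to check: (a) $\cB_1$ is a $C^\infty(M_1)$-submodule of $\Gamma_c(A_1)$ — immediate from the definition as a span of compactly supported combinations; (b) $\cB_1$ is involutive — this uses the fact that $F_*$ being a Lie algebroid morphism implies $F_*[\balpha,\mathbf{b}]=[F_*\balpha,F_*\mathbf{b}]$ when $\balpha,\mathbf{b}$ are projectable (\cite[\S 3.4, eq. (24)]{MK2}), combined with involutivity of $\cB_2$, together with the observation that if $\balpha$ is projectable and $g\in C^\infty(M_1)$ then $g\balpha$ need not be projectable, so one must be careful: one shows the bracket of two elements of the generating set is again a sum of a projectable section (lying over an element of $\cB_2$) plus correction terms of the form (function)$\cdot$(projectable section), using the Leibniz rule for the anchor — this is the one genuinely fiddly point; (c) $\cB_1$ is locally finitely generated — here one invokes the constant rank hypothesis on $F_*$, which guarantees that $\ker F_*$ is a subbundle of $A_1$, hence locally $\Gamma(\ker F_*)$ is finitely generated, and combines a local frame of $\ker F_*$ with local lifts (provided by hypothesis) of a finite local generating set of $\cB_2$; (d) condition $\eqref{star}$ holds — by construction the generating set of $\cB_1$ consists of projectable sections (and their $C^\infty_c$-multiples, but the projectable ones alone span $\cB_1$), so $\widehat{\cB_1}^{proj}$ generates $\cB_1$; (e) $F_*(\cB_1)=\cB_2$ — the inclusion $\subset$ is by construction, and $\supset$ uses precisely the liftability hypothesis: every element of $\cB_2$, indeed every element of a local generating set, lifts to a projectable section of $A_1$ lying in $\cB_1$, and globalizing via a partition of unity (as in Lemma \ref{lem:liftequiv}) gives all of $\cB_2$ in the image, after which one uses surjectivity of $f$ exactly as in Lemma \ref{lem:FB2}.

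\textbf{Main obstacle.} The step I expect to require the most care is (c), showing that $\cB_1$ is \emph{locally finitely generated}, because the definition of $\cB_1$ as a span over projectable sections is a priori "large": one must produce, near each point $x\in M_1$, finitely many projectable sections whose $C^\infty_c$-span contains every $g\balpha$ with $\balpha$ projectable over $\cB_2$. The natural candidates are: a local frame $\{e_1,\dots,e_r\}$ of the subbundle $\ker F_*$ (each $e_j$ is projectable, to $0\in\cB_2$), together with local lifts $\{\tilde{Y}_1,\dots,\tilde{Y}_m\}$ of a finite local generating set $\{Y_1,\dots,Y_m\}$ of $\cB_2$ near $f(x)$. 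The argument that these suffice: given a projectable $\balpha$ over $\sum h_k Y_k\in\cB_2$ (with $h_k\in C^\infty_c(M_2)$), the section $\balpha - \sum (f^*h_k)\tilde{Y}_k$ projects to $0$, hence lies in $\Gamma(\ker F_*)$, hence is a $C^\infty$-combination of the $e_j$; one then restricts scalars to a transversal $S$ to the $f$-fibres through $x$ and pulls back, exactly in the spirit of the last paragraph of the proof of Lemma \ref{lem:FB2}, to see that the coefficients can be taken pulled back from $M_2$ where needed, and handles compact supports with a cutoff. The constant-rank hypothesis is used exactly here to ensure $\ker F_*$ is a bundle; without it $\Gamma(\ker F_*)$ need not be finitely generated and the argument breaks. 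I would carry out (a),(d) first as they are immediate, then (c), then (b) using (c)'s local frames, then (e) last.
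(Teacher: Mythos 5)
Your proposal is correct and follows essentially the same route as the paper: part i) via the observation that $\widehat{\cB_1}^{proj}=\widehat{\cB_1}$ when $f$ is a diffeomorphism, and part ii) by taking as local generators a frame of the subbundle $\ker F_*$ (which is where the constant-rank hypothesis enters) together with lifts of finitely many local generators of $\cB_2$, then subtracting $\sum_i f^*(g_i)\balpha^i$ to land in $\Gamma(\ker F_*)$. The only differences are cosmetic: you spell out the involutivity/Leibniz bookkeeping that the paper leaves implicit, and the transversal/pullback-of-coefficients step you import from the proof of Lemma \ref{lem:FB2} is not actually needed here, since arbitrary $C^{\infty}(M_1)$-coefficients on the $\ker F_*$-frame are already permitted in the generating statement.
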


\begin{remark} 
If $F_*$ is fiberwise surjective, then any singular subalgebroid $\cB_2$ satisfies the assumptions of Lemma \ref{lem:diffeo} ii). The singular subalgebroid $\cB_1$ appearing there deserves to be called the \emph{pullback of $\cB_2$}. 
{Indeed, when $A_1=TM_1$ and $A_2=TM_2$ are tangent bundles, $\cB_1$ is exactly the pullback of the singular foliation $\cB_2$ by $f$, as defined in \cite[Proposition 1.10]{AndrSk}.} {See also Remark \ref{rem:Moritamor} below.}

\end{remark}
\begin{proof}
i) is clear since in that case ${\widehat{\cB_1}}^{proj}={\widehat{\cB_1}}$.
For ii) we proceed as follows.

We first check that $\cB_1$ is a singular subalgebroid of $A_1$. The involutivity of $\cB_1$ follows from that of $\cB_2$. To see that $\cB_1$ is locally finitely generated nearby a given point $x\in M_1$, choose a finite set of local generators $\{\mathbf{b}^i\}$ of $\cB_2$ in a neighborhood $V$ of $f(x)$.
{Shrinking $V$ if necessary, we can assume that the $\mathbf{b}^i$'s are compactly supported, and hence elements of $\cB_2$.}
By assumption, 
there are sections $\{\balpha^i\}\subset\Gamma(A_1)$ which lift the $\{\mathbf{b}^i\}$, \ie,  each $\balpha^i$ is $F_*$-projectable, and $F_*\balpha^i=\mathbf{b}^i$. 
We have $\balpha^i\in {\widehat{\cB_1}}$ by the definition of the latter. Now let $\{\mathbf{c}^j\}\subset\Gamma(\ker(F_*))$ be a finite set which forms a frame for the vector bundle $\ker(F_*)$  near $x$. It exists since $F_*$ has constant rank. We claim that $$\{\balpha^i\}\cup \{\mathbf{c}^j\}$$ is a generating set for $\cB_1$ near $x$. Indeed, given $\balpha\in \Gamma(A_1)$ which is  $F_*$-projectable to an element of $\cB_2$, there exist $\{g_i\}\subset C^{\infty}(M_2)$ such that $F_*\balpha=\sum g_i \mathbf{b}^i$ on $V$. On $f^{-1}(V)$, the section $\sum f^*(g_i) \balpha^i$  projects to $F_*\balpha$. Hence, in neighborhood of $x$, we have $\balpha=\sum f^*(g_i) \balpha^i+\sum h_j\mathbf{c}^j$ for certain $\{h_j\}\subset C^{\infty}(M_1)$. This shows that  $\cB_1$ is locally finitely generated nearby $x$.

By construction $\cB_1$ satisfies condition $\eqref{star}$. By definition $F_*\cB_1\subset \cB_2$, and the reverse inclusion holds since by assumption elements of $\cB_2$ can be lifted to sections of $A_1$.
 \end{proof}

\subsection*{ {Surjective morphisms}}
\label{sec:surmor}

The following proposition address surjective morphisms, and a special case   (corresponding to the case $f=Id$)  was already addressed in Prop. \ref{prop:hbhfb}.

\begin{prop}\label{prop:morph}
Let $F\colon \cG_1\to \cG_2$ be a morphism of Lie groupoids, covering a   surjective submersion $f \colon M_1\to M_2$.
Let $\cB_1$ be a singular subalgebroid of $Lie(\cG_1)$, and assume that it satisfies  condition $\eqref{star}$ above. 
Let  $\cB_2:=F_*(\cB_1)$, {which is a singular subalgebroid  of $ Lie(\cG_2)$ by Lemma \ref{lem:FB2}}.

Then there is a canonical, surjective morphism of topological groupoids $$\Xi \colon H^{\cG_1}(\cB_1)\to H^{\cG_2}(\cB_2)$$ making the following diagram commute:
\begin{equation}\label{diag:commtoG}
  \xymatrix{
H^{\cG_1}( {\cB}_1)  \ar[d]^{\Phi_1}   \ar@{-->}[r]^{\Xi} &H^{\cG_2}(\cB_2)  \ar[d]^{\Phi_2}     \\
\cG_1 \ar[r]^{F} &   \cG_2 }
\end{equation}
\end{prop}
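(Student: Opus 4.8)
\textbf{Plan of proof for Proposition \ref{prop:morph}.}

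The plan is to mirror the proof of Prop. \ref{prop:hbhfb} as closely as possible, with the base map $f$ now allowed to be a surjective submersion. First I would work locally on $M_1$. Fix a point $x\in M_1$. Using condition $\eqref{star}$ together with Remark \ref{rem:basis} (which holds for global hulls too, as noted in the proof of Lemma \ref{lem:FB2}), I can choose $\balpha_1,\dots,\balpha_n\in \widehat{\cB_1}^{proj}$ whose classes form a basis of $\cB_1/I_x\cB_1$; these are thus a minimal set of local generators of $\cB_1$ near $x$, and moreover each is $F_*$-projectable, with $F_*\balpha_i\in \cB_2$. Let $(U,\varphi,\cG_1)$ be the associated path holonomy bisubmersion, $U\subset \RR^n\times M_1$. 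The key local claim is that $(U, F\circ\varphi,\cG_2)$ is (an open subset of) a path holonomy bisubmersion for $\cB_2$ associated to the not-necessarily-minimal generators $F_*\balpha_1,\dots,F_*\balpha_n$ of $\cB_2$ near $f(x)$. The computation is the same as in Prop. \ref{prop:hbhfb}: since $F$ is a Lie groupoid morphism one has $F_*(\rar{\balpha_i})=\rar{F_*\balpha_i}$, hence
\[
(F\circ\varphi)(\lambda,y)=F\big(\exp_y \textstyle\sum_i \lambda_i \rar{\balpha_i}\big)=\exp_{f(y)}\textstyle\sum_i \lambda_i \rar{F_*\balpha_i}.
\]
The only new point is that the domain $U$ lives over $M_1$ via $\bs_U$, while the target bisubmersion for $\cB_2$ has domain an open subset of $\RR^n\times M_2$; here $F\circ\varphi$ genuinely is a bisubmersion for $\cB_2$ by Prop. \ref{prop:pathhol} applied on $M_2$, and the composite map $U\to \RR^n\times M_2$ (namely $(\lambda,y)\mapsto (\lambda,f(y))$) is a surjective submersion onto an open subset, so by Lemma \ref{lem:UV} (the ``pullback by a submersion'' construction) $(U,F\circ\varphi,\cG_2)$ is a bisubmersion for $\cB_2$ adapted to a path holonomy atlas of $\cB_2$.

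Next I would globalize. Pick a family $(U_i,\varphi_i,\cG_1)_{i\in I}$ of minimal path holonomy bisubmersions for $\cB_1$ covering $M_1$, and let $\cU$ be the path holonomy atlas it generates (Def. \ref{def:pathholatlas}). Since $F$ is a Lie groupoid morphism, it commutes with inverses and compositions of bisubmersions, so $\{(U,F\circ\varphi_U,\cG_2):U\in\cU\}$ is a family of bisubmersions for $\cB_2$; by the local claim above and Prop. \ref{prop:pathholadapted}/Prop. \ref{prop:equivpathhol} together with Remark \ref{rem:HHu}, it is an atlas equivalent to a path holonomy atlas of $\cB_2$, so $\coprod_{U\in\cU}U/\!\sim_2\;=H^{\cG_2}(\cB_2)$, where $\sim_2$ is the equivalence relation obtained by regarding the $U$'s as $\cG_2$-bisubmersions for $\cB_2$. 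Denote by $\sim_1$ the relation obtained regarding them as $\cG_1$-bisubmersions for $\cB_1$. Since any morphism of $\cG_1$-bisubmersions $f\colon U\to V$ satisfies $\varphi_V\circ f=\varphi_U$ and hence $(F\circ\varphi_V)\circ f=F\circ\varphi_U$, it is also a morphism of $\cG_2$-bisubmersions; thus $\sim_1$-classes are contained in $\sim_2$-classes, and the identity map on $\coprod_{U\in\cU}U$ descends to a well-defined, surjective, continuous groupoid morphism
\[
\Xi\colon H^{\cG_1}(\cB_1)=\coprod_{U\in\cU}U/\!\sim_1\;\longrightarrow\;\coprod_{U\in\cU}U/\!\sim_2\;=H^{\cG_2}(\cB_2).
\]
That $\Xi$ is independent of the chosen atlas (hence canonical) follows from Prop. \ref{prop:rem33}, and commutativity of the square \eqref{diag:commtoG} is immediate because on representatives $\Phi_2(\Xi(q_U(u)))=(F\circ\varphi_U)(u)=F(\Phi_1(q_U(u)))$.

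\textbf{Main obstacle.} Unlike in Prop. \ref{prop:hbhfb}, where $f=Id_M$, here the domains of the $\cG_1$- and $\cG_2$-path holonomy bisubmersions do not literally coincide: on the $\cG_1$ side a bisubmersion sits in $\RR^n\times M_1$, while on the $\cG_2$ side it sits in $\RR^n\times M_2$. The point requiring care is to verify that $(U, F\circ\varphi,\cG_2)$ really satisfies Def. \ref{dfn:bisubm2} for $\cB_2$ — in particular condition iii), $(F\circ\varphi)^{-1}(\rar{\cB_2})=\Gamma_c(U,\ker d\bs_U)$ — despite $F\circ\varphi$ being far from submersive and despite the source map of $U$ landing in $M_1$ rather than $M_2$. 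I would handle this exactly as in Lemma \ref{lem:UV}: factor $F\circ\varphi = \tilde\varphi\circ p$, where $p\colon U\to \tilde U$ is the submersion $(\lambda,y)\mapsto(\lambda,f(y))$ onto an open subset $\tilde U$ of a genuine path holonomy bisubmersion $(\tilde U,\tilde\varphi,\cG_2)$ for $\cB_2$ (well-defined since the $F_*\balpha_i$ lie in $\cB_2$ near $f(x)$), and then invoke Lemma \ref{lem:UV} to pull the bisubmersion property of $\tilde U$ back along $p$. Condition $\eqref{star}$ and Lemma \ref{lem:FB2} are exactly what guarantee that $\cB_2=F_*(\cB_1)$ is a singular subalgebroid and that the $F_*\balpha_i$ span $\cB_2/I_{f(x)}\cB_2$, so the construction goes through; the surjectivity of $f$ is used precisely to make $F_*\balpha_i$ well-defined global sections and to make $p$ surjective onto its image.
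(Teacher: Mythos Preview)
Your local analysis is correct and matches the paper's Lemma~\ref{lem:U12}: each minimal path holonomy $\cG_1$-bisubmersion $(U,\varphi,\cG_1)$ becomes a $\cG_2$-bisubmersion $(U,F\circ\varphi,\cG_2)$ via the factorization through the submersion $p:(\lambda,y)\mapsto(\lambda,f(y))$ and Lemma~\ref{lem:UV}. The obstacle you identify in your final paragraph is real but not the main one.

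The genuine gap is in your globalization. You assert that $\{(U,F\circ\varphi_U,\cG_2):U\in\cU\}$ is a $\cG_2$-atlas for $\cB_2$ and that its quotient by $\sim_2$ equals $H^{\cG_2}(\cB_2)$, but this is exactly where $f\neq Id_M$ breaks the argument of Prop.~\ref{prop:hbhfb}. The elements of $\cU$ beyond $\cS$ are $\cG_1$-compositions $U\circ_1 V:=U\times_{(\bs_U)_1,(\bt_V)_1}V$, fiber products over $M_1$. For your family to be a $\cG_2$-atlas you need adaptedness under the $\cG_2$-composition $U\circ_2 V:=U\times_{(\bs_U)_2,(\bt_V)_2}V$, a fiber product over $M_2$. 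When $f$ is not a diffeomorphism, $U\circ_1 V$ is a \emph{proper closed submanifold} of $U\circ_2 V$, and a point of $U\circ_2 V$ need not admit a local $\cG_2$-morphism into any element of your family. Without this, you cannot conclude that your quotient is $H^{\cG_2}(\cB_2)$, that it even carries a groupoid multiplication (the product of $[u]$ and $[v]$ should be represented in some $U\circ_2 V$, which is not in your family), or that $\Xi$ is surjective.

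The paper addresses precisely this by working with two distinct atlases: $\cU_1$ generated by $\cS$ under $\circ_1$-compositions, and $\cU_2$ generated by $\cS$ under $\circ_2$-compositions (the latter \emph{is} a genuine $\cG_2$-atlas by construction). It builds a strict inclusion $\iota:\coprod_{\cU_1}U_1\hookrightarrow\coprod_{\cU_2}U_2$, then shows $\iota$ descends through $\sim_1$ and $\sim_2$ via a bisection argument (needed because a $\cG_1$-morphism between $\circ_1$-compositions does not extend to the ambient $\circ_2$-compositions), and finally proves surjectivity by showing that any $(\lambda,y)\circ_2(\eta,x)$ is $\sim_2$-equivalent to some $(\lambda',y')\circ_1(\eta,x)$ with $y'=(\bt_V)_1(\eta,x)$ lying in the same $f$-fiber as $y$. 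That last step is exactly the content missing from your proposal.
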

 
 {To construct the morphism $\Xi$ we will relate {path-holonomy} bisubmersions for $\cB_1$ with 
 bisubmersions for $\cB_2$. We first need a lemma\footnote{{Lemma \ref{lem:U12} is concerned with path-holonomy bisubmersions. One can check that the statement of the lemma  holds for all $\cG_1$-bisubmersion for $\cB_1$, but we will not need this.}}

\begin{lemma}\label{lem:U12}
Assume the set-up of Prop. \ref{prop:morph}. Let  $\{\balpha_1,\dots,\balpha_n\}$ be a minimal set of local generators of $\cB_1$ {lying in $\widehat{\cB_1}^{proj}$}.
 Let $(U,\varphi_1,\cG_1)$ the corresponding  {path-holonomy} $\cG_1$-bisubmersion for $\cB_1$. Then $(U,\varphi_2:=F\circ \varphi_1,\cG_2)$ is a 
$\cG_2$-bisubmersion for $\cB_2$.
\end{lemma}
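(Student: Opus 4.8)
The strategy is to verify the three conditions of Definition \ref{dfn:bisubm2} for the map $\varphi_2 = F \circ \varphi_1 \colon U \to \cG_2$, using heavily the fact that the generators $\balpha_i$ were chosen to be $F_*$-projectable (i.e. in $\widehat{\cB_1}^{proj}$). The key computational identity is that for an $F_*$-projectable section $\balpha$ of $\mathrm{Lie}(\cG_1)$ one has $F_*(\rar{\balpha}) = \rar{F_*\balpha}$, where on the right $F_*\balpha \in \Gamma(\mathrm{Lie}(\cG_2))$; this is proved exactly as in the Claim inside the proof of Prop. \ref{prop:imagerelbi}, the only difference being that $F$ now covers the submersion $f$ rather than $\mathrm{Id}_M$, but the argument (applying $F_*$ to $(R_k)_* X_{\bt(k)}$ and using that $F$ is a groupoid morphism) goes through verbatim. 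The analogous identity $F_*(\lar{\balpha}) = \lar{F_*\balpha}$ follows by conjugating with the inversion maps.

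\textbf{Condition i):} Since $(U,\varphi_1,\cG_1)$ is a bisubmersion, $\bt_{U,1} = \bt_{\cG_1} \circ \varphi_1$ and $\bs_{U,1} = \bs_{\cG_1} \circ \varphi_1$ are submersions $U \to M_1$. Because $F$ covers the surjective submersion $f$, we have $\bt_{\cG_2} \circ F = f \circ \bt_{\cG_1}$ and similarly for source, hence $\bt_{U,2} := \bt_{\cG_2} \circ \varphi_2 = f \circ \bt_{U,1}$ is a composition of submersions, thus a submersion; same for $\bs_{U,2}$.

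\textbf{Condition ii):} I would argue this as in the Claim of Prop. \ref{prop:hbhfb}: the path-holonomy bisubmersion $\varphi_1$ sends $(\lambda,y) \mapsto \exp_y \sum \lambda_i \rar{\balpha_i}$, and composing with $F$ and using the identity above together with the fact that $F$ intertwines the flows of $\rar{\balpha_i}$ and $\rar{F_*\balpha_i}$ (as $F$ is a groupoid morphism, hence intertwines right-invariant vector fields), one gets $\varphi_2(\lambda,y) = \exp_{f(y)} \sum \lambda_i \rar{F_*\balpha_i}$. So $(U,\varphi_2,\cG_2)$ is literally the path-holonomy data attached to the (not necessarily minimal) generating set $\{F_*\balpha_1,\dots,F_*\balpha_n\}$ of $\cB_2$ near $f(x)$ — here one uses Lemma \ref{lem:FB2} to know these span $\cB_2$ locally, together with Remark \ref{rem:basis}. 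Then condition ii) for $(U,\varphi_2,\cG_2)$ holds by exactly the argument in the proof of Prop. \ref{prop:pathhol} applied to this generating set (using Remark \ref{rem:FB} b) for the foliation $\rar{\cB_2}$ on $\cG_2$), noting as in Remark \ref{rem:FB} a) that it is harmless that the $F_*\balpha_i$ need not form a basis of $\cB_2/I_{f(x)}\cB_2$.

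\textbf{Condition iii):} This follows from condition ii) together with the structure of the path-holonomy bisubmersion in precisely the same way as the proof of Prop. \ref{prop:pathhol}: one extends a compactly supported section $Z$ of $\ker d\bs_{U,2}$ — but note $\ker d\bs_{U,2}$ may be strictly larger than $\ker d\bs_{U,1}$ since $\bs_{U,2} = f \circ \bs_{U,1}$ — to the ambient path-holonomy bisubmersion $(V,\bt_V,\bs_V)$ for $\rar{\cB_2}$ on $\cG_2$ built from $\rar{F_*\balpha_i}$, apply Lemma \ref{lem:bisubm2} b) there, and restrict back to $U$; one uses Lemma \ref{lem:bisubm2} c) to check spanning of $\ker d\bs_{U,2}$ at each point by vector fields $\varphi_2$-related to elements of $\rar{\cB_2}$. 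The second equation of iii) for $\bt$ is symmetric via the inverse bisubmersion (Definition \ref{def:inv}) and the identity $F_*\lar{\balpha} = \lar{F_*\balpha}$.

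\textbf{Main obstacle.} The point that requires genuine care — and the only place the submersion $f$ (as opposed to $\mathrm{Id}$) matters structurally — is that in condition iii) the kernels $\ker d\bs_{U,1}$ and $\ker d\bs_{U,2}$ differ: $\bs_{U,2}$ factors through $f$, so $\ker d\bs_{U,2} = (d\bs_{U,1})^{-1}(\ker df)$ is strictly bigger. One must therefore check that $\varphi_2^{-1}(\rar{\cB_2})$ is \emph{all} of $\Gamma_c(U,\ker d\bs_{U,2})$, not merely the smaller $\Gamma_c(U,\ker d\bs_{U,1})$ part that lifts from $\cB_1$. This works because $\rar{\cB_2}$ on $\cG_2$ genuinely spans all of $\ker d\bs_{\cG_2}$ along $M_2$ modulo $I$ only where $\cB_2$ is full, but the construction of the path-holonomy bisubmersion $(V,\bt_V,\bs_V)$ on $\cG_2$ from the $\rar{F_*\balpha_i}$ already builds in the correct extra directions: its source fiber has the right dimension over $M_2$, and restricting via $U \subset V$ recovers exactly the fibers of $\bs_{U,2}$ over $M_1$. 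So once one sets up the correspondence with $(V,\bt_V,\bs_V)$ cleanly, the dimension bookkeeping resolves itself; the work is in writing that correspondence carefully rather than in any new idea.
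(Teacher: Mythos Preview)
Your approach has a conceptual slip that propagates into a real gap. You claim that $(U,\varphi_2,\cG_2)$ ``is literally the path-holonomy data attached to the (not necessarily minimal) generating set $\{F_*\balpha_1,\dots,F_*\balpha_n\}$ of $\cB_2$''. But $U$ is an open subset of $\RR^n\times M_1$, whereas the path-holonomy bisubmersion for $\cB_2$ built from those generators is an open subset $W\subset\RR^n\times M_2$. These are different manifolds as soon as $f$ is not a diffeomorphism. The formula $\varphi_2(\lambda,y)=\exp_{f(y)}\sum\lambda_i\,\rar{F_*\balpha_i}$ you derive is correct, but what it shows is that $\varphi_2$ \emph{factors} through $W$ via the submersion $p=(\mathrm{Id},f)\colon U\to W$, i.e. $\varphi_2=\varphi_W\circ p$; it does not show $U=W$.

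This matters most in condition~iii), where you invoke ``restricting via $U\subset V$'' with $V\subset\RR^n\times\cG_2$ the path-holonomy bisubmersion for $\rar{\cB_2}$. There is no such inclusion: the natural map $(\lambda,y)\mapsto(\lambda,1_{f(y)})$ collapses the $f$-fibers and is not injective. So the argument of Prop.~\ref{prop:pathhol}, which rests on $U$ being a genuine submanifold of $V$ via the identity section $M\subset\cG$, cannot be transported. Your ``main obstacle'' paragraph correctly senses that the extra $\ker df$ directions must be accounted for, but the proposed resolution (``restricting via $U\subset V$ recovers exactly the fibers of $\bs_{U,2}$'') does not make sense for the reason just given. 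The paper's proof sidesteps all of this: it observes the factorization $\varphi_2=\varphi_W\circ p$ with $p$ a submersion and $(W,\varphi_W,\cG_2)$ a path-holonomy bisubmersion by Prop.~\ref{prop:pathhol}, and then applies Lemma~\ref{lem:UV} (precomposing a bisubmersion with a submersion yields a bisubmersion). That single lemma handles i), ii), iii) simultaneously --- in particular, the $\ker dp$ directions you were worried about are exactly the part of $\ker d\bs_{U,2}$ that is $\varphi_2$-related to zero, as in the proof of Lemma~\ref{lem:UV}.
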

{The situation is visualized in the following diagram, which is helpful to follow  the proof of Prop. \ref{prop:morph} too.}
  \begin{equation*} 
 \xymatrix{
 U\ar_{
 \varphi_1}[d]\ar@{-->}[rd]^{\varphi_2}&\\
 {\cG_1} \ar[r]^{F}&\cG_2   }
 \end{equation*} 
 
\begin{proof}
Recall that $U$ is an open subset of $\RR^n \times M_1$, and by   definition \ref{dfn:pathhol} we have
$\varphi_1\colon U  \to \cG_1, (\lambda,x)\mapsto \exp_x \sum \lambda_i \overset{\rightarrow}{\balpha_i}$.
Hence 
\begin{equation}\label{eq:lambdax}
\varphi_2((\lambda,x))=F(\exp_x \sum \lambda_i \overset{\rightarrow}{\balpha_i})=\exp_{f(x)} \sum \lambda_i \overrightarrow{F_*(\balpha_i)}.
\end{equation}
Here the second equality holds because, since $F$ is a Lie groupoid morphism and $\balpha_i$ is $F_*$-projectable (to  $F_*(\balpha_i)\in \cB_2$), we have $F_*(\overset{\rightarrow}{\balpha_i})=\overrightarrow{F_*(\balpha_i)}$.

Let $(W,\varphi_W,\cG_2)$ be the {path-holonomy} $\cG_2$-bisubmersion for $\cB_2$ 
(not necessarily minimal) associated to the  set of local generators $\{F_*(\balpha_1),\dots,F_*(\balpha_n)\}$  of $\cB_2$. In particular, $W$ is  an open subset of $\RR^n \times M_2$. The submersion $(Id,f)\colon \RR^n \times M_1 \to \RR^n \times M_2$ restricts to map $p\colon U\to W$ which by eq. \eqref{eq:lambdax} makes this diagram commute:
\begin{equation*}
\xymatrix{
U  \ar[r]^p \ar[rd]_{\varphi_2}      &   W \ar[d]^{\varphi_{W}}  \\
 & \cG_2 }
\end{equation*}
By Lemma \ref{lem:UV}, we conclude that 
$(U,\varphi_2,\cG_2)$ is a 
$\cG_2$-bisubmersion for $\cB_2$. 
\end{proof}

 {Notice that if if $(U,\varphi_1,\cG_1)$ and $(U',\varphi_1',\cG_1)$ are $\cG_1$-bisubmersion for $\cB_1$ as in Lemma \ref{lem:U12}, and $k$ is a (locally defined) morphism  between them, then $k$ is is also a   morphism of bisubmersions between the corresponding $\cG_2$-bisubmersion for $\cB_2$. Applying this fact in the proof of Prop. \ref{prop:morph} implies that the morphism $\Xi$ is canonical in a neighborhood of the identity section of $H^{\cG_1}(\cB_1)$, and by  source-connectedness on the whole of  $H^{\cG_1}(\cB_1)$.}

\begin{proof}[Proof of Prop.  \ref{prop:morph}]

Take a family $\cS$ of {path-holonomy} $\cG_1$-bisubmersions
 for $\cB_1$, constructed out of   
 minimal sets of local, $F$-projectable  generators of $\cB_1$, and so that $\cup_{U\in 
\cS} \bs_{U}(U)=M_1$. It exists since $\cB_1$ satisfies property $\eqref{star}$. Denote by $\cU_1$ the atlas
(see {definitions  \ref{def:pathholatlas} and} \ref{def:atlas}) generated by the elements of  $\cS$, viewed as $\cG_1$-bisubmersions 
for $\cB_1$. In other words, $\cU_1$ is constructed from elements of {$\cS$}, taking   their  finite compositions as  $\cG_1$-bisubmersions 
for $\cB_1$ {(we do not need to take inverse bisubmersions by Remark \ref{rem:invphbisub}.)} 
Similarly, denote by $\cU_2$ the atlas generated by the elements of  $\cS$, viewed as $\cG_2$-bisubmersions 
for $\cB_2$ as in Lemma \ref{lem:U12} {(again\footnote{Indeed, since $F$ is a groupoid morphism, an isomorphism of bisubmersions between $(U,\varphi,\cG_1)\in \cS$ and its inverse $(U,i_{\cG_1}\circ\varphi,\cG_1)$ provides an 
isomorphism of $\cG_2$-bisubmersions between $(U,F\circ \varphi,\cG_2)$ and its inverse $(U,i_{\cG_2}\circ F \circ\varphi,\cG_2)$.}
 we do not need to take inverses)}. 
 
\underline{Claim:} \emph{{There is a canonical injective map} $\iota$ that makes the following diagram commute:
 \begin{equation} \label{diag:comp}
 \xymatrix{
\coprod_{U_1\in \cU_1}U_1\ar[d] \ar[r]^{\iota} & \coprod_{U_2\in \cU_2}U_2\ar[d]\\
 {\cG_1} \ar[r]^{F}&\cG_2   }
\end{equation}
}

 By Lemma \ref{lem:U12}, for every ${(U,\varphi_1,\cG_1)}\in \cS$,
 we have that
  $(U,\varphi_2:=F\circ \varphi_1,\cG_2)$ is a  $\cG_2$-bisubmersion  for $\cB_2$. {We take $\iota|_U$ to be simply the identity.}

Let $U,V\in \cS$. Then $U\circ_1 V:=U\times_{(\bs_U)_1,(\bt_V)_1}V$,
their composition as ${\cG_1}$-bisubmersions for $\cB_1$, is contained in 
$U\circ_2 V:=U\times_{(\bs_U)_2,(\bt_V)_2}V$,
their composition as $\cG_2$-bisubmersions for $\cB_2$. {(Notice that the former is 
 a fiber product over $M_1$, the latter a fiber product over $M_2$.)}
 We define 
 $\iota|_{U\circ_1 V}$ to be this inclusion.
The following diagram commutes, since $F$ is a morphism of groupoids:
 \begin{equation*} 
 \xymatrix{
U\circ_1 V\ar_{
 (\varphi_U)_1\cdot (\varphi_V)_1}[d]\ar[d]\ar[r] & U\circ_2 V\ar^{
 (\varphi_U)_2\cdot (\varphi_V)_2}[d]\\
 {\cG_1} \ar[r]^{F}&\cG_2   }
\end{equation*}
{The same holds for the composition of any finite number of bisubmersions in $\cS$.}
\hfill$\bigtriangleup$
 
\underline{Claim} \emph{The   map $\iota$ descends to a map 
\begin{equation}\label{eq:maponquot}
H^{\cG_1}(\cB_1)=\coprod_{U_1\in \cU_1}U_1/\sim_1 \;\to\; \coprod_{U_2\in \cU_2}U_2/\sim_2
\end{equation} 
where $\sim_1$ (respectively $\sim_2$) denotes the equivalence relation 
{of $G_1$-bisubmersions (respectively $G_2$-bisubmersions) as in Appendix \ref{sec:atlases}.}
It is clearly a morphism of topological groupoids.}

It is clear that if $U,V\in \cS$ and $f \colon U \to V$ is a morphism for $\cG_1$-bisubmersions, then it is also a morphism for $\cG_2$-bisubmersions. Hence, for all $u\in U$ and $v\in V$,  $u\sim_1 v$ implies that $u\sim_2 v$.
\begin{equation*}
\xymatrix{
U  \ar[rd]_{(\varphi_U)_1}     \ar[rr]^f & & V  \ar[ld]^{(\varphi_V)_1}   \\
&{\cG_1} \ar[d]^{F}&\\
&\cG_2 & }
\end{equation*}

The  equivalences between points in arbitrary elements of $\cU_1$ are more delicate. We first describe how to  construct certain bisections. Let $U_1\in \cU_1$, so that $U_1=U^1\circ_1\dots\circ_1U^k$ for $U^1,\dots,U^k\in \cS$. We denote by $U_2$ the ``corresponding'' element of $\cU_2$, \ie, $U_2:=U^1\circ_2\dots\circ_2U^k$. As remarked {in the previous claim}, we have a (usually strict) inclusion $\iota\colon U_1\hookrightarrow U_2$.
Fix $u\in U_1$. We construct a bisection for $U_2$ passing through $\iota(u)$, as follows.  Let $\mathbf{b}$ be a $dim({M_2})$-dimensional  submanifold of $U_1$ so that  $T_u\mathbf{b}$  is transverse to both $(\bs_U)^{-1}_1(\ker(f_*))$ and  $(\bt_U)^{-1}_1(\ker(f_*))$.
Notice that $(\bs_U)_1(\mathbf{b})$ and $(\bt_U)_1(\mathbf{b})$ are both submanifolds of $M_1$ transverse to the fibers of $f \colon M_1\to M_2$, and that the map $$(\bs_U)_1(\mathbf{b})\to (\bt_U)_1(\mathbf{b}),$$ obtained $(\bs_U)_1$-lifting to points of $\mathbf{b}$ and applying 
$(\bt_U)_1$, is a diffeomorphism. Together with the commutativity of \eqref{diag:comp}, this implies that $\iota(\mathbf{b})$ is a bisection for the $\cG_2$-bisubmersion $U_2$.

Now let $U_1,V_1$ be arbitrary elements of $\cU_1$, and $U_2, V_2$ the ``corresponding'' elements of $\cU_2$. Let $u\in U_1$ and $v\in V_1$ with $u\sim_1 v$. By definition, this means that there is a locally defined morphism $f\colon U_1\to V_1$ of bisubmersions over $\cG_1$, mapping $u$ to $v$ (see \S\ref{sub:con}). Choosing a submanifold $\mathbf{b}\subset U_1$ through $u$ as above, we obtain  bisections $\iota(\mathbf{b})$  for the bisubmersion $U_2$, and $\iota(f(\mathbf{b}))$
for the bisubmersion $V_2$. Notice that these bisections 
pass  through $\iota(u)$ and $\iota(v)$ respectively.
Both bisections carry the same bisection of $\cG_2$, by the commutativity of \eqref{diag:comp}. By Cor. \ref{cor:crucial} b) we obtain 
$\iota(u)\sim_2 \iota(v)$. \hfill$\bigtriangleup$

\underline{Claim} \emph{ {We have $\coprod_{U_2\in \cU_2}U_2/\sim_2
=H^{\cG_2}(\cB_2)$, hence the morphism  in \eqref{eq:maponquot} reads $H^{\cG_1}(\cB_1)\to H^{\cG_2}(\cB_2)$.}}
Let $(U,\varphi_1,\cG_1) \in \cS$, constructed out of 
 a minimal set of local generators    $\{\balpha_1,\dots,\balpha_n\}$ of $\cB_1$. Consider  
  $(U,\varphi_2{:=F\circ \varphi_1},\cG_2)$, a 
$\cG_2$-bisubmersion for $\cB_2$. Further consider  $(W,\varphi_W,\cG_2)$, the $\cG_2$-bisubmersion for $\cB_2$ 
(not necessarily minimal) associated to the  set of local generators $\{F_*(\balpha_1),\dots,F_*(\balpha_n)\}$  of $\cB_2$. {In the proof of Lemma \ref{lem:U12} we saw that there is a  morphism of bisubmersions  $p\colon U\to W$, so} 
the bisubmersion $(U,\varphi_2,\cG_2)$ is adapted (see definition \ref{def:atlas}) to the bisubmersion $(W,\varphi_W,\cG_2)$. Hence the atlas $\cU_2$, {which is generated by all $(U,\varphi_2,\cG_2)$'s    as above,} is adapted to the atlas  generated by the corresponding $W$'s, which we denote by $\cW$. {In turn, by Rem. \ref{rem:HHu},
$\cW$ is equivalent to a path holonomy atlas.
 The latter is adapted to $\cU_2$ by Prop. \ref{prop:pathholadapted}. In conclusion, $\cU_2$ is equivalent to a path holonomy atlas, so by Proposition \ref{prop:rem33}  there is a canonical  isomorphism of topological groupoids $\coprod_{U_2\in \cU_2}U_2/\sim_2
\cong H^{\cG_2}(\cB_2)$.}
    \hfill$\bigtriangleup$

\underline{Claim} \emph{The map $H^{\cG_1}(\cB_1)\to H^{\cG_2}(\cB_2)$ constructed above is surjective.} 
 
The inclusion $\iota$ in diagram \eqref{diag:comp} is not surjective. We have to show that any point in the codomain is equivalent, under $\sim_2$, to a point in the image of $\iota$. {We do so only for points in the codomain which lie in the product of two bisubmersions, as the general case is similar.}   
Consider
  path holonomy $G_1$-bisubmersions $U$ and $V$ {lying in $\cS$, i.e.} constructed out of   
 minimal sets of local $F$-projectable  generators $\{a_i\}_{i\le I}$ and $\{b_k\}_{k\le K}$ of $\cB_1$. Consider points
 $(\lambda,y)\in U\subset \RR^I \times M_1$ and $(\eta,x)\in V\subset \RR^K \times M_1$   such that their images under\footnote{Recall that $(\bs_U)_2$ denotes the composition of $\varphi_2\colon U\to \cG_2$ and the source map of $\cG_2$.}
 $(\bs_U)_2$ and $(\bt_V)_2$ respectively coincide.
{We want} to show that  $(\lambda,y)\circ_2 (\eta,x)\in  U\circ_2 V$ is equivalent under $\sim_2$ to an element in the image of $\iota$. The difficulty is that $(\lambda,y)\circ_1 (\eta,x)$ might not be well-defined.

{
  \begin{equation*} 
 \xymatrix{
 U\ar_{
 \varphi_1}[d]\ar[rd]^{\varphi_2}&\\
 {\cG_1} \ar[r]^{F}\ar[d]^{\bs_{\cG_1}}&\cG_2  \ar[d]^{\bs_{\cG_2}} \\
 M_1 \ar[r]^{f}&M_2
 }
 \end{equation*}
 }

Define $y':=(\bt_V)_1(\eta,x)\in M_1$. Notice that $f(y')=(\bt_V)_2(\eta,x)=f(y)$.
Choose a 
 path holonomy   bisubmersion $U'$, constructed out of   a
 minimal set of local, $F$-projectable  generators  $\{a'_j\}_{j\le J}$ of $\cB_1$ defined nearby $y'$. There is $\lambda'\in \RR^J$ such that 
 \begin{equation}\label{eq:ll'}
(\lambda,y)\sim_2 (\lambda',y').
\end{equation}
To see this, first use the fact that the $\{F_*a'_j\}_{j\le J}$ generate $\cB_2$ nearby $f(y')$ to write $F_*a_i=\sum_j c_i^j(F_*a'_j)$ where $c_i^j\in C^{\infty}(M_2)$. Then check using {Eq. \eqref{eq:lambdax}} that $$U\to U', (\gamma_1,\cdots,\gamma_I; z)\mapsto  
 (\sum c_i^1 \gamma_i ,\cdots, \sum c_i^J\gamma_i ; \psi(z))
 $$
 is a morphism of $G_2$-bisubmersions, which maps  
$(\lambda,y)$  to a point of the form $(\lambda',y')$ {for some $\lambda'\in \RR^J$}. Here
 $\psi$ is any  local diffeomorphism  of $M_1$ with $\psi(y)=y'$ preserving the $f$-fibers, {which exists since $f$ is a submersion.}

Notice that $(\lambda',y')\circ_1 (\eta,x)\in U'\circ_1 V$ is well-defined. Thanks to eq. \eqref{eq:ll'} we can hence write
$$(\lambda,y)\circ_2 (\eta,x)\sim_2 (\lambda',y')\circ_2 (\eta,x)=\iota((\lambda',y')\circ_1 (\eta,x)),$$
proving the claim.  \hfill$\bigtriangleup$

{To finish the proof, notice that diagram \eqref{diag:commtoG}
 commutes since diagram \eqref{diag:comp} does.}
\end{proof}

\subsection*{{Arbitrary morphisms}}

The next theorem, {which specializes to Thm. \ref{thm:morph} for morphisms covering the identity}, extends Prop. \ref{prop:morph}.

\begin{thm}\label{thm:morphsub}
Let $F\colon \cG_1\to \cG_2$ be a morphism of Lie groupoids, covering a   surjective submersion $f \colon M_1\to M_2$ between their spaces of identity elements. Let $\cB_1$ be a singular subalgebroid of $A_1:=Lie(\cG_1)$, and assume that it satisfies  condition $\eqref{star}$ above. 
Let  $\cB_2$ be a singular subalgebroid of $A_2:=Lie(\cG_2)$, such that
$F_*(\cB_1)\subset \cB_2$.
Then there is a canonical  morphism of topological groupoids 
$$\Xi \colon H^{\cG_1}(\cB_1)\to H^{\cG_2}(\cB_2)$$ {over $f$}  making the following diagram commute:
\begin{equation}  \xymatrix{
H^{\cG_1}( {\cB}_1)  \ar[d]^{\Phi_1}   \ar@{-->}[r]^{\Xi} &H^{\cG_2}(\cB_2)  \ar[d]^{\Phi_2}     \\
\cG_1 \ar[r]^{F} &   \cG_2 }
\end{equation}
\end{thm}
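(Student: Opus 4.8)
The plan is to reduce Theorem \ref{thm:morphsub} to the two results already established, exactly as Theorem \ref{thm:morph} was reduced to Prop. \ref{prop:hbhfb} and Lemma \ref{prop:morph0}. First I would apply Prop. \ref{prop:morph} to the Lie groupoid morphism $F\colon \cG_1\to\cG_2$ and the singular subalgebroid $\cB_1$: since $\cB_1$ satisfies condition $\eqref{star}$, Lemma \ref{lem:FB2} guarantees that $F_*(\cB_1)$ is a singular subalgebroid of $A_2$, and Prop. \ref{prop:morph} provides a canonical surjective morphism of topological groupoids $\Xi_1\colon H^{\cG_1}(\cB_1)\to H^{\cG_2}(F_*(\cB_1))$ over $f$ commuting with the canonical maps $\Phi_1$ and $\Phi_2$ down to $\cG_1$ and $\cG_2$ via $F$.

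Next, since by hypothesis $F_*(\cB_1)\subset \cB_2$, both are singular subalgebroids of $A_2=Lie(\cG_2)$ and Lemma \ref{prop:morph0} (applied with the ambient Lie groupoid $\cG_2$, the smaller subalgebroid $F_*(\cB_1)$, and the larger one $\cB_2$) yields a canonical morphism of topological groupoids $\Xi_2\colon H^{\cG_2}(F_*(\cB_1))\to H^{\cG_2}(\cB_2)$, covering $Id_{M_2}$, which commutes with the two canonical maps to $\cG_2$. Composing, I set $\Xi:=\Xi_2\circ\Xi_1$. This is a morphism of topological groupoids $H^{\cG_1}(\cB_1)\to H^{\cG_2}(\cB_2)$ covering $f\colon M_1\to M_2$ (being the composite of a morphism over $f$ with one over $Id_{M_2}$).

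It remains to check that the square in the statement commutes. This follows by pasting the commuting diagrams of Prop. \ref{prop:morph} and Lemma \ref{prop:morph0}: one has $F\circ\Phi_1=\Phi_{F_*(\cB_1)}\circ\Xi_1$ (where $\Phi_{F_*(\cB_1)}\colon H^{\cG_2}(F_*(\cB_1))\to\cG_2$ is the canonical map), and $\Phi_{F_*(\cB_1)}=\Phi_2\circ\Xi_2$; hence $\Phi_2\circ\Xi=\Phi_2\circ\Xi_2\circ\Xi_1=\Phi_{F_*(\cB_1)}\circ\Xi_1=F\circ\Phi_1$. Canonicity of $\Xi$ is inherited from the canonicity of $\Xi_1$ and $\Xi_2$. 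Since neither ingredient is hard once the two earlier results are in place, I do not anticipate a genuine obstacle here; the only point requiring a word of care is that $F_*(\cB_1)$ — a priori defined via $\widehat{\cB_1}^{proj}$ in Lemma \ref{lem:FB2} — is indeed a singular subalgebroid to which Prop. \ref{prop:morph} and Lemma \ref{prop:morph0} both legitimately apply, and that when $f=Id_M$ the construction reduces on the nose to the one in Thm. \ref{thm:morph} (so the notation $\Xi$ is unambiguous).
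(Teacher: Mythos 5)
Your proposal is correct and coincides with the paper's own proof: Theorem \ref{thm:morphsub} is obtained by composing the surjective morphism $H^{\cG_1}(\cB_1)\to H^{\cG_2}(F_*(\cB_1))$ from Prop. \ref{prop:morph} with the inclusion-induced morphism $H^{\cG_2}(F_*(\cB_1))\to H^{\cG_2}(\cB_2)$ from Lemma \ref{prop:morph0}, the commutativity of the square following by pasting the two commuting triangles. Your additional checks (that $F_*(\cB_1)$ is a legitimate singular subalgebroid via Lemma \ref{lem:FB2}, and that the construction specializes to Thm. \ref{thm:morph} when $f=Id_M$) are consistent with the paper.
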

\begin{proof}
Compose the canonical morphism   $\Xi \colon H^{\cG_1}(\cB_1)\to H^{\cG_2}(F_*(\cB_1))$ given by Prop. \ref{prop:morph}  with the canonical morphism  $H^{\cG_2}(F_*(\cB_1))\to H^{\cG_2}(\cB_2)$   given by Lemma \ref{prop:morph0}.
\end{proof}

 \begin{remark}\label{rem:Moritamor}
{One can wonder whether an analogue of Thm. \ref{thm:morphsub} holds if one replaces the Lie groupoid morphism $F\colon G_1\to G_2$ by a generalized morphism.
A useful characterization of generalized morphisms is the one \cite[Remark 4.5.3]{MatiasME} as a diagram of Lie groupoid morphisms
$$G_1\overset{\phi}{\leftarrow} K \overset{\psi}{\rightarrow} G_2,$$
where $\phi$ is a strong equivalence (hence the base map $f\colon P\to M_1$ is a surjective submersion, $K\cong f^{-1}G_1:=P\times_M G_1\times_M P$ is the pullback groupoid, and $\phi$ is the second projection).
We have $Lie(K)\cong f^{-1}Lie(G_1)$ (the pullback Lie algebroid). Pulling back the singular subalgebroid  $\mathcal{B}_1$ 
 one obtains a singular subalgebroid $f^{-1}\mathcal{B}_1$ of $Lie(K)$.  Indeed one can define the pullback of singular subalgebroids by fiber-wise surjective Lie algebroid morphisms just as for singular foliations  \cite[Def. 1.9]{AndrSk}  
 (notice that \cite[Prop. 1.10 b)]{AndrSk}   extends straightforwardly to singular subalgebroids). 
}

{
 Under the  assumption that $f\colon P\to M_1$ has connected fibers, we expect that $f^{-1}(H^{G_1}(\mathcal{B}_1))\cong H^K(f^{-1}\mathcal{B}_1)$:
this is true for   singular foliations \cite[Thm. 3.21]{MEsingfol}, and we 
expect the proof given there to extend to singular subalgebroids.
If so,  one obtains a strong equivalence of topological groupoids  $\Xi_1\colon  H^K(f^{-1}\mathcal{B}_1)\to H^{G_1}(\mathcal{B}_1)$.
}
{
If the assumptions of Theorem \ref{thm:morphsub} are satisfied for the Lie groupoid morphism $\psi\colon K\to G_2$ and the singular subalgebroids $f^{-1}\mathcal{B}_1$ and $\mathcal{B}_2$, we can then apply that theorem 
 to obtain a morphism of topological groupoids  $\Xi_2\colon H^K(\phi^{-1}\mathcal{B}_1)\to H^{G_2}(\mathcal{B}_2)$. Altogether, this would yield a  generalized morphism from
  $H^{\cG_1}( {\cB}_1)$ to $H^{\cG_2}( {\cB}_2)$.
  }
\end{remark}
\subsection*{Examples}

The following are applications of Prop. \ref{prop:morph}.

\begin{ex}[The singular foliation $\rar{\cB}$ on $\cG$] Let $\cG\rightrightarrows M$ be a Lie groupoid, and $\cB$ a singular subalgebroid of $A:=Lie(\cG)$.
Consider the Lie groupoid $\cG \times_{\bs,\bs} \cG\rightrightarrows \cG$ 
associated to the submersion $\bs \colon \cG\to M$. (It is a subgroupoid of the pair groupoid associated to $\cG$, where we regard $G$ as a mere manifold). Consider the morphism of Lie groupoids
$$F \colon \cG \times_{\bs,\bs} \cG \to \cG,\;\; (g,h)\mapsto gh^{-1}$$ over $\bt \colon \cG\to M$. The corresponding Lie algebroid morphism  ${F}_*\colon \ker(\bs_*)\to  \ker(\bs_*)|_M=A$ {sends $v\in \ker(\bs_*)_h$ to $(R_{h^{-1}})_*v$. From this we see that the  singular foliation $\rar{\cB}$ satisfies condition \eqref{star} and }
$F_*(\rar{\cB})={\cB}$. 

Notice that, since $\cG \times_{\bs,\bs} \cG$ is a subgroupoid of the pair groupoid  $\cG \times \cG$, by proposition \ref{prop:morphG}
we have $H^{\cG \times_{\bs,\bs} \cG}(\rar{\cB})\cong H^{\cG \times \cG}(\rar{\cB})$, that is:
the holonomy groupoid of $\rar{\cB}$ constructed using $\cG \times_{\bs,\bs} \cG$ agrees with the one constructed using the pair groupoid $\cG \times \cG$. Prop. \ref{prop:morph} implies the existence of a canonical surjective  morphism of topological groupoids $$H^{\cG \times \cG}(\rar{\cB})\to H^{\cG}({\cB}).$$
Notice that the domain of the above morphism is simply
 the holonomy groupoid of the singular foliation $\rar{\cB}$, in the sense of \cite{AndrSk}.
In \cite{AZ4} we 
use this morphism to study the geometric properties of the holonomy groupoid $H^{\cG}({\cB})$.
\end{ex}

\begin{ex}[Singular foliations]
Let $f\colon M_1\to M_2$ be a surjective submersion. Let $\cF_1$ be a singular foliation on $M_1$, such that 
${\widehat{\cF_1}}^{proj}:=\{X\in {\widehat{\cF_1}}:X \text{ is $f$-projectable to a vector field on $M_2$}\}$ generates $\cF_1$ as a $C_c^{\infty}(M_1)$-module. 
  Then $f_*\cF_1$ is a singular foliation on $M_2$, by Lemma \ref{lem:FB2}.
Let $$(f,f)\colon M_1\times M_1\to M_2\times M_2$$ be 
the induced map on  pair groupoids. Prop. \ref{prop:morph} implies the existence of a canonical surjective  morphism of topological groupoids $$H(\cF_1)\to H(f_*\cF_1)$$ between the holonomy groupoids of the two singular foliations.  \end{ex}

{We remark that, under the additional assumptions that $f$ has connected fibers and that $\cF_1$ contains $\Gamma_c(\ker(f_*))$, the foliation $\cF_1$ agrees with the pullback foliation $f^{-1}(f_*\cF_1)$, by \cite[Lemma 3.2]{AZ1}. It is shown in \cite{MEsingfol} that $H(\cF_1)$ then agrees with the pullback of the groupoid $H(f_*\cF_1)$ via $f$, and the above morphism is the canonical projection of the pullback groupoid.}

 \bibliographystyle{habbrv} 

\begin{thebibliography}{10}

\bibitem{AndrSk}
I.~Androulidakis and G.~Skandalis.
\newblock The holonomy groupoid of a singular foliation.
\newblock {\em J. Reine Angew. Math.}, 626:1--37, 2009.

\bibitem{IakAnal}
I.~Androulidakis and G.~Skandalis.
\newblock The analytic index of elliptic pseudodifferential operators on a
  singular foliation.
\newblock {\em J. K-Theory}, 8(3):363--385, 2011.

\bibitem{PseudodiffCalcSingFol}
I.~Androulidakis and G.~Skandalis.
\newblock Pseudodifferential calculus on a singular foliation.
\newblock {\em J. Noncommut. Geom.}, 5(1):125--152, 2011.

\bibitem{AZ1}
I.~{Androulidakis} and M.~{Zambon}.
\newblock {Smoothness of holonomy covers for singular foliations and essential
  isotropy.}
\newblock {\em {Math. Z.}}, 275(3-4):921--951, 2013.

\bibitem{AZ2}
I.~{Androulidakis} and M.~{Zambon}.
\newblock {Holonomy transformations for singular foliations.}
\newblock {\em {Adv. Math.}}, 256:348--397, 2014.

\bibitem{AZ5}
I.~Androulidakis and M.~Zambon.
\newblock Almost regular {P}oisson manifolds and their holonomy groupoids.
\newblock {\em Selecta Math. (N.S.)}, 23(3):2291--2330, 2017.

\bibitem{AZ4}
I.~Androulidakis and M.~Zambon.
\newblock Integration of singular subalgebroids, 08 2020,
  \texttt{Arxiv:2008.07976v1}.

\bibitem{CW}
A.~Cannas~da Silva and A.~Weinstein.
\newblock {\em Geometric models for noncommutative algebras}, volume~10 of {\em
  Berkeley Mathematics Lecture Notes}.
\newblock American Mathematical Society, Providence, RI, 1999.

\bibitem{LecturesIntegrabilty}
M.~Crainic and R.~L. Fernandes.
\newblock Lectures on integrability of {L}ie brackets.
\newblock In {\em Lectures on {P}oisson geometry}, volume~17 of {\em Geom.
  Topol. Monogr.}, pages 1--107. Geom. Topol. Publ., Coventry, 2011.

\bibitem{MatiasME}
M.~L. del Hoyo.
\newblock Lie groupoids and their orbispaces.
\newblock {\em Port. Math.}, 70(2):161--209, 2013.

\bibitem{MEsingfol}
A.~Garmendia and M.~Zambon.
\newblock {Hausdorff Morita equivalence of singular foliations}.
\newblock {\em Annals of Global Analysis and Geometry}, 55(1):99--132, Feb
  2019.

\bibitem{GZQuotiens}
A.~Garmendia and M.~Zambon.
\newblock {Quotients of singular foliations and Lie 2-group actions}.
\newblock {\em {To appear in Journal of Non-commutative geometry}}, 04 2019.

\bibitem{GuLi}
M.~Gualtieri and S.~Li.
\newblock Symplectic groupoids of log symplectic manifolds.
\newblock {\em Int. Math. Res. Not. IMRN}, (11):3022--3074, 2014.

\bibitem{MagriYvettePN}
Y.~{Kosmann-Schwarzbach} and F.~{Magri}.
\newblock {Poisson-Nijenhuis structures}.
\newblock {\em {Ann. Inst. Henri Poincar\'e, Phys. Th\'eor.}}, 53(1):35--81,
  1990.

\bibitem{LLG}
C.~Laurent-Gengoux, S.~Lavau, and T.~Strobl.
\newblock {The universal Lie $\infty$-algebroid of a singular foliation}.
\newblock {\em Doc. Math.}, 25:1571--1652, 2020, 1806.00475.

\bibitem{LavauThesis}
S.~Lavau.
\newblock {\em {Lie ${\infty}$-algebroids and singular foliations}}.
\newblock PhD thesis, {Universit\'e de Lyon}, 2017, \texttt{Arxiv:1703.07404}.

\bibitem{MK2}
K.~C.~H. Mackenzie.
\newblock {\em General theory of {L}ie groupoids and {L}ie algebroids}, volume
  213 of {\em London Mathematical Society Lecture Note Series}.
\newblock Cambridge University Press, Cambridge, 2005.

\bibitem{MMRC}
I.~Moerdijk and J.~Mr{\v{c}}un.
\newblock On the integrability of {L}ie subalgebroids.
\newblock {\em Adv. Math.}, 204:101--115, 2006.

\bibitem{Phillipsholonomicimperative}
J.~Phillips.
\newblock The holonomic imperative and the homotopy groupoid of a foliated
  manifold.
\newblock {\em Rocky Mountain J. Math.}, 17(1):151--165, 1987.

\bibitem{Rinehart}
G.~S. Rinehart.
\newblock Differential forms on general commutative algebras.
\newblock {\em Trans. Amer. Math. Soc.}, 108:195--222, 1963.

\end{thebibliography}

\end{document}